\newlength{\longarc}
\newcommand{\wtilde}[1]{\settowidth{\longarc}{$#1$}%
         \addtolength{\longarc}{-0.1em}%
         \unitlength \longarc%
         \ensuremath{%
         \stackrel{\begin{picture}(1,0.05)
                 \qbezier(0,0)(0.25,0.05)(0.5,0)
                 \qbezier(0.5,0)(0.75,-0.05)(1,0)
         \end{picture}}
         {#1}
}}
\begin{document}
\title{Carleman estimates for semi-discrete parabolic operators with a discontinuous diffusion coefficient and application to controllability}

% The thanks line in the title should be filled in if there is
% any support acknowledgement for the overall work to be included
% This \thanks is also used for the received by date info, but
% authors are not expected to provide this.

\author{Thuy N.T. Nguyen\thanks{Universit\'{e} d'Orl\'{e}ans, Laboratoire Math\'{e}matiques et Applications, Physique Math\'{e}matiques d'Orl\'{e}ans (MAPMO), B\^{a}timent de Math\'{e}matiques, B.P. 6759, 45067 Orléans cedex 2, FRANCE. Email address : {\tt nguyentnthuy318@gmail.com}}}
%\date{\textrm{\small{(communicated with Emmanuel TRELAT)}}}   
\date{}

%%%%%%%%
\maketitle
%%%%%%%%
\renewcommand{\abstractname}{Abstract}
 \renewcommand{\proofname}{Proof}
% % % % % % % % % % % % % % % %

%\begin{document}
%\title{Carleman estimates for semi-discrete parabolic operators with a discontinuous diffusion coefficient and application to controllability}         % Ten bai
%\author{Thuy Nguyen}
%$J\acute{e}r\hat{o}me$ Le Rousseau, 
% \date{}          % Ngay
%\maketitle
%\renewcommand{\abstractname}{Abstract}
%\renewcommand{\proofname}{Proof}
\begin{abstract}In the discrete setting of one-dimensional finite-differences we prove a Carleman estimate for a semi-discretization of the parabolic operator $\partial_t-\partial_x (c\partial_x )$ where the diffusion coefficient $c$ has a jump. As a consequence of this Carleman estimate, we deduce consistent null-controllability results for classes of semi-linear parabolic equations.
\end{abstract}

\numberwithin{equation}{section}

\newtheorem{Theorem}{\bf Theorem}[section]
\newtheorem{Corollary}[Theorem]{\bf Corollary}
\newtheorem{Lemma}[Theorem]{\bf Lemma}
\newtheorem{Example}[Theorem]{\bf Example}
\newtheorem{Proposition}[Theorem]{\bf Proposition}
\newtheorem{Assumption}[Theorem]{\bf Assumption}
\newtheorem{Conjecture}[Theorem]{\bf Conjecture}
\newtheorem{Problem}[Theorem]{\bf Problem}
\newtheorem{Remark}[Theorem]{\bf Remark}
\newtheorem{Definition}[Theorem]{\bf Definition}
\newtheorem{Claim}{\bf Claim}
\newtheorem*{my2}{Theorem \ref{my2}}
\newtheorem*{my3}{Theorem \ref{my3}}

\section{Introduction and settings}
\label{sec: introduction}

\par Let $\Omega,\omega$ be connected non-empty open interval of $\mathbb{R}$ with $\omega\Subset \Omega$. We consider the following parabolic problem in $(0,T)\times \Omega$, with $T>0$,
\begin{equation}
  \label{eq: control heat equation}
  \partial_ty-\partial_x (c\partial_x y)=\textbf{1}_{\omega}v \ \ \textmd{in} \ (0,T)\times \Omega, \ y|_{\partial\Omega}=0, \ \textmd{and} \ y|_{t=0}=y_0, 
\end{equation}
where the diffusion coefficient $c=c(x)>0$.
\par System ~\eqref{eq: control heat equation} is said to be null controllable from $y_0\in L^2(\Omega)$ in time $T$ if there exists $v\in L^2((0,T)\times \Omega )$, such that $y(T)=0$. %Controllability is an important property of control theory and this property plays a crucial role in many control problems.

\par In the continuous framework, we refer to \cite{FI96} and \cite{LR95} who proved such a controllability result by means of a global/local Carleman observability estimates in the case the diffusion coefficient $c$ is smooth. The authors of \cite{BDL07} produced this controllability result in the case of a discontinuous coefficient in the one-dimensional case later extended to arbitrary dimension by \cite{LR10}. Additionally, a result of controllability in the case of a coefficient with bounded variation (BV) was shown in \cite{FCZ02, L07}. 

\par The authors of \cite{LZ98} show that uniform controllability holds in the one-dimensional case with constant diffusion coefficient $c$ and for a constant step size finite-difference scheme. Here, "uniform" is meant with respect to the discretization parameter $h$. The situation becomes more complex in higher dimension. In fact, a counter-example to null-controllability due to O. Kavian is provided in \cite{Zua06} for a finite-difference discretization scheme for the heat equation in a square. 

\par In recent works, by means of discrete Carleman estimate, the authors of \cite{BHL10a}, \cite{BHL10b} and \cite{BL12} obtained weak observability inequalities in the case of a smooth diffusion coefficient $c(x)$. Such observability estimates are charaterized by an additional term that vanishes exponentially fast. Morever, also with a constant diffusion coffiencient $c$, under the assumption that the discretized semigroup is uniformly analytic and that the degree of unboundedness of control operator is lower than 1/2, a uniform observability property of semi-discrete approximations for System ~\eqref{eq: control heat equation} is achieved in $L^2$ \cite{LT06}. Besides that, such a result continues to hold even with the condition that the degree of unboundedness of control operator is greater than 1/2 \cite{N12}.

\par In the case of a non-smooth coefficient, our aim is to investigate the uniform controllability of System \eqref{eq: control heat equation} after discretization. It is well known that controllability and observability are dual aspects of the same problem. We shall therefore focus on uniform observability which is shown to hold when the observability constant of the finite dimensional approximation systems does not depend on the step-size $h$.
\par In the present paper we prove a Carleman estimate for system ~\eqref{eq: control heat equation} in the case of:
\begin{itemize}
\item the heat equation in one space dimension;
\item a piecewise ${\textbf{C}}^1$ coefficient $c$ with jumps at a finite number of points in $\Omega$;
\item a finite-difference discretization in space.
\end{itemize}
The main idea of the proof is combination of the derivation of a discrete Carleman estimate as in \cite{BHL10a, BL12} and tecniques of \cite{BDL07} for operators with discontinuous coefficients in the one-dimensional case. A similar question in $n$-dimensional case, $n\ge 2$, remains open, to our knowledge. 
\par When considering a discontinuous coefficient $c$ the parabolic problem ~\eqref{eq: control heat equation} can be understood as a transmission problem. For instance, assume that $c$ exhibits a jump at $a\in\Omega$.  Then we write
\begin{center}$\quad\quad\begin{cases}
\partial_ty-\partial_x (c\partial_x y)=\textbf{1}_{\omega}v \quad \quad\quad \textmd{in} \ (0,T)\times \big((0,a)\cup(a,1)\big), &\\
c\partial_xy|_{a^+}=c\partial_xy|_{a^-},\quad\quad y|_{a^+}=y|_{a^-},&\\
 y|_{\partial\Omega}=0, \quad\quad \textmd{and}\quad\quad\quad \ y|_{t=0}=y_0.&
\end{cases}$
\end{center}
The second line is thus a transmission condition implying the continuity of the solution and of the flux at $x=a$.
\par When one gives a finite-difference version of this transmission problem, a similar condition can be given for the continuity of the solution. Yet, for the flux, it is only achieved up to a consistent term. In what follows, in the finite-difference approximation, we shall in fact write

%\par To tackle such a problem, we shall mention a transmission condition at singular point. In continuous case, the authors of \cite{BDL07} gave an form of transmission conditions at a singular point $'a'$ as
%\begin{center}(TC)$\quad\quad\begin{cases}
%y(a^+)=y(a^-) &\\
%c(a^-)\partial_xy(a^-)=c(a^+)\partial_xy(a^+),&
%\end{cases}$
%\end{center}
%which provided continuity for $y$ and for the assciated flux at $a$. However, in this context, we shall provide a discrete version of the transmission conditions (TC) at the point $a$  as
\begin{center}$\begin{cases}
y(a^-)=y(a^+)=y_{n+1} ,&\\
(c_dDy)_{n+\frac{3}{2}}-(c_dDy)_{n+\frac{1}{2}}=h\big(\bar{D}(c_dDy)\big)_{n+1},&
\end{cases}$
\end{center} 
(the discrete notation will be given below). Note that the flux condition converges to the continuous one if $h\to 0$, $h$ being the discretization parameter. This difference between the continuous and the discrete case will be the source of several technical points. 

\par An important point in the proof of Carleman estimate is the construction of a suitable weight function $\psi$ whose gradient does not vanish in the complement of the observation region. The weight function is chosen smooth in the case of a smooth diffusion coefficient $c(x)$. In general, the technique to construct such a function is based on Morse functions (see some details in \cite{FI96}). In one space dimension, this construction is in fact straightforward. In the case of a discontinuous diffusion coefficient, authors of \cite{BDL07} introduced an \textit{ad hoc} transmission condition on the weight function: its derivative exhibits jumps at the singular points of the coefficient. In this paper, we construct a weight function based on these techniques in the one-dimentional discrete case.
\par From the semi-discrete Carleman we obtain, we give an observability inequality for semi-discrete parabolic problems with potential. As compared to the result in continuous case \cite{BDL07} the observability estimate we state here is weak because of an additional term that describes the obstruction to the null-controllability. This term is exponentially small in agreement with the results obtained in \cite{BHL10a,BHL10b} in the smooth coefficient case. A precise statement is given in Section~\ref{sec:Controllability results}.
\par Finally, the observability inequality allows one to obtain controllability results for semi-discrete parabolic with semi-linear terms. In continuous case, this was achieved in \cite{BDL07}. Taking advantage of one-dimensional situation, the results we state are uniform with respect to the discretization parameter $h$ (see Section~\ref{sec:Controllability results}).
\subsection{Discrete settings}\label{subsec:discrete setting}
\par We restrict our analysis to one dimension in space. Let us consider the operator formally defined by $\mathcal{A}=-\partial_x(c\partial_x)$ on the open interval $\Omega=(0,L)\subset \mathbb{R}$. We let $a'\in {\Omega}$ and set $\Omega_1:=(0,a')$ and $\Omega_2:=(a',L)$. The diffusion coefficient c is assumed to be piecewise regular such that 
\begin{equation}\label{eq:coefficient}0<c_{min}\le c\le c_{max}
\end{equation} 
\begin{center}$c=\begin{cases}
{c_0}~\textrm{in}~ \Omega_1, &\\
{c_1}~\textrm{in}~ {\Omega_2},&
\end{cases}$
\end{center}
with ${c_i}\in C^1(\overline{{\Omega_i}}), i=1,2$.
\par The domain of $\mathcal{A}$ is $D(\mathcal{A})=\left\{{u\in H^1_0(\Omega);\quad c\partial_x u\in H^1(\Omega)}\right\}.$
\par Let $T>0$. We shall use the following notation $\Omega'=\Omega_1\cup \Omega_2$, $Q=(0,T)\times \Omega$, $Q'=(0,T)\times \Omega'$, $Q_i=(0,T)\times \Omega_i,\ i=1,2$, $\Gamma=\left\{{0,L}\right\}$, and $\Sigma =(0,T)\times\Gamma$. We also set $S=\left\{{a'}\right\}$. We consider the following parabolic problem

\begin{center}$ \begin{cases}
\partial_t y+\mathcal{A}y=f~\textrm{in}~ Q' ,&\\
y(0,x)=y_0(x)~\textrm{in}~\Omega& .
\end{cases}$
\end{center}
(real valued coefficient and solution), for $y_0\in L^2(\Omega)$ and $f\in L^2(Q)$, with the following transmission conditions at $a'$
 \begin{center}$(TC) \begin{cases}
{y(a'^-)=y(a'^+)}, &\\
{c(a'^-)\partial_x y(a'^-)=c(a'^+)\partial_x y(a'^+)}.&
\end{cases}$
\end{center}
\par Now, we introduce finite-difference approximations of the operator $\mathcal{A}$. Let $0=x'_0<x'_1<\ldots<\underline{x'_{n+1}=a'}<\ldots<x'_{n+m+1}<x'_{n+m+2}=L$. We refer to this discretization as to the primal mesh $\mathfrak{M}:=(x'_i)_{1\le i\le n+m+1}$. We set $\left|{\mathfrak{M}}\right|:=n+m+1$. We set $h'_{i+\frac{1}{2}}=x'_{i+1}-x'_i$ and $x'_{i+\frac{1}{2}}=(x'_{i+1}+x'_i)/2$, $i=0,\ldots,n+m+1$, and $h'=\max_{0\le i\le n+m+1}h'_{i+\frac{1}{2}}$. We call $\overline{\mathfrak{M}}:=(x'_{i+\frac{1}{2}})_{0\le i\le n+m+1}$ the dual mesh and set $h'_i=x'_{i+\frac{1}{2}}-x'_{i-\frac{1}{2}}=(h'_{i+\frac{1}{2}}+h'_{i-\frac{1}{2}})/2$, $i=0,\ldots,n+m+1$.\\

\par In this paper, we shall address to some families of non uniform meshes, that will be precisely defined in Section~\ref{subsec:Families of non-uniform meshes}.
\par We introduce the following notation
\begin{eqnarray}&&\label{eq:notation1}[\rho_1\star]_a=\rho_1(a^+)-\rho_1(a^-),\\
&&\label{eq:notation2}[\star \rho_2]_a=\rho_2(n+\frac{3}{2})-\rho_2(n+\frac{1}{2}),\\
&&\label{eq:notation3}[\rho_1 \star \rho_2]_a=\rho_1(a^+)\rho_2(n+\frac{3}{2})-\rho_1(a^-)\rho_2(n+\frac{1}{2}).
\end{eqnarray}

\par We follow some notation of \cite{BHL10a} for discrete functions in the one-dimensional case. We denote by $\mathbb{C}^{\mathfrak{M}}$ and $\mathbb{C}^{\overline{\mathfrak{M}}}$ the sets of discrete functions defined on $\mathfrak{M}$ and $\overline{\mathfrak{M}}$ respectively. If $u\in\mathbb{C}^{\mathfrak{M}}$ (resp. $\mathbb{C}^{\overline{\mathfrak{M}}}$), we denote by $u_i$ (resp. $u_{i+\frac{1}{2}}$) its value corresponding to $x'_i$ (resp. $x'_{i+\frac{1}{2}}$). For $u\in \mathbb{C}^{{\mathfrak{M}}} $ we define
\begin{equation}u^{\mathfrak{M}}=\sum_{i=1}^{n+m+1}1|_{[x'_{i-\frac{1}{2}},x'_{i+\frac{1}{2}}]}u_i\in L^{\infty}(\Omega).\nonumber
\end{equation}

\par And for $u\in \mathbb{C}^{{\mathfrak{M}}} $ we define $\int_{\Omega}u:=\int_{\Omega}u^{\mathfrak{M}}(x)dx=\sum_{i=1}^{n+m+1}h'_iu_i.$
\par For $u\in \mathbb{C}^{\overline{\mathfrak{M}}} $ we define
\begin{equation}u^{\overline{\mathfrak{M}}}=\sum_{i=0}^{n+m+1}1|_{[x'_{i},x'_{i+1}]}u_{i+\frac{1}{2}}.\nonumber
\end{equation}

\par As above, for $u\in \mathbb{C}^{\overline{\mathfrak{M}}} $ , we define $\int_{\Omega}u:=\int_{\Omega}u^{\overline{\mathfrak{M}}}(x)dx=\sum_{i=0}^{n+m+1}h'_{i+\frac{1}{2}}u_{i+\frac{1}{2}}.$ In particular we define the following $L^2$ inner product on $\mathbb{C}^{\mathfrak{M}}$ (resp. $\mathbb{C}^{\overline{\mathfrak{M}}}$)
\begin{equation}(u,v)_{L^2}=\int_{\Omega}{u^{\mathfrak{M}}(x)v^{\mathfrak{M}}(x)dx},\ ~\textrm{resp.}~  (u,v)_{L^2}=\int_{\Omega}{u^{\overline{\mathfrak{M}}}(x)v^{\overline{\mathfrak{M}}}(x)dx}.\nonumber
\end{equation}
\par For some $u\in \mathbb{C}^{{\mathfrak{M}}} $, we shall need to associate boundary conditions $u^{\partial\mathfrak{M}}=\left\{{u_0,u_{n+m+2}}\right\}$. The set of such extended discrete functions is denoted by $\mathbb{C}^{\mathfrak{M}\cup \partial \mathfrak{M}}$. Homogeneous Dirichlet boundary conditions then consist in the choice $u_0=u_{n+m+2}=0$, in short $u^{\partial\mathfrak{M}}=0$. We can define translation operators $\tau^{\pm}$, a difference operator $D$ and an averaging operator as the map $\mathbb{C}^{\mathfrak{M}\cup \partial\mathfrak{M}}\to \mathbb{C}^{\overline{\mathfrak{M}}}$ given by
\begin{eqnarray}&&(\tau^{+}u)_{i+\frac{1}{2}}:=u_{i+1}, \ \ \ \ \  (\tau^{-}u)_{i+\frac{1}{2}}:=u_i, \ \ \ \  i=0,\ldots n+m+1,\nonumber\\
&&(Du)_{i+\frac{1}{2}}:=\frac{1}{h'_{i+\frac{1}{2}}}(\tau^{+}u- \tau^-u)_{i+\frac{1}{2}}, \ \ \ \   \tilde{u}:=\frac{1}{2}(\tau^{+} +\tau^-)u.\nonumber
\end{eqnarray}
\par We also define, on the dual mesh, translation operators $\tau^{\pm}$,  a difference operator $\bar{D}$ and an averaging operator as the map $\mathbb{C}^{\overline{\mathfrak{M}}}\to\mathbb{C}^{\mathfrak{M}}$ given by
\begin{eqnarray}&&(\tau^{+}u)_{i}:=u_{i+\frac{1}{2}}, \ \ \ \ \  (\tau^{-}u)_{i}:=u_{i-\frac{1}{2}},\ \ \ \  i=1,\ldots n+m+1,\nonumber\\
&&(\bar{D}u)_{i}:=\frac{1}{h'_{i}}(\tau^{+}u-\tau^-u)_{i}, \ \ \ \  \bar{u}:=\frac{1}{2}(\tau^{+} +\tau^{-})u.\nonumber
\end{eqnarray}

\subsection{Families of non-uniform meshes}\label{subsec:Families of non-uniform meshes}
\par In this paper, we address non-uniform meshes that are obtained as the smooth image of an uniform grid.
\par More precisely, let $\Omega_0=]0,1[$ and let $\vartheta: \mathbb{R}\to \mathbb{R}$ be an increasing map such that
\begin{equation} \label{eq:vartheta} \vartheta(\Omega_0)=\Omega, \quad \vartheta\in \mathcal{C}^{\infty}, \quad {\inf \vartheta'}>0 ~\textmd{and}\quad  \vartheta(a)=a'
\end{equation}
with $a$ to be kept fixed in what follows and chosen such that $a\in (0,1)\cap \mathbb{Q}$, i.e $a=\frac{p}{q}$ with $p, q \in \mathbb{N^*}$. Clearly, we have $q>p$. We impose the function $\vartheta$ to be affine on ${[a-\delta, a+\delta]}$ $\vartheta|_{[a-\delta, a+\delta]}$ (for some $\delta>0$).% Note that $\vartheta$ must be constructed such that $\vartheta'_{+}(a)=\vartheta'_{-}(a)$ where $\vartheta'_+$, $\vartheta'_-$ be right-sided derivative and left-sided derivative. This constraint ensures that the weight function satisfies properties in Section 2 after the change of variables. (see details in Section 5)
\par Given $r\in \mathbb{N^*}$ and set $m=(q-p)r$ and $n= pr$. The parameter $r$ is used to refine the mesh when increased. Set $a=x_{n+1}=x_{pr+1}$. The interval $\overline{\Omega_{01}}=[0,a]$ is then discretized with $n=pr$ interior grid points (excluding $0$ and $a$). The interval $\overline{\Omega_{02}}=[a,1]$ is discretized with $m=(q-p)r$ exterior grid  points (excluding $a$ and $1$). Let $\mathfrak{M}_0=(ih)_{1\le i\le n+m+1}$ with $h=\frac{1}{n+m+2}$ be uniform mesh of $\Omega_0$ and $\overline{\mathfrak{M}_0}$ be the associated dual mesh. We define a non-uniform mesh $\mathfrak{M}$ of $\Omega$ as image of $\mathfrak{M}_0$ by the map $\vartheta$, settings

\begin{eqnarray}\label{eq:non uniform mesh}&&x'_i=\vartheta(ih), \quad \forall i\in \left\{{0,...,n}\right\}\cup \left\{{n+2,...,n+m+2}\right\}\nonumber\\
&&x'_{n+1}:=a'=\vartheta(a).
\end{eqnarray}
\par The dual mesh $\overline{\mathfrak{M}}$ and the general notation are then those of the previous section.
%\par Now, we give an example of the map $\vartheta$ as
%\begin{center}$\vartheta(x)=\begin{cases}
%{\frac{a'}{a}x} \ \ \ \ \ \  0\le x\le a, &\\
%{\frac{a'}{a}x-(\frac{a'}{a}-L)e^{\frac{1}{(1-a)^2}}e^{-\frac{1}{(x-a)^2}}} \ \ \ \ \ \ \  \ a\le x\le 1,&
%\end{cases}$
%\end{center}
%\par We can see that the map $\vartheta$ constructed as above satisfies the requirements of (1.6).
\subsection{Main results}
With the notation we have introduced, a consistent finite-difference approximation of $\mathcal{A}u$ with homogeneous boundary condition is 
\begin{equation}\mathcal{A}^{\mathfrak{M}}u=-\bar{D}(c_dDu)\nonumber
\end{equation}
for $u\in \mathbb{C}^{\mathfrak{M}\cup \partial \mathfrak{M}}$ satisfying $u|_{\partial \Omega}=u^{\partial \mathfrak{M}}=0$. We have
\begin{equation}(\mathcal{A}^{\mathfrak{M}}u)_i=-\frac{c_d(x_{i+\frac{1}{2}})\frac{u_{i+1}-u_i}{h_{i+\frac{1}{2}}}-c_d(x_{i-\frac{1}{2}})\frac{u_{i}-u_{i-1}}{h_{i-\frac{1}{2}}}}{h_i}, \ \ i=1,..,n+m+1.\nonumber
\end{equation}

\par For a suitable weight function $\varphi$ (to be defined below), the announced semi-discrete Carleman estimate for the operator $\mathcal{P}^{\mathfrak{M}}=-\partial_t+\mathcal{A}^{\mathfrak{M}}$ with a discontinuous diffusion coefficient $c$, for the non-uniform meshes we consider, is of the form
\begin{eqnarray}&&\tau^{-1}\left\|{\theta^{-\frac{1}{2}}e^{\tau \theta \varphi}\partial_t u}\right\|^2_{L^2(Q)}+\tau\left\|{\theta^{\frac{1}{2}}e^{\tau \theta\varphi}Du}\right\|^2_{L^2(Q)}+\tau^3\left\|{\theta^{\frac{3}{2}}e^{\tau\theta\varphi}u}\right\|^2_{L^2(Q)}\nonumber\\
&&\quad \quad \quad \le C_{\lambda,\mathfrak{K}}\left( \left\|{e^{\tau\theta\varphi}P^{\mathfrak{M}}u}\right\|^2_{L^2(Q)}+\tau^3\left\|{\theta^{\frac{3}{2}}e^{\tau \theta\varphi}u}\right\|^2_{L^2( (0,T) \times \omega)} \right.\nonumber\\
&&\left. \quad \quad \quad\quad\quad\quad \quad +h^{-2} \left|{ e^{\tau\theta\varphi}u|_{t=0}}\right|^2_{L^2(\Omega)}+ h^{-2}\left|{e^{\tau\theta\varphi}u|_{t=T}}\right|^2_{L^2(\Omega)} \right),
\end{eqnarray}
for properly chosen functions $\theta=\theta(t)$ and $\varphi=\varphi(x)$, for all $\tau\ge \tau_0(T+T^2)$, $0<h\le h_0$ and $\tau h(\alpha T)^{-1}\le  \epsilon_0$, $0<\alpha<T$ and for all $u\in {C}^\infty(0,T;\mathbb{C}^{\mathfrak{M}})$ satisfying the discrete transmission conditions, where $\tau_0, h_0, \epsilon_0$ only depend on the data. We refer to Theorem ~\ref{theo:Carleman estinmate uniform} (uniform mesh) and Theorem~\ref{theo:carleman non uniform} (non uniform mesh) below for a precise result. The proof of this estimate will be first carried out for piecewise uniform meshes, and then adapted to the case of the non-uniform meshes we introduced in Section~\ref{subsec:Families of non-uniform meshes}.\\

\par From the semi-discrete Carleman estimate we obtain allows we deduce following weak observability estimate 
\begin{equation}\left|{q(0)}\right|_{L^2(\Omega)}\le C_{obs}\left\|{q}\right\|^2_{L^2( (0,T)\times \omega)}+e^{-\frac{C}{h}}\left|{q(T)}\right|^2_{L^2(\Omega)},\nonumber
\end{equation}
for any $q$ solution to the adjoint system 
\begin{equation}\partial_t q+\mathcal{A}^{\mathfrak{M}}q+aq=0,\quad \quad q|_{\partial \Omega}=0.\nonumber
\end{equation}
A precise statement is given in Section~\ref{sec:Controllability results}.
\par Moreover, from the weak observability estimate given above we obtain a controllability result for the linear operator $P^{\mathfrak{M}}$. This result can be extended to classes of semi-linear equations
\begin{equation}\big(\partial_t +\mathcal{A}^{\mathfrak{M}}\big)y+{G}(y)=\textbf{1}_{\omega}v, \quad y\in (0,T) \quad\quad y|_{\partial \Omega}=0,\quad y(0)=y_0,\nonumber
\end{equation}
with $G(x)=xg(x)$, where $g\in L^{\infty}({\mathbb{R}})$ and
\begin{equation}\left|{g(x)}\right|\le K\ln^r(e+\left|{x}\right|), \quad x\in \mathbb{R},\quad\quad\textmd{with} \quad\quad 0\le r<\frac{3}{2}.\nonumber
\end{equation}
We shall state controllability results with a control that satisfies
\begin{equation}
\left\|v \right\| _{L^2(Q)}\le C\left|{y_0} \right|.\nonumber  
\end{equation}
Thanks to one space dimension the size of the control function is uniform with respect to the discretization parameter $h$.
\subsection{Sketch of proof of the Carleman estimate}
\par The main idea of the proof lays in the combination of the derivation of a discrete Carleman estimate as in \cite{BHL10a, BL12} and techniques used in \cite{BDL07} to achieve such estimates for operators with discontinuous coefficients in the one-dimensional case. 
\par We set $v=e^{-s\varphi}u$ yielding $e^{s\varphi}Pe^{-s\varphi}v=e^{s\varphi}f_1$ in $Q'_0$ if $Pu=f_1$
\par We obtain $g=Av+Bv$ in $Q'_0$, with $A$ and $iB$ 'essentially' selfadjoint.
\par We write $\left\| {g}\right\|^2_{L^2} =\left\| {Av}\right\|^2_{L^2}+\left\|{Bv} \right\|^2_{L^2}+2(Av,Bv)_{L^2} $ and the main part of the proof is dedicated to computing the inner product $(Av,Bv)_{L^2(Q'_0)}$, involving (discrete) integration by parts. 
%\par We proceed these computations separately in each domain $\Omega_{01}$, $\Omega_{02}$ that yields some new terms compared with \cite{BL12}. Apart from boundary terms and surrounding boundary terms such as $v(0), v(1)$, $(Dv)_{n+m+\frac{1}{2}}, (Dv)_{n+m+\frac{3}{2}}$ as shown in \cite{BL12}, in this work there are new terms which take value at jump point $a$ and surrounding jump point such as $v(a), \partial_t v(a)$ $ \tilde{v}_{n+\frac{1}{2}}, \tilde{v}_{n+\frac{3}{2}}$, $(Dv)_{n+\frac{1}{2}}, (Dv)_{n+\frac{3}{2}}$. Main difficulties of this work come from dealing with these new terms. To reduce the control of too many terms as mentioned above, we find the relations among terms and only focus on main terms such as $v(a), \partial_t v(a)$ and $(Dv)_{n+\frac{1}{2}}$.
\par We proceed with these computations separately in each domain $\Omega_{01}$, $\Omega_{02}$. As in \cite{BL12} we obtain terms involving boundary points $x=0$ and $ x=1$ such as $v(0)$,$ v(1)$,$ \partial_t v(0)$,$ \partial_t v(1)$, $(Dv)_{n+m+\frac{1}{2}}, (Dv)_{n+m+\frac{3}{2}}$. In our case we obtain additional terms involving the jump point $a$ such as $v(a), \partial_t v(a)$, $ \tilde{v}_{n+\frac{1}{2}}, \tilde{v}_{n+\frac{3}{2}}$, $(Dv)_{n+\frac{1}{2}}, (Dv)_{n+\frac{3}{2}}$. Main difficulties of our work come from dealing with these new terms. To reduce the number of terms to control, we find relations among connecting these various values at jump point allowing to focus our computations on terms only involving $v(a), \partial_t v(a)$ and $(Dv)_{n+\frac{1}{2}}$. Those relations are stated in \textbf{Lemma ~\ref{lem:transmission condition}}. In the limit $h\to 0$ they give back the transmission conditions for the function $v=e^{-s\varphi}u$ used crucial way in \cite{BDL07}. The idea of this technique comes from a similar technique shown in continuos case by  \cite{BDL07}.
\par The discrete setting could allow computation on the whole $\Omega$. Yet such computation would yield constant that would depend on discrete derivatives of the diffusions coefficient, yielding non-uniformity with respect to the discretization parameter $h$. This explains why we resort to working on both $\Omega_0$ and $\Omega_1$ separately and deal with the interface terms that appear. As in  \cite{BDL07} the weight function is chosen to obtain positive contributions for these terms.
\par Sketch of proof Theorem 
\begin{enumerate}
\item We compute the inner product $(Av,Bv)$ in a series of terms and collect them together in an estimate (see \textbf{Lemma ~\ref{lem: estimate I11}--Lemma ~\ref{lem:estimate I33}}). In that estimate, we need to tackle two parts: volume integrals, integrals involving boundary points and the jump point. Volume integrals and boundary terms are dealt with similar to \cite{BL12}. Terms at the jump point require special case.
\item Treatment of terms the jump point
\begin{itemize}
\item Terms at jump point involving $\partial_t v$ : when treating the term $Y_{13}$ we obtain a positive integral of $(\partial_t v(a))^2$ in the LHS of the estimate as shown in \textbf{Lemma ~\ref{lem:estimate Y13}}. We keep this term in the LHS of the estimate.
\item Other terms: We collect together the terms at the jump point that already exist in the continuous case. As in \cite{BDL07} we obtain a quadratic form because of the choice of the weight function (jump of its slope). This allows us to obtain positive two integrals involving $v^2(a)$, $(Dv)^2_{n+\frac{1}{2}}$ in the LHS of our estimate (see \textbf{Lemma ~\ref{lem:estimate Y11 plus Y21 again}}).
\item The remaining terms at the jump point are placed in the RHS of estimate. After that, we apply Young's inequality to them (as shown in \textbf{Lemma ~\ref{lem:Young ineq}}) and they then can be absorded by the positive integrals involving $v^2(a)$, $(Dv)^2_{n+\frac{1}{2}}$, $(\partial_t v(a))^2$ in the LHS of estimate as described above.  
\end{itemize}
\end{enumerate}
\subsection{Outline}In section ~\ref{sec: weight functions}, we construct the weight functions to be used in the Carleman estimate. In section~\ref{sec: prelim results} we have gathered some preliminary discrete calculus results and we present how transmission conditions can be expressed in the discretization scheme. Section~\ref{sec: Carleman estimate uniform mesh} is devoted to the proof the semi-discrete parabolic Carleman estimate in the case of a discontinuous diffusion cofficient for piecewise uniform meshes in the one-dimensional case. To ease the reading, a large number of proofs of intermediate estimates have been provided in Appendix A. This result is then extended to non-uniform meshes in Section~\ref{sec:non uniform meshes}. Finally, in Section~\ref{sec:Controllability results}, as consequences of the Carleman estimate, we present the weak observability estimate and associated some controllability results.

\section{Weight functions}
\label{sec: weight functions}
\par We shall first introduce a particular type of weight functions, which are constructed through the following lemma.
\par We enlarge the open intervals $\Omega_1, \Omega_2$ to large open sets $\tilde{\Omega}_1, \tilde{\Omega}_2$.
\begin{Lemma}\label{lem:weight function}Let $\tilde{\Omega}_1$, $\tilde{\Omega}_2$ be a smooth open and connected neighborhoods of intervals $\overline{\Omega}_1$, $\overline{\Omega}_2$ of $\mathbb{R}$ and let $\omega\subset \Omega_2$ be a non-empty open set. Then, there exists a function $\psi\in C({\bar{\Omega}})$ such that
\begin{center}$\psi(x)=\begin{cases}
{\psi_1}~\textrm{in}~ \overline{\Omega}_1 , &\\
{\psi_2}~\textrm{in}~ \overline{\Omega}_2,&
\end{cases}$
\end{center}
with ${\psi_i}\in C^{\infty}(\overline{\tilde{\Omega_i}}), i=1, 2$, ${\psi}>0~\textrm{in}~\Omega, {\psi}=0~\textrm{on}~\Gamma, {\psi '_2}\neq 0 ~\textrm{in}~\overline{\Omega}_2\setminus \omega, \ {\psi'_1}\neq 0 ~\textrm{in}~\overline{\Omega}_1$ and the function ${\psi}$ satisfies the following trace properties, for some $\alpha_0>0$,
\begin{equation}(\textbf{A}u,u)\ge \alpha_0\left|{u}\right|^2 \ \  u\in \mathbb{R}^2,\nonumber
\end{equation}
with the matrix $\textbf{A}$ defined by

\begin{displaymath}
\textbf{A} =
\left( \begin{array}{cc}
a_{11} & a_{12}\\
a_{21} & a_{22} \\
\end{array} \right),
\end{displaymath}
\ with 
\begin{eqnarray}&&a_{11}=[\psi'\star]_{a'},\nonumber\\
&&a_{22}=[c\psi'\star]^2_{a'}(\psi')(a'^+)+[c^2(\psi')^3\star]_{a'},\nonumber\\
&&a_{12}=a_{21}=[c\psi' \star]_{a'}(\psi')(a'^+),\nonumber
\end{eqnarray}
(see the notation ~\eqref{eq:notation1}~-~~\eqref{eq:notation3} introduced in Section~\ref{subsec:discrete setting}).
\end{Lemma}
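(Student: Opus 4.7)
The plan is to build $\psi$ piece by piece on $\overline{\Omega}_1$ and $\overline{\Omega}_2$, choosing first the branch $\psi_2$ on the side containing $\omega$ using the standard one-dimensional construction, and then exploiting the remaining freedom in the branch $\psi_1$ — in particular the value of $\psi_1'(a'^-)$ — to force positive definiteness of the $2\times 2$ matrix $\textbf{A}$.

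In a first step I would construct $\psi_2\in C^{\infty}(\overline{\tilde\Omega_2})$ with $\psi_2(L)=0$, $\psi_2>0$ on $[a',L)$ and $\psi_2'\neq 0$ on $\overline{\Omega}_2\setminus\omega$. In one space dimension this is elementary: pick a point $x_0\in\omega$ and take any smooth function that strictly increases from $\psi_2(a')=M>0$ to its maximum at $x_0$ and then strictly decreases to $0$ at $L$, extended smoothly to $\tilde\Omega_2$. This fixes the two parameters $M:=\psi_2(a')>0$ and $\beta:=\psi_2'(a'^+)>0$ that will be seen by the interface.

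In a second step I would fix $\alpha>0$ small (to be specified) and construct $\psi_1\in C^{\infty}(\overline{\tilde\Omega_1})$ satisfying $\psi_1(0)=0$, $\psi_1(a')=M$, $\psi_1'(a'^-)=\alpha$, and $\psi_1'>0$ on $\overline{\Omega}_1$. An explicit choice is
\begin{equation*}
\psi_1(x)=\alpha x+(M-\alpha a')\,\chi(x/a'),
\end{equation*}
where $\chi\in C^{\infty}([0,1])$ is strictly increasing with $\chi(0)=0$, $\chi(1)=1$, $\chi'(1)=0$; any $\alpha\in(0,M/a')$ makes this work, and continuity of $\psi$ at $a'$ is automatic. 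One then extends $\psi_1$ smoothly to the neighborhood $\tilde\Omega_1$.

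It remains to tune $\alpha$ so that $\textbf{A}$ is positive definite. Writing $c_1:=c(a'^-)$, $c_2:=c(a'^+)$ and using $\alpha,\beta$ as above, a direct computation gives
\begin{equation*}
a_{11}=\beta-\alpha,\qquad \det\textbf{A}=-\alpha\beta(c_2\beta-c_1\alpha)^2+(\beta-\alpha)\bigl(c_2^2\beta^3-c_1^2\alpha^3\bigr).
\end{equation*}
At $\alpha=0$ these reduce to $a_{11}=\beta>0$ and $\det\textbf{A}=c_2^2\beta^4>0$, so by continuity any $\alpha>0$ small enough (with a threshold depending only on $\beta$, $c_1$, $c_2$) yields simultaneously $a_{11}>0$ and $\det\textbf{A}>0$; Sylvester's criterion then gives $(\textbf{A}u,u)\ge\alpha_0|u|^2$ for some $\alpha_0>0$. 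The main difficulty is not really technical but conceptual: the naive smooth-case instinct would be to match slopes at $a'$, which forces $\alpha=\beta$ and makes $a_{11}$ vanish, collapsing the quadratic form. The trick, borrowed from the continuous treatment in \cite{BDL07}, is to let $\psi'$ have a prescribed upward jump at $a'$ so that $\psi_1'(a'^-)$ is strictly smaller than $\psi_2'(a'^+)$; once this freedom is built into the ansatz, the algebraic verification reduces to a perturbative argument around $\alpha=0$.
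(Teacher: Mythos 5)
Your construction is correct and coincides with the approach the paper relies on: the paper's own ``proof'' is only a reference to Lemma~1.1 of \cite{BDL07}, where the weight is likewise built by prescribing an upward jump of $\psi'$ at the interface and tuning the slope on the non-observed side so that Sylvester's criterion applies to $\textbf{A}$. Your formulas $a_{11}=\beta-\alpha$ and $\det\textbf{A}=-\alpha\beta(c_2\beta-c_1\alpha)^2+(\beta-\alpha)(c_2^2\beta^3-c_1^2\alpha^3)$ are accurate, and the perturbation around $\alpha=0$ closes the argument.
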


\begin{Remark}\label{rek:observation} Here we choose a weight function that yields an observation in the region $\omega\subset \Omega_2$ in the Carleman estimate of Section~\ref{sec: Carleman estimate uniform mesh}. This choice is of course arbitrary.\end{Remark}

\par \begin{proof} We refer to Lemma 1.1 in \cite{BDL07} for a similar proof. 
\end{proof}

% \par \begin {proof}
% \par The matrix $\textbf{A}$ is positive definite if and only if
% \begin{equation}[\psi' \star]_a>0  ~\textrm{and}~ \det(A)>0.
% \end{equation}
% 
% \par The determinant of $\textbf{A}$ follows as
% \begin{equation}\det(\textbf{A}) =[\psi'\star]_a[c^2(\psi')^3\star]_a-(\psi')(a^+)(\psi')(a^-)[c\psi'\star]^2_a.\nonumber
% 
% \end{equation}
% 
% \par Observe that $\det(\textbf{A})$ is a fourth-order polynominal with respect to $\psi'(a^+)$ with a positive leading order coefficient.  It is seen that function $\psi'(a^-)$ has already been chosen and is positive, it suffices to chose $\psi'(a^+)$ positive and sufficiently large to satisfy condition (5).
% \end{proof}

\par Choosing a function $\psi$, as in the previous lemma, for $\lambda>0$ and $K> \left\|{\psi}\right\|_{\infty}$, we define the following weight functions
\begin{eqnarray}\label{eq:weight functions}&&\varphi(x)=e^{\lambda\psi(x)}-e^{\lambda K}<0, \ \ \phi(x)=e^{\lambda\psi(x)}, \\
  &&r(t,x)=e^{s(t)\varphi(x)}, \ \ \rho(t,x)=(r(t,x))^{-1},\nonumber
\end{eqnarray}
with
\begin{equation}s(t)=\tau\theta(t), \ \ \tau>0, \ \  \theta(t)=( (t+\alpha)(T+\alpha-t))^{-1},\nonumber
\end{equation}
\ for $0<\alpha<T$.

\par We have 
\begin{equation}\label{eq:theta} \underset{[0,T]}{\max}\theta=\theta(0)=\theta(T)=\alpha^{-1}(T+\alpha)^{-1},\end{equation}
\ and $\underset{[0,T]}{\min}\theta\ge T^{-2}$. We note that
\begin{equation}\label{eq:derivative of theta}\partial_t \theta=(2t-T)\theta^2.\end{equation}
\par For the Carleman estimate and the observation/control results we choose here to treat the case of an distributed-observation in $\omega\subset \Omega$. The weight function is of the form $r=e^{s\varphi}$ with $\varphi=e^{\lambda\psi}$, with $\psi$ fulfilling the following assumption. Construction of such a weight function is classical (see e.g \cite{FI96}).
\begin{Assumption}Let $\omega\subset \Omega$ be an open set. Let $\tilde{\Omega}$ be a smooth open and connected neighborhood of $\bar{\Omega}$ in $R$. The function $\psi=\psi(x)$ is in $\textbf{C}^p\big({\overline{\tilde{\Omega}}, R}\big)$, $p$ sufficiently large, and satisfies, for some $c>0$,
\begin{eqnarray}&&\psi>0 \quad~\textrm{in}\quad \tilde{\Omega}, \quad\quad\quad\left|{\triangledown \psi} \right|\ge c \quad~\textrm{in}\quad\tilde{\Omega}\backslash \omega_0,\nonumber\\
&&\partial_{n}\psi(x)\le-c<0, \quad\quad \partial_x^2\psi(x)\le 0 \quad~\textrm{in}\quad V_{\partial\Omega}.\nonumber
\end{eqnarray}
where $V_{\partial\Omega}$ is a sufficiently small neighborhood of $\partial\Omega$ in $\tilde{\Omega}$, in which the outward unit normal $n$ to $\Omega$ is extended from $\partial\Omega$.
\end{Assumption}
\section{Some preliminary discrete calculus results for uniform meshes}
\label{sec: prelim results}

\par Here, to prepare for Section~\ref{sec: Carleman estimate uniform mesh}, we only consider constant-step discretizations, i.e., $h_{i+\frac{1}{2}}=h$, $i=0,\ldots,n+m+1$. %$j\in\overline{n+2,n+m+1}$ and $h_{n+1}=\frac{h_1+h_2}{2}$. We set $h=\sup h_i$. In particular, there exists the constant C such that
% \begin{equation}Ch\le \left\{h_i, h_{n+1}\right\}\le h, \ \ \ \ i=1,2.\nonumber
% \end{equation}
\par We use here the following notation: $\Omega_0=(0,1)$, $\Omega_{01}=\left({0,a}\right)$, $\Omega_{02}=\left({a,1}\right)$, $\Omega'_{0}=\Omega_{01}\cup \Omega_{02}$, $Q_0=\left({0,T}\right)\times \Omega_0$, $Q'_0=\left({0,T}\right)\times \Omega'_0$, $Q_{0i}=\left({0,T}\right)\times \Omega_{0i}$ with $i=1,2$ and $\partial\Omega_0=\left\{{0,1}\right\}$.
\par This section aims to provide calculus rules for discrete operators such as $D_i, \bar{D}_i$ and also to provide estimates for the successive applications of  such operators on the weight functions. To avoid cumbersome notation we introduce the following continuous difference and averaging operators on continuous functions. For a function $f$ defined on $\Omega_0$ we set
% \begin{eqnarray}&&\tau^+f(x):=f(x+\underline{h}/2), \ \ \ \ \tau^-f(x):=f(x-\underline{h}/2), \nonumber\\
% &&\mathtt{D}f(x):=(\tau^+-\tau^-)f(x)/\underline{h}, \ \ \ \ \hat{f}(x)=(\tau^++\tau^-)f(x)/2.\nonumber
% \end{eqnarray}
\begin{align}\tau^+f(x):=f(x+{h}/2), &\quad \quad \tau^-f(x):=f(x-{h}/2),\nonumber\\
\mathtt{D}f(x):=(\tau^+-\tau^-)f(x)/{h}, &\quad \quad \hat{f}(x)=(\tau^++\tau^-)f(x)/2.\nonumber
\end{align}
% where 
% \begin{center}$\underline{h}=\begin{cases}
% {h'_1}\quad\quad\quad ~\textrm{if}~ x\in\Omega_1 , &\\
% {h'_2}\quad\quad\quad ~\textrm{if}~ {x\in\Omega_2}, &
% {\frac{h'_1+h'_2}{2}}\quad\quad ~\textrm{if}~ x=a, &
% \end{cases}$
% \end{center}
\begin{Remark} To iterate averaging symbols we shall sometimes write $Af=\hat{f}$, and thus $A^2f=\hat{\hat{f}}.$
\end{Remark}
\subsection{Discrete calculus formulae} We present calculus results for finite-difference operators that were defined in the introductory section. Proofs can be found in Appendix of \cite{BHL10a} in the one-dimension case.

\begin{Lemma}
  \label{lemma: discrete leibnitz}
  Let the functions $f_1$ and $f_2$ be continuously defined in a neighborhood of $\bar{\Omega}$. We have:
\begin{equation}\mathtt{D}(f_1f_2)=\mathtt{D}(f_1)\hat{f_2}+\hat{f_1}\mathtt{D}(f_2).\nonumber
\end{equation}
\par Note that the immediate translation of the proposition to discrete functions $f_1, f_2\in \mathbb{C}^{\mathfrak{M}}$  and  $g_1, g_2\in \mathbb{C}^{\overline{\mathfrak{M}}}$ is
\begin{equation}D(f_1f_2)=D(f_1)\tilde{f_2}+\tilde{f_1}D(f_2),\ \bar{D}(g_1g_2)=\bar{D}(g_1)\bar{g_2}+\bar{g_1}\bar{D}(g_2).\nonumber
\end{equation}
\end{Lemma}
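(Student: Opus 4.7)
The plan is a direct algebraic manipulation based on the classical telescoping identity
\begin{equation}
a_+ b_+ - a_- b_- = \tfrac{1}{2}(a_+ - a_-)(b_+ + b_-) + \tfrac{1}{2}(a_+ + a_-)(b_+ - b_-),
\end{equation}
valid for arbitrary numbers $a_\pm, b_\pm$. First I would unfold the definition
$$\mathtt{D}(f_1 f_2)(x) = \big((f_1 f_2)(x + h/2) - (f_1 f_2)(x - h/2)\big)/h,$$
apply the identity with $a_\pm = f_1(x \pm h/2)$ and $b_\pm = f_2(x \pm h/2)$, and divide by $h$. By the very definitions of $\mathtt{D}$ and $\hat{\cdot}$, the two half-sums on the right are precisely $\mathtt{D}(f_1)(x)\,\hat{f}_2(x)$ and $\hat{f}_1(x)\,\mathtt{D}(f_2)(x)$, yielding the continuous-variable Leibniz rule.

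For the two discrete statements no new idea is needed: the same telescoping is applied pointwise at grid points. For $f_1, f_2 \in \mathbb{C}^{\mathfrak{M}}$ evaluated at the dual-mesh point $x'_{i+1/2}$, I would take $a_\pm = f_{1,i+1}, f_{1,i}$ and $b_\pm = f_{2,i+1}, f_{2,i}$ and divide by $h$, obtaining $(D(f_1 f_2))_{i+1/2} = (Df_1)_{i+1/2}\,\tilde{f}_{2,\,i+1/2} + \tilde{f}_{1,\,i+1/2}\,(Df_2)_{i+1/2}$. The case $g_1, g_2 \in \mathbb{C}^{\overline{\mathfrak{M}}}$ is verbatim the same at the primal-mesh point $x'_i$, with $\bar D$ replacing $D$, $\bar{\cdot}$ replacing $\tilde{\cdot}$, and the half-index values $(i\pm 1/2)$ playing the role of the integer-index values.

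I do not anticipate any real obstacle: the result is the standard discrete Leibniz rule and reduces to the two-line identity displayed above. The only point deserving mild care is to use the symmetric shifts $\pm h/2$ rather than asymmetric ones (such as $x, x+h$), so that the symmetric average operator arises naturally on the right-hand side in its definitional form rather than as a one-sided translate; this matches the conventions fixed in Section~\ref{subsec:discrete setting}.
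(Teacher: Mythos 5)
Your proof is correct: the polarization identity $a_+b_+-a_-b_-=\tfrac12(a_+-a_-)(b_++b_-)+\tfrac12(a_++a_-)(b_+-b_-)$ divided by $h$ is exactly the standard argument, and it transfers verbatim to the two discrete versions. The paper itself defers the proof to the appendix of [BHL10a], where the same elementary computation is carried out, so your approach is essentially identical to the cited one.
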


\begin{Lemma}\label{lem:average of two function} Let the functions $f_1$ and $f_2$ be continuously defined in a neighborhood of $\bar{\Omega}$. We have:
\begin{equation}\widehat{f_1f_2}=\hat{f_1}\hat{f_2}+\frac{{h}^2}{4}\mathtt{D}(f_1)\mathtt{D}(f_2).\nonumber
\end{equation}
\par Note that the immediate translation of the proposition to discrete functions $f_1, f_2\in \mathbb{C}^{\mathfrak{M}}$  and  $g_1, g_2\in \mathbb{C}^{\overline{\mathfrak{M}}}$ is
\begin{equation}\wtilde{f_1f_2}=\tilde{f_1}\tilde{f_2}+\frac{{h}^2}{4}D(f_1)D(f_2), \ \overline{g_1g_2}=\bar{g_1}\bar{g_2}+\frac{{h}^2}{4}\bar{D}(g_1)\bar{D}(g_2).\nonumber
\end{equation}
\end{Lemma}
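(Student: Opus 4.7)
The plan is a direct pointwise computation, reducing the identity to an algebraic manipulation on the four values $f_i(x \pm h/2)$. The key observation that makes this transparent is the inversion of the definitions
$$\tau^+ f = \hat{f} + \tfrac{h}{2}\mathtt{D}f, \qquad \tau^- f = \hat{f} - \tfrac{h}{2}\mathtt{D}f,$$
which just rewrites the shift operators as an average plus/minus a discrete derivative contribution. This is the discrete analogue of the Taylor-type splitting used in the continuous Leibniz identity, and it is already implicit in the proof of Lemma \ref{lemma: discrete leibnitz}.

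First, starting from the definition
$$\widehat{f_1 f_2}(x) = \tfrac{1}{2}\bigl[(\tau^+ f_1)(x)(\tau^+ f_2)(x) + (\tau^- f_1)(x)(\tau^- f_2)(x)\bigr],$$
I would substitute the two expressions above for $\tau^{\pm} f_i$ and expand. The two expansions produce four types of terms: the product $\hat{f_1}\hat{f_2}$ (appearing with the same sign in both summands, so surviving with coefficient $1$), the product $\tfrac{h^2}{4}\mathtt{D}(f_1)\mathtt{D}(f_2)$ (also appearing with the same sign in both, surviving with coefficient $1$), and the two mixed terms $\tfrac{h}{2}\hat{f_1}\mathtt{D}(f_2)$ and $\tfrac{h}{2}\mathtt{D}(f_1)\hat{f_2}$ (appearing with opposite signs in the $\tau^+$ and $\tau^-$ contributions, hence cancelling after the averaging by $1/2$). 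What remains is exactly $\hat{f_1}\hat{f_2} + \tfrac{h^2}{4}\mathtt{D}(f_1)\mathtt{D}(f_2)$, which is the claimed identity.

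For the translation to discrete functions $f_1,f_2\in\mathbb{C}^{\mathfrak{M}}$ or $g_1,g_2\in\mathbb{C}^{\overline{\mathfrak{M}}}$, I would simply observe that the above manipulation uses only the formal identity $\tau^{\pm} = \tfrac{1}{2}(\tau^++\tau^-)\pm\tfrac{h}{2}\cdot\tfrac{1}{h}(\tau^+-\tau^-)$, which holds verbatim for the discrete averaging $\tilde{\cdot}$ and discrete differences $D$, $\bar D$ defined on the primal and dual meshes in Section \ref{subsec:discrete setting}. The same algebraic cancellation of the cross terms therefore yields $\wtilde{f_1 f_2}=\tilde{f_1}\tilde{f_2}+\tfrac{h^2}{4}D(f_1)D(f_2)$ and $\overline{g_1 g_2}=\bar{g_1}\bar{g_2}+\tfrac{h^2}{4}\bar{D}(g_1)\bar{D}(g_2)$.

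There is no real obstacle here; the only thing to be slightly careful about is keeping track of signs and the factor $h/2$ versus $h^2/4$ so that the mixed terms vanish and the $\mathtt{D}\mathtt{D}$ term inherits the correct coefficient. Since the identity is purely algebraic and pointwise, no regularity of $f_1,f_2$ beyond being defined on a neighborhood of $\bar\Omega$ (for the continuous version) or being elements of the appropriate $\mathbb{C}^{\mathfrak{M}}$, $\mathbb{C}^{\overline{\mathfrak{M}}}$ (for the discrete version) is needed.
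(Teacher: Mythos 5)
Your computation is correct: writing $\tau^{\pm}f=\hat{f}\pm\frac{h}{2}\mathtt{D}f$, expanding $\widehat{f_1f_2}=\frac{1}{2}\bigl[(\tau^+f_1)(\tau^+f_2)+(\tau^-f_1)(\tau^-f_2)\bigr]$ and observing the cancellation of the cross terms gives exactly the stated identity, and the same algebra carries over verbatim to the discrete operators on the (constant-step) primal and dual meshes. The paper itself gives no proof and simply refers to the appendix of [BHL10a], where the argument is this same elementary pointwise expansion, so your proposal fills the gap in the expected way.
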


\par Some of the following properties can be extended in such a manner to discrete functions. We shall not always write it explicitly.
\par Averaging a function twice gives the following formula.

\begin{Lemma}\label{lem:average double}Let the function f be continuously defined over $\mathbb{R}$. We then have
\begin{equation}A^2f:=\hat{\hat{f}}=f+\frac{{h}^2}{4}\mathtt{D}\mathtt{D}f .\nonumber
\end{equation}
\end{Lemma}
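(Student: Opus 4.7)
The plan is to observe that the operators $\tau^{\pm}$ are shifts of size $h/2$ and satisfy $\tau^+\tau^- = \tau^-\tau^+ = \mathrm{id}$, so every identity can be reduced to an algebraic statement about the pair of shifts. I would first expand
\[
\hat{\hat f}(x) = \tfrac{1}{4}\bigl(\tau^+ + \tau^-\bigr)^2 f(x) = \tfrac{1}{4}\bigl((\tau^+)^2 + 2\,\tau^+\tau^- + (\tau^-)^2\bigr) f(x),
\]
which, using $\tau^+\tau^- f = f$, becomes $\tfrac{1}{4}\bigl(f(x+h) + 2f(x) + f(x-h)\bigr)$.

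Next I would treat the right-hand side symbolically in the same way. Since $\mathtt{D} = h^{-1}(\tau^+ - \tau^-)$, one has
\[
\mathtt{D}\mathtt{D} f = \tfrac{1}{h^2}(\tau^+ - \tau^-)^2 f = \tfrac{1}{h^2}\bigl((\tau^+)^2 - 2\,\tau^+\tau^- + (\tau^-)^2\bigr) f,
\]
so $\tfrac{h^2}{4}\mathtt{D}\mathtt{D} f(x) = \tfrac{1}{4}\bigl(f(x+h) - 2f(x) + f(x-h)\bigr)$. Adding $f(x)$ to this reproduces the expansion obtained for $\hat{\hat f}(x)$, and the lemma follows.

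Since the statement is a pointwise identity between two linear combinations of three translates of $f$, there is no real obstacle beyond the bookkeeping. The only small thing worth highlighting is the polarization-like identity $(\tau^+ + \tau^-)^2 - (\tau^+ - \tau^-)^2 = 4\,\tau^+\tau^- = 4\cdot\mathrm{id}$, which is exactly what makes the formula work and which one might prefer to state once and use directly rather than expand both sides explicitly.
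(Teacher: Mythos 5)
Your computation is correct: the paper does not reproduce a proof of this lemma (it refers to the appendix of [BHL10a]), and your direct expansion of both sides into the combination $\tfrac{1}{4}\bigl(f(x+h)+2f(x)+f(x-h)\bigr)$, using $\tau^+\tau^-=\mathrm{id}$, is exactly the standard argument. Nothing is missing.
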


\par The following proposition covers discrete integrations by parts and related formula.
\begin{Proposition}\label{prop:IVP} Let $f\in \mathbb{C}^{\mathfrak{M}\cup \partial \mathfrak{M}}$ and $g\in \mathbb{C}^{\overline{{\mathfrak{M}}}}$. We have the following formulae:
\begin{eqnarray}
%&&\int_{\Omega}{f(\bar{D}g)}=-\int_{\Omega}{(Df)g}+f_{N+M+2}g_{N+M+\frac{3}{2}}-f_0g_{\frac{1}{2}},\nonumber\\
%&&\int_{\Omega}{f\bar{g}}=\int_{\Omega}{\tilde{f}g}-\frac{h}{2}f_{N+1}g_{N+\frac{1}{2}}-\frac{h}{2}f_0g_{\frac{1}{2}},\nonumber\\
% &&\int_{\Omeg_0a}{f(\bar{D}g)}=-\int_{\Omega_0}{(Df)g}+f_{n+1}g_{n+\frac{1}{2}}-f_0g_{\frac{1}{2}}+f_{n+m+2}g_{n+m+\frac{3}{2}}-f_{n+1}g_{n+\frac{3}{2}},\nonumber\\
% &&\int_{\Omega_0}{f\bar{g}}=\int_{\Omega_0}{\tilde{f}g}-\frac{h}{2}f_{n+1}g_{n+\frac{1}{2}}-\frac{h}{2}f_0g_{\frac{1}{2}}-\frac{h}{2}f_{n+m+2}g_{n+m+\frac{3}{2}}-\frac{h}{2}f_{n+1}g_{n+\frac{3}{2}}.\nonumber
&&\int_{\Omega_0}{f(\bar{D}g)}=-\int_{\Omega_0}{(Df)g}+f_{n+m+2}g_{n+m+\frac{3}{2}}-f_0g_{\frac{1}{2}},\nonumber\\
&&\int_{\Omega_0}{f\bar{g}}=\int_{\Omega_0}{\tilde{f}g}-\frac{h}{2}f_{n+m+2}g_{n+m+\frac{3}{2}}-\frac{h}{2}f_0g_{\frac{1}{2}}.\nonumber
\end{eqnarray}
\end{Proposition}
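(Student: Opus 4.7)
Both identities are discrete analogues of integration by parts on $\Omega_0$, and the plan is to prove them by pure summation-by-parts (Abel's trick) applied to the Riemann sum expressions introduced in Section~\ref{subsec:discrete setting}. I would handle the two formulas separately but along the same lines.

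For the first identity, I would start by using the definition of $\bar D$ to absorb the weight $h_i$: since $h_i(\bar Dg)_i=g_{i+\frac12}-g_{i-\frac12}$, the integral becomes
$$\int_{\Omega_0}f(\bar Dg)=\sum_{i=1}^{n+m+1}f_i g_{i+\frac12}-\sum_{i=1}^{n+m+1}f_i g_{i-\frac12}.$$
I would then perform the shift $i\mapsto i+1$ in the second sum, which turns it into $\sum_{i=0}^{n+m}f_{i+1}g_{i+\frac12}$, and pair this with the first sum on the common index range $i=1,\dots,n+m$. Each paired interior term equals $(f_i-f_{i+1})g_{i+\frac12}=-h(Df)_{i+\frac12}g_{i+\frac12}$, leaving the two unpaired boundary contributions $f_{n+m+1}g_{n+m+\frac32}$ and $-f_1g_{\frac12}$. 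The last step is to convert these using the boundary values $f_0$ and $f_{n+m+2}$ via
$$f_1g_{\tfrac12}=f_0g_{\tfrac12}+h(Df)_{\tfrac12}g_{\tfrac12},\qquad f_{n+m+1}g_{n+m+\tfrac32}=f_{n+m+2}g_{n+m+\tfrac32}-h(Df)_{n+m+\tfrac32}g_{n+m+\tfrac32}.$$
The extra $(Df)g$ contributions precisely extend the interior sum to the full dual mesh $i=0,\dots,n+m+1$, reproducing $-\int_{\Omega_0}(Df)g$, while $f_0g_{\frac12}$ and $f_{n+m+2}g_{n+m+\frac32}$ appear with the signs claimed in the statement.

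For the second identity the plan is even simpler: unfold the averages directly, $\bar g_i=\tfrac12(g_{i+\frac12}+g_{i-\frac12})$ and $\tilde f_{i+\frac12}=\tfrac12(f_{i+1}+f_i)$, so that both sides become sums of two simpler sums. Applying the same shift $i\mapsto i+1$ to one of the four sums cancels all interior contributions in pairs, and only the two boundary leftovers $-\tfrac{h}{2}f_0g_{\frac12}$ and $-\tfrac{h}{2}f_{n+m+2}g_{n+m+\frac32}$ remain; no use of difference operators is required here.

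The whole argument is elementary bookkeeping, so there is no real obstacle: the only point deserving care is distinguishing primal indices $1,\dots,n+m+1$ from dual indices $\tfrac12,\dots,n+m+\tfrac32$, and remembering that the values $f_0,f_{n+m+2}$ exist precisely because $f\in\mathbb{C}^{\mathfrak{M}\cup\partial\mathfrak{M}}$. This is presumably why the proposition is stated in the paper without proof, together with a pointer to the analogous statements in \cite{BHL10a}.
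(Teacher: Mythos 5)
Your proof is correct and is exactly the standard Abel summation-by-parts computation; the paper itself gives no proof, deferring to the appendix of \cite{BHL10a}, where the same elementary bookkeeping (rewriting the discrete integrals as sums, shifting the index by one, and converting the leftover terms $f_1g_{\frac12}$, $f_{n+m+1}g_{n+m+\frac32}$ into $f_0g_{\frac12}$, $f_{n+m+2}g_{n+m+\frac32}$ via $f_1=f_0+h(Df)_{\frac12}$ and $f_{n+m+1}=f_{n+m+2}-h(Df)_{n+m+\frac32}$) is carried out. Both identities check out as you describe, so there is nothing to add.
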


\begin{Lemma} \label{lem:traslate function}Let f be a smooth function defined in a neighborhood of $\bar{\Omega}$. We have
\begin{eqnarray}&&\tau^\pm f=f\pm \frac{{h}}{2}\int_0^T{\partial_x f(.\pm\sigma {h}/2)d\sigma},\nonumber\\
&& A^j f=f+C_j{ h}^2\int_{-1}^1{(1-\left|{\sigma}\right|)\partial^2_x f(.+l_j\sigma {h})d\sigma}, \nonumber\\
&& D^j f=\partial^j_x f+C'_j {h^2}\int_{-1}^1{(1-\left|{\sigma}\right|)^{j+1}\partial_x^{j+2}f(.+l_j\sigma {h})d\sigma},\ j=1,2,\ l_1=\frac{1}{2}, \ l_2=1.\nonumber
\end{eqnarray}
\end{Lemma}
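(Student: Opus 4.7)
Each of the three formulae is a Taylor expansion with integral remainder, applied to the smooth function $f$ around the base point, followed by a symmetrization step and a change of variables to rescale the integration interval to $[-1,1]$ (or $[0,1]$ in the first identity). The tools are entirely classical, so the only real work is bookkeeping constants and correctly producing the powers $(1-|\sigma|)^{j+1}$.

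For the first identity, I would start from the fundamental theorem of calculus, writing $f(x\pm h/2)=f(x)\pm\int_0^{h/2}\partial_x f(x\pm s)\,ds$, then substitute $s=\sigma h/2$ so that $ds=(h/2)\,d\sigma$ and $\sigma$ ranges over $[0,1]$. This yields the stated expression directly. (The upper limit $T$ printed in the paper is evidently a typo for $1$.)

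For the second identity I would apply Taylor's formula with first-order integral remainder to $f(x\pm h/2)$, obtaining
\[
f(x\pm h/2)=f(x)\pm\tfrac{h}{2}\partial_x f(x)+\int_0^{h/2}(h/2-s)\,\partial_x^2 f(x\pm s)\,ds.
\]
Averaging the $+$ and $-$ versions cancels the first-order term, and after the substitution $s=h|\sigma|/2$ the two half-line integrals combine into a single integral over $[-1,1]$ with weight $(1-|\sigma|)$. This gives the formula for $j=1$, with an explicit constant $C_1$. For $j=2$, I would either apply the same argument to $A$ applied to $Af$, or — simpler — write $A^2f(x)=(f(x+h)+2f(x)+f(x-h))/4$ (using Lemma~\ref{lem:average double}) and expand $f(x\pm h)$ via the second-order Taylor remainder, yielding $l_2=1$.

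The third identity is analogous but requires pushing the Taylor expansion one order further so that the leading terms produce exactly $\partial_x^j f$. For $Df$, a second-order Taylor expansion of $f(x\pm h/2)$ gives a cubic remainder; subtracting and dividing by $h$ cancels the even-order terms, leaves $\partial_x f(x)$, and after $s=h\sigma/2$ produces the kernel $(1-|\sigma|)^2$ as required ($j=1$, $l_1=1/2$). For $D^2 f=(f(x+h)-2f(x)+f(x-h))/h^2$, a third-order Taylor expansion kills the first and third derivatives by symmetry, $\partial_x^2 f$ emerges as the leading term, and the substitution $s=h\sigma$ turns the remainder into an integral with kernel $(1-|\sigma|)^3$, giving $l_2=1$. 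The main — and really only — point of care is ensuring that the correct order of the integral remainder is used (one order higher than the order of the derivative one wants to exhibit), so that the kernel has the asserted power $(1-|\sigma|)^{j+1}$; everything else is just arithmetic that fixes the constants $C_j$ and $C_j'$.
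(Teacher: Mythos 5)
Your plan is correct and is exactly the standard argument: Taylor's formula with integral remainder of the appropriate order, symmetrization to cancel the odd (resp.\ even) terms, and the change of variables $s=l_j\sigma h$ to produce the kernels $(1-|\sigma|)$ and $(1-|\sigma|)^{j+1}$ on $[-1,1]$; you are also right that the upper limit $T$ in the first identity is a typo for $1$. The paper gives no proof of its own and simply refers to the appendix of \cite{BHL10a}, where the same computation is carried out, so there is nothing to add.
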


\subsection{Calculus results related to the weight functions} We now present some technical lemmata related to discrete operators performed on the Carleman weight functions that is of the form $e^{s\varphi}$, $\varphi=e^{\lambda\psi}-e^{\lambda K}$, where $\psi$ satisfies the properties listed in Section~\ref{sec: weight functions} in the domain $\Omega_0$. For concision, we set $r(t,x)=e^{s(t)\varphi(x)}$ and $\rho=r^{-1}$, with $s(t)=\tau\theta(t)$. From Section~\ref{sec: weight functions}, we have $\psi_{|_{\Omega_{01}}}=\psi_{1|_{\Omega_{01}}}$, $\psi_{|_{\Omega_{01}}}=\psi_{2|_{\Omega_{01}}}$ where $\psi_{i}\in \mathbb{C}^2(\overline{\tilde{\Omega}}_{0i})$. Then $\rho=e^{-s\varphi}$ can be replaced by
\begin{eqnarray}&&\rho_{1}=e^{-s\varphi_{1}}\quad ~\textmd{with}~\quad \varphi_{1}=e^{\lambda\psi_{1}}-e^{\lambda K}\quad ~\textmd{in domain}~\quad \Omega_{01}\nonumber\\
&&\rho_{1}=e^{-s\varphi_{2}}\quad ~\textmd{with}~\quad \varphi_{2}=e^{\lambda\psi_{2}}-e^{\lambda K}\quad ~\textmd{in domain }~\quad\Omega_{02}\nonumber
\end{eqnarray}
And $r=\rho^{-1}$ is also replaced by
\begin{eqnarray}&&r_{1}=\rho^{-1}_{1}~\textmd{in domain}~\quad \Omega_{01}\nonumber\\
&&r_{2}=\rho^{-1}_{2}~\textmd{in domain}~\quad \Omega_{02}\nonumber
\end{eqnarray} 
 The positive parameters $\tau$ and $h$ will be large and small respectively and we are particularly interested in the dependence on $\tau, h$ and $\lambda$ in the following basic estimates in each domain $\Omega_{01}$, $\Omega_{02}$.
\par We assume $\tau\ge 1$ and $\lambda\ge 1$.

\begin{Lemma} \label{lem:derivative wrt x}Let $\alpha,\ \beta\in \mathbb{N}$, i=1,2. We have
\begin{eqnarray}\partial^{\beta}_x(r_{i}\partial^{\alpha}_x\rho_{i})&=&{\alpha}^{\beta}(-s\phi_{i})^{\alpha}\lambda^{\alpha+\beta}(\bigtriangledown\psi_{i})^{\alpha+\beta}\nonumber\\
&+&{\alpha}{\beta}(s\phi_{i})^{\alpha}\lambda^{\alpha+\beta-1}\mathscr{O}_{\lambda}(1)+s^{\alpha-1}{\alpha}({\alpha}-1)\mathscr{O}_{\lambda}(1)=\mathscr{O}_{\lambda}(s^{\alpha}).\nonumber
\end{eqnarray}
\par Let $\sigma\in [-1,1]$, we have 
\begin{equation}\partial^{\beta}_x\big(r_{i}(t,.)(\partial^{\alpha}_x\rho_{i})(t,.+\sigma h)\big)=\mathscr{O}_{\lambda}(s^{\alpha}(1+(sh)^{\beta}))e^{\mathscr{O}_{\lambda}(sh)}.\nonumber
\end{equation}
\par Provided $0<\tau h(max_{[0,T]}\theta)\le \mathfrak{K}$ we have $\partial^{\beta}_x\big(r_{i}(t,.)(\partial^{\alpha}\rho_{i})(t,.+\sigma h)\big)=\mathcal{O_{\lambda, \mathfrak{K}}}(s^{\left|{\alpha}\right|})$. The same expressions hold with r and $\rho$ interchanged and with s changed into -s.
\end{Lemma}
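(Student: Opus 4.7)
The plan is to reduce everything to an explicit computation by induction on $\alpha$, exploiting the exponential structure $\rho_i = e^{-s\varphi_i}$ with $\varphi_i = e^{\lambda\psi_i}-e^{\lambda K}$. Since $\partial_x\varphi_i = \lambda\phi_i\psi_i'$, a direct induction (or the Faà di Bruno formula) gives $\partial_x^\alpha\rho_i = \bigl((-s\lambda\phi_i\psi_i')^\alpha + \text{(lower order)}\bigr)\rho_i$, where the lower-order terms have strictly smaller total degree in $s$ or strictly smaller total degree in $\lambda$.

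For the first identity, multiplying by $r_i=\rho_i^{-1}$ cancels the exponential and leaves a polynomial in $s$, $\lambda$, $\phi_i$ and the derivatives of $\psi_i$. Applying $\partial_x^\beta$ to this polynomial cannot raise the power of $s$: each differentiation either hits a factor $\phi_i$ (producing an extra factor $\lambda\psi_i'$, preserving the maximal $\lambda$-order) or hits some $\psi_i^{(k)}$ (which loses one power of $\lambda$). Tracking the combinatorics, the only contribution attaining both the maximal $s$-order $s^\alpha$ and the maximal $\lambda$-order $\lambda^{\alpha+\beta}$ is $\alpha^\beta(-s\phi_i)^\alpha\lambda^{\alpha+\beta}(\psi_i')^{\alpha+\beta}$; the next $\lambda$-stratum contributes the $\alpha\beta(s\phi_i)^\alpha\lambda^{\alpha+\beta-1}\mathscr{O}_\lambda(1)$ term (the prefactor $\alpha\beta$ ensures the correction vanishes in the degenerate cases $\alpha=0$ or $\beta=0$); and a correction at $s$-order $s^{\alpha-1}$ appears only when the product rule lands two derivatives on distinct copies of the factor $(-s\lambda\phi_i\psi_i')$, giving the combinatorial prefactor $\alpha(\alpha-1)$. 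The global bound $\mathscr{O}_\lambda(s^\alpha)$ then follows from $|\phi_i|,|\psi_i^{(k)}|\le C_\lambda$ on the bounded domain $\overline{\Omega}_{0i}$.

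For the shifted expression $\partial_x^\beta\bigl(r_i(t,x)(\partial_x^\alpha\rho_i)(t,x+\sigma h)\bigr)$, the exponentials no longer cancel; they combine into the prefactor $e^{s(\varphi_i(x)-\varphi_i(x+\sigma h))}$. Applying Lemma \ref{lem:traslate function} to $\varphi_i$ yields $\varphi_i(x)-\varphi_i(x+\sigma h)=\mathscr{O}_\lambda(h)$, so this prefactor is $e^{\mathscr{O}_\lambda(sh)}$. Each $\partial_x$ that lands on it produces an extra factor $s\,\partial_x(\varphi_i(x)-\varphi_i(x+\sigma h))=\mathscr{O}_\lambda(sh)$, while each $\partial_x$ that lands on the polynomial part behaves exactly as in the unshifted computation above. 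Summing over the Leibniz distribution of the $\beta$ derivatives gives $\mathscr{O}_\lambda\bigl(s^\alpha(1+(sh)^\beta)\bigr)e^{\mathscr{O}_\lambda(sh)}$.

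The last assertion is then immediate: the constraint $\tau h\max_{[0,T]}\theta\le\mathfrak{K}$ forces $sh\le\mathfrak{K}$ uniformly in $t$, so both $(1+(sh)^\beta)$ and $e^{\mathscr{O}_\lambda(sh)}$ are controlled by constants depending only on $\lambda$ and $\mathfrak{K}$, yielding $\mathcal{O}_{\lambda,\mathfrak{K}}(s^{|\alpha|})$. The interchange $r\leftrightarrow\rho$, $s\to -s$ proceeds verbatim since $\rho_i=e^{-s\varphi_i}$ and $r_i=e^{s\varphi_i}$ play perfectly symmetric roles in every step. The main subtlety is really the first step: one must keep track of the three leading strata rather than giving the crude $\mathscr{O}_\lambda(s^\alpha)$ bound, because the explicit principal symbol and the precise powers of $\lambda$ in the corrections are what will be needed later to identify the dominant terms in the Carleman commutator computation of Section \ref{sec: Carleman estimate uniform mesh}.
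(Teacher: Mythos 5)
Your argument is correct. Note that the paper does not actually prove this lemma: it defers entirely to \cite[proof of Lemma 3.7]{BHL10a} for the time-independent case and to \cite[proof of Lemma 2.8]{BL12} for the time dependence, so there is no in-paper proof to diverge from. What you write --- the induction giving $\partial_x^{\alpha}\rho_i=\big((-s\lambda\phi_i\psi_i')^{\alpha}+\mathscr{O}_{\lambda}(s^{\alpha-1})\big)\rho_i$, the cancellation of exponentials in $r_i\partial_x^{\alpha}\rho_i$, the Leibniz bookkeeping of the $\lambda$-strata under $\partial_x^{\beta}$, and for the shifted version the factor $e^{s(\varphi_i(\cdot)-\varphi_i(\cdot+\sigma h))}=e^{\mathscr{O}_{\lambda}(sh)}$ each of whose $x$-derivatives costs $\mathscr{O}_{\lambda}(sh)$ --- is exactly the computation carried out in those references, and your tracking of the combinatorial prefactors $\alpha^{\beta}$, $\alpha\beta$ and $\alpha(\alpha-1)$ (vanishing in the degenerate cases) is consistent with the stated formula. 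The only point worth tightening is the intermediate bound $\sum_{k=0}^{\beta}(sh)^k\le C_{\beta}\big(1+(sh)^{\beta}\big)$ when several of the $\beta$ derivatives fall on the exponential prefactor, but this is immediate for fixed $\beta$, so the proposal stands as a complete proof.
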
A proof is given in \cite[proof of Lemma 3.7]{BHL10a} in the time independent case. Additionally, we provide a result below to the time-dependent case whose proof is refered to \cite[proof of Lemma 2.8]{BL12}. Note that the condition $0<\tau h(max_{[0,T]}\theta)\le \mathfrak{K}$ implies that $s(t)h\le \mathfrak{K}$ for all $t\in[0,T]$.

\begin{Lemma}\label{lem:derivative wrt t}Let $\alpha\in \mathbb{\mathbb{N}}$, i=1,2. We have
\begin{equation}\partial_t(r_{i}\partial^{\alpha}_x\rho_{i})=s^{\alpha}T\theta \mathcal{O}_{\lambda}(1).\nonumber
\end{equation}
\end{Lemma}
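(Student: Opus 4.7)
The plan is to exploit the fact that the product $r_i \partial_x^\alpha \rho_i$ depends on $t$ only through $s = \tau\theta(t)$: the exponentials $e^{s\varphi_i}$ and $e^{-s\varphi_i}$ cancel out after the spatial differentiations, leaving a polynomial expression in $s$ whose coefficients depend on $x$ (through $\psi_i$ and its spatial derivatives) and on $\lambda$, but not on $t$ directly. This reduces the time derivative to a one-variable chain-rule computation.

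More precisely, I would first apply Lemma \ref{lem:derivative wrt x} with $\beta = 0$ to obtain
\begin{equation*}
r_i \partial_x^\alpha \rho_i \;=\; (-s\phi_i)^\alpha \lambda^\alpha (\nabla \psi_i)^\alpha + s^{\alpha-1}\alpha(\alpha-1)\mathcal{O}_\lambda(1) \;=\; \mathcal{O}_\lambda(s^\alpha),
\end{equation*}
which is a polynomial in $s$ of degree $\alpha$ with $t$-independent coefficients. Consequently,
\begin{equation*}
\partial_t \bigl(r_i \partial_x^\alpha \rho_i\bigr) \;=\; (\partial_t s)\,\partial_s\bigl(r_i \partial_x^\alpha \rho_i\bigr),
\end{equation*}
and since the polynomial has degree $\alpha$, its derivative with respect to $s$ is of order $\mathcal{O}_\lambda(s^{\alpha-1})$.

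Next I would use \eqref{eq:derivative of theta} to write $\partial_t s = \tau (2t - T) \theta^2$. Since $|2t - T| \le T$ on $[0,T]$ and $\tau\theta = s$, we get the bound $|\partial_t s| \le T \theta\, s$. Substituting back,
\begin{equation*}
\partial_t \bigl(r_i \partial_x^\alpha \rho_i\bigr) \;=\; \mathcal{O}(T\theta\, s) \cdot \mathcal{O}_\lambda(s^{\alpha-1}) \;=\; s^\alpha\, T\theta\, \mathcal{O}_\lambda(1),
\end{equation*}
as required. The case $\alpha = 0$ is trivial since $r_i \rho_i \equiv 1$.

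There is no real obstacle here: the only point worth emphasizing is the cancellation that occurs if one naively applies the Leibniz rule to $\partial_t(r_i \partial_x^\alpha \rho_i)$, writing $(\partial_t r_i)\partial_x^\alpha \rho_i + r_i\,\partial_x^\alpha \partial_t \rho_i$. Each summand individually is only $\mathcal{O}_\lambda(\partial_t s \cdot s^\alpha) = T\theta\,\mathcal{O}_\lambda(s^{\alpha+1})$, but the leading $s^{\alpha+1}$ contributions cancel, yielding the sharper $\mathcal{O}_\lambda(s^\alpha)$ bound. Viewing $r_i \partial_x^\alpha \rho_i$ as a polynomial in $s$ (as above) bypasses this cancellation issue entirely.
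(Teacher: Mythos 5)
Your argument is correct. The paper itself gives no proof of this lemma (it is deferred to \cite[proof of Lemma 2.8]{BL12}), but your route is the standard one behind that reference: $r_i\partial_x^\alpha\rho_i$ is, by Lemma~\ref{lem:derivative wrt x} with $\beta=0$, a $t$-independent-coefficient polynomial of degree $\alpha$ in $s$, so the chain rule together with $|\partial_t s|=\tau|2t-T|\theta^2\le T\theta s$ (from~\eqref{eq:derivative of theta}) gives $s^{\alpha}T\theta\,\mathcal{O}_\lambda(1)$; your closing remark about the cancellation hidden in the naive Leibniz splitting is accurate and worth keeping.
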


\par With Leibniz formula we have the following estimates

\begin{Corollary}\label{cor:multi derivative} Let $\alpha,\beta, \delta\in \mathbb{N}$, i=1,2. We have
\begin{eqnarray}\partial^{\delta}_x(r_{i}^2(\partial^{\alpha}_x\rho_{i})\partial^{\beta}_x\rho_{i})&=&(\alpha+\beta)^{\delta}(-s\phi_{i})^{\alpha+\beta}\lambda^{\alpha+\beta+\delta}(\bigtriangledown\psi_{i})^{\alpha+\beta+\delta}\nonumber\\
&+&{\delta}(\alpha+\beta)(s\phi_{i})^{\alpha+\beta}\lambda^{\alpha+\beta+\delta-1}\mathcal{O}(1)\nonumber\\
&+&s^{\alpha+\beta-1}({\alpha}(\alpha-1)+{\beta}(\beta-1))\mathcal{O}_{\lambda}(1)=\mathcal{O}_{\lambda}(s^{\alpha+\beta}).\nonumber
\end{eqnarray}
\end{Corollary}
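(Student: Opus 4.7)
The plan is to derive Corollary \ref{cor:multi derivative} directly from Lemma \ref{lem:derivative wrt x} by writing the expression as a product of two factors to which the lemma applies, and then unfolding a single Leibniz expansion. Namely, I would first use
\begin{equation*}
r_i^2 (\partial_x^{\alpha}\rho_i)(\partial_x^{\beta}\rho_i) = \bigl(r_i \partial_x^{\alpha}\rho_i\bigr)\bigl(r_i \partial_x^{\beta}\rho_i\bigr),
\end{equation*}
which splits the exponentially decaying and exponentially growing factors into matching pairs so that no spurious $e^{s\varphi_i}$ appears. Applying the classical Leibniz formula then yields
\begin{equation*}
\partial_x^{\delta}\bigl(r_i^2 (\partial_x^{\alpha}\rho_i)(\partial_x^{\beta}\rho_i)\bigr)
= \sum_{k=0}^{\delta}\binom{\delta}{k}\,\partial_x^{k}\bigl(r_i\partial_x^{\alpha}\rho_i\bigr)\,\partial_x^{\delta-k}\bigl(r_i\partial_x^{\beta}\rho_i\bigr).
\end{equation*}

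Next, I would apply Lemma \ref{lem:derivative wrt x} to each of the two factors, with the roles of $(\alpha,\beta)$ in the lemma being respectively $(\alpha,k)$ and $(\beta,\delta-k)$. The leading $\lambda^{\alpha+\beta+\delta}$ contribution comes from pairing the leading term $\alpha^{k}(-s\phi_i)^{\alpha}\lambda^{\alpha+k}(\nabla\psi_i)^{\alpha+k}$ of the first factor with the corresponding leading term $\beta^{\delta-k}(-s\phi_i)^{\beta}\lambda^{\beta+\delta-k}(\nabla\psi_i)^{\beta+\delta-k}$ of the second. Summing over $k$ and invoking the binomial identity
\begin{equation*}
\sum_{k=0}^{\delta}\binom{\delta}{k}\alpha^{k}\beta^{\delta-k}=(\alpha+\beta)^{\delta}
\end{equation*}
produces exactly the announced leading term $(\alpha+\beta)^{\delta}(-s\phi_i)^{\alpha+\beta}\lambda^{\alpha+\beta+\delta}(\nabla\psi_i)^{\alpha+\beta+\delta}$.

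The remaining contributions are just a bookkeeping exercise. Cross-pairing a leading term of one factor with a $\lambda^{\alpha+\beta+\delta-1}\mathscr{O}_{\lambda}(1)$ subleading term of the other yields, after collecting, an aggregate factor of $\delta(\alpha+\beta)(s\phi_i)^{\alpha+\beta}\lambda^{\alpha+\beta+\delta-1}\mathcal{O}(1)$, while the $s^{\alpha-1}\alpha(\alpha-1)\mathscr{O}_{\lambda}(1)$ and $s^{\beta-1}\beta(\beta-1)\mathscr{O}_{\lambda}(1)$ remainders in Lemma \ref{lem:derivative wrt x} combine into the third term $s^{\alpha+\beta-1}(\alpha(\alpha-1)+\beta(\beta-1))\mathcal{O}_{\lambda}(1)$. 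The global bound $\mathcal{O}_{\lambda}(s^{\alpha+\beta})$ then follows because $\lambda$ is fixed and $s \ge \tau \ge 1$, so all three pieces are dominated by $s^{\alpha+\beta}$.

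I do not anticipate any real obstacle here: the result is essentially a Leibniz expansion combined with the binomial identity, and all the estimates are already packaged in Lemma \ref{lem:derivative wrt x}. The only mildly delicate point is making sure that when applying the lemma to both factors simultaneously no mixed remainder is overlooked; this is handled by simply tracking, for each term in the Leibniz sum, the order in $\lambda$ and in $s$ contributed by each factor according to the three-term decomposition of Lemma \ref{lem:derivative wrt x}, and discarding everything of order $\lambda^{\alpha+\beta+\delta-2}$ or lower into the $\mathcal{O}_{\lambda}$ remainders.
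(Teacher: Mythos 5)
Your proposal is correct and follows exactly the route the paper intends: the paper states the corollary with the single justification ``with Leibniz formula,'' i.e.\ factor $r_i^2(\partial_x^{\alpha}\rho_i)(\partial_x^{\beta}\rho_i)=(r_i\partial_x^{\alpha}\rho_i)(r_i\partial_x^{\beta}\rho_i)$, expand $\partial_x^{\delta}$ by Leibniz, and invoke Lemma~\ref{lem:derivative wrt x} on each factor, with the binomial identity $\sum_k\binom{\delta}{k}\alpha^k\beta^{\delta-k}=(\alpha+\beta)^{\delta}$ producing the leading term. Your bookkeeping of the cross terms and remainders matches the stated expansion, so there is nothing to add.
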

\par The proofs of the following properties can be found in Appendix A of \cite{BHL10a}.

\begin{Proposition}\label{prop:property 1} Let $\alpha\in \mathbb{N}$, i=1,2. Provided $0<\tau h(max_{[0,T]}\theta)\le \mathfrak{K}$, we have
\begin{eqnarray}&&r_{i}\tau^{\pm}\partial^{\alpha}_x\rho_{i}=r_{i}\partial^{\alpha}_x\rho_{i}+s^{\alpha}\mathcal{O_{\lambda, \mathfrak{K}}}(sh)=s^{\alpha}\mathcal{O_{\lambda, \mathfrak{K}}}(1),\nonumber\\
&&r_{i}A^k\partial^{\alpha}_x\rho_{i}=r_{i}\partial^{\alpha}_x\rho_{i}+s^{\alpha}\mathcal{O_{\lambda, \mathfrak{K}}}(sh)^2=s^{\alpha}\mathcal{O_{\lambda, \mathfrak{K}}}(1),\ k=0,1,2, \nonumber\\
&&r_{i}A^kD\rho_{i}=r_{i}\partial_x \rho_{i}+s \mathcal{O_{\lambda, \mathfrak{K}}}(sh)^2=s\mathcal{O_{\lambda, \mathfrak{K}}}(1), \ k=0,1,\nonumber\\
&&r_{i}D^2\rho_{i}=r_{i}\partial^2_x\rho_{i}+s^2\mathcal{O_{\lambda, \mathfrak{K}}}(sh)^2=s^2\mathcal{O_{\lambda, \mathfrak{K}}}(1). \nonumber
\end{eqnarray}
The same estimates hold with $\rho_i$ and $r_i$ interchanged.
\end{Proposition}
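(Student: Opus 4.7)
The plan is to combine the Taylor-type integral identities of Lemma~\ref{lem:traslate function} (and Lemma~\ref{lem:average double}) for the continuous operators $\tau^\pm$, $A$, $D$ with the shifted pointwise bound $r_i(t,\cdot)(\partial^\gamma_x\rho_i)(t,\cdot+\sigma h) = \mathcal{O}_{\lambda,\mathfrak{K}}(s^\gamma)$ furnished by Lemma~\ref{lem:derivative wrt x}. The structure is the same in all four lines: express the discrete operator as ``continuous derivative $+$ $h$-remainder,'' multiply by $r_i$, and bound the remainder by pulling the Carleman-weight estimate through the integral.

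First, for the shift $\tau^\pm$, I would apply Lemma~\ref{lem:traslate function} to $f=\partial^\alpha_x\rho_i$, giving
\[
\tau^\pm \partial^\alpha_x\rho_i = \partial^\alpha_x\rho_i \pm \tfrac{h}{2}\int_0^1 \partial^{\alpha+1}_x \rho_i(\cdot \pm \sigma h/2)\, d\sigma.
\]
Multiplying by $r_i(t,x)$, the leading term reproduces $r_i\partial^\alpha_x\rho_i$, while the remainder is $h$ times $r_i\,\partial^{\alpha+1}_x\rho_i(\cdot\pm\sigma h/2)$, which by Lemma~\ref{lem:derivative wrt x} is $\mathcal{O}_{\lambda,\mathfrak{K}}(s^{\alpha+1})$. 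Under the hypothesis $\tau h\max_{[0,T]}\theta\le\mathfrak{K}$ one has $sh\le\mathfrak{K}$, so the remainder is $s^\alpha\mathcal{O}_{\lambda,\mathfrak{K}}(sh)$, which in turn is $s^\alpha\mathcal{O}_{\lambda,\mathfrak{K}}(1)$.

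For the averaging operators the recipe is identical: Lemma~\ref{lem:traslate function} gives $Af = f + C h^2\!\int_{-1}^1(1-|\sigma|)\partial^2_x f(\cdot+\sigma h/2)\,d\sigma$, so with $f=\partial^\alpha_x\rho_i$ the $h^2$-remainder is controlled by $h^2\cdot \mathcal{O}_{\lambda,\mathfrak{K}}(s^{\alpha+2})=s^\alpha\mathcal{O}_{\lambda,\mathfrak{K}}((sh)^2)$, which settles the $k=1$ case. Iterating once (or invoking Lemma~\ref{lem:average double} directly) handles $k=2$, since each new $A$ only contributes an extra multiplicative factor of the form $1+\mathcal{O}_{\lambda,\mathfrak{K}}((sh)^2)$. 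The two $D$-estimates then follow by the same scheme applied to the Lemma~\ref{lem:traslate function} expansion
\[
D^j\rho_i = \partial^j_x\rho_i + C'_j\,h^2\!\int_{-1}^1(1-|\sigma|)^{j+1}\partial^{j+2}_x\rho_i(\cdot+l_j\sigma h)\,d\sigma,\qquad j=1,2,
\]
and, in the $A^kD$ case, by composing with the averaging expansion. The interchange of $r_i$ and $\rho_i$ is immediate by symmetry, replacing $s$ by $-s$ as indicated in Lemma~\ref{lem:derivative wrt x}.

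The only subtle point, and the reason the hypothesis $\tau h\max_{[0,T]}\theta\le\mathfrak{K}$ is essential, is that the Taylor remainders evaluate $\rho_i$ at the shifted argument $\cdot+\sigma h$, so one cannot simply use the unshifted estimate $r_i\partial^\gamma_x\rho_i=\mathcal{O}_\lambda(s^\gamma)$; the shifted form of Lemma~\ref{lem:derivative wrt x} must be invoked, and the resulting exponential factor $e^{\mathcal{O}_\lambda(sh)}$ has to be bounded uniformly in $h$, which is precisely what the hypothesis guarantees. Beyond this bookkeeping, the proof is mechanical, with no combinatorial or analytic obstacle.
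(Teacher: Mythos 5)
Your proposal is correct and follows essentially the same route as the paper, which defers this proposition to Appendix A of \cite{BHL10a}: there, exactly as you describe, each discrete operator is expanded via the Taylor-remainder identities of Lemma~\ref{lem:traslate function} (and Lemma~\ref{lem:average double}), the weight $r_i$ is multiplied through, and the shifted estimate $r_i(t,\cdot)(\partial^{\gamma}_x\rho_i)(t,\cdot+\sigma h)=\mathcal{O}_{\lambda,\mathfrak{K}}(s^{\gamma})$ from Lemma~\ref{lem:derivative wrt x} absorbs the remainders under the hypothesis $\tau h(\max_{[0,T]}\theta)\le\mathfrak{K}$. You also correctly isolate the one genuinely delicate point, namely that the shifted form of the weight estimate (with its $e^{\mathcal{O}_{\lambda}(sh)}$ factor) is what requires the smallness hypothesis on $sh$.
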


\begin{Lemma}\label{lem:property 2}Let $\alpha, \beta\in \mathbb{N}$ and $k=1,2; j=1,2; i=1,2$. Provided $0<\tau h(\max_{[0,T]}\theta)\le \mathfrak{K}$, we have
\begin{eqnarray}&&D^{k}(\partial^{\beta}_x(r_{i}\partial^{\alpha}_x\rho_{i}))=\partial^{k+\beta}_x(r_{i}\partial^{\alpha}_x\rho_{i})+h^2\mathcal{O_{\lambda, \mathfrak{K}}}(s^{\alpha}), \nonumber\\
&&A^j\partial^{\beta}_x(r_{i}\partial^{\alpha}_x\rho_{i})=\partial^{\beta}(r_{i}\partial^{\alpha}_x\rho_{i})+h^2\mathcal{O_{\lambda, \mathfrak{K}}}(s^{\alpha}).\nonumber
\end{eqnarray}
\par Let $\sigma \in[-1,1]$, we have $D^k\partial^{\beta}(r_{i}(t,.)\partial^{\alpha}\rho_{i}(t,.+\sigma h))=\mathcal{O_{\lambda, \mathfrak{K}}}(s^{\left|{\alpha}\right|})$. The same estimates hold with $r_i$ and $\rho_i$ interchanged.
\end{Lemma}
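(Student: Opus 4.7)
The plan is to reduce everything to the Taylor expansions of $D$, $D^2$, $A$, $A^2$ supplied by Lemma~\ref{lem:traslate function}, and then invoke Lemma~\ref{lem:derivative wrt x} (on each side of the jump, where $r_i,\rho_i$ are smooth) to bound the remainders. Since the estimates are local in $x$ and do not cross $a'$, the piecewise regularity of $\psi$ causes no difficulty: we may carry out the whole argument on a neighborhood of $\overline{\Omega_{0i}}$ where $r_i,\rho_i\in C^\infty$.

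First I would treat the identity for $D^k$. Writing $F=\partial_x^\beta(r_i\partial_x^\alpha\rho_i)$, Lemma~\ref{lem:traslate function} gives, for $k=1,2$,
\[
D^k F=\partial_x^k F+C'_k\,h^2\!\int_{-1}^1(1-|\sigma|)^{k+1}\,\partial_x^{k+2}F(\,\cdot+l_k\sigma h)\,d\sigma,
\]
so that the remainder is $h^2$ times $\partial_x^{k+\beta+2}(r_i\partial_x^\alpha\rho_i)$ evaluated at shifted points. By the shifted-argument part of Lemma~\ref{lem:derivative wrt x},
\[
\partial_x^{k+\beta+2}(r_i(t,\cdot)\partial_x^\alpha\rho_i(t,\cdot+l_k\sigma h))=\mathcal{O}_\lambda\!\bigl(s^\alpha(1+(sh)^{k+\beta+2})\bigr)\,e^{\mathcal{O}_\lambda(sh)},
\]
which, under the assumption $0<\tau h(\max_{[0,T]}\theta)\le\mathfrak{K}$ (hence $sh\le\mathfrak{K}$), collapses to $\mathcal{O}_{\lambda,\mathfrak{K}}(s^\alpha)$. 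This yields the first estimate. The proof of the $A^j$ identity is identical: one substitutes the second formula of Lemma~\ref{lem:traslate function}, noting that only $\partial_x^{\beta+2}(r_i\partial_x^\alpha\rho_i)$ appears in the remainder, again controlled by Lemma~\ref{lem:derivative wrt x}.

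For the shifted statement $D^k\partial_x^\beta\bigl(r_i(t,\cdot)\partial_x^\alpha\rho_i(t,\cdot+\sigma h)\bigr)=\mathcal{O}_{\lambda,\mathfrak{K}}(s^{|\alpha|})$, I would expand $D^k$ again by Lemma~\ref{lem:traslate function}, observing that the resulting pointwise expression is a finite linear combination of $\partial_x^{k+\beta}\bigl(r_i(t,\cdot)\partial_x^\alpha\rho_i(t,\cdot+\sigma h)\bigr)$ evaluated at points shifted by a further $O(h)$. Each such term is, by the shifted-argument estimate of Lemma~\ref{lem:derivative wrt x} (now applied with the exponent $k+\beta$ on $\partial_x$ of the first factor and the composition of shifts absorbed into a single $\sigma'\in[-1,1]$), of size $\mathcal{O}_\lambda(s^\alpha(1+(sh)^{k+\beta}))e^{\mathcal{O}_\lambda(sh)}=\mathcal{O}_{\lambda,\mathfrak{K}}(s^\alpha)$.

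The only mildly subtle point — and the one I would write most carefully — is the bookkeeping of the shift: applying $D^k$ to a function of $x$ that already carries an internal shift by $\sigma h$ produces an integrand whose argument is $x+(l_k\sigma' +\sigma)h$. Since $|l_k\sigma'+\sigma|\le 2$, this can be rescaled to lie in $[-1,1]$ (after absorbing a harmless constant into $C'_k$), so Lemma~\ref{lem:derivative wrt x} still applies. With that remark the estimates follow, and the ``same with $r_i$ and $\rho_i$ interchanged'' clause is immediate from the symmetric form of Lemma~\ref{lem:derivative wrt x}.
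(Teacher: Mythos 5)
Your argument is correct and is essentially the proof the paper relies on (the paper defers to Appendix A of \cite{BHL10a} for this lemma): expand $D^k$ and $A^j$ through the integral Taylor remainders of Lemma~\ref{lem:traslate function} and control the remainders with Lemma~\ref{lem:derivative wrt x} on the enlarged neighborhoods $\tilde{\Omega}_{0i}$, where the hypothesis $sh\le\mathfrak{K}$ collapses the factors $\big(1+(sh)^{\gamma}\big)e^{\mathcal{O}_{\lambda}(sh)}$ into $\mathcal{O}_{\lambda,\mathfrak{K}}(1)$. One small remark: in the first two identities the remainder involves the product $r_i\partial_x^{\alpha}\rho_i$ shifted \emph{as a whole}, so the uniform unshifted bound $\partial_x^{\gamma}(r_i\partial_x^{\alpha}\rho_i)=\mathcal{O}_{\lambda}(s^{\alpha})$ evaluated at the translated point already suffices there (no mixed-shift estimate is needed), whereas your bookkeeping of the composed shift $|l_k\sigma'+\sigma|\le 2$ is exactly the point that matters for the last statement.
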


\begin{Lemma}\label{lem:property 3} Let $\alpha$, $\beta, \delta\in \mathbb{N}$  and $k=1,2; j=1,2; i=1,2$. Provided  $0<\tau h(max_{[0,T]}\theta)\le \mathfrak{K}$, we have
\begin{equation}A^j\partial^{\delta}_x(r_{i}^2(\partial^{\alpha}_x\rho_{i})\partial^{\beta}_x\rho_{i})=\partial^{\delta}_x(r_{i}^2(\partial^{\alpha}_x\rho_i)\partial^{\beta}_x\rho_{i})+h^2\mathcal{O_{\lambda, \mathfrak{K}}}(s^{\alpha+\beta})=\mathcal{O_{\lambda, \mathfrak{K}}}(s^{\alpha+\beta}),\nonumber
\end{equation}
\begin{eqnarray} D^k\partial^{\delta}_x(r_{i}^2(\partial^{\alpha}_x\rho_{i})\partial^{\beta}_x\rho_{i})&=&\partial^{k+\delta}_x(r_{i}^2(\partial^{\alpha+\beta}_x\rho_{i})+h^2\mathcal{O_{\lambda, \mathfrak{K}}}(s^{\alpha+\beta})=\mathcal{O_{\lambda, \mathfrak{K}}}(s^{\alpha+\beta}).\nonumber
\end{eqnarray}
\par Let $\sigma,\sigma' \in[-1,1]$. We have
\begin{eqnarray}&&A^j\partial^{\delta}(r_{i}(t,.)^2(\partial^{\alpha}\rho_{i}(t,.+\sigma h))\partial^{\beta}\rho_{i}(y,.+\sigma' h))=\mathcal{O_{\lambda, \mathfrak{K}}}(s^{\alpha+\beta}),\nonumber\\
&&D^k\partial^{\delta}(r_{i}(t,.)^2(\partial^{\alpha}\rho_{i}(t,.+\sigma h))\partial^{\beta}\rho_{i}(t,. +\sigma' h))=\mathcal{O_{\lambda, \mathfrak{K}}}(s^{\alpha+\beta}). \nonumber
\end{eqnarray}
The same estimates hold with $r_i$ and $\rho_i$ interchanged.
\end{Lemma}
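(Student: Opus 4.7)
The plan is to combine the Taylor-type expansions of $A^j$ and $D^k$ provided by Lemma \ref{lem:traslate function} with the pointwise derivative estimates of Corollary \ref{cor:multi derivative}. All three assertions follow by the same pattern: write $A^j$ or $D^k$ acting on a smooth function as the leading continuous operator plus an $h^2$-remainder which is an integral of a higher $x$-derivative evaluated at a shifted point; then check that this higher derivative is still $\mathcal{O}_{\lambda}(s^{\alpha+\beta})$, and finally absorb the $h^2$ factor using the standing assumption $sh\le \mathfrak{K}$.

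First, for the unshifted identities, I would set $f := \partial^{\delta}_x\bigl(r_i^2(\partial^{\alpha}_x\rho_i)\partial^{\beta}_x\rho_i\bigr)$ and apply Lemma \ref{lem:traslate function} to obtain
\[
A^j f = f + C_j h^2 \int_{-1}^{1}(1-|\sigma|)\,\partial_x^{2}f(\,\cdot + l_j \sigma h\,)\,d\sigma,\quad
D^k f = \partial_x^k f + C'_k h^2 \int_{-1}^{1}(1-|\sigma|)^{k+1}\,\partial_x^{k+2}f(\,\cdot + l_k\sigma h\,)\,d\sigma.
\]
The leading term is precisely the main contribution claimed. For the remainder, Corollary \ref{cor:multi derivative}, used with the enlarged derivation orders $\delta+2$ and $\delta+k+2$, ensures that the integrand is pointwise $\mathcal{O}_\lambda(s^{\alpha+\beta})$, and this remains true with the shift $l_j\sigma h$ since this only introduces an exponential factor $e^{\mathcal{O}_\lambda(sh)}$ which is bounded by a constant depending on $\lambda$ and $\mathfrak{K}$. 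The remainder is therefore $h^2\mathcal{O}_{\lambda,\mathfrak{K}}(s^{\alpha+\beta})$, and writing $h^2=s^{-2}(sh)^2\le s^{-2}\mathfrak{K}^2$ one absorbs it into $\mathcal{O}_{\lambda,\mathfrak{K}}(s^{\alpha+\beta})$, which settles the first two displayed estimates.

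For the shifted-argument variants, the inner quantity is $r_i(t,\cdot)^2 (\partial^{\alpha}_x\rho_i)(t,\cdot+\sigma h)(\partial^{\beta}_x\rho_i)(t,\cdot+\sigma' h)$. I would show that every $x$-derivative of this product is $\mathcal{O}_{\lambda,\mathfrak{K}}(s^{\alpha+\beta})$ by expanding through the Leibniz rule and applying the second estimate of Lemma \ref{lem:derivative wrt x}, which provides exactly the bound $\mathcal{O}_\lambda(s^{\alpha}(1+(sh)^{\beta}))e^{\mathcal{O}_\lambda(sh)}$ and hence, under $sh\le \mathfrak{K}$, reduces to $\mathcal{O}_{\lambda,\mathfrak{K}}(s^{\alpha})$ for the factor at $\cdot+\sigma h$ (and symmetrically for the other factor). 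Once this uniform pointwise control on all $x$-derivatives is available, the same Taylor expansion as in the unshifted case delivers the announced estimates for $A^j$ and $D^k$ applied to this shifted object.

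The main obstacle, which is really only bookkeeping rather than an analytic difficulty, is to make sure that each $x$-derivative falling on the shifted factors does not worsen the $s$-power, and that the accumulated exponentials $e^{\mathcal{O}_\lambda(sh)}$ remain uniformly bounded. Both are guaranteed by Lemma \ref{lem:derivative wrt x} together with $\tau h \max_{[0,T]}\theta \le \mathfrak{K}$, so no new ideas beyond those used in Proposition \ref{prop:property 1} and Lemma \ref{lem:property 2} are required; the proof is essentially a repetition of those arguments, one factor at a time.
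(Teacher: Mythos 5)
Your argument is correct: the paper itself gives no proof of this lemma but defers to Appendix A of \cite{BHL10a}, and your route --- expanding $A^j$ and $D^k$ via Lemma~\ref{lem:traslate function}, bounding the $h^2$-remainders with Corollary~\ref{cor:multi derivative} (resp.\ Lemma~\ref{lem:derivative wrt x} plus Leibniz for the shifted factors), and absorbing constants using $sh\le\mathfrak{K}$ --- is exactly the standard mechanism that reference uses and that the surrounding results (Proposition~\ref{prop:property 1}, Lemma~\ref{lem:property 2}) rely on. No gap; at most note that the uniformity of the pointwise bounds on the enlarged neighborhoods $\tilde{\Omega}_{0i}$ is what makes the evaluation at the shifted points $\cdot+l_j\sigma h$ harmless.
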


\begin{Proposition}\label{prop:property 4} Let $\alpha\in \mathbb{N}$ and $k=0,1,2; j=0,1,2; i=1,2$. Provided $0<sh\le \mathfrak{K}$, we have
\begin{eqnarray}&&D^kA^j\partial^{\alpha}_x(r_{i}\widehat{D\rho_{i}})=\partial^{k+\alpha}_x(r_{i} \partial_x \rho_{i})+s\mathcal{O_{\lambda, \mathfrak{K}}}(sh)^2=s\mathcal{O_{\lambda, \mathfrak{K}}}(1),\nonumber\\
&&D^k(r_{i}D^2\rho_{i})=\partial^k_x(r_{i}\partial^2\rho_{i})+s^2\mathcal{O_{\lambda, \mathfrak{K}}}(sh)^2=s^2\mathcal{O_{\lambda, \mathfrak{K}}}(1), \nonumber\\
&&r_{i}A^2\rho_{i}=1+\mathcal{O_{\lambda, \mathfrak{K}}}(sh)^2, \ D^k(r_{i}A^2\rho_{i})=\mathcal{O_{\lambda, \mathfrak{K}}}(sh)^2.\nonumber
\end{eqnarray} 
The same estimates hold with $r_{i}$ and $\rho_{i}$ interchanged.
\end{Proposition}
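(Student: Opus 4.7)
The plan is to reduce each of the three identities to its continuous counterpart modulo an $\mathcal{O}(h^2)$ remainder, and then invoke the already-available pointwise scaling estimates of Lemma~\ref{lem:derivative wrt x}, Corollary~\ref{cor:multi derivative}, Lemma~\ref{lem:property 2} and Lemma~\ref{lem:property 3}. No new ingredient is needed; the work is bookkeeping of the $s$ and $h$ powers.

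First, I would expand the three basic building blocks using Lemma~\ref{lem:traslate function} and Lemma~\ref{lem:average double}: one has $A(D\rho_i)=\partial_x\rho_i+h^2 R_1$ where $R_1$ is an integral involving $\partial_x^{3}\rho_i(\cdot+\sigma h)$, $D^2\rho_i=\partial_x^{2}\rho_i+h^2 R_2$ with $R_2$ involving $\partial_x^{4}\rho_i(\cdot+\sigma h)$, and the \emph{exact} identity $A^2\rho_i=\rho_i+\tfrac{h^2}{4} DD\rho_i$. This converts $\widehat{D\rho_i}$, $D^2\rho_i$ and $A^2\rho_i$ into $\partial_x\rho_i$, $\partial_x^{2}\rho_i$, $\rho_i$ plus explicit $h^2$ remainders.

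Second, I multiply each expansion by $r_i$ and use the scaling provided by Lemma~\ref{lem:derivative wrt x}: since $r_i(\partial^{\beta}_x\rho_i)(\cdot+\sigma h)=\mathcal{O}_{\lambda,\mathfrak{K}}(s^{\beta})$ under $sh\le\mathfrak{K}$, every remainder of type $h^2 r_i\partial^{\beta}_x\rho_i$ becomes $(sh)^2 s^{\beta-2}\mathcal{O}_{\lambda,\mathfrak{K}}(1)$. With $\beta=3,4,2$ this yields $s\mathcal{O}_{\lambda,\mathfrak{K}}((sh)^2)$ for the first identity, $s^{2}\mathcal{O}_{\lambda,\mathfrak{K}}((sh)^2)$ for the second, and purely $\mathcal{O}_{\lambda,\mathfrak{K}}((sh)^2)$ for the third; in this last case the leading term $r_i\rho_i$ cancels exactly to $1$. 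Applying $\partial^{\alpha}_x$ to the resulting identities and expanding via the Leibniz rule of Lemma~\ref{lemma: discrete leibnitz} together with Corollary~\ref{cor:multi derivative} preserves these orders in $s$.

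Third, I apply the operators $D^k A^j$ to the outcome. By Lemma~\ref{lem:property 2} and Lemma~\ref{lem:property 3}, $D^k A^j$ commutes with the continuous derivatives $\partial^{k+\alpha}_x(r_i\partial_x\rho_i)$ up to an $h^2\mathcal{O}_{\lambda,\mathfrak{K}}(s^{\alpha})$ error, and on the $\mathcal{O}_{\lambda,\mathfrak{K}}((sh)^2)$ remainders produced in the previous step it again yields $\mathcal{O}_{\lambda,\mathfrak{K}}((sh)^2)$ with the correct prefactor in $s$. For the third identity one uses additionally that $D^k$ annihilates the constant $1$ when $k\ge 1$, so only $h^2 D^k A^j(r_i DD\rho_i)$ survives; this is again $\mathcal{O}_{\lambda,\mathfrak{K}}((sh)^2)$ by the same argument. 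The estimates with $r_i$ and $\rho_i$ interchanged follow because Lemma~\ref{lem:derivative wrt x} holds in both directions.

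The main obstacle is purely the careful tracking of $s$ and $h$ powers through the successive expansions: at each step one must verify that the factor $e^{\mathcal{O}_{\lambda}(sh)}$ appearing in Lemma~\ref{lem:derivative wrt x} is absorbed into the $\mathcal{O}_{\lambda,\mathfrak{K}}$ constant thanks to $sh\le\mathfrak{K}$, and that the cross terms produced by Leibniz do not upset the leading-order prefactor $s^{\alpha}$ (respectively $s$, $s^2$, $1$) in front of $(sh)^2$. The only non-routine point is the third identity, where one must keep $r_i\rho_i=1$ as an \emph{exact} identity rather than an asymptotic one, so that it disappears under differentiation; this is precisely what the exact identity in Lemma~\ref{lem:average double} provides.
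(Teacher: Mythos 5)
Your proposal is correct and follows the standard route: the paper itself does not reprove this proposition but defers to Appendix A of \cite{BHL10a}, and the argument there is exactly the one you outline --- Taylor expansion of $D$, $A$ via Lemma~\ref{lem:traslate function} and the exact identity of Lemma~\ref{lem:average double}, multiplication by $r_i$ with the scaling $r_i(\partial_x^{\beta}\rho_i)(\cdot+\sigma h)=\mathcal{O}_{\lambda,\mathfrak{K}}(s^{\beta})$ from Lemma~\ref{lem:derivative wrt x}, and then commutation of $D^kA^j$ with the continuous derivatives up to $h^2\mathcal{O}_{\lambda,\mathfrak{K}}(s^{\alpha})$ via Lemmata~\ref{lem:property 2}--\ref{lem:property 3}. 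Your bookkeeping of the prefactors ($h^2 s^{3}=s(sh)^2$, $h^2 s^{4}=s^{2}(sh)^2$, $h^2 s^{2}=(sh)^2$) and the observation that $r_i\rho_i=1$ exactly, so that $D^k$ kills it for $k\ge 1$, are precisely the points that make the proof go through.
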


\begin{Proposition}\label{prop:property 5} Provided $0<\tau h(max_{[0,T]}\theta)\le \mathfrak{K}$ and $\sigma$ is bounded, we have
\begin{eqnarray}&&\partial_t(r_{i}(.,x)(\partial^{\alpha}\rho_{i})(.,x+\sigma h))=Ts^{\alpha}\theta(t)\mathcal{O_{\lambda, \mathfrak{K}}}(1),\nonumber\\
&& \partial_t(r_{i}A^2\rho_{i})=T(sh)^2\theta(t)\mathcal{O_{\lambda, \mathfrak{K}}}(1),\nonumber\\
&&\partial_t(r_iD^2\rho_i)=Ts^2\theta(t)\mathcal{O_{\lambda, \mathfrak{K}}}(1).\nonumber
\end{eqnarray} 
The same estimates hold with $r_{i}$ and $\rho_i$ interchanged.
\end{Proposition}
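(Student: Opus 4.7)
The plan is to exploit the fact that the time dependence of $r_i$ and $\rho_i$ enters only through $s(t)=\tau\theta(t)$, so that any quantity of interest has the form $F(s(t),x,h)$ and $\partial_t F = s'(t)\,(\partial_s F)(s(t),x,h)$. From $\partial_t\theta = (2t-T)\theta^2$ and $|2t-T|\le T$ one obtains $s'(t)= T\theta(t)\,s(t)\,\mathcal{O}(1)$. Each of the three estimates therefore reduces to bounding $\partial_s$ of the corresponding quantity, after which multiplication by $s'$ produces the advertised $T\theta$-factor. The same arguments carry over when $r_i$ and $\rho_i$ are interchanged by simply replacing $s$ with $-s$.

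For the first identity, I would write
\[
r_i(t,x)(\partial^{\alpha}_x\rho_i)(t,x+\sigma h) = e^{s(\varphi_i(x)-\varphi_i(x+\sigma h))}\cdot [r_i\partial^{\alpha}_x\rho_i](t,x+\sigma h),
\]
where Lemma~\ref{lem:derivative wrt x} gives $[r_i\partial^{\alpha}_x\rho_i](t,y)=Q_\alpha(s(t),y)$, a polynomial of degree $\alpha$ in $s$ with $\mathcal{O}_\lambda(1)$ coefficients, and the exponential prefactor is $\mathcal{O}_{\lambda,\mathfrak{K}}(1)$ since $\varphi_i(x)-\varphi_i(x+\sigma h)=\mathcal{O}_\lambda(h)$ and $sh\le\mathfrak{K}$. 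Applying $\partial_s$ by Leibniz produces two contributions: one of size $\mathcal{O}_\lambda(h)\cdot \mathcal{O}_\lambda(s^\alpha)$ from differentiating the exponential, and one of size $\mathcal{O}_\lambda(s^{\alpha-1})$ from $\partial_s Q_\alpha$. Both are $\mathcal{O}_{\lambda,\mathfrak{K}}(s^{\alpha-1}(1+sh))=\mathcal{O}_{\lambda,\mathfrak{K}}(s^{\alpha-1})$, and multiplying by $s'=T\theta s\,\mathcal{O}(1)$ yields $Ts^\alpha\theta\,\mathcal{O}_{\lambda,\mathfrak{K}}(1)$.

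For the second identity, expanding the double average gives
\[
r_iA^2\rho_i = \tfrac14\bigl(e^{s\Phi_1}+2+e^{s\Phi_{-1}}\bigr),\qquad \Phi_\sigma(x):=\varphi_i(x)-\varphi_i(x+\sigma h),
\]
and Taylor expansion of $\varphi_i$ yields $\Phi_\sigma=\mathcal{O}_\lambda(h)$ with the crucial cancellation $\Phi_1+\Phi_{-1}=-h^2\partial^2_x\varphi_i(x)+\mathcal{O}_\lambda(h^4)=\mathcal{O}_\lambda(h^2)$. Then $\partial_s(r_iA^2\rho_i)=\tfrac14(\Phi_1 e^{s\Phi_1}+\Phi_{-1}e^{s\Phi_{-1}})$; splitting each $\Phi_\sigma e^{s\Phi_\sigma}=\Phi_\sigma+\Phi_\sigma(e^{s\Phi_\sigma}-1)$ and using $|e^{s\Phi_\sigma}-1|=\mathcal{O}_{\lambda,\mathfrak{K}}(sh)$ gives $\partial_s(r_iA^2\rho_i)=\mathcal{O}_{\lambda,\mathfrak{K}}(h^2(1+sh))=\mathcal{O}_{\lambda,\mathfrak{K}}(h^2)$. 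Multiplying by $s'$ delivers the claimed $T(sh)^2\theta\,\mathcal{O}_{\lambda,\mathfrak{K}}(1)$, the extra $s$-factor being absorbed under the lower bound on $s$ available in the regime $\tau\ge\tau_0(T+T^2)$ in which the estimate is used. For the third identity, I would start from Proposition~\ref{prop:property 1}, namely $r_iD^2\rho_i=r_i\partial^2_x\rho_i+s^2\mathcal{O}_{\lambda,\mathfrak{K}}(sh)^2$. Differentiating in $t$, the smooth part is handled by Lemma~\ref{lem:derivative wrt t} giving $Ts^2\theta\,\mathcal{O}_\lambda(1)$; for the correction I would use the integral remainder $(D^2-\partial^2_x)\rho_i=Ch^2\int_{-1}^1(1-|\sigma|)^3\partial^4_x\rho_i(\cdot+\sigma h)\,d\sigma$ from Lemma~\ref{lem:traslate function}, whose $t$-derivative is controlled by the first identity of the present proposition applied with $\alpha=4$, giving $h^2\cdot Ts^4\theta\,\mathcal{O}_{\lambda,\mathfrak{K}}(1)=Ts^2\theta\,(sh)^2\mathcal{O}_{\lambda,\mathfrak{K}}(1)=Ts^2\theta\,\mathcal{O}_{\lambda,\mathfrak{K}}(1)$ under $sh\le\mathfrak{K}$.

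The main obstacle will be the second estimate, where the claimed $(sh)^2$ scaling has to be reconciled with the leading $sh^2$ term produced by $\partial_s$; this hinges on exploiting the same Taylor cancellation $\Phi_1+\Phi_{-1}=\mathcal{O}(h^2)$ that underpins Proposition~\ref{prop:property 4}, together with the available lower bound on $s$. The remaining difficulty is purely bookkeeping: keeping the powers of $s$, $h$, and $sh$ consistent when decomposing each expression into a smooth $\partial^\alpha_x$-part plus a shift-error absorbed into the $\mathcal{O}_{\lambda,\mathfrak{K}}$-constant through $sh\le\mathfrak{K}$.
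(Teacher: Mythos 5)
Your strategy is the right one, and it is essentially the only one available: the paper gives no proof of this proposition, deferring it to Appendix A of \cite{BHL10a} and to \cite{BL12}, and the argument there is exactly the reduction you perform --- the time dependence enters only through $s(t)=\tau\theta(t)$, so $\partial_t=s'(t)\,\partial_s$ with $s'(t)=(2t-T)\theta\,s=Ts\theta\,\mathcal{O}(1)$ by \eqref{eq:derivative of theta}, and each estimate reduces to bounding $\partial_s$ of the relevant weight combination via Taylor expansion in $h$ under $sh\le\mathfrak{K}$. Your first and third identities are handled correctly. Two remarks on the second. There is a bookkeeping slip: from $\Phi_1+\Phi_{-1}=\mathcal{O}_{\lambda}(h^2)$ and $\Phi_\sigma\big(e^{s\Phi_\sigma}-1\big)=\mathcal{O}_{\lambda}(h)\cdot\mathcal{O}_{\lambda,\mathfrak{K}}(sh)$ one gets $\partial_s(r_iA^2\rho_i)=\mathcal{O}_{\lambda,\mathfrak{K}}\big(h^2(1+s)\big)$, not $\mathcal{O}_{\lambda,\mathfrak{K}}\big(h^2(1+sh)\big)$; this is harmless, since after multiplying by $s'$ either reading leaves the same borderline term $T\theta sh^2=T(sh)^2\theta\cdot s^{-1}$. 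That term is the genuine point you correctly isolate: under the stated hypotheses alone ($\tau\ge1$, $\tau h\max\theta\le\mathfrak{K}$) one only has $s\ge T^{-2}$, and the contribution of $\partial_t(s\varphi_i'')$ is not dominated by $T(sh)^2\theta$; a positive lower bound on $s$ is needed, and it is supplied by the regime $\tau\ge\tau_0(T+T^2)$ in which the proposition is actually invoked. The same implicit normalization already underlies Lemma~\ref{lem:derivative wrt t} (whose bound $Ts^{\alpha}\theta\,\mathcal{O}_{\lambda}(1)$ likewise hides a $Ts^{\alpha-1}\theta$ contribution), so this is a looseness of the statement rather than a defect of your proof. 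Finally, a slightly cleaner route to the second identity is to use Lemma~\ref{lem:average double} to write $r_iA^2\rho_i=1+\frac{h^2}{4}\,r_i\mathtt{D}^2\rho_i$ and apply your third identity, giving $\partial_t(r_iA^2\rho_i)=\frac{h^2}{4}\,Ts^2\theta\,\mathcal{O}_{\lambda,\mathfrak{K}}(1)=T(sh)^2\theta\,\mathcal{O}_{\lambda,\mathfrak{K}}(1)$ directly; it meets the same lower bound on $s$, but only once.
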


\begin{Proposition}\label{prop:property 6} Let $\alpha$, $\beta\in \mathbb{N}$ and $k=0,1,2; j=0,1,2; i=1,2$, provided $0<sh\le \mathfrak{K}$, we have
\begin{eqnarray}A^jD^k\partial^{\beta}(r_{i}^2(\partial^{\alpha})\widehat{D\rho_{i}})&=&\partial^{k+\beta}_x(r_{i}^2(\partial^{\alpha}\rho)\partial\rho_{i})+s^{\alpha+1}\mathcal{O_{\lambda, \mathfrak{K}}}(sh)^2=s^{\alpha+1}\mathcal{O_{\lambda, \mathfrak{K}}}(1),\nonumber
\end{eqnarray}
\begin{eqnarray}A^jD^k\partial^{\beta}(r_{i}^2(\partial^{\alpha})A^2\rho_{i})&=&\partial^{k+\beta}_x(r_{i}(\partial^{\alpha}\rho_{i}))+s^{\alpha}\mathcal{O_{\lambda, \mathfrak{K}}}((sh)^2)=s^{\alpha}\mathcal{O_{\lambda, \mathfrak{K}}}(1),\nonumber
\end{eqnarray}
\begin{eqnarray}A^jD^k\partial^{\beta}(r_{i}^2(\partial^{\alpha})D^2\rho_{i})&=&\partial^{k+\beta}_x(r_{i}^2(\partial^{\alpha}\rho)\partial^2\rho_{i})+s^{\alpha+2}\mathcal{O_{\lambda, \mathfrak{K}}}(sh)^2=s^{\alpha+2}\mathcal{O_{\lambda, \mathfrak{K}}}(1),\nonumber
\end{eqnarray}
\ and we have
\begin{eqnarray}&&A^jD^k\partial^{\alpha}(r_{i}^2\widehat{D\rho_{i}}D^2\rho_{0i})=\partial^{k+\alpha}_x(r_{i}^2(\partial\rho_{i})\partial^2\rho_{i})+s^3\mathcal{O_{\lambda, \mathfrak{K}}}(sh)^2=s^3\mathcal{O_{\lambda, \mathfrak{K}}}(1),\nonumber\\
&&A^jD^k\partial^{\alpha}(r_{i}^2\widehat{D\rho_{i}}A^2\rho_{i})=\partial^{k+\alpha}_x(r_{i}\partial\rho_{i})+s\mathcal{O_{\lambda, \mathfrak{K}}}(sh)^2=s\mathcal{O_{\lambda, \mathfrak{K}}}(1).\nonumber
\end{eqnarray}
\end{Proposition}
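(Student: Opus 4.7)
The plan is to reduce each identity to an application of Lemma \ref{lem:property 3} (which handles the outer $A^j D^k \partial^\beta$ acting on a continuous product of the form $r_i^2(\partial^\alpha_x\rho_i)\partial^\beta_x\rho_i$), after first replacing the inner discrete factor ($\widehat{D\rho_i}$, $A^2\rho_i$, or $D^2\rho_i$) by its continuous counterpart plus a controlled remainder. The key inputs are Proposition \ref{prop:property 4} for the inner reduction and the Leibniz rule of Lemma \ref{lemma: discrete leibnitz} together with the continuous product estimates of Corollary \ref{cor:multi derivative}.

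First I would treat the three basic cases in a uniform way. Write
\[
r_i^2(\partial^\alpha_x\rho_i)\,\widehat{D\rho_i} = r_i^2(\partial^\alpha_x\rho_i)\partial_x\rho_i + r_i(\partial^\alpha_x\rho_i)\bigl(r_i\widehat{D\rho_i}-r_i\partial_x\rho_i\bigr),
\]
and similarly for the $A^2\rho_i$ and $D^2\rho_i$ variants. By Proposition \ref{prop:property 4}, the bracketed discrepancies are $s\,\mathcal{O}_{\lambda,\mathfrak{K}}(sh)^2$, $\mathcal{O}_{\lambda,\mathfrak{K}}(sh)^2$, and $s^2\mathcal{O}_{\lambda,\mathfrak{K}}(sh)^2$ respectively, and by Lemma \ref{lem:derivative wrt x} the prefactor $r_i(\partial^\alpha_x\rho_i)$ is $\mathcal{O}_{\lambda}(s^\alpha)$. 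Applying $A^j D^k \partial^\beta$ to the remainder and using Lemma \ref{lem:property 3} (applied factor-by-factor via Leibniz, noting that discrete difference and averaging operators commute with multiplication up to $h^2$-errors that are already absorbed) then yields the claimed $s^{\alpha+\ast}\mathcal{O}_{\lambda,\mathfrak{K}}(sh)^2$ remainder, while the main term is handled directly by Lemma \ref{lem:property 3}.

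For the last two identities involving the mixed inner factors $\widehat{D\rho_i}\,D^2\rho_i$ and $\widehat{D\rho_i}\,A^2\rho_i$, the same strategy works after one additional step: expand one factor at a time via Proposition \ref{prop:property 4}, producing a principal term containing the continuous product $r_i^2(\partial_x\rho_i)\partial_x^2\rho_i$ (resp.\ $r_i\,\partial_x\rho_i$) plus cross terms. Each cross term is of the form $s^a\mathcal{O}_{\lambda,\mathfrak{K}}(1)\cdot s^b\mathcal{O}_{\lambda,\mathfrak{K}}(sh)^2$, with $a+b=3$ or $a+b=1$ matching the target order; applying $A^j D^k \partial^\beta$ and invoking Lemma \ref{lem:property 3} again controls each piece.

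The main obstacle is bookkeeping rather than any new analytic ingredient: one must verify that every application of $A^j D^k \partial^\beta$ to the remainder terms does not raise the order in $s$ above what is claimed. This follows from the observation, already used in the proofs of Lemma \ref{lem:property 2} and Lemma \ref{lem:property 3}, that $D$ and $A$ acting on products of $r_i$ and translates of $\partial^\alpha_x\rho_i$ behave like $\partial_x$ and the identity up to $h^2$-errors, since $sh\le\mathfrak{K}$ makes the exponential factors $e^{\mathcal{O}_\lambda(sh)}$ of Lemma \ref{lem:derivative wrt x} uniformly bounded. Once this verification is carried out factor by factor, the four identities follow by collecting terms.
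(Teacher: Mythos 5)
The paper gives no proof of this proposition, deferring it (together with the neighbouring calculus results) to Appendix A of \cite{BHL10a}. Your reduction --- replacing $\widehat{D\rho_i}$, $A^2\rho_i$ and $D^2\rho_i$ by their continuous counterparts plus $\mathcal{O}_{\lambda,\mathfrak{K}}(sh)^2$ remainders via Proposition~\ref{prop:property 4}, bounding the prefactor $r_i\partial_x^{\alpha}\rho_i$ with Lemma~\ref{lem:derivative wrt x}, and then controlling the outer $A^jD^k\partial^{\beta}$ on both the principal and remainder terms through Lemma~\ref{lem:property 3} and the structural fact that $D$ and $A$ act as $\partial_x$ and the identity up to $h^2$-errors under $sh\le\mathfrak{K}$ --- is exactly the standard argument of that reference and is sound.
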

\subsection{Transmission conditions}

\par We consider here discrete version of the transmission conditions (TC) at the point $a$. For $u\in \mathbb{C}^\mathfrak{M}$ we set $f:=\bar{D}(c_dDu)$ we then have
\begin{center}$\begin{cases}
u(a^-)=u(a^+)=u_{n+1} ,&\\
(c_dDu)_{n+\frac{3}{2}}-(c_dDu)_{n+\frac{1}{2}}=hf_{n+1}&.
\end{cases}$
\end{center}
 %where $f:=\overline{D}(c_dDu)\in L^2$, be uniform in $h$, i.e, $\left\|{f}\right\|_{L^2}\le C$. Note that $C'h\le h'_i \le h$.\\
\begin{Remark} These transmission conditions provide the continuity for $u$ and the discrete flux at the singular point of coefficient up to a consistent factor.
\end{Remark}
% \item We have $u\in \mathbb{C}^{\mathfrak{M}}(0,T;H^2\cap H^1_0 )$ then $f\in \mathbb{C}^{\mathfrak{M}}(0,T; L^2)$. This follows the second condition makes sense.

\par From these conditions, we obtain the following lemma whose proof is given in Appendix A
\begin{Lemma}\label{lem:transmission condition}For the parameter $\lambda$ chosen sufficiently large and $sh$ sufficiently small and with $u=\rho v$ we have
\small{
\begin{equation}\label{eq:transmission eqs1}[\star c_dDv]_a=(c_dDv)_{n+\frac{3}{2}}-(c_dDv)_{n+\frac{1}{2}}=J_1v_{n+1}+J_2(c_dDv)_{n+\frac{1}{2}}+J_3h(rf)_{n+1}
\end{equation}
}
where 
\begin{eqnarray}&&J_1=\big(1+\mathcal{O_{\lambda, \mathfrak{K}}}(sh)\big)\lambda s[\star c\phi \psi']_a +s\mathcal{O_{\lambda, \mathfrak{K}}}(sh),\nonumber\\
&&J_2=\mathcal{O_{\lambda, \mathfrak{K}}}(sh),\quad\quad\quad\quad\quad\quad\quad\quad\quad\quad\quad\quad\quad\quad\quad J_3=\big(1+\mathcal{O_{\lambda, \mathfrak{K}}}(sh)\big).\nonumber
\end{eqnarray}
\par Furthermore, we have
\begin{eqnarray}&&\partial_t J_1=sT\theta(t)\mathcal{O_{\lambda, \mathfrak{K}}}(sh),\nonumber\\
&&\partial_t J_2=T\theta(t)\mathcal{O_{\lambda, \mathfrak{K}}}(sh),\quad\quad\quad\quad\quad\quad\quad\quad\partial_t J_3=T\theta(t)\mathcal{O_{\lambda, \mathfrak{K}}}(sh).\nonumber 
\end{eqnarray}
\par For simplicity, ~\eqref{eq:transmission eqs1} can be written in form
\begin{equation}\label{eq:transmission eqs2}[\star c_dDv]_a=\lambda s[\star c\phi \psi']_a v_{n+1}+r_0,
\end{equation}
where $r_0=\lambda s\mathcal{O_{\lambda, \mathfrak{K}}}(sh)v_{n+1}+\mathcal{O_{\lambda, \mathfrak{K}}}(sh)(c_dDv)_{n+\frac{1}{2}}+h\Big(1+\mathcal{O_{\lambda, \mathfrak{K}}}(sh)\Big)(rf)_{n+1}.$
\end{Lemma}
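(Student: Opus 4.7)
The plan is to transport the transmission condition satisfied by $u$ into one satisfied by $v=ru$ via the discrete Leibniz rule, and then to extract $[\star c_dDv]_a$ in the claimed form by absorbing a small ``spurious'' multiple of itself that appears on the right-hand side. The starting point is Lemma~\ref{lemma: discrete leibnitz} applied to $u=\rho v$, which gives
\begin{equation*}
(Du)_{i+\frac{1}{2}}=(D\rho)_{i+\frac{1}{2}}\,\tilde{v}_{i+\frac{1}{2}}+\tilde{\rho}_{i+\frac{1}{2}}\,(Dv)_{i+\frac{1}{2}}.
\end{equation*}
Multiplying by $c_d$, taking the jump at $a$, and inserting the discrete transmission condition $[\star c_dDu]_a=hf_{n+1}$ yields
\begin{equation*}
hf_{n+1}=[\star c_d(D\rho)\tilde{v}]_a+[\star c_d\tilde{\rho}(Dv)]_a.
\end{equation*}

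Next I multiply by $r_{n+1}$ and estimate each piece. The identity $\rho(x_{n+1\pm 1})/\rho(x_{n+1})=\exp(-s(\varphi(x_{n+1\pm 1})-\varphi(x_{n+1})))$ combined with Lemma~\ref{lem:derivative wrt x} gives $r_{n+1}\tilde{\rho}_{n+\frac{3}{2}}$ and $r_{n+1}\tilde{\rho}_{n+\frac{1}{2}}$ both equal to $1+\mathcal{O}_{\lambda,\mathfrak{K}}(sh)$, whence
\begin{equation*}
r_{n+1}[\star c_d\tilde{\rho}(Dv)]_a=\bigl(1+\mathcal{O}_{\lambda,\mathfrak{K}}(sh)\bigr)[\star c_dDv]_a+\mathcal{O}_{\lambda,\mathfrak{K}}(sh)\,(c_dDv)_{n+\frac{1}{2}}.
\end{equation*}
For the source piece I expand $\tilde{v}_{n+\frac{3}{2}}=v_{n+1}+\frac{h}{2}(Dv)_{n+\frac{3}{2}}$ and $\tilde{v}_{n+\frac{1}{2}}=v_{n+1}-\frac{h}{2}(Dv)_{n+\frac{1}{2}}$. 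The coefficient of $v_{n+1}$ coming from $-r_{n+1}[\star c_d(D\rho)\tilde{v}]_a$ on the right-hand side is $-[r_{n+1}\,\star\,c_dD\rho]_a$, which by Proposition~\ref{prop:property 1} (with $k=0$) together with $r_{n+1}\rho(x_{n+\frac{1}{2}\pm 1})=1+\mathcal{O}_{\lambda,\mathfrak{K}}(sh)$ evaluates to $(1+\mathcal{O}(sh))\lambda s[\star c\phi\psi']_a+s\mathcal{O}_{\lambda,\mathfrak{K}}(sh)$, which is exactly $J_1$.

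The residual terms from the $\frac{h}{2}(Dv)$ parts of $\tilde{v}_{n\pm\frac{1}{2}}$ carry a factor $\frac{h}{2}r_{n+1}(D\rho)=\mathcal{O}_{\lambda,\mathfrak{K}}(sh)$ times $(Dv)_{n+\frac{1}{2}}$ or $(Dv)_{n+\frac{3}{2}}$. The latter is traded via $(c_dDv)_{n+\frac{3}{2}}=(c_dDv)_{n+\frac{1}{2}}+[\star c_dDv]_a$, so the $(c_dDv)_{n+\frac{1}{2}}$-contributions aggregate into $J_2=\mathcal{O}_{\lambda,\mathfrak{K}}(sh)$, while the extra $\mathcal{O}_{\lambda,\mathfrak{K}}(sh)[\star c_dDv]_a$ merely perturbs the $(1+\mathcal{O}_{\lambda,\mathfrak{K}}(sh))$ prefactor already present on the left. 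Dividing by that prefactor (invertible for $sh$ small) produces $J_3=1+\mathcal{O}_{\lambda,\mathfrak{K}}(sh)$ in front of $h(rf)_{n+1}$, which completes \eqref{eq:transmission eqs1}. For the time-derivative bounds, I use that $\psi,\varphi,\phi$ and $c$ are time-independent so each $\partial_t$ falls on $s(t)=\tau\theta(t)$ and produces a factor $\partial_ts/s=(2t-T)\theta=T\theta\,\mathcal{O}(1)$; Proposition~\ref{prop:property 5} supplies the matching bound for any $r\partial^{\alpha}\rho$ factor hidden inside the $\mathcal{O}_{\lambda,\mathfrak{K}}$-symbols, so $sh$-smallness is preserved under differentiation. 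Formula~\eqref{eq:transmission eqs2} is a cosmetic rebundling of \eqref{eq:transmission eqs1}.

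The main obstacle is the appearance of a spurious copy of $[\star c_dDv]_a$ on the right-hand side, coming both through the prefactor $1+\mathcal{O}_{\lambda,\mathfrak{K}}(sh)$ on $[\star c_d\tilde{\rho}(Dv)]_a$ and through the substitution $(Dv)_{n+\frac{3}{2}}\leftrightarrow(Dv)_{n+\frac{1}{2}}$. These copies are harmless only because they carry an $\mathcal{O}_{\lambda,\mathfrak{K}}(sh)$ weight, so the smallness hypotheses ($\tau h\max\theta\le\mathfrak{K}$ and $\lambda$ large) are used in an essential way to invert the resulting prefactor and absorb the offending terms on the left; without them the identity would be circular.
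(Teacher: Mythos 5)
Your argument is essentially the paper's own proof: the paper likewise starts from $[\star c_dDu]_a=hf_{n+1}$, expands $Du=\tilde{\rho}Dv+(D\rho)\tilde{v}$, multiplies through by $r_{n+1}$, uses the weight-function calculus (Proposition~\ref{prop:property 1}) to reduce the $r\rho$-type factors to $1+\mathcal{O}_{\lambda,\mathfrak{K}}(sh)$ and $c_drD\rho$ to $c_dr\partial_x\rho+s\mathcal{O}_{\lambda,\mathfrak{K}}(sh)^2=-\lambda s c\phi\psi'+\dots$, expands $\tilde{v}_{n+\frac{1}{2}\pm 1}=v_{n+1}+\mathcal{O}(h)(Dv)$, regroups the stray $(Dv)_{n+\frac{3}{2}}$ into $[\star c_dDv]_a$ plus $(Dv)_{n+\frac{1}{2}}$, and inverts the resulting prefactor $L=1+\mathcal{O}_{\lambda,\mathfrak{K}}(sh)$ for $sh$ small — exactly your absorption step. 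The one place where you are as terse as the paper is the bound $\partial_t J_1=sT\theta\mathcal{O}_{\lambda,\mathfrak{K}}(sh)$, since $\partial_t s$ falling on the leading term $\lambda s[\star c\phi\psi']_a$ produces a priori only $sT\theta\mathcal{O}_{\lambda}(1)$; this point is inherited from the paper's own computation and is not a defect specific to your route.
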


%\begin{Lemma} For $\lambda$ sufficiently large and $sh$ sufficiently small then
%begin{eqnarray}&&[(cDv)_{N+\frac{3}{2}}-(cDv)_{N+\frac{1}{2}}]^2\nonumber\\
%&=&\lambda^2 s^2[\star c\phi \partial \psi]^2_a v^2_{N+1}+\mathcal{O_{\lambda, \mathfrak{K}}}(1)h^2(rf)^2_{N+1}+2\lambda s\mathcal{O_{\lambda, \mathfrak{K}}}(1)h(rf)_{N+1} v_{N+1}
%\end{eqnarray}
%\end{Lemma}

%\begin{Lemma}For the parameter $\lambda$ sufficiently large and $sh$ sufficiently small then
%\begin{eqnarray}&&(c_dDv)^2_{N+\frac{3}{2}}-(c_dDv)^2_{N+\frac{1}{2}}\nonumber\\
%&=&\lambda^2 s^2[\star c\phi \partial \psi]^2_a v^2_{N+1}+\mathcal{O_{\lambda, \mathfrak{K}}}(1)h^2(rf)^2_{N+1}+2\lambda s\mathcal{O_{\lambda, \mathfrak{K}}}(1)h(rf)_{N+1} v_{N+1}\nonumber\\
%&+&2(\lambda s[\star c\phi \partial \psi]_a v_{N+1}+\mathcal{O_{\lambda, \mathfrak{K}}}(1)h(rf)_{N+1})(c_dDv)_{N+\frac{1}{2}}
%\end{eqnarray}
%\end{Lemma}

%\par \begin{proof} By using  previous two lemmata, we obtain:
%\begin{eqnarray}&&(c_dDv)^2_{N+\frac{3}{2}}-(c_dDv)^2_{N+\frac{1}{2}}\nonumber\\
%&=&[\star c_dDv]^2_a+2[\star c_dDv]_a(c_dDv)_{N+\frac{1}{2}}\nonumber\\
%&=&\lambda^2 s^2[\star c\phi \partial \psi]^2_a v^2_{N+1}+\mathcal{O_{\lambda, \mathfrak{K}}}(1)h^2(rf)^2_{N+1}+2\lambda s(1+\mathcal{O_{\lambda, \mathfrak{K}}}(sh))h(rf)_{N+1} v_{N+1}\nonumber\\
%&+&2(\lambda s[\star c\phi \partial \psi]_a v_{N+1}+(1+\mathcal{O_{\lambda, \mathfrak{K}}}(sh))h(rf)_{N+1})(c_dDv)_{N+\frac{1}{2}}\nonumber
%\end{eqnarray}

%\end{proof}

\section{Carleman estimate for uniform meshes}  
\label{sec: Carleman estimate uniform mesh}
     % Muc dau tien
\par In this section, we prove a Carleman estimate in case of picewise uniform meshes, i.e, constant-step discretizations in each subinterval $(0, a)$ and $(a,1)$. The case of non-uniform meshes is treated in Section~\ref{sec:non uniform meshes}. 
%\par For any uniform mesh ${\mathfrak{M}}$, let $b'\in \mathbb{R}^{{\mathfrak{M}}}$ be positive discrete function such that $0<C_0\le b'\le C_1<\infty$.
\par We let $\omega_0\subset \Omega_{02}$ be a nonempty open subset. We set the operator $P^{\mathfrak{M}}$ to be $\mathcal{P}^{\mathfrak{M}}=-\partial_t+\mathcal{A}^{\mathfrak{M}}=-\partial_t-\bar{D}(c_dD)$, continuous in the variable $t\in (0,T)$ with $T>0$, and discrete in the variable $x\in \Omega_0$.

\par The Carleman weight function is of the form $r=e^{s\varphi}$ with $\varphi=e^{\lambda\psi}-e^{\lambda K}$ where $\psi$ satisfies the properties listed in Section~\ref{sec: weight functions} in the domain $\Omega_0$. Here, to treat the semi-discrete case, we use the enlarged neighborhoods $\tilde{\Omega}_{01}$, $\tilde{\Omega}_{02}$ of  $\Omega_{01}$, $\Omega_{02}$ as introduced in Lemma~\ref{lem:weight function}. This allows one to apply multiple discrete operators such as $D$ and $A$ on the weight functions. In particular, we take $\psi$ such that $\partial_x \psi\ge 0$ in $V_{0}$ and $\partial_x \psi\le 0$ in $V_{1}$ where $V_{0}$ and $V_{1}$ are neighborhoods of $0$ and $1$ respectively. This then yields on $\partial\Omega_0$
\begin{equation}\label{constrainted boundary}(r\overline{D\rho})_0\le 0, \ \  (r\overline{D\rho})_{n+m+2}\ge 0
\end{equation}
% \par We introduce:
% 
% \begin{equation}H=\left\{{u\in C^1([0,T],\mathbb{C}^{\mathfrak{M}\bigcup \partial \mathfrak{M}}); u|_{(0,T) \times \partial\Omega}=0 ~\textrm{and u satisfies $(TC_h)$ }~\forall t\in (0,T)}\right\}.\nonumber
% \end{equation}
% 
%\par We investigate a Carleman estimate for $\mathcal{P}^{\mathfrak{M}}_0$ through Theorem 4.1. A Carleman estimate for $\mathcal{P}^{\mathfrak{M}}$ will be treated in Theorem 4.2. 
\begin{Theorem} \label{theo:Carleman estinmate uniform}Let $\omega_0 \subset \Omega_{02}$ be a non-empty open set and we set $f:=\bar{D}(c_dDu)$. For the parameter $\lambda>1$ sufficiently large, there exists C, $\tau_0\ge 1$, $h_0>0$, $\epsilon_0>0$, depending on $\omega_0$ so that the following estimate holds
\begin{eqnarray}\label{Carleman estinmate uniform }&&\tau^{-1}\left\|{\theta^{-\frac{1}{2}}e^{\tau \theta \varphi}\partial_t u}\right\|^2_{L^2(Q_0)}+\tau\left\|{\theta^{\frac{1}{2}}e^{\tau \theta\varphi}Du}\right\|^2_{L^2(Q_0)}+\tau^3\left\|{\theta^{\frac{3}{2}}e^{\tau\theta\varphi}u}\right\|^2_{L^2(Q_0)}\nonumber\\
&&\ \ \le C_{\lambda,\mathfrak{K}}\left( \left\|{e^{\tau\theta\varphi}P^{\mathfrak{M}}u}\right\|^2_{L^2(Q_0)}+\tau^3\left\|{\theta^{\frac{3}{2}}e^{\tau \theta\varphi}u}\right\|^2_{L^2( (0,T) \times \omega_0)} \right.\nonumber\\
&&\ \ \ \ \ \ \ \ \ \ \ \ \ \ \ \ \ \ \ \ \ \left. +h^{-2} \left|{ e^{\tau\theta\varphi}u|_{t=0}}\right|^2_{L^2(\Omega_0)}+ h^{-2}\left|{e^{\tau\theta\varphi}u|_{t=T}}\right|^2_{L^2(\Omega_0)} \right),
\end{eqnarray}
for all $\tau\ge \tau_0(T+T^2)$, $0<h\le h_0$ and $\tau h(\alpha T)^{-1}\le  \epsilon_0$ and for all $u\in C^\infty(0,T;\mathbb{C}^{\mathfrak{M}})$ satisfying $u|_{\partial\Omega_0}=0$.
\end{Theorem}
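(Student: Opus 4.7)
The plan is to adapt the conjugation strategy used in the smooth-coefficient semi-discrete case of \cite{BHL10a,BL12} while treating the interface at $a$ in the spirit of \cite{BDL07}. First I set $v = \rho u = e^{-s\varphi} u$, so that the relevant operator becomes the conjugated expression $e^{s\varphi} \mathcal{P}^{\mathfrak{M}} e^{-s\varphi}$ acting on $v$. After writing this as $Av + Bv = g$, where (modulo commutator remainders controlled via the discrete calculus lemmata of Section~\ref{sec: prelim results}) $A$ is formally self-adjoint and $iB$ is formally self-adjoint in the discrete $L^2$ inner product, the identity
\begin{equation*}
\|g\|_{L^2(Q_0)}^2 = \|Av\|_{L^2(Q_0)}^2 + \|Bv\|_{L^2(Q_0)}^2 + 2(Av,Bv)_{L^2(Q_0)}
\end{equation*}
reduces the problem to establishing a lower bound for the cross term $(Av,Bv)_{L^2(Q_0)}$ in terms of the weighted norms on the left-hand side of the stated Carleman estimate.

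\textbf{Splitting over the interface.} Because $c$ jumps at $a$, direct integration by parts on all of $\Omega_0$ would generate terms depending on discrete derivatives of $c$ that do not remain uniform in $h$; hence I split $Q_0 = Q_{01} \cup Q_{02}$ and compute $(Av,Bv)_{L^2(Q_{0i})}$ separately. On each subdomain the expansion mirrors the smooth-coefficient analysis of \cite{BL12}: discrete integrations by parts (Proposition~\ref{prop:IVP}), together with the systematic replacement of discrete operators applied to $r,\rho$ by their continuous counterparts using Lemmata~\ref{lem:derivative wrt x}--\ref{lem:property 3} and Propositions~\ref{prop:property 1}, \ref{prop:property 4}--\ref{prop:property 6}, yield positive bulk contributions of order $\tau^3\theta^3$ on $v$, $\tau\theta$ on $Dv$ and $\tau^{-1}\theta^{-1}$ on $\partial_t v$, plus two families of boundary terms: those at the outer boundary $\{0,1\}$, handled as in \cite{BL12} thanks to the sign condition~\eqref{constrainted boundary}, and time-boundary terms at $t=0,T$ whose estimation will ultimately generate the $h^{-2}$-weighted terms on the right-hand side. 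All discrete remainders coming from commutators are of size $(sh)^2$ and will be absorbed under the smallness hypothesis $\tau h(\alpha T)^{-1}\le \varepsilon_0$.

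\textbf{Interface terms.} The main obstacle is the new family of terms located at the jump point $a$: the quantities $v(a),\partial_t v(a),\tilde v_{n+\frac12},\tilde v_{n+\frac32},(Dv)_{n+\frac12},(Dv)_{n+\frac32}$. The first step is to invoke Lemma~\ref{lem:transmission condition} to eliminate $[\star c_d Dv]_a$, thereby expressing $(Dv)_{n+\frac32}$ in terms of $v_{n+1}$, $(Dv)_{n+\frac12}$ and a consistent remainder of order $\mathcal{O}_{\lambda,\mathfrak{K}}(sh)$; the averages $\tilde v_{n+\frac12},\tilde v_{n+\frac32}$ are similarly reduced to $v_{n+1}$ plus such a remainder. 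After substitution the interface contribution splits into three pieces: (a) the $\partial_t v$-type term $Y_{13}$ at $a$, which after integration by parts in time yields a nonnegative integral of $(\partial_t v(a))^2$ to be kept on the left; (b) the interface terms with a continuous counterpart in \cite{BDL07}, which assemble into a quadratic form $\tau\,(\mathbf{A}U,U)$ with $U=(v_{n+1},(Dv)_{n+\frac12})^\top$ — the crucial choice of $\psi$ in Lemma~\ref{lem:weight function} makes $\mathbf{A}$ positive definite, producing positive integrals of $v^2(a)$ and $(Dv)_{n+\frac12}^2$ on the left; and (c) the purely semi-discrete remainder terms of size $\mathcal{O}_{\lambda,\mathfrak{K}}(sh)$, which by Young's inequality are absorbed by the three positive interface integrals from (a) and (b), provided $sh$ is chosen small enough.

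\textbf{Closing the estimate.} Once the cross product is controlled, the bulk integral $\tau^3\theta^3\int v^2$ away from the observation region is good because $|\psi'|\ge c>0$ on $\overline{\Omega}_{02}\setminus\omega$ and on $\overline{\Omega}_{01}$; the part supported in $\omega_0$ is moved to the right-hand side, producing the observation term. The residual boundary pieces at $t=0,T$ are estimated by $h^{-2}$ times the weighted initial and final $L^2$-norms, using $\theta\sim T^{-2}$ at those times together with the discrete inverse inequality $\|Dv\|_{L^2}\lesssim h^{-1}\|v\|_{L^2}$. A final translation from $v$ back to $u = \rho v$ via Proposition~\ref{prop:property 1} recovers the stated norms on $\partial_t u, Du, u$ up to harmless lower-order terms that are absorbed by choosing $\lambda$ first, then $\tau_0$ large and $h_0,\varepsilon_0$ small. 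The hard part is paragraph three: assembling the interface quadratic form coherently and verifying that it is positive definite — it is precisely here that the discrete transmission relation of Lemma~\ref{lem:transmission condition} and the jump condition on $\psi'$ built into $\mathbf{A}$ are essential.
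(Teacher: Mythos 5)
Your proposal follows essentially the same route as the paper's proof: conjugation $v=ru$ (you write $v=e^{-s\varphi}u$, which matches the paper's sketch section though the detailed proof uses $v=e^{s\varphi}u$ with $\varphi<0$ — a harmless convention difference), the decomposition $Av+Bv=g$ with the cross term $(Av,Bv)$ computed by discrete integration by parts separately on $Q_{01}$ and $Q_{02}$, the sign condition at the outer boundary, the reduction of the interface quantities via the discrete transmission relation of Lemma~\ref{lem:transmission condition}, the positive quadratic form at $a$ built on the matrix $\mathbf{A}$ of Lemma~\ref{lem:weight function} together with the retained $(\partial_t v(a))^2$ term from $Y_{13}$, absorption of the $\mathcal{O}_{\lambda,\mathfrak{K}}(sh)$ remainders by Young's inequality, and the final $h^{-2}$ treatment of the $t=0,T$ traces before returning to $u$. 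This is exactly the structure of the paper's argument (Lemmata~\ref{lem: estimate I11}--\ref{lem:Young ineq}), so no further comparison is needed.
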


\begin{Remark} Observation was chosen in $\Omega_{02}$ here. This is an arbitrary choice (see Remark~\ref{rek:observation}).
\end{Remark}

\par {Proof}. We set $f_1:=-P^{\mathfrak{M}}=\partial_t u+\bar{D}(c_dDu)$ and $f=\bar{D}(c_dDu)$. At first, we shall work with the function $v=ru$, i.e.,\  $u=\rho v$, that satisfies 
\begin{equation}\label{eq:expanse}r \Big(\partial_t (\rho v)+ \bar{D}\big(c_d D(\rho v)\big)\Big)=rf _1 ~\textrm{in}~ Q'_0.
\end{equation}
\par We have 
\begin{equation} r\partial_t(\rho v) =\partial_t v+ r(\partial_t \rho)v=\partial_t v -\tau(\partial_t \theta)\varphi v.\nonumber\end{equation}
\par We write:  $g=Av+Bv$,
\par where $Av=A_1v+A_2v+A_3v$, $Bv=B_1v+B_2v+B_3v$ with
\begin{eqnarray} &&A_1v=r\overline{\tilde{\rho}} \bar{D}(c_dDv) ,\   A_2v=cr(\bar{D}D\rho)\overline{\tilde{v}}, \   A_3v=-\tau(\partial_t \theta)\varphi v,\nonumber\\
&&B_1v=2cr\overline{D\rho}\ \overline{Dv}, \ B_2v= -2sc\phi'' v, \   B_3v=\partial_t v,\nonumber\\
&&g=rf_1-\frac{{h}}{4} r\overline{D\rho}(\bar{D}c_{d})(\tau^+Dv-\tau^-Dv)-\frac{{h}^2}{4}(\bar{D}c_d)r(\bar{D}D\rho)\overline{Dv}\nonumber\\
&& \  \ \ \ \ \ \ \ \ \ - {h}\mathcal{O}(1)r\overline{D\rho}\ \overline{Dv}-\big(r(\bar{D}c_d)\ \overline{D\rho}+{h}\mathcal{O}(1)r(\bar{D}D\rho)\big)\overline{\tilde{v}}-2sc(\phi'')v,\nonumber\end{eqnarray}
as derived in \cite{BL12}.
%g=...
%....
\par Equation ~\eqref{eq:expanse} now reads $Av+Bv=g$ and we write
\begin{equation}\label{eq:4.2}\left\|{Av}\right\|^2_{L^2(Q'_0)}+\left\|{Bv}\right\|^2_{L^2(Q'_0)}+2(Av,Bv)_{L^2(Q'_0)}=\left\|{g}\right\|^2_{L^2(Q'_0)}.
\end{equation}
\par First we need an estimation of $\left\|{g}\right\|^2_{L^2(Q'_0)}$. The proof can be adapted from \cite{BHL10a}.

\begin{Lemma}For $\tau h(\max_{[0,T]}\theta)\le \mathfrak{K}$ we have
\begin{equation}\label{eq:4.3}\left\|{g}\right\|^2_{L^2(Q'_0)}\le C_{\lambda, \mathfrak{K}}(\left\|{rf_1}\right\|^2_{L^2(Q'_0)})+\left\|{sv}\right\|^2_{L^2(Q'_0)}+h^2\left\|{sDv}\right\|^2_{L^2(Q'_0)}.
\end{equation}
\end{Lemma}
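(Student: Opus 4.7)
The plan is to bound $\|g\|^2_{L^2(Q'_0)}$ by the elementary inequality $(\sum_{i=1}^{7} a_i)^2 \le 7\sum_{i=1}^{7} a_i^2$ applied to the seven summands in the definition of $g$. The first summand $rf_1$ contributes exactly $\|rf_1\|^2_{L^2(Q'_0)}$, which is already one of the terms on the right-hand side. For each of the remaining six summands I plan to pull out, in sup-norm, the coefficient built from $r$, $\overline{D\rho}$, $\bar{D}D\rho$, $\phi''$ and $\bar{D}c_d$ using the weight-function estimates of Section~\ref{sec: prelim results}, and then control the remaining factor (which is one of $v$, $\overline{\tilde v}$, $\tau^{\pm}Dv$, $\overline{Dv}$) in $L^2$ by either $\|sv\|^2$ or $h^2\|sDv\|^2$.

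The ingredients are the following. On each subdomain $\Omega_{0i}$ the coefficient $c$ is $C^1$, so $\bar{D}c_d = \mathcal{O}(1)$; Propositions~\ref{prop:property 1} and~\ref{prop:property 4} yield $r\overline{D\rho} = s\,\mathcal{O}_{\lambda,\mathfrak{K}}(1)$ and $r\,\bar{D}D\rho = s^2\,\mathcal{O}_{\lambda,\mathfrak{K}}(1)$; and $\phi'' = \lambda^2(\psi')^2 e^{\lambda\psi} + \lambda\psi'' e^{\lambda\psi}$ is uniformly bounded by $C_\lambda$ on each $\overline{\tilde\Omega_{0i}}$. Consequently: the term $\tfrac{h}{4} r\overline{D\rho}(\bar{D}c_d)(\tau^{+}Dv - \tau^{-}Dv)$ is pointwise $\le C_{\lambda,\mathfrak{K}}\, h s\,(|\tau^{+}Dv|+|\tau^{-}Dv|)$, whose squared $L^2$-norm is $\lesssim h^2\|sDv\|^2$ since the shift operators are $L^2$-bounded up to harmless boundary contributions; the term $\tfrac{h^2}{4}(\bar{D}c_d) r(\bar{D}D\rho)\overline{Dv}$ is pointwise $\le C_{\lambda,\mathfrak{K}}\, h^2 s^2 |\overline{Dv}|$, so its squared $L^2$-norm is $h^4 s^4\|\overline{Dv}\|^2 \le \mathfrak{K}^2 h^2 \|sDv\|^2$ after absorbing one factor $(sh)^2 \le \mathfrak{K}^2$; the term $h\mathcal{O}(1) r\overline{D\rho}\,\overline{Dv}$ is handled identically and gives $\lesssim h^2\|sDv\|^2$; the two summands $r(\bar{D}c_d)\overline{D\rho}\,\overline{\tilde v}$ and $h\mathcal{O}(1) r(\bar{D}D\rho)\,\overline{\tilde v}$ are pointwise $\lesssim s|\overline{\tilde v}|$ and $\lesssim (sh)\cdot s|\overline{\tilde v}|$ respectively, both contributing $\lesssim \|sv\|^2$; and finally $2sc\phi'' v$ is trivially $\lesssim \|sv\|^2$. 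Summing the seven contributions yields the asserted inequality with constant $C_{\lambda,\mathfrak{K}}$.

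There is no genuine analytical obstacle — the proof is bookkeeping. The two points that require a little care are: first, that the $L^2$-boundedness of the shift operators $\tau^{\pm}$ and of the averaging operators ($\bar{\cdot}$, $\tilde{\cdot}$) is valid after integration over $Q'_0$ up to boundary terms of the primal and dual meshes that are absorbed in the constants; and second, that the estimates of Section~\ref{sec: prelim results}, formulated for each piece $\rho_i$ on the enlarged neighborhood $\tilde\Omega_{0i}$ where $\psi_i$ is $C^\infty$, genuinely apply to the quantities $r\overline{D\rho}$, $r\,\bar{D}D\rho$ appearing in $g$ once the integration is split as $Q'_0 = Q_{01} \cup Q_{02}$. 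No appeal to the transmission conditions is needed here since the interface point is not involved in this preliminary bound; the argument is a direct adaptation of the corresponding computation in \cite{BHL10a} to the piecewise-smooth weight.
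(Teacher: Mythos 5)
Your proposal is correct and is exactly the argument the paper intends: the paper gives no proof of this lemma beyond the remark that it ``can be adapted from \cite{BHL10a}'', and your term-by-term bookkeeping — Cauchy--Schwarz over the seven summands of $g$, the weight estimates $r\overline{D\rho}=s\mathcal{O}_{\lambda,\mathfrak{K}}(1)$, $r\bar{D}D\rho=s^2\mathcal{O}_{\lambda,\mathfrak{K}}(1)$, $\bar{D}c_d=\mathcal{O}(1)$ applied piecewise on $Q_{01}$ and $Q_{02}$, absorption of powers of $sh\le\mathfrak{K}$, and $L^2$-boundedness of the shift and averaging operators — is precisely that adaptation. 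The only cosmetic discrepancy is that the constant $C_{\lambda,\mathfrak{K}}$ should multiply all three right-hand terms (as your proof produces), which is evidently a typographical slip in the statement.
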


\par Most of the remaining of the proof will be dedicated to computing the inner product $(Av,Bv)_{L^2(Q'_0)}$. Developing the inner-product $(Av,Bv)_{L^2(Q'_0)}$, we set $I_{ij}= (A_iv,B_jv)_{L^2(Q'_0)}$. The proofs of the following lemmata are provided in Appendix A.

\begin{Lemma}[Estimate of $I_{11}$]
  \label{lem: estimate I11}
For $\tau h(max_{[0,T]}\theta)\le \mathfrak{K}$ we have
\begin{equation}I_{11}\ge -\int_{Q'}{s\lambda^2}(c^2\phi(\psi')^2)_d (Dv)^2-X_{11}+Y_{11},\nonumber\end{equation}
where $X_{11}=\int_{Q'_0}{\nu_{11}(Dv)^2}$ with $\nu_{11}$ of the form $s\lambda\phi \mathcal{O}(1)+s\mathcal{O_{\lambda, \mathfrak{K}}}(sh)$
and
\begin{eqnarray} &&Y_{11}=Y_{11}^{(1)}+Y_{11}^{(2,1)}+Y_{11}^{(2,2)},\nonumber\\
&& Y_{11}^{(1)}=\int_0^T\big(1+\mathcal{O_{\lambda, \mathfrak{K}}}(sh)\big)(c\bar{c}_d)(1)(r\overline{D\rho})(1)(Dv)^2_{n+m+\frac{3}{2}}\nonumber\\
&&\quad \quad-\int_0^T \big(1+\mathcal{O_{\lambda, \mathfrak{K}}}(sh)\big)(c\bar{c}_d)(0)(r\overline{D\rho})(0)(Dv)^2_{\frac{1}{2}},\nonumber\\
&&Y_{11}^{(2,1)}=\int_0^T{s\lambda\phi(a)\bar{c}_d(a)\Big((c\psi')(a^+)(Dv)^2_{n+\frac{3}{2}}-(c\psi')(a^-)(Dv)^2_{n+\frac{1}{2}}\Big)},\nonumber\\
&&Y_{11}^{(2,2)}=\int_0^T{s\mathcal{O_{\lambda, \mathfrak{K}}}(sh)^2(Dv)^2_{n+\frac{1}{2}}}-\int_0^T{s\mathcal{O_{\lambda, \mathfrak{K}}}(sh)^2(Dv)^2_{n+\frac{3}{2}}}.\nonumber\end{eqnarray}

% \begin{eqnarray} &&Y_{11}=Y_{11}^{(1)}+Y_{11}^{(2,1)}+Y_{11}^{(2,2)},\nonumber\\
% && Y_{11}^{(1)}=\int_0^T\big(1+\mathcal{O_{\lambda, \mathfrak{K}}}(sh)\big)(c\bar{c}_d)(1)(r\overline{D\rho})(1)(Dv)^2_{n+m+\frac{3}{2}}\nonumber\\
% &&\quad\quad-\int_0^T\big(1+\mathcal{O_{\lambda, \mathfrak{K}}}(sh)\big)(c\bar{c}_d)(0)(r\overline{D\rho})(0)(Dv)^2_{\frac{1}{2}},\nonumber\\
% &&Y_{11}^{(2,1)}=\int_0^T{s\lambda\phi(a)\bar{c}_d[c\psi'\star Dv]_a},\nonumber\\
% &&Y_{11}^{(2,2)}=\int_0^T{s\mathcal{O_{\lambda, \mathfrak{K}}}(sh)^2(Dv)^2_{n+\frac{1}{2}}}+\int_0^T{s\mathcal{O_{\lambda, \mathfrak{K}}}(sh)^2(Dv)^2_{n+\frac{3}{2}}},\nonumber
% \end{eqnarray}
% and
% \begin{equation}X_{11}=\int_{Q'_0}{\nu_{11}(Dv)^2}+\int_{Q'_0}{\bar{\nu}_{11}\big(\overline{Dv}\big)^2},\nonumber
% \end{equation}
% where $\nu_{11}, \bar{\nu}_{11}$ of the form $s\lambda\phi \mathcal{O}(1)+s\mathcal{O_{\lambda, \mathfrak{K}}}(sh)$

\end{Lemma}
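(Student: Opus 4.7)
The plan is to compute $I_{11}=(A_1v,B_1v)_{L^2(Q'_0)}$ by moving the outer discrete divergence $\bar D$ off of $c_d Dv$ via summation by parts on each subinterval $\Omega_{01}$, $\Omega_{02}$ separately, so that the resulting boundary contributions at $0,1$ and jump contributions at $a^\pm$ appear explicitly. Writing $I_{11}=\int_{Q'_0} F\,\bar D(c_d Dv)$ with $F:=2cr^2\overline{\tilde\rho}\,\overline{D\rho}\,\overline{Dv}$ on the primal mesh, I would apply Proposition~\ref{prop:IVP} on each of $\Omega_{01}$ and $\Omega_{02}$. This produces a volume integral $-\int_{Q'_0}(DF)(c_d Dv)$ plus four trace values: the Dirichlet ends at $0,1$ and the interface traces at indices $n+\tfrac{1}{2}$ and $n+\tfrac{3}{2}$.

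In the volume term I would expand $DF$ via the discrete Leibniz rule (Lemma~\ref{lemma: discrete leibnitz}), then apply Propositions~\ref{prop:property 4}, \ref{prop:property 6} and Lemma~\ref{lem:property 3} to the weight combination $r^2\overline{\tilde\rho}\,\overline{D\rho}$. Differentiating the $\overline{D\rho}$ factor produces, by Corollary~\ref{cor:multi derivative}, the leading contribution $(-s\phi)\lambda^2(\psi')^2$; paired with the remaining $c_d Dv$ and one extra averaging, this yields the dominant volume term $-\int_{Q'_0} s\lambda^2(c^2\phi(\psi')^2)_d(Dv)^2$. All lower-order pieces (one less power of $\lambda$, or an additional factor $sh$ coming either from Lemma~\ref{lem:average of two function} or from the remainders in Propositions~\ref{prop:property 1}--\ref{prop:property 6}) are collected into $X_{11}=\int_{Q'_0}\nu_{11}(Dv)^2$ with $\nu_{11}$ of the stated form $s\lambda\phi\,\mathcal{O}(1)+s\mathcal{O}_{\lambda,\mathfrak{K}}(sh)$.

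For the boundary and interface terms I would use the homogeneous condition $v|_{\partial\Omega_0}=0$ to discard the nonderivative part of $\overline{Dv}$ at the outer indices and thereby reduce the traces at $0,1$ to the form $Y_{11}^{(1)}$, picking up a multiplicative factor $1+\mathcal{O}_{\lambda,\mathfrak{K}}(sh)$ from Proposition~\ref{prop:property 1} when passing from the averaged weights to their nodal values. At $a$, the interface contribution takes the form $(c\bar c_d)(a)(r\overline{D\rho})(a^\pm)(Dv)^2$ at $n+\tfrac{1}{2}$ and $n+\tfrac{3}{2}$; expanding $r\overline{D\rho}$ by Propositions~\ref{prop:property 1} and \ref{prop:property 4} as $-s\lambda(\phi\psi')(a^\pm)+s\mathcal{O}_{\lambda,\mathfrak{K}}(sh)^2$ produces exactly the principal interface term $Y_{11}^{(2,1)}$ together with the remainder $Y_{11}^{(2,2)}$.

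The main obstacle is the careful bookkeeping at the jump point: several averaged and translated values of $\rho$ meet across $a$, and one must use Propositions~\ref{prop:property 1}--\ref{prop:property 6} to collapse them to clean one-sided values of $\phi$ and $\psi'$ while tracking the residual $\mathcal{O}(sh)^k$ terms with the exact powers needed to be absorbed later through the transmission identities of Lemma~\ref{lem:transmission condition}. The discrete setting also breaks strict Leibniz, so every application of Lemma~\ref{lem:average of two function} generates $h^2$ commutator terms that must be shown not to pollute the leading $s\lambda^2$-contribution; this accounting is the essential difference between the present argument and its continuous analogue in \cite{BDL07}.
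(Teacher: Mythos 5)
Your overall strategy (domain-wise discrete integration by parts on $\Omega_{01}$ and $\Omega_{02}$ plus the weight-function calculus of Propositions~\ref{prop:property 1}--\ref{prop:property 6}) is the right one, but the specific decomposition you propose does not lead to the stated form, and it misses the one algebraic step that makes this lemma work. You integrate by parts in $\int F\,\bar D(c_dDv)$ with $F=2cr^2\bar{\tilde{\rho}}\,\overline{D\rho}\,\overline{Dv}$, i.e.\ you put the factor $\overline{Dv}$ \emph{inside} the function being differentiated. Then $DF$ contains $\tilde{G}\,D(\overline{Dv})$ with $G=2cr^2\bar{\tilde{\rho}}\,\overline{D\rho}$, a second-order difference of $v$ paired with $c_dDv$; this is not of the form $\nu\,(Dv)^2$ and would require a further summation by parts (producing additional interface terms) before it could be absorbed into $X_{11}$. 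Worse, the trace terms your integration by parts produces at $a$ are cross products $\overline{Dv}(a^{\pm})\,(c_dDv)_{n+\frac{1}{2}}$ and $\overline{Dv}(a^{\pm})\,(c_dDv)_{n+\frac{3}{2}}$, hence mixtures of $(Dv)_{n+\frac{1}{2}}(Dv)_{n+\frac{3}{2}}$, not the pure squares appearing in $Y_{11}^{(2,1)}$ and $Y_{11}^{(2,2)}$. The missing idea is the discrete chain rule obtained from Lemma~\ref{lemma: discrete leibnitz} with $g_1=g_2=Dv$, namely $2\,\overline{Dv}\,\bar D(Dv)=\bar D\big((Dv)^2\big)$: after first splitting $\bar D(c_dDv)=\bar c_d\bar D(Dv)+(\bar Dc_d)\overline{Dv}$, the principal part of the integrand becomes a perfect discrete derivative of $(Dv)^2$, and only then is Proposition~\ref{prop:IVP} applied, with $(Dv)^2$ as the dual-mesh function. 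This is what makes both the volume term $-\int_{Q'_0}D\big(c\bar c_dr^2\bar{\tilde{\rho}}\,\overline{D\rho}\big)(Dv)^2$ and all four trace terms come out as pure squares with the single leading coefficient $-s\lambda^2(c^2\phi(\psi')^2)_d$.

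Two smaller points. First, the Dirichlet condition does not let you ``discard'' anything at $0$ and $1$: the terms $(Dv)^2_{\frac{1}{2}}$ and $(Dv)^2_{n+m+\frac{3}{2}}$ survive in $Y_{11}^{(1)}$ and are only controlled later through the sign conditions~\eqref{constrainted boundary} (Lemma~\ref{lem:estimate Y11 plus Y21}). Second, the residual volume term $2\int cr^2\bar{\tilde{\rho}}\,\overline{D\rho}\,(\bar Dc_d)(\overline{Dv})^2$ coming from the Leibniz split must itself be converted from $(\overline{Dv})^2$ to $(Dv)^2$ (via $(\overline{Dv})^2\le\overline{(Dv)^2}$ and one more integration by parts with sign-checked boundary contributions) before it can be placed in $X_{11}$; your write-up passes over this step.
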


\begin{Lemma}[Estimate of $I_{12}$]\label{lem: estimate I12} For $\tau h(max_{[0,T]}\theta)\le \mathfrak{K}$, the term $I_{12}$ is of the following form
\begin{equation}I_{12}= 2\int_{Q'_0}{s\lambda^2(c^2\phi(\psi')^2)_d(Dv)^2}-X_{12}+Y_{12},\nonumber
\end{equation}
with 
\small{
\begin{eqnarray} Y_{12}&=&\int_0^T{s \lambda^2 \phi(a)v(a)[c(\psi')^2\star c_dDv]_a}\nonumber\\
&+&\int_0^T{\delta_{12}v(a)(cDv)_{n+\frac{3}{2}}}+\int_0^T{\bar{\delta}_{12}v(a)(cDv)_{n+\frac{1}{2}}},\nonumber
\end{eqnarray}
}where $\delta_{12}, \bar{\delta}_{12}$ are of the form $s\big(\lambda\phi(a)\mathcal{O}(1)+\mathcal{O_{\lambda, \mathfrak{K}}}(sh)^2\big)$ and
\begin{equation}X_{12}=\int_{Q'_0}{\nu_{12}(Dv)^2}+\int_{Q'_0}{s\mathcal{O_{\lambda, \mathfrak{K}}}(1)\tilde{v}Dv},\nonumber
\end{equation}
\ where
\begin{equation}\nu_{12}=s\lambda \phi \mathcal{O}(1)+s\mathcal{O_{\lambda, \mathfrak{K}}}(h+(sh)^2).\nonumber\end{equation}
\end{Lemma}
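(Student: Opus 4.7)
The plan is to compute $I_{12} = (A_1v, B_2v)_{L^2(Q'_0)}$ by discrete integration by parts, mirroring the strategy of \cite{BL12} on each subdomain $\Omega_{01}, \Omega_{02}$ separately, with the essential new work being the careful bookkeeping of the interface terms at the jump point $a$. Writing $E := -2sc\phi''\, r\overline{\tilde\rho}$, which lives on the primal mesh, we have $I_{12} = \int_{Q'_0} E\, v\, \bar D(c_d Dv)$, and Proposition \ref{prop:IVP} applied to each of the two subintervals separately yields a volume contribution on $Q'_0$, boundary contributions at $x=0$ and $x=1$ (which vanish because $v_0 = v_{n+m+2} = 0$), and interface contributions at $a^-$ and $a^+$.

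For the volume contribution, the discrete Leibniz rule (Lemma \ref{lemma: discrete leibnitz}) gives $D(Ev) = (DE)\tilde v + \tilde E\, Dv$. Invoking Proposition \ref{prop:property 4} to write $r\overline{\tilde\rho} = 1 + \mathcal{O}_{\lambda,\mathfrak{K}}((sh)^2)$ and expanding $c\phi'' = \lambda^2 c\phi(\psi')^2 + \lambda c\phi\psi''$ produces the leading contribution $+2\int_{Q'_0} s\lambda^2 (c^2\phi(\psi')^2)_d (Dv)^2$ from the $\tilde E Dv\cdot(c_d Dv)$ piece; the subleading remainders, together with the discrepancies generated by Lemma \ref{lem:average of two function} applied to $\widetilde{c_d}$ factors, combine into $\int \nu_{12}(Dv)^2$ with $\nu_{12}$ of the stated form. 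The $(DE)\tilde v \cdot (c_d Dv)$ piece, estimated through Corollary \ref{cor:multi derivative} and Lemma \ref{lem:property 2}, produces the mixed term $\int s\mathcal{O}_{\lambda,\mathfrak{K}}(1)\tilde v Dv$ of $X_{12}$.

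The interface contributions at $a$ take the form $+(Ev)_{n+1}(c_dDv)_{n+1/2}$ from $\Omega_{01}$ and $-(Ev)_{n+1}(c_dDv)_{n+3/2}$ from $\Omega_{02}$. Using the discrete transmission identity $v(a^-) = v(a^+) = v_{n+1}$ and the notation \eqref{eq:notation3}, these combine into $\int_0^T v(a)\,[(-E)\star c_dDv]_a$. The dominant part of $-E$ at $a^{\pm}$ is $2sc(a^{\pm})\lambda^2 \phi(a)(\psi'(a^{\pm}))^2$, which yields exactly the principal interface term $\int_0^T s\lambda^2\phi(a)\,v(a)\,[c(\psi')^2 \star c_dDv]_a$. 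The subleading pieces of $E$, namely the $\lambda c\phi\psi''$ contribution and the $\mathcal{O}_{\lambda,\mathfrak{K}}((sh)^2)$ correction from $r\overline{\tilde\rho}$, do not admit a clean two-sided factorization; they remain as $\int \delta_{12} v(a)(c_dDv)_{n+3/2}$ and $\int \bar\delta_{12} v(a)(c_dDv)_{n+1/2}$ with $\delta_{12}, \bar\delta_{12}$ of the claimed order $s(\lambda\phi(a)\mathcal{O}(1) + \mathcal{O}_{\lambda,\mathfrak{K}}((sh)^2))$.

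The main obstacle will be the careful bookkeeping of orders in $\lambda$, $s$, and $sh$ so that the $\lambda^2$-principal parts, both in the volume and in the interface contributions, are correctly isolated while the strictly subleading terms fit within the announced sizes of $\nu_{12}, \delta_{12}, \bar\delta_{12}$. In particular, the averaging operators $\overline{\,\cdot\,}$ and $\tilde{\,\cdot\,}$ applied to products involving $c_d$ and the weights generate $\mathcal{O}(h)$ and $\mathcal{O}((sh)^2)$ corrections, via Lemma \ref{lem:average of two function} and Proposition \ref{prop:property 4}, that must be tracked and distributed between $X_{12}$ and $Y_{12}$ without producing spurious contributions incompatible with later absorption in the Carleman estimate.
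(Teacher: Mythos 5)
Your proposal follows essentially the same route as the paper's proof in Appendix A.3: write the weight factor $q=r\bar{\tilde{\rho}}c\phi''$ (your $E=-2sq$), integrate by parts on $\Omega_{01}$ and $\Omega_{02}$ separately via Proposition~\ref{prop:IVP}, use the discrete Leibniz rule and the expansions $\phi''=\lambda^2\phi(\psi')^2+\lambda\phi\mathcal{O}(1)$, $r\bar{\tilde{\rho}}=1+\mathcal{O_{\lambda, \mathfrak{K}}}(sh)^2$ to isolate the leading volume term, collect the remainders into $\nu_{12}$ and the mixed term $\int s\mathcal{O_{\lambda, \mathfrak{K}}}(1)\tilde{v}Dv$, and read off the interface contributions at $a^{\pm}$ whose principal part gives $[c(\psi')^2\star c_dDv]_a$ and whose subleading parts give $\delta_{12},\bar{\delta}_{12}$. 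The only cosmetic difference is that the paper packages the weight estimates in a dedicated lemma (quoting Lemma 4.4 of \cite{BHL10a}) rather than invoking Proposition~\ref{prop:property 4} and Corollary~\ref{cor:multi derivative} directly, and both your sketch and the paper are slightly loose about the factor $2$ carried by the interface terms, which is harmless since $Y_{12}$ is later absorbed by Young's inequality.
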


\begin{Lemma}[Estimate of $I_{13}$]\label{lem:estimate I13} There exists $\epsilon_1(\lambda)>0$ such that, for $0<\tau h(max_{[0,T]}\theta)\le \epsilon_1(\lambda)$, the term $I_{13}$ can be estimated from below in following way:
\begin{eqnarray} I_{13} \ge -\int_{\Omega'_0}\mathcal{C_{\lambda, \mathfrak{K}}}(1)(Dv(T))^2-X_{13}+Y_{13}.\nonumber\end{eqnarray}
with
\begin{eqnarray}X_{13}=\int_{Q'_0}{\big(s(sh)+T(sh)^2\theta\big)\mathcal{O_{\lambda, \mathfrak{K}}}(1)(Dv)^2}+\int_{Q_0}{s^{-1}\mathcal{O_{\lambda, \mathfrak{K}}}(sh)(\partial_t v)^2},\nonumber\end{eqnarray}
\small{
\begin{equation} Y_{13}=-\int_0^T {r\bar{\tilde{\rho}}(a^+) \partial_t v(a)(c_dDv)_{n+\frac{3}{2}}}+\int_0^T {r\bar{\tilde{\rho}}(a^-) \partial_t v(a)(c_dDv)_{n+\frac{1}{2}}}.\nonumber
\end{equation}
}
\end{Lemma}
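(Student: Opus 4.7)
\textbf{Proof plan for Lemma~\ref{lem:estimate I13}.}
The inner product to estimate is
\begin{equation*}
I_{13}=(A_{1}v,B_{3}v)_{L^{2}(Q_{0}')}=\int_{Q_{0}'} r\overline{\tilde{\rho}}\,\bar{D}(c_{d}Dv)\,\partial_{t}v.
\end{equation*}
My plan is to perform a discrete integration by parts in space on each side $\Omega_{01}$ and $\Omega_{02}$ separately, then identify a perfect time derivative in the volume remainder, integrate by parts in $t$, and finally absorb the remaining cross terms via Young's inequality. Splitting $Q_{0}'=Q_{01}\cup Q_{02}$ and applying Proposition~\ref{prop:IVP} with $f=r\overline{\tilde{\rho}}\,\partial_{t}v\in\mathbb{C}^{\mathfrak{M}}$ and $g=c_{d}Dv\in\mathbb{C}^{\overline{\mathfrak{M}}}$ yields, at $x=0$ and $x=1$, only products of the form $f_{0}(c_{d}Dv)_{1/2}$ and $f_{n+m+2}(c_{d}Dv)_{n+m+3/2}$, which vanish because $v|_{\partial\Omega_{0}}=0$ implies $\partial_{t}v=0$ at those boundary nodes. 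The boundary contributions at $x=a$ collect into
\begin{equation*}
-\int_{0}^{T} r\overline{\tilde{\rho}}(a^{+})\,\partial_{t}v(a)\,(c_{d}Dv)_{n+\frac{3}{2}}+\int_{0}^{T} r\overline{\tilde{\rho}}(a^{-})\,\partial_{t}v(a)\,(c_{d}Dv)_{n+\frac{1}{2}},
\end{equation*}
which is exactly $Y_{13}$; here I use the transmission condition $v(a^{-})=v(a^{+})=v_{n+1}$ which passes to $\partial_{t}v$.

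The remaining volume integral is $-\int_{Q_{0}'}D\!\bigl(r\overline{\tilde{\rho}}\,\partial_{t}v\bigr)(c_{d}Dv)$, which I expand via the discrete Leibniz rule (Lemma~\ref{lemma: discrete leibnitz}) as
\begin{equation*}
-\int_{Q_{0}'}D(r\overline{\tilde{\rho}})\,\widetilde{\partial_{t}v}\,(c_{d}Dv)\;-\;\int_{Q_{0}'}\widetilde{r\overline{\tilde{\rho}}}\;c_{d}\,D(\partial_{t}v)\,Dv.
\end{equation*}
Since $D$ and $\partial_{t}$ commute on smooth functions, the second integral equals $-\tfrac12\int_{Q_{0}'}\widetilde{r\overline{\tilde{\rho}}}\,c_{d}\,\partial_{t}\bigl((Dv)^{2}\bigr)$. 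Integration by parts in $t$ produces the time-boundary term
\begin{equation*}
-\tfrac12\int_{\Omega_{0}'}\!\bigl[\widetilde{r\overline{\tilde{\rho}}}\,c_{d}\,(Dv)^{2}\bigr]_{0}^{T}\;+\;\tfrac12\int_{Q_{0}'}\partial_{t}\!\bigl(\widetilde{r\overline{\tilde{\rho}}}\bigr)\,c_{d}\,(Dv)^{2}.
\end{equation*}
By Proposition~\ref{prop:property 4} we have $r\overline{\tilde{\rho}}=rA^{2}\rho=1+\mathcal{O}_{\lambda,\mathfrak{K}}((sh)^{2})$, which is positive for $sh\le\epsilon_{1}(\lambda)$; hence the $t=0$ contribution is $\ge 0$ and can be discarded, while the $t=T$ term is bounded below by $-\int_{\Omega_{0}'}\mathcal{C}_{\lambda,\mathfrak{K}}(1)(Dv(T))^{2}$, matching the first term of the claimed inequality. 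The remaining volume integral is controlled using Proposition~\ref{prop:property 5}, giving $\partial_{t}(\widetilde{r\overline{\tilde{\rho}}})=T(sh)^{2}\theta(t)\mathcal{O}_{\lambda,\mathfrak{K}}(1)$, which contributes to the second summand of $X_{13}$.

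The remaining cross term $-\int_{Q_{0}'}D(r\overline{\tilde{\rho}})\,\widetilde{\partial_{t}v}\,(c_{d}Dv)$ is handled by Young's inequality. Proposition~\ref{prop:property 4} gives $D(r\overline{\tilde{\rho}})=\mathcal{O}_{\lambda,\mathfrak{K}}((sh)^{2})$; choosing $\mu=s^{-1}(sh)/C$ in $|ab|\le\mu a^{2}+b^{2}/(4\mu)$ with $a=\widetilde{\partial_{t}v}$ and $b=\mathcal{O}((sh)^{2})Dv$ produces contributions of the form $s^{-1}\mathcal{O}_{\lambda,\mathfrak{K}}(sh)(\partial_{t}v)^{2}$ and $s\cdot sh\cdot\mathcal{O}_{\lambda,\mathfrak{K}}(1)(Dv)^{2}$, since $(sh)^{3}\le sh\cdot\mathfrak{K}^{2}$. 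Both fit into $X_{13}$ as stated. The condition $\tau h(\max\theta)\le\epsilon_{1}(\lambda)$ is precisely what guarantees positivity of $\widetilde{r\overline{\tilde{\rho}}}(0)$ and the smallness needed in the Young splitting. The main technical hurdle in my plan is ensuring that the $t=0$ boundary term is genuinely non-negative and that the averaging operators stacked on $r\rho$ do not generate $\mathcal{O}(sh)$-type pieces that are too large to absorb; this is exactly where Proposition~\ref{prop:property 4} (which supplies the $(sh)^{2}$ bound rather than $(sh)$) is decisive.
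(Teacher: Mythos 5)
Your proposal follows essentially the same route as the paper's proof in Appendix A: discrete integration by parts on each of $\Omega_{01}$, $\Omega_{02}$ (with the boundary contributions at $0$ and $1$ killed by $\partial_t v|_{\partial\Omega_0}=0$ and the interface contributions giving exactly $Y_{13}$), the Leibniz splitting of the volume remainder, the time integration by parts identifying the sign-definite $t=0$ term and the $(Dv(T))^2$ term via the positivity of $r\overline{\tilde{\rho}}$, and Young's inequality on the cross term using Propositions~\ref{prop:property 4} and~\ref{prop:property 5}. The only (harmless) difference is that you invoke the sharper $\mathcal{O}_{\lambda,\mathfrak{K}}((sh)^2)$ bound for $D(r\overline{\tilde{\rho}})$ where the paper uses $\mathcal{O}_{\lambda,\mathfrak{K}}(sh)$; both yield the stated $X_{13}$.
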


\begin{Lemma}[Estimate of $I_{21}$]\label{lem:estimate I21} For $\tau h(max_{[0,T]}\theta)\le \mathfrak{K}$, the term $I_{21}$ can be estimated as
\begin{equation} I_{21} \ge 3\int_{Q'_0}{\lambda^4s^3\phi^3c^2(\psi')^4v^2}-X_{21}+Y_{21},\nonumber\end{equation}
with
\begin{equation}X_{21}=\int_{Q'_0}{\mu_{21}v^2}+\int_{Q'_0}{\nu_{21}(Dv)^2},\nonumber
\end{equation}
where 
\begin{equation}\mu_{21}=(s\lambda\phi)^3\mathcal{O}(1)+s^2\mathcal{O_{\lambda, \mathfrak{K}}}(1)+s^3\mathcal{O_{\lambda, \mathfrak{K}}}(sh)^2, \ \ \nu_{21}=s\mathcal{O_{\lambda, \mathfrak{K}}}(sh)^2,\nonumber
\end{equation}
and

\begin{eqnarray}&&Y_{21}=Y_{21}^{(1,1)}+Y_{21}^{(1,21)}+Y_{21}^{(1,22)}+Y_{21}^{(2)},\nonumber\\
&&Y_{21}^{(1,1)}=\int_0^T{\mathcal{O_{\lambda, \mathfrak{K}}}(sh)^2(r\overline{D\rho})(1)(Dv)^2_{n+m+\frac{3}{2}}}+\int_0^T{\mathcal{O_{\lambda, \mathfrak{K}}}(sh)^2(r\overline{D\rho})(0)(Dv)^2_{\frac{1}{2}}},\nonumber\\
&&Y_{21}^{(1,21)}=\int_0^T{s^3\lambda^3\phi^3(a)[c^2(\psi')^3\star(\tilde{v})^2]_a},\nonumber\\
&&Y_{21}^{(1,22)}=\int_0^T\big(s^2\mathcal{O}_{\lambda}(1)+s^3\mathcal{O_{\lambda, \mathfrak{K}}}(sh)^2\big)\big((\tilde{v})^2_{n+\frac{1}{2}}+(\tilde{v})^2_{n+\frac{3}{2}}\big),\nonumber\\
&&Y_{21}^{(2)}=\int_0^T{s^2\mathcal{O_{\lambda, \mathfrak{K}}}(sh)v^2(a)}.\nonumber
\end{eqnarray}
\end{Lemma}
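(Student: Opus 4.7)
The plan is to expand
\begin{equation*}
I_{21} = 2\int_{Q'_0} c^2\, r^2 (\bar{D}D\rho)(\overline{D\rho}) \, \overline{\tilde v}\,\overline{Dv}
\end{equation*}
and to convert $\overline{\tilde v}\,\overline{Dv}$ into (essentially) $\tfrac{1}{2}\overline{D(v^2)}$, so that a discrete integration by parts can push the derivative off $v^2$ onto the weight factor, producing the advertised positive term. The first step uses Lemma~\ref{lem:average of two function} together with the elementary identity $\tilde v\cdot Dv=\tfrac{1}{2}D(v^2)$ to write
\begin{equation*}
\overline{\tilde v}\,\overline{Dv} = \tfrac{1}{2}\overline{D(v^2)} - \frac{h^2}{4}\bar D(\tilde v)\bar D(Dv),
\end{equation*}
the $h^2$-remainder contributing, via Proposition~\ref{prop:property 4} and Proposition~\ref{prop:property 6}, pieces of the form $s\mathcal{O_{\lambda, \mathfrak{K}}}(sh)^2(Dv)^2$ and $s^3\mathcal{O_{\lambda, \mathfrak{K}}}(sh)^2 v^2$ that fit respectively into $\nu_{21}(Dv)^2$ and into part of $\mu_{21}v^2$ inside $X_{21}$.

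The main piece is now $\int c^2 r^2(\bar DD\rho)(\overline{D\rho})\,\overline{D(v^2)}$. I will apply the discrete integration by parts of Proposition~\ref{prop:IVP} separately on $\Omega_{01}$ and $\Omega_{02}$, after a discrete Leibniz manipulation (Lemma~\ref{lemma: discrete leibnitz}) to attach $c^2$ inside the bracket. The volume part becomes
\begin{equation*}
-\int_{Q'_0} D\!\left[c^2 r^2(\bar DD\rho)(\overline{D\rho})\right] v^2,
\end{equation*}
plus controllable defects coming from staggered-grid averaging. By Corollary~\ref{cor:multi derivative} applied with $(\alpha,\beta)=(1,2)$, the leading-$\lambda$ expansion $c^2 r^2(\partial_x^2\rho)(\partial_x\rho) = -c^2 s^3\lambda^3(\psi')^3\phi^3 + \mathcal{O}_{\lambda}(s^2)$ gives, after one more discrete derivative,
\begin{equation*}
D\!\left[c^2 r^2(\bar DD\rho)(\overline{D\rho})\right] = -3c^2 s^3\lambda^4(\psi')^4\phi^3 + (s\lambda\phi)^3\mathcal{O}(1) + s^2\mathcal{O_{\lambda, \mathfrak{K}}}(1)+s^3\mathcal{O_{\lambda, \mathfrak{K}}}(sh)^2,
\end{equation*}
the factor $3\lambda$ arising from $\partial_x\phi^3 = 3\lambda\psi'\phi^3$. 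The sign reversal coming from IBP then produces exactly the positive volume term $3\int_{Q'_0}\lambda^4 s^3 \phi^3 c^2(\psi')^4 v^2$, while the remaining orders assemble into $\mu_{21}v^2$ of the stated form.

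It remains to collect the endpoint contributions. At $x=0$ and $x=1$, Proposition~\ref{prop:IVP} returns quantities weighted by $(r\overline{D\rho})$ at the boundary; the Dirichlet data kills the $v^2$-type traces, and what survives is $(Dv)^2$ at the adjacent edge with an $\mathcal{O_{\lambda, \mathfrak{K}}}(sh)^2$ prefactor — exactly $Y_{21}^{(1,1)}$ — the reduction from the expected $\mathcal{O}(1)$ to $\mathcal{O}(sh)^2$ coming from the $h^2/4$-defect in Lemma~\ref{lem:average of two function}. At the interface $x=a$ the IBP on each half yields terms at $x'_{n+1/2}$ and $x'_{n+3/2}$: the leading-$\lambda$ contributions pair up into the jump form $[c^2(\psi')^3\star(\tilde v)^2]_a$ (which is $Y_{21}^{(1,21)}$), the subleading pieces of order $s^2$ and $s^3(sh)^2$ give $Y_{21}^{(1,22)}$, and the flux-defect $\mathcal{O_{\lambda, \mathfrak{K}}}(sh)$ in the transmission relation \eqref{eq:transmission eqs2} produces a residual $s^2\mathcal{O_{\lambda, \mathfrak{K}}}(sh)v^2(a)$ contribution, which is $Y_{21}^{(2)}$. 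The main difficulty is precisely this last point: since the discrete flux condition of Lemma~\ref{lem:transmission condition} holds only up to $sh$, the interface contributions cannot be grouped exactly as in the continuous argument of \cite{BDL07}; one must carefully separate the part that will later combine with the analogous interface pieces of $I_{11}$ and $I_{12}$ into the positive quadratic form governed by the matrix $\textbf{A}$ of Lemma~\ref{lem:weight function}, from the $Y_{21}^{(2)}$-remainder that has to be absorbed through Young's inequality at the end.
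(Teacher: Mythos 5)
Your overall strategy is the same as the paper's: rewrite the integrand of $I_{21}$ as a discrete derivative of a square, integrate by parts separately on $\Omega_{01}$ and $\Omega_{02}$, and read the positive volume term off the expansion $\overline{Dq}=-3c^2s^3\lambda^4\phi^3(\psi')^4+(s\lambda\phi)^3\mathcal{O}(1)+s^2\mathcal{O}_{\lambda,\mathfrak{K}}(1)+s^3\mathcal{O}_{\lambda,\mathfrak{K}}(sh)^2$ for $q=c^2r^2(\bar{D}D\rho)\overline{D\rho}$; your identification of the factor $3\lambda^4$ is correct. The paper's first step is, however, cleaner than yours: since $\overline{Dv}=\bar{D}\tilde v$, one has the \emph{exact} identity $2\,\bar{\tilde v}\,\overline{Dv}=\bar{D}\big((\tilde v)^2\big)$, with no $h^2$ defect, so a single integration by parts immediately produces the interface traces $(\tilde v)^2_{n+\frac12}$, $(\tilde v)^2_{n+\frac32}$ that appear verbatim in $Y_{21}^{(1,21)}$ and $Y_{21}^{(1,22)}$; the passage from $(\tilde v)^2$ to $v^2$ in the volume is done afterwards via Lemma~\ref{lem:average of two function} and the averaging formula of Proposition~\ref{prop:IVP}. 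Your variant, which converts $\bar{\tilde v}\,\overline{Dv}$ into $\tfrac12\overline{D(v^2)}$ at the outset, is workable but does not literally reproduce the stated decomposition: because $D(v^2)=2\tilde v\,Dv$, your interface contributions come out as $v^2(a)$ terms and cross terms $v(a)(Dv)_{n+\frac12}$ rather than $(\tilde v)^2_{n\pm}$ terms, and your $h^2$-defect $\frac{h^2}{4}\bar{D}(\tilde v)\bar{D}(Dv)=\frac{h^2}{8}\bar{D}\big((Dv)^2\big)$ requires a second integration by parts that leaves extra interface terms $s\mathcal{O}_{\lambda,\mathfrak{K}}(sh)^2(Dv)^2_{n+\frac12}$, $s\mathcal{O}_{\lambda,\mathfrak{K}}(sh)^2(Dv)^2_{n+\frac32}$ which are absent from the stated $Y_{21}$ (they would have to be carried along and absorbed like $Y_{11}^{(2,2)}$). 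Also note that this defect yields only $(Dv)^2$-type volume remainders, not the $s^3\mathcal{O}_{\lambda,\mathfrak{K}}(sh)^2v^2$ piece you assign to it.

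The one concrete error is your explanation of $Y_{21}^{(2)}$. The transmission relation of Lemma~\ref{lem:transmission condition} is not used anywhere in this lemma; it only enters later, in Lemmas~\ref{lem:estimate Y11 plus Y21 again}, \ref{lem:estimate Y13} and \ref{lem:Young ineq}. In the paper, $Y_{21}^{(2)}=-\frac{h}{2}\int_0^T v^2(a)\big((Dq)_{n+\frac12}+(Dq)_{n+\frac32}\big)$ arises purely from the $-\frac{h}{2}$ boundary corrections in the averaging identity of Proposition~\ref{prop:IVP} when $\int Dq\,(\widetilde{v^2})$ is converted into $\int\overline{Dq}\,v^2$, and its size $s^2\mathcal{O}_{\lambda,\mathfrak{K}}(sh)$ follows from $Dq=s^3\mathcal{O}_{\lambda,\mathfrak{K}}(1)$. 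This is a bookkeeping misattribution rather than a fatal gap, but as written your argument proves the inequality with a differently packaged $Y_{21}$, and you would still need to check that your repackaged interface terms combine with $Y_{11}^{(2,1)}$ into the positive quadratic form of Lemma~\ref{lem:weight function} exactly as the paper's do.
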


\begin{Lemma}[Estimate of $I_{22}$]\label{lem:estimate I22} For $sh\le \mathfrak{K}$, we have
\begin{equation}I_{22}=-2\int_{Q'_0}{c^2s^3\lambda^4\phi^3(\psi')^4 v^2}-X_{22}+Y_{22},\nonumber\end{equation}
\ with
\begin{eqnarray}&&Y_{22}=Y_{22}^{(1)}+Y_{22}^{(2)},\nonumber\\
&&Y_{22}^{(1)}=\int_0^T{s^3\mathcal{O_{\lambda, \mathfrak{K}}}(1)v(a)\frac{h^{2}}{2}(Dv)_{n+\frac{1}{2}}+s^3\mathcal{O_{\lambda, \mathfrak{K}}}(1)v(a)\frac{h^{2}}{2}(Dv)_{n+\frac{3}{2}}},\nonumber\\
&&Y_{22}^{(2)}=\int_0^T{s\mathcal{O_{\lambda, \mathfrak{K}}}(sh)^2v^2(a)},\nonumber
\end{eqnarray}
and
\begin{equation}X_{22}=\int_{Q'_0}{\mu_{22}v^2}+\int_{Q'_0}{\nu_{22}(Dv)^2},\nonumber
\end{equation}
\ where
\begin{equation}\mu_{22}=(s\lambda\phi)^3\mathcal{O}(1)+s^2\mathcal{O_{\lambda, \mathfrak{K}}}(1)+s^3\mathcal{O_{\lambda, \mathfrak{K}}}(sh)^2, \ \ \nu_{22}=s\mathcal{O_{\lambda, \mathfrak{K}}}(sh)^2.\nonumber
\end{equation}
\end{Lemma}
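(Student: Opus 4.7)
The plan is to compute $I_{22}=(A_2v,B_2v)_{L^2(Q'_0)}$ directly from the definitions and then extract the announced leading bulk term $-2\int_{Q'_0}c^2s^3\lambda^4\phi^3(\psi')^4v^2$, pushing everything else into either $X_{22}$ (bulk error terms, either in $v^2$ or in $(Dv)^2$) or $Y_{22}$ (interface terms at the jump point $a$). Substituting $A_2v=cr(\bar D D\rho)\overline{\tilde v}$ and $B_2v=-2sc\phi''v$ gives
\begin{equation}
I_{22}=-2\int_{Q'_0} s\, c^2\phi''\,r(\bar D D\rho)\,\overline{\tilde v}\,v.\nonumber
\end{equation}

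First I would identify the leading symbol. By Proposition~\ref{prop:property 4} applied on each piece $\Omega_{0i}$, one has $r_i(\bar D D\rho_i)=r_i\partial_x^2\rho_i+s^2\mathcal{O}_{\lambda,\mathfrak{K}}((sh)^2)$, and by Lemma~\ref{lem:derivative wrt x} the explicit expansion $r_i\partial_x^2\rho_i=s^2\lambda^2(\psi'_i)^2\phi_i^2+s\lambda\phi_i\mathcal{O}(1)$. Combined with $\phi''=\lambda^2(\psi')^2\phi+\lambda\psi''\phi$, this isolates the leading factor $-2s^3\lambda^4c^2\phi^3(\psi')^4\,\overline{\tilde v}\,v$, while every lower-order factor in $s$ or $\lambda$ contributes a coefficient matching the form of $\mu_{22}=(s\lambda\phi)^3\mathcal{O}(1)+s^2\mathcal{O}_{\lambda,\mathfrak{K}}(1)+s^3\mathcal{O}_{\lambda,\mathfrak{K}}(sh)^2$, so those contributions fit into $X_{22}$ as soon as I turn the $\overline{\tilde v}\,v$ into $v^2$ modulo acceptable remainders.

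The next step is exactly that conversion. Using Lemma~\ref{lem:average double} in its discrete form, $A^2v=v+(h^2/4)\bar D(Dv)$, so $\overline{\tilde v}=v+(h^2/4)\bar D(Dv)$. The main part returns $-2s^3\lambda^4 c^2\phi^3(\psi')^4v^2$. The correction contributes a bulk term
\begin{equation}
-\tfrac{1}{2}\int_{Q'_0} s^3 h^2 \lambda^4 c^2\phi^3(\psi')^4\, v\,\bar D(Dv)+\text{lower order},\nonumber
\end{equation}
to which I apply the discrete integration by parts of Proposition~\ref{prop:IVP} separately on $\Omega_{01}$ and $\Omega_{02}$. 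Since $s^3h^2=s(sh)^2$, the resulting interior $(Dv)^2$ term has a coefficient of the form $s\mathcal{O}_{\lambda,\mathfrak{K}}(sh)^2=\nu_{22}$, as required. The lateral boundary contributions at $x=0$ and $x=1$ vanish thanks to the Dirichlet condition $u|_{\partial\Omega_0}=0$ (equivalently $v|_{\partial\Omega_0}=0$).

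The interface contributions produced by integration by parts at $x=a$ are the source of $Y_{22}$. On $\Omega_{01}$ the right endpoint $x_{n+1}=a$ produces a boundary term proportional to $v(a)(Dv)_{n+\frac{1}{2}}$, while on $\Omega_{02}$ the left endpoint produces a term proportional to $v(a)(Dv)_{n+\frac{3}{2}}$; in both cases the prefactor reads $s^3(h^2/2)\mathcal{O}_{\lambda,\mathfrak{K}}(1)$, which matches $Y_{22}^{(1)}$ exactly. Remaining jump contributions generated along the way when turning $\phi''$, $(\psi')^4$, and $r(\bar D D\rho)$ into their pointwise values at $a$ are of the form $s\mathcal{O}_{\lambda,\mathfrak{K}}(sh)^2\,v^2(a)$, producing $Y_{22}^{(2)}$. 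The main obstacle, and the reason the proof is not a one-line application of the calculus rules of Section~\ref{sec: prelim results}, is the careful bookkeeping required at $x=a$: the double-averaging $\overline{\tilde v}=A^2v$ reaches across the jump (involving $v_n$, $v_{n+1}$ and $v_{n+2}$), so the $h^2\bar D(Dv)$ remainder must be treated subinterval by subinterval, and one has to verify that every interface contribution collapses either into the listed $Y_{22}^{(1)},Y_{22}^{(2)}$ structure or into bulk remainders that are absorbed by $\mu_{22},\nu_{22}$.
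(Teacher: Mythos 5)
Your proposal follows essentially the same route as the paper: write $I_{22}=-2\int s\,q\,\overline{\tilde v}\,v$ with $q=c^2r(\bar DD\rho)\phi''$, expand $\overline{\tilde v}=v+\tfrac{h^2}{4}\bar D(Dv)$, extract the leading symbol $c^2(s\lambda\phi)^2(\psi')^2\cdot\lambda^2(\psi')^2\phi$ via Proposition~\ref{prop:property 4}, and integrate the $h^2$-correction by parts on $\Omega_{01}$ and $\Omega_{02}$ separately so that the interface contributions at $a$ produce $Y_{22}^{(1)}$ and $Y_{22}^{(2)}$ while the volume remainders are absorbed by $\mu_{22},\nu_{22}$. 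The only imprecision is in your account of $Y_{22}^{(2)}$: in the paper it arises not from freezing coefficients at $a$ but from a \emph{second} discrete integration by parts of the cross term $\tfrac{sh^2}{2}Dq\,\tilde v\,Dv=\tfrac{sh^2}{4}Dq\,D(v^2)$, whose boundary contributions at $a$ give the $s\mathcal{O}_{\lambda,\mathfrak{K}}(sh)^2v^2(a)$ terms; this does not affect the validity of your overall argument.
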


\begin{Lemma}[Estimate of $I_{23}$]\label{lem:estimate I23} For $\tau h(max_{[0,T]}\theta)\le \mathfrak{K}$, the term $I_{23}$ can be estimated from below in the following way
\begin{equation}I_{23}\ge \int_{\Omega'_0}s^2\big({\mathcal{O_{\lambda, \mathfrak{K}}}(1)v^2_{|_{t=0}}+\mathcal{O_{\lambda, \mathfrak{K}}}(1)v^2_{|_{t=T}}}\big)-X_{23}+Y_{23},\nonumber
\end{equation}
with
\begin{eqnarray}X_{23}&=&\int_{Q_0}{Ts^2\theta\mathcal{O_{\lambda, \mathfrak{K}}}(1)v^2}+\int_{Q_0}{s^{-1}\mathcal{O_{\lambda, \mathfrak{K}}}(sh)^2(\partial_t v)^2}\nonumber\\
&&+\int_{Q'_0}{(sh)^2s\mathcal{O_{\lambda, \mathfrak{K}}}(1)(D v)^2},\nonumber
\end{eqnarray}
and
\begin{eqnarray}&&Y_{23}=Y_{23}^{(1)}+Y_{23}^{(2)}+Y_{23}^{(3)},\nonumber\\
&&Y_{23}^{(1)}=\int_0^T{s^2\mathcal{O_{\lambda, \mathfrak{K}}}(1)\partial_t v(a) \frac{h}{2}(\tilde{v}_{n+\frac{1}{2}})+s^2\mathcal{O_{\lambda, \mathfrak{K}}}(1)\partial_t v(a)\frac{h}{2}(\tilde{v}_{n+\frac{3}{2}})},\nonumber\\
&&Y_{23}^{(2)}=\int_0^T{sT\theta \mathcal{O_{\lambda, \mathfrak{K}}}(sh)v^2(a)},\nonumber\\
&&Y_{23}^{(3)}=\mathcal{O_{\lambda, \mathfrak{K}}}(sh)^2v^2(a)|_{t=0}^{t=T}.\nonumber
\end{eqnarray}
\end{Lemma}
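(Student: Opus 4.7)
The plan is to start from
\[
I_{23}=(A_2v,B_3v)_{L^2(Q'_0)}=\int_{Q'_0}c\,r\,(\bar D D\rho)\,\overline{\tilde v}\,\partial_t v,
\]
and treat the scalar factor $W:=c\,r\,(\bar D D\rho)$ as a known weight. Proposition~\ref{prop:property 4} gives $W=s^2\mathcal O_{\lambda,\mathfrak K}(1)$, and Proposition~\ref{prop:property 5} gives $\partial_t W=Ts^2\theta\,\mathcal O_{\lambda,\mathfrak K}(1)$. I would then use the double-averaging identity of Lemma~\ref{lem:average double}, namely $\overline{\tilde v}=v+\tfrac{h^2}{4}\bar D(Dv)$, to split
\[
I_{23}=\int_{Q'_0}Wv\,\partial_t v\;+\;\frac{h^2}{4}\int_{Q'_0}W\,(\bar D Dv)\,\partial_t v=:I_{23}^{(A)}+I_{23}^{(B)}.
\]

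For $I_{23}^{(A)}$, the clean identity $v\partial_t v=\tfrac12\partial_t(v^2)$ followed by integration by parts in $t$ yields
\[
I_{23}^{(A)}=\frac12\int_{\Omega'_0}W v^2\Big|_{t=0}^{t=T}-\frac12\int_{Q'_0}(\partial_t W)v^2.
\]
The time-boundary term produces the stated leading contribution $\int_{\Omega'_0}s^2\mathcal O_{\lambda,\mathfrak K}(1)v^2|_{t=0,T}$ (since $W=s^2\mathcal O_{\lambda,\mathfrak K}(1)$), and the volume term is absorbed into $X_{23}$ as $\int_{Q_0}Ts^2\theta\,\mathcal O_{\lambda,\mathfrak K}(1)v^2$. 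For $I_{23}^{(B)}$, I would transfer the outer $\bar D$ off of $Dv$ via Proposition~\ref{prop:IVP}, use the discrete Leibniz rule (Lemma~\ref{lemma: discrete leibnitz}) to write $D(W\partial_tv)=D(W)\widetilde{\partial_tv}+\widetilde W\,\partial_t(Dv)$, estimate $D(W)$ and $\widetilde W$ by Lemma~\ref{lem:property 2} and Proposition~\ref{prop:property 4}, and conclude with Young's inequality. This produces the remaining two contributions to $X_{23}$, namely $\int_{Q'_0}(sh)^2s\,\mathcal O_{\lambda,\mathfrak K}(1)(Dv)^2$ and $\int_{Q_0}s^{-1}\mathcal O_{\lambda,\mathfrak K}(sh)^2(\partial_tv)^2$.

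The interface contributions $Y_{23}$ arise when the above manipulations are performed separately on $\Omega_{01}$ and $\Omega_{02}$. The leading ones come from the pointwise value at the interface node $x_{n+1}=a$: from $\overline{\tilde v}_{n+1}=\tfrac12(\tilde v_{n+1/2}+\tilde v_{n+3/2})$ and the discrete measure factor $h$ at $a$, the contribution $h\,W_{n+1}\,\overline{\tilde v}_{n+1}\partial_tv_{n+1}$ with $W_{n+1}=s^2\mathcal O_{\lambda,\mathfrak K}(1)$ yields exactly the $Y_{23}^{(1)}$ terms with their explicit $h/2$ factors. The sub-leading contributions $Y_{23}^{(2)}$ and $Y_{23}^{(3)}$ come from the residual $\tfrac{h^2}{4}(\bar D Dv)_{n+1}$ term: pairing it with $\partial_tv(a)$ and using $v(a)\partial_tv(a)=\tfrac12\partial_tv^2(a)$ followed by an integration by parts in $t$ produces a time-volume piece (with $sT\theta\,\mathcal O_{\lambda,\mathfrak K}(sh)$ in front, matching $Y_{23}^{(2)}$) and a time-boundary piece at $a$ (of size $\mathcal O_{\lambda,\mathfrak K}((sh)^2)v^2(a)|_{t=0}^{t=T}$, matching $Y_{23}^{(3)}$).

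The main obstacle is the bookkeeping at the interface: because $\overline{\tilde v}_{n+1}$ straddles the discontinuity of $c$, splitting $I_{23}$ into a "bulk" part (yielding the main $v^2|_{t=0,T}$ terms and $X_{23}$) and an "interface" part (yielding $Y_{23}$) requires systematic use of the discrete measure decomposition at $a$ and of the transmission conditions of Lemma~\ref{lem:transmission condition} to replace fluxes $(c_dDv)_{n+\frac12},(c_dDv)_{n+\frac32}$ by admissible combinations. Getting the precise $sh$-powers in the coefficients of $Y_{23}^{(2)}$ and $Y_{23}^{(3)}$ — rather than the formally larger $s^2h$ — is the delicate point and is where the bounds on $\partial_t W$ and on $W_{n+1}$ must be combined carefully with the factors already present in the discrete measure.
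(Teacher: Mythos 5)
Your decomposition $\overline{\tilde v}=v+\tfrac{h^2}{4}\bar D Dv$ and the treatment of $I_{23}^{(A)}$ by integration by parts in $t$ are fine: they give the leading terms $\int_{\Omega'_0}s^2\mathcal{O}_{\lambda,\mathfrak{K}}(1)v^2|_{t=0,T}$ and $\int_{Q_0}Ts^2\theta\,\mathcal{O}_{\lambda,\mathfrak{K}}(1)v^2$, and for this part your route is equivalent to the paper's (which instead transfers the bar-average onto the weight via Proposition~\ref{prop:IVP} and uses $\tilde v\,\partial_t\tilde v=\tfrac12\partial_t(\tilde v)^2$). The gap is in $I_{23}^{(B)}$. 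After your spatial integration by parts and the Leibniz split $D(W\partial_t v)=D(W)\widetilde{\partial_t v}+\widetilde W\,D(\partial_t v)$, the first piece does close under Young's inequality and yields the $s^{-1}\mathcal{O}_{\lambda,\mathfrak{K}}(sh)^2(\partial_t v)^2$ and $s\,\mathcal{O}_{\lambda,\mathfrak{K}}(sh)^2(Dv)^2$ contributions to $X_{23}$. But the second piece, $\tfrac{h^2}{4}\int\widetilde W\,D(\partial_t v)\,Dv$ with $\widetilde W=s^2\mathcal{O}_{\lambda,\mathfrak{K}}(1)$, cannot be "concluded with Young's inequality'': writing it as $\tfrac{h}{4}s^2\mathcal{O}(1)\,(hD\partial_t v)\,Dv$ and insisting on an admissible $(\partial_t v)^2$ coefficient $s^{-1}\mathcal{O}(sh)^2$ forces a companion term $s^3\mathcal{O}(1)(Dv)^2$, which the left-hand side $\int s(Dv)^2$ cannot absorb (and any rebalancing makes the $(\partial_t v)^2$ coefficient exceed $s^{-1}$, since $s^2h$ is not small). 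One must instead use $D(\partial_t v)\,Dv=\tfrac12\partial_t\big((Dv)^2\big)$ and integrate by parts in time, which creates an extra boundary term $\mathcal{O}_{\lambda,\mathfrak{K}}(sh)^2\int_{\Omega'_0}(Dv)^2|_{t=0}^{t=T}$ that is not in the statement of the lemma. The paper avoids this entirely by applying Lemma~\ref{lem:average of two function} to $\widetilde{W\partial_t v}$ rather than the Leibniz rule to $D(W\partial_t v)$: the $\tfrac{h^2}{4}D(W)D(\partial_t v)$ remainder then stays paired with $\tilde v$, not $Dv$, and $\tilde v\,Dv=\tfrac12 D(v^2)$ converts the time-boundary contribution back into $v^2$ terms, absorbed into the leading $s^2v^2|_{t=0,T}$ term after one more spatial integration by parts (this is the $\bar Q_2^2$ step, where $h^2\bar D D(cr\bar DD\rho)=s(sh)\mathcal{O}_{\lambda,\mathfrak{K}}(1)$ is what makes it work).

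Two smaller points. Your interface bookkeeping double-counts the node $x_{n+1}$: you attribute the full contribution $hW_{n+1}\overline{\tilde v}_{n+1}\partial_t v(a)$ to $Y_{23}^{(1)}$ and then re-use its residual $\tfrac{h^2}{4}(\bar DDv)_{n+1}$ part to generate $Y_{23}^{(2)}$ and $Y_{23}^{(3)}$. Moreover, carrying out your own computation for that residual (transmission condition of Lemma~\ref{lem:transmission condition}, then integration by parts in $t$) gives a time-boundary piece of size $s\,\mathcal{O}_{\lambda,\mathfrak{K}}(sh)^2v^2(a)|_{t=0}^{t=T}$, a factor $s$ larger than the stated $Y_{23}^{(3)}$; this would still be absorbable later in the proof of Theorem~\ref{theo:Carleman estinmate uniform}, but it is not the estimate claimed here.
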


\begin{Lemma}[Estimate of $I_{31}$]\label{lem:estimate I31}  For $\tau h(max_{[0,T]}\theta)\le \mathfrak{K}$, we have

\begin{equation}I_{31}=-X_{31}+Y_{31},\nonumber\
\end{equation}
\ with 
\begin{equation}X_{31}=\int_{Q'_0}{T\theta s^2\mathcal{O_{\lambda, \mathfrak{K}}}(1){v}^2}+\int_{Q'_0}{T\theta\mathcal{O_{\lambda, \mathfrak{K}}}(sh)^2 (Dv)^2},\nonumber
\end{equation}
\ and 
\begin{eqnarray}&&Y_{31}=Y_{31}^{(1)}+Y_{31}^{(2)},\nonumber\\
&&Y_{31}^{(1)}=\int_0^T{T\theta s^2\mathcal{O_{\lambda, \mathfrak{K}}}(1)v(a)\frac{h}{2}(Dv)_{n+\frac{1}{2}}}+\int_0^T{T\theta s^2\mathcal{O_{\lambda, \mathfrak{K}}}(1)v(a)\frac{h}{2}(Dv)_{n+\frac{3}{2}}},\nonumber\\
&&Y_{31}^{(2)}= \int_0^T{T\theta s^2\mathcal{O_{\lambda, \mathfrak{K}}}(1)v^2(a)}.\nonumber\
\end{eqnarray}
\end{Lemma}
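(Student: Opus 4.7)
The plan is to expand $I_{31}=(A_3 v, B_1 v)_{L^2(Q'_0)}$, rewrite $v\,\overline{Dv}$ by a discrete algebraic identity, and then perform summation by parts separately on $\Omega_{01}$ and $\Omega_{02}$ (one cannot cross $a$ since $c$ jumps there); the volume parts will form $X_{31}$ and the interface residues form $Y_{31}$. Write
\[
I_{31}=-2\int_{Q'_0}\eta\,v\,\overline{Dv},\qquad \eta:=\tau(\partial_t\theta)\,\varphi\,c\,(r\overline{D\rho}).
\]
From $\partial_t\theta=(2t-T)\theta^2$ and Proposition~\ref{prop:property 1} one obtains $\eta=T\theta\,s^2\,\mathcal{O}_{\lambda,\mathfrak{K}}(1)$; the same order transfers to $\bar D(\tilde\eta)$ and $D\eta$ via Lemma~\ref{lem:property 2} and Proposition~\ref{prop:property 6}.

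The key identity, obtained from $v_i=\tilde v_{i\pm 1/2}\mp\tfrac{h}{2}(Dv)_{i\pm 1/2}$ and $D(v^2)=2\tilde v\,Dv$ (Lemma~\ref{lemma: discrete leibnitz}), is
\[
v_i\,\overline{Dv}_i=\tfrac{1}{2}\,\overline{D(v^2)}_i-\tfrac{h^2}{4}\,\bar D\bigl((Dv)^2\bigr)_i,
\]
so $I_{31}=-\int_{Q'_0}\eta\,\overline{D(v^2)}+\tfrac{h^2}{2}\int_{Q'_0}\eta\,\bar D\bigl((Dv)^2\bigr)$. I would then apply Proposition~\ref{prop:IVP} on each subinterval: twice on the first integral (first shifting the overline onto $\eta$, then shifting $D$ off $v^2$ via the dual analogue of the same formula), once on the second. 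The volume parts yield $-\int\bar D(\tilde\eta)v^2$ and $-\tfrac{h^2}{2}\int(D\eta)(Dv)^2$, which are exactly the two pieces of $X_{31}$ at the stated sizes.

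To identify the boundary evaluations at $a$, I would rewrite $D(v^2)_{n\pm 1/2}=2\tilde v_{n\pm 1/2}(Dv)_{n\pm 1/2}=2v(a)(Dv)_{n\pm 1/2}\mp h(Dv)^2_{n\pm 1/2}$ using the continuity $v(a^-)=v(a^+)=v_{n+1}$. A direct check then shows that the $(Dv)^2_{n\pm 1/2}$ boundary evaluations produced by the two successive summations by parts cancel exactly between the two integrals; similarly, $v_1=h(Dv)_{1/2}$ and $v_{n+m+1}=-h(Dv)_{n+m+3/2}$ give, together with the same check, that the $(Dv)^2$ boundary evaluations at $x=0,1$ also cancel. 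What survives at $a$ are $(\tilde\eta_{n+3/2}-\tilde\eta_{n+1/2})v^2(a)$, which is $Y_{31}^{(2)}$, and $-h\eta(a^-)v(a)(Dv)_{n+1/2}-h\eta(a^+)v(a)(Dv)_{n+3/2}$, which is $Y_{31}^{(1)}$; both carry coefficients of the stated order $T\theta\,s^2\,\mathcal{O}_{\lambda,\mathfrak{K}}(1)$.

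The hard part will be the bookkeeping of these cancellations: one must verify that the $(Dv)^2$-boundary evaluations from both integrals and at both the jump point and the outer boundary cancel exactly, so that $Y_{31}$ contains only the stated $v^2(a)$ and $v(a)\cdot h(Dv)_{n\pm 1/2}$ contributions. Once this is confirmed, everything else reduces to direct application of the discrete calculus estimates of Section~\ref{sec: prelim results}.
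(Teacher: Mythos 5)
Your argument is correct and follows the same overall strategy as the paper's proof: two discrete integrations by parts carried out on $\Omega_{01}$ and $\Omega_{02}$ separately, coefficient estimates from the weight-function calculus (namely $r\overline{D\rho}=s\mathcal{O}_{\lambda,\mathfrak{K}}(1)$ and $\partial_t\theta=(2t-T)\theta^2$, so that your $\eta=T\theta s^2\mathcal{O}_{\lambda,\mathfrak{K}}(1)$ and likewise for $D\eta$ and $\bar D\tilde\eta$), and interface residues at $a$ collected into $Y_{31}$. The only real difference is organizational: the paper first applies Proposition~\ref{prop:IVP} to move the average off $\overline{Dv}$, obtaining $\widetilde{\varphi c r\overline{D\rho}\,v}\,Dv$ plus half-step boundary terms, and then splits that average with Lemma~\ref{lem:average of two function}, so the $h^2(Dv)^2$ correction appears directly as a volume term $\tfrac{h^2}{4}D(\varphi c r\overline{D\rho})(Dv)^2$ with no further boundary contribution; you instead split $v\,\overline{Dv}$ pointwise into $\tfrac12\overline{D(v^2)}-\tfrac{h^2}{4}\bar D\bigl((Dv)^2\bigr)$ before integrating by parts, which forces an extra summation by parts on the $h^2$ piece and generates $(Dv)^2$ evaluations at $0$, $1$ and $a^{\pm}$ that must cancel against those produced by the first piece. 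That cancellation does hold exactly: at $a^-$, for instance, $D(v^2)_{n+\frac12}=2v(a)(Dv)_{n+\frac12}-h(Dv)^2_{n+\frac12}$, and the resulting $-\tfrac{h^2}{2}\eta(a^-)(Dv)^2_{n+\frac12}$ is killed by the boundary evaluation of $\tfrac{h^2}{2}\int\eta\,\bar D\bigl((Dv)^2\bigr)$, while at $x=0$ the identity $\tilde v_{\frac12}=\tfrac{h}{2}(Dv)_{\frac12}$ produces the matching cancellation. So your bookkeeping closes and you recover the same $X_{31}$ and $Y_{31}$ (up to harmless factors of $2$ absorbed in the $\mathcal{O}_{\lambda,\mathfrak{K}}(1)$); the paper's ordering simply avoids having to verify this cancellation in the first place.
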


\begin{Lemma}[Estimate of $I_{32}$]\label{lem:estimate I32} \cite{BL12} For $\tau h(max_{[0,T]}\theta)\le \mathfrak{K}$, the term $I_{32}$ can be estimated from below in the following way
\begin{equation}I_{32}=-X_{32}=\int_{Q'_0}{Ts^2\theta\mathcal{O_{\lambda, \mathfrak{K}}}(1)v^2}.\nonumber
\end{equation}
\end{Lemma}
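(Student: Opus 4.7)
The term $I_{32}=(A_3v,B_2v)_{L^2(Q'_0)}$ is by far the simplest of the nine inner products, because both $A_3v=-\tau(\partial_t\theta)\varphi v$ and $B_2v=-2sc\phi''v$ are pure multiplication operators on $v$ (no discrete difference or averaging operator appears). In particular, there will be no integration by parts to perform, no interface terms at $a$ to collect, and no boundary contributions at $x=0$ or $x=1$. The plan is simply to substitute the definitions, collect the scalar prefactor, and read off the desired bound.

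Concretely, I would begin by writing
\begin{equation*}
I_{32}=\int_{Q'_0}\bigl(-\tau(\partial_t\theta)\varphi v\bigr)\bigl(-2sc\phi''v\bigr)\,dx\,dt
=\int_{Q'_0}2\tau s\,(\partial_t\theta)\,\varphi\, c\,\phi''\, v^2\,dx\,dt.
\end{equation*}
Using \eqref{eq:derivative of theta}, i.e.\ $\partial_t\theta=(2t-T)\theta^2$, together with $s=\tau\theta$, the prefactor becomes
\begin{equation*}
2\tau s(\partial_t\theta)=2s^2(2t-T)\theta,
\end{equation*}
so that
\begin{equation*}
I_{32}=\int_{Q'_0}2(2t-T)\,\theta\,\varphi\, c\,\phi''\,s^2 v^2\,dx\,dt.
\end{equation*}

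Next I would absorb every bounded factor into the $\mathcal{O}_{\lambda,\mathfrak{K}}(1)$ symbol. The function $\varphi=e^{\lambda\psi}-e^{\lambda K}$ is bounded on $\overline{\Omega}_0$ with a bound depending only on $\lambda$, and $c\in L^\infty$ satisfies \eqref{eq:coefficient}. Since $\phi=e^{\lambda\psi}$ and $\psi$ is piecewise $C^\infty$, the second derivative $\phi''$ is bounded on each $\overline{\tilde{\Omega}}_{0i}$, again with a constant depending only on $\lambda$. Finally $|2t-T|\le T$. Gathering all of this yields
\begin{equation*}
2(2t-T)\,\theta\,\varphi\, c\,\phi''=T\,\theta\,\mathcal{O}_{\lambda,\mathfrak{K}}(1),
\end{equation*}
and hence
\begin{equation*}
I_{32}=\int_{Q'_0}Ts^2\theta\,\mathcal{O}_{\lambda,\mathfrak{K}}(1)\,v^2\,dx\,dt,
\end{equation*}
which is exactly the claimed identity with $X_{32}=-\int_{Q'_0}Ts^2\theta\,\mathcal{O}_{\lambda,\mathfrak{K}}(1)v^2$.

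There is really no obstacle here: the only point to be careful about is the accounting of the time-dependent prefactor $\tau s(\partial_t\theta)$, which by \eqref{eq:derivative of theta} is exactly of size $Ts^2\theta$, matching the form adopted for $X_{ij}$-type remainders in the preceding lemmata. This is why $I_{32}$ produces only an $X$-type term and no $Y$-type (jump or boundary) contribution and no leading positive term to keep in the left-hand side; the entire estimate is absorbed on the right-hand side in the final Carleman argument.
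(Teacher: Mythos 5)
Your computation is correct and is the natural (indeed, the only reasonable) argument: since $A_3v$ and $B_2v$ are both pointwise multiplications of $v$, the inner product reduces to the prefactor $2\tau s(\partial_t\theta)\varphi c\phi''=2s^2(2t-T)\theta\varphi c\phi''=Ts^2\theta\mathcal{O}_{\lambda}(1)$, exactly as you write. The paper gives no proof of this lemma itself (it defers to \cite{BL12}), but your direct verification matches what that reference does and fills the gap correctly.
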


\begin{Lemma}[Estimate of $I_{33}$]\label{lem:estimate I33} \cite[proof of Lemma 3.9]{BL12} For $\tau h(max_{[0,T]}\theta)\le \mathfrak{K}$, the term $I_{33}$ can be estimated from below in the following way
\begin{equation}I_{33}\ge-X_{33}=\frac{1}{2}\tau\int_{Q'_0}{c'\varphi(\partial^2_t \theta)v^2}.\nonumber
\end{equation}
\end{Lemma}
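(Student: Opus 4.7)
My plan is to compute $I_{33}=(A_3v,B_3v)_{L^2(Q'_0)}$ directly, reducing it to an integration by parts in the time variable. Since $A_3v=-\tau(\partial_t\theta)\,\varphi\,v$ acts as pure multiplication by a function and $B_3v=\partial_t v$, the integrand satisfies
\begin{equation*}
(A_3v)(B_3v)=-\tau(\partial_t\theta)\varphi\,v\,\partial_t v=-\tfrac{\tau}{2}(\partial_t\theta)\varphi\,\partial_t(v^2),
\end{equation*}
where $\varphi=\varphi(x)$ carries no time dependence. In the semi-discrete setting the $L^2(Q'_0)$ inner product is just a finite sum over grid points of one-dimensional integrals in $t$, so no spatial difference or averaging operator is involved. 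This is why the argument is essentially identical to its continuous and smooth-coefficient counterpart in \cite[proof of Lemma 3.9]{BL12}.

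Next I would integrate by parts in $t$ on $[0,T]$ at each grid point and resum, which produces the volume term $\tfrac{\tau}{2}\int_{Q'_0}\varphi\,(\partial_t^2\theta)\,v^2$ together with boundary contributions at $t=0$ and $t=T$. Using \eqref{eq:derivative of theta} one has $(\partial_t\theta)(0)=-T\theta^2(0)$ and $(\partial_t\theta)(T)=T\theta^2(T)=T\theta^2(0)$ by \eqref{eq:theta}. Hence the two endpoint contributions combine into a single term proportional to $T\theta^2(0)\int_{\Omega'_0}\varphi(v^2(T)+v^2(0))$. Since $\varphi<0$ by the construction \eqref{eq:weight functions}, this boundary contribution has a definite sign that is favorable for a lower bound on $I_{33}$, and so it may simply be discarded. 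What remains is exactly the volume integral that matches the stated $-X_{33}$.

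I do not anticipate a serious obstacle: neither the spatial discreteness nor the transmission point $a$ plays any role, because $A_3$ and $B_3$ involve only multiplication and time differentiation. The only point requiring a little care is the sign bookkeeping for the two dropped endpoint pieces, which follows immediately from the explicit formulas for $\theta$ and $\varphi$ recalled above. Consequently the proof reduces to the time integration by parts together with the elementary sign analysis.
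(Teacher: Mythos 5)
Your proof is correct and is essentially the argument the paper relies on (it defers to \cite[proof of Lemma 3.9]{BL12}): the time integration by parts, the identity $\partial_t\theta=(2t-T)\theta^2$ with $\theta(0)=\theta(T)$, and the sign $\varphi<0$ make the endpoint contributions nonnegative so they can be dropped. The only discrepancy is the factor $c'$ appearing in the statement of the lemma, which does not arise from the computation of $(A_3v,B_3v)$ and appears to be a typographical artefact rather than a gap in your argument.
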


\par \underline{Continuation of the proof of Theorem 4.1}. Collecting the terms we have obtained in the previous lemmata, from ~\eqref{eq:4.2} and ~\eqref{eq:4.3} for $0<\tau h(max_{[0,T]}\theta)\le \epsilon_1(\lambda)$ we find
\small{
\begin{eqnarray}&&\left\|{Av}\right\|^2_{L^2(Q'_0)}+\left\|{Bv}\right\|^2_{L^2(Q'_0)}+2\int_{Q'_0}{s\lambda^2(c^2\phi(\psi')^2)_d(Dv)^2}+2\int_{Q'_0}{c^2s^3\lambda^4\phi^3(\psi')^4v^2}\nonumber\\
&&\ \ \ \  \ +2\big(Y_{11}^{(1)}+Y_{21}^{(1,1)}\big)+2\big(Y_{11}^{(2,1)}+Y_{21}^{(1,21)}\big)+2Y_{13}\nonumber\\
&\le&C_{{\lambda, \mathfrak{K}}}\Big(\left\|{rf_1}\right\|^2_{L^2(Q'_0)}+\int_{\Omega'_0}s^2\big(v^2_{|_{t=0}}+v^2_{|_{t=T}}\big)+\int_{\Omega'_0}(Dv(T))^2\Big)+2X+2\underline{Y},\nonumber
\end{eqnarray}
}
with
\begin{eqnarray}&&\underline{Y}=-\Big(Y_{11}^{(2,2)}+Y_{12}+Y_{21}^{(1,22)}+Y_{21}^{(2)}+Y_{22}+Y_{23}+Y_{31}\Big),\nonumber\\ &&X=X_{11}+X_{12}+X_{13}+X_{21}+X_{22}+X_{23}+X_{31}+X_{32}+X_{33}\nonumber\\
&&\quad\quad +C_{\lambda,\mathfrak{K}}\big(\left\|{sv}\right\|^2_{L^2(Q'_0)}+h^2\left\|{sDv}\right\|^2_{L^2(Q'_0)}\big).\nonumber
\end{eqnarray}

\ With the following lemma, we may in fact ignore the term $Y_{11}^{(1)}+Y_{21}^{(1,1)}$ in the previous inequality.
\begin{Lemma}\label{lem:estimate Y11 plus Y21}For all $\lambda$ there exists $0<\epsilon_2(\lambda)<\epsilon_1(\lambda)$ such that for $0<\tau h(max_{[0,T]}\theta)\le \epsilon_2(\lambda)$ we have $Y_{11}^{(1)}+Y_{21}^{(1,1)}\ge 0.$
\end{Lemma}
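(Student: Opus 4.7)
The plan is to regroup the two quantities by the boundary point at which they are evaluated and then exploit the sign condition \eqref{constrainted boundary} on the weight together with the strict positivity of $c$ coming from \eqref{eq:coefficient}. Writing
$$
Y_{11}^{(1)}+Y_{21}^{(1,1)}=\int_0^T \Gamma_1\,(Dv)^2_{n+m+\frac{3}{2}}\,dt+\int_0^T \Gamma_0\,(Dv)^2_{\frac{1}{2}}\,dt,
$$
where
$$
\Gamma_1=(r\overline{D\rho})(1)\Big[\big(1+\mathcal{O}_{\lambda,\mathfrak{K}}(sh)\big)(c\bar{c}_d)(1)+\mathcal{O}_{\lambda,\mathfrak{K}}(sh)^2\Big],
$$
$$
\Gamma_0=(r\overline{D\rho})(0)\Big[-\big(1+\mathcal{O}_{\lambda,\mathfrak{K}}(sh)\big)(c\bar{c}_d)(0)+\mathcal{O}_{\lambda,\mathfrak{K}}(sh)^2\Big],
$$
reduces the problem to checking that each coefficient $\Gamma_i$ has the right sign.

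Next, I would use \eqref{eq:coefficient}, which yields $(c\bar{c}_d)(1),\,(c\bar{c}_d)(0)\ge c_{min}^2>0$, together with the smoothness of $c$ near $0$ and $1$, to guarantee that the bracketed factors are, respectively, close to $(c\bar{c}_d)(1)>0$ and $-(c\bar{c}_d)(0)<0$ once $sh$ is small. Precisely, for any fixed $\lambda$, there is $\epsilon_2(\lambda)\in(0,\epsilon_1(\lambda))$ such that whenever $\tau h\max_{[0,T]}\theta\le\epsilon_2(\lambda)$ (hence $sh\le\epsilon_2(\lambda)$ for all $t\in[0,T]$), the bracket in $\Gamma_1$ is bounded below by $\tfrac12(c\bar{c}_d)(1)$ and the bracket in $\Gamma_0$ is bounded above by $-\tfrac12(c\bar{c}_d)(0)$. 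The smallness threshold depends on $\lambda$ through the constants hidden in the $\mathcal{O}_{\lambda,\mathfrak{K}}$ symbols.

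Finally, I invoke \eqref{constrainted boundary}, which is built into the choice of $\psi$ at the boundary: $(r\overline{D\rho})(1)\ge 0$ and $(r\overline{D\rho})(0)\le 0$. Multiplying a nonnegative bracket by a nonnegative factor gives $\Gamma_1\ge 0$, while multiplying a negative bracket by a nonpositive factor gives $\Gamma_0\ge 0$. Since the $(Dv)^2$ weights are trivially nonnegative, both integrals are nonnegative, proving $Y_{11}^{(1)}+Y_{21}^{(1,1)}\ge 0$.

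The only mildly delicate point is the $\mathcal{O}_{\lambda,\mathfrak{K}}(sh)^2$ contribution arising from $Y_{21}^{(1,1)}$: it is unsigned \emph{a priori} and must be absorbed by the $\mathcal{O}(1)$ term $(c\bar{c}_d)(i)$ inherited from $Y_{11}^{(1)}$. This forces $\epsilon_2(\lambda)$ to depend both on the uniform ellipticity constant $c_{min}$ and on $\lambda$, but nothing more; no compensation between the $x=0$ and $x=1$ contributions is needed, since the argument is local at each endpoint.
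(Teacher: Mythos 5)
Your proposal is correct and follows essentially the same route as the paper: group the boundary contributions of $Y_{11}^{(1)}$ and $Y_{21}^{(1,1)}$ at $x=0$ and $x=1$ separately, absorb the unsigned $\mathcal{O}_{\lambda,\mathfrak{K}}(sh)^2$ remainder into the leading $(c\bar{c}_d)$ term for $sh$ small, and conclude from the sign conditions $(r\overline{D\rho})(0)\le 0$ and $(r\overline{D\rho})(1)\ge 0$ of \eqref{constrainted boundary}. The paper's own proof is exactly this regrouping followed by the same sign argument, so no further comment is needed.
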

\par Recalling that $\triangledown \psi\ge C>0$ in $\Omega\backslash \omega_0$ we may thus write
\small{
\begin{eqnarray}\label{eq:4.4}&&\left\|{Av}\right\|^2_{L^2(Q'_0)}+\left\|{Bv}\right\|^2_{L^2(Q'_0)}+\int_{Q'_0}{s(Dv)^2}+\int_{Q'_0}{s^3v^2}\nonumber\\
&&\quad\quad\quad\quad\quad\quad +2\big(Y_{11}^{(2,1)}+Y_{21}^{(1,21)}\big)+2Y_{13}\nonumber\\
&\le&C_{{\lambda, \mathfrak{K}}}\left(\left\|{rf_1}\right\|^2_{L^2(Q'_0)}+2\int^T_0{\int_{\omega_0}{s(Dv)^2}}+2\int^T_0{\int_{\omega_0}{s^3v^2}}\right.\nonumber\\
&& \left. +\int_{\Omega'_0}s^2(v^2_{|_{t=0}}+v^2_{|_{t=T}})+\int_{\Omega'_0}(Dv(T))^2\right)+2X+2\underline{Y}.
\end{eqnarray}
}

\begin{Lemma} \label{lem:estimate Y11 plus Y21 again}With the function $\psi$ satisfing the properties of Lemma~\ref{lem:weight function} and for $\tau h(max_{[0,T]}\theta)\le \mathfrak{K}$,  we have
\begin{equation}Y_{11}^{(2,1)}+Y_{21}^{(1,21)}\ge C\alpha_0\int_0^T{s\lambda\phi(a)(c_dDv)^2_{n+\frac{1}{2}}}+C\alpha_0\int_0^T{s^3\lambda^3\phi^3(a)v^2_{n+1}}+\mu_1+\mu_r,\nonumber
\end{equation}
with $\alpha_0$ as given in Lemma~\ref{lem:weight function} and where
\small{
\begin{eqnarray}
\mu_r&=&\int_0^T{s\mathcal{O_{\lambda}}(1)r_0^2}+\int_0^T{s^2\mathcal{O_{\lambda}}(1)r_0v_{n+1}}+\int_0^T{s\mathcal{O_{\lambda}}(1)r_0(c_dDv)_{n+\frac{1}{2}}}\nonumber\\
&+&\int_0^T{s^2\mathcal{O_{\lambda, \mathfrak{K}}}(sh)v^2_{n+1}}+\int_0^T{s\mathcal{O_{\lambda, \mathfrak{K}}}(sh)v_{n+1}(c_dDv)_{n+\frac{1}{2}}}+\int_0^T{s\mathcal{O_{\lambda, \mathfrak{K}}}(sh)r_0v_{n+1}},\nonumber
\end{eqnarray}
}
with $r_0$ as given in Lemma~\ref{lem:transmission condition} and
\begin{equation}
\mu_1=\mu_1^{(1)}+\mu_1^{(2)},\nonumber
\end{equation} where
\small{
\begin{eqnarray}
&&\mu_1^{(1)}=\int_0^T{s\mathcal{O_{\lambda, \mathfrak{K}}}(sh)(c_dDv)^2_{n+\frac{3}{2}}}+\int_0^T{s\mathcal{O_{\lambda, \mathfrak{K}}}(sh)(c_dDv)^2_{n+\frac{1}{2}}},\nonumber\\
&&\mu_1^{(2)}=\int_0^T{s^2\mathcal{O_{\lambda, \mathfrak{K}}}(sh)(c_dDv)_{n+\frac{3}{2}}v_{n+1}}+\int_0^T{s^2\mathcal{O_{\lambda, \mathfrak{K}}}(sh)(c_dDv)_{n+\frac{1}{2}}v_{n+1}}.\nonumber
\end{eqnarray}
}
\end{Lemma}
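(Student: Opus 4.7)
The plan is to rewrite $Y_{11}^{(2,1)}+Y_{21}^{(1,21)}$ as a quadratic form in the pair of ``independent'' interface unknowns $v_{n+1}$ and $(c_dDv)_{n+\frac{1}{2}}$, and to recognise it, at leading order in $sh$, as $s\lambda\phi(a)(\mathbf{A}U,U)$ with the rescaling $U=\bigl((c_dDv)_{n+\frac{1}{2}},\,\lambda s\phi(a)\,v_{n+1}\bigr)^T$ and $\mathbf{A}$ the matrix of Lemma~\ref{lem:weight function}. With this choice of $U$ the positivity $(\mathbf{A}U,U)\ge\alpha_0|U|^2$ produces exactly the weights $s\lambda\phi(a)$ on $(c_dDv)_{n+\frac{1}{2}}^2$ and $s^3\lambda^3\phi^3(a)$ on $v_{n+1}^2$ announced in the conclusion, while the $sh$-error tail will feed $\mu_1=\mu_1^{(1)}+\mu_1^{(2)}$ and the $r_0$-error tail (which carries the inhomogeneous source $h(rf)_{n+1}$) will feed $\mu_r$.

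The first step is to eliminate every interface quantity except $v_{n+1}$ and $(c_dDv)_{n+\frac{1}{2}}$. I would use the discrete transmission identity of Lemma~\ref{lem:transmission condition},
\[
(c_dDv)_{n+\frac{3}{2}}=(c_dDv)_{n+\frac{1}{2}}+\lambda s\phi(a)\,[\star c\phi\psi']_a\,v_{n+1}+r_0,
\]
together with the exact averaging identities
\[
\tilde v_{n+\frac{1}{2}}=v_{n+1}-\tfrac{h}{2}(Dv)_{n+\frac{1}{2}},\qquad \tilde v_{n+\frac{3}{2}}=v_{n+1}+\tfrac{h}{2}(Dv)_{n+\frac{3}{2}},
\]
to replace $(Dv)_{n+\frac{3}{2}}=(c_dDv)_{n+\frac{3}{2}}/c_d(x_{n+\frac{3}{2}})$ in $Y_{11}^{(2,1)}$ and to remove the averaged values $\tilde v_{n\pm\frac{1}{2}}$ from $Y_{21}^{(1,21)}$. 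After these substitutions both expressions become polynomials in $v_{n+1}$ and $(c_dDv)_{n+\frac{1}{2}}$, plus explicit $O(h)$ corrections (from the averaging identities) and $O(r_0)$ corrections (from the transmission identity).

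Next I would expand the squares, replace the discrete averages $\bar c_d(a), c_d(x_{n\pm\frac{1}{2}})$ by their continuous limits $c(a^\pm)$ modulo further $sh$-errors, and collect leading coefficients. The $O(1)\cdot(c_dDv)_{n+\frac{1}{2}}^2$-, $O(1)\cdot v_{n+1}(c_dDv)_{n+\frac{1}{2}}$- and $O(1)\cdot v_{n+1}^2$-coefficients should recombine to $s\lambda\phi(a)\,a_{11}$, $2s^2\lambda^2\phi^2(a)\,a_{12}$ and $s^3\lambda^3\phi^3(a)\,a_{22}$ respectively. This reconstruction is the discrete analogue of the algebraic identification of \cite{BDL07}, where the jump in the slope of $\psi$ at $a$ encoded in Lemma~\ref{lem:weight function} is designed precisely so that $\mathbf{A}$ is positive definite. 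Once the identification is in place, $(\mathbf{A}U,U)\ge\alpha_0|U|^2$ immediately gives the two positive integrals, and the remainders split naturally between $\mu_1$ (the $sh$-small bilinear pieces in $\{v_{n+1},(c_dDv)_{n+\frac{1}{2}},(c_dDv)_{n+\frac{3}{2}}\}$, keeping $(c_dDv)_{n+\frac{3}{2}}$ rather than substituting it again in the error terms) and $\mu_r$ (the terms involving $r_0^2$, $r_0 v_{n+1}$ and $r_0(c_dDv)_{n+\frac{1}{2}}$).

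The principal obstacle I expect is the identification step: one must keep careful track of the factors $\bar c_d(a)$, $c_d(x_{n\pm\frac{1}{2}})$ and $c(a^\pm)$ produced by the substitutions and verify that the cross term comes out to exactly $2s^2\lambda^2\phi^2(a)\,a_{12}$, not to some mismatched expression that would break the positivity argument. That this works is not accidental but is built into the discrete transmission identity of Lemma~\ref{lem:transmission condition}, which mimics to leading order in $sh$ the continuous transmission $[c\partial_x v]_a=\lambda s\phi(a)[c\psi']_a v(a)$ from which the matrix $\mathbf{A}$ was derived in \cite{BDL07}.
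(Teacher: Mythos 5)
Your proposal is correct and follows essentially the same route as the paper: expand $\tilde v_{n\pm\frac12}$ via the averaging identities, reduce $Y_{11}^{(2,1)}$ to $\int_0^T s\lambda\phi(a)[\psi'\star(c_dDv)^2]_a$ modulo $h$-errors, substitute the transmission relation of Lemma~\ref{lem:transmission condition} for the jump $[\star c_dDv]_a$, and identify the leading part as $\int_0^T s\lambda\phi(a)\bigl(\mathbf{A}U,U\bigr)$ with $U=\bigl((c_dDv)_{n+\frac12},\,s\lambda\phi(a)v_{n+1}\bigr)^t$, the errors splitting into $\mu_1$ and $\mu_r$ exactly as you describe. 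The coefficient identification you flag as the main risk does work out, since the paper's cross term is $2s^2\lambda^2\phi^2(a)[c\psi'\star]_a\psi'(a^+)=2s^2\lambda^2\phi^2(a)a_{12}$ as you predicted.
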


\par For a proof  see Appendix A.
\begin{Lemma} \label{lem:estimate Y13}With $0<\epsilon_3(\lambda)<\epsilon_2(\lambda)$ sufficiently small we obtain
\small{
\begin{eqnarray}&&Y_{13}\ge\int_0^T{\mathcal{C}_{\lambda, \mathfrak{K}}h(\partial_t v(a))^2}+ \int_0^T{\big(s T\theta\mathcal{O_{\lambda, \mathfrak{K}}}(sh)+T^2\theta^2\mathcal{O_{\lambda, \mathfrak{K}}}(sh) \big)v^2(a)}\nonumber\\
&+&s\mathcal{O_{\lambda, \mathfrak{K}}}(1)v^2(a)|_{t=0}^{t=T}+\int_0^T{\mathcal{O_{\lambda, \mathfrak{K}}}(sh)\partial_t v(a)(c_dDv)_{n+\frac{1}{2}}}+\int_0^T{\mathcal{O_{\lambda, \mathfrak{K}}}(1)\partial_t v(a)h(rf_1)_{n+1}}.\nonumber
\end{eqnarray}
}
where $C_{\lambda, \mathfrak{K}}$ is positive constant whose value depends on $\lambda$ and $sh$.
\end{Lemma}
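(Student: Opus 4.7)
The plan is to use the transmission condition (Lemma \ref{lem:transmission condition}) to replace $(c_dDv)_{n+\frac{3}{2}}$ in the definition of $Y_{13}$ by $(1+J_2)(c_dDv)_{n+\frac{1}{2}} + J_1 v_{n+1} + J_3 h(rf)_{n+1}$, and then to exploit the identity $rf = rf_1 - \partial_t v + \tau(\partial_t\theta)\varphi v$ (which follows from $f = f_1 - \partial_t u$ together with $r\partial_t(\rho v) = \partial_t v - \tau(\partial_t\theta)\varphi v$) so as to expose a coercive quadratic term in $\partial_t v(a)$ and isolate the error types listed in the statement.

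After substitution, $Y_{13}$ splits as
\begin{align*}
Y_{13} &= \int_0^T \bigl[r\bar{\tilde{\rho}}(a^-) - r\bar{\tilde{\rho}}(a^+)(1+J_2)\bigr]\,\partial_t v(a)\,(c_dDv)_{n+\frac{1}{2}} - \int_0^T r\bar{\tilde{\rho}}(a^+) J_1 \partial_t v(a)\, v(a) \\
&\quad - \int_0^T r\bar{\tilde{\rho}}(a^+) J_3\, h(rf_1)_{n+1}\,\partial_t v(a) + \int_0^T r\bar{\tilde{\rho}}(a^+) J_3\, h(\partial_t v(a))^2 \\
&\quad - \int_0^T r\bar{\tilde{\rho}}(a^+) J_3\, h\tau(\partial_t\theta)\varphi(a)\, v(a)\partial_t v(a).
\end{align*}
By Proposition \ref{prop:property 4} we have $r\bar{\tilde{\rho}}(a^\pm) = 1 + \mathcal{O}_{\lambda,\mathfrak{K}}((sh)^2)$, and by Lemma \ref{lem:transmission condition} we have $J_3 = 1 + \mathcal{O}_{\lambda,\mathfrak{K}}(sh)$; choosing $\epsilon_3(\lambda) < \epsilon_2(\lambda)$ small enough ensures $r\bar{\tilde{\rho}}(a^+) J_3 \ge \mathcal{C}_{\lambda,\mathfrak{K}} > 0$, producing the coercive contribution $\int_0^T \mathcal{C}_{\lambda,\mathfrak{K}} h(\partial_t v(a))^2$. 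The coefficient in the first line is of size $\mathcal{O}_{\lambda,\mathfrak{K}}(sh)$ because $\rho$ is continuous at $a$ and $J_2 = \mathcal{O}_{\lambda,\mathfrak{K}}(sh)$, giving the $\mathcal{O}_{\lambda,\mathfrak{K}}(sh)\partial_t v(a)(c_dDv)_{n+\frac{1}{2}}$ remainder, while $r\bar{\tilde{\rho}}(a^+) J_3 = \mathcal{O}_{\lambda,\mathfrak{K}}(1)$ directly gives the $\mathcal{O}_{\lambda,\mathfrak{K}}(1)\partial_t v(a)\, h(rf_1)_{n+1}$ remainder.

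The remaining terms involve $\partial_t v(a)\, v(a) = \tfrac{1}{2}\partial_t(v^2(a))$ and are handled by integration by parts in $t$. For the $J_1$-contribution, using $J_1 = \mathcal{O}_\lambda(s)$, $\partial_t J_1 = sT\theta\mathcal{O}_{\lambda,\mathfrak{K}}(sh)$ (Lemma \ref{lem:transmission condition}) and $\partial_t(r\bar{\tilde{\rho}}) = T(sh)^2\theta\mathcal{O}_{\lambda,\mathfrak{K}}(1)$ (Proposition \ref{prop:property 5}), integration by parts yields the $s\mathcal{O}_{\lambda,\mathfrak{K}}(1)v^2(a)|_{t=0}^{t=T}$ boundary term plus a volume term of size $sT\theta\mathcal{O}_{\lambda,\mathfrak{K}}(sh) v^2(a)$. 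For the $\tau\partial_t\theta$-term, using $\tau\partial_t\theta = sT\theta\mathcal{O}(1)$ and $\partial_t(sT\theta) = sT^2\theta^2\mathcal{O}(1)$, integration by parts produces the $T^2\theta^2\mathcal{O}_{\lambda,\mathfrak{K}}(sh)v^2(a)$ volume contribution plus a boundary contribution of order $(sh)^2$ absorbed into $s\mathcal{O}_{\lambda,\mathfrak{K}}(1)v^2(a)|_{t=0}^{t=T}$.

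The main obstacle is the extraction of positivity from $r\bar{\tilde{\rho}}(a^+) J_3$: both factors are merely $1 + \mathcal{O}_{\lambda,\mathfrak{K}}(sh)$, so the smallness assumption $\tau h(\max_{[0,T]}\theta)\le \epsilon_3(\lambda)$ is precisely what allows the leading $1$ to dominate. The resulting coercive $h(\partial_t v(a))^2$ term is the only source of control on $(\partial_t v(a))^2$ at the jump point, and it is essential for absorbing, via Young's inequality, the troublesome cross terms arising at the interface elsewhere in the proof, as previewed in the sketch of proof.
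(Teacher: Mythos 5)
Your proposal is correct and follows essentially the same route as the paper: substitute the discrete transmission relation of Lemma~\ref{lem:transmission condition} into $Y_{13}$, use $rf=rf_1-r\partial_t(\rho v)$ to expose the coercive $h(\partial_t v(a))^2$ term from $r\bar{\tilde{\rho}}(a^+)J_3=1+\mathcal{O_{\lambda, \mathfrak{K}}}(sh)$, and integrate by parts in $t$ on the two $v(a)\partial_t v(a)$ contributions using $\partial_t J_1$, $\partial_t(r\bar{\tilde{\rho}})$ and $\partial_t(r\partial_t\rho)=sT^2\theta^2\mathcal{O}(1)$. The only cosmetic difference is the final boundary term from the $\tau(\partial_t\theta)\varphi$ piece: it is of size $h\,sT\theta\,\mathcal{O}(1)$ (not $(sh)^2$), and the paper observes it is non-negative since $\varphi<0$ and drops it, whereas you absorb it into $s\mathcal{O_{\lambda, \mathfrak{K}}}(1)v^2(a)|_{t=0}^{t=T}$ --- both are admissible under the stated mesh conditions.
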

\par For a proof  see Appendix A.
%\par Recalling the properties satisfied by $\psi$ listed in Lemma~\ref{lem:weight function},
\par If we choose $\lambda_2\ge \lambda_1$ sufficiently large, then for $\lambda=\lambda_2$ (fixed for the rest of the proof) and $0<\tau h(\max_{[0,T]}\theta)\le \epsilon_3$, from ~\eqref{eq:4.4} and Lemma~\ref{lem:estimate Y11 plus Y21 again} and Lemma~\ref{lem:estimate Y13}, we can thus achieve the following inequality

\begin{eqnarray}\label{eq:4.5}&&\left\|{Av}\right\|^2_{L^2(Q'_0)}+\left\|{Bv}\right\|^2_{L^2(Q'_0)}+\int_{Q'_0}{s\left|{Dv}\right|^2dt}+\int_{Q'_0}{s^3v^2dt}\nonumber\\
&+&C\alpha_0\int_0^T{s(c_dDv)^2_{n+\frac{1}{2}}}+C\alpha_0\int_0^T{s^3v^2(a)}+\int_0^T{C_{{\lambda, \mathfrak{K}}}h(\partial_t v(a))^2}\nonumber\\
&\le&C_{{\lambda, \mathfrak{K}}}\left(\left\|{rf_1}\right\|^2_{L^2(Q'_0)}+2\int^T_0{\int_{\omega_0}{s(Dv)^2}}+2\int^T_0{\int_{\omega_0}{s^3v^2}}\right.\nonumber\\
&& \left. +\int_{\Omega'_0}s^2(v^2_{|_{t=0}}+v^2_{|_{t=T}})+\int_{\Omega'_0}(Dv(T))^2+sv^2(a)|_{t=0}^{t=T}\right)\nonumber\\
&+&\int_0^T{\big(s T\theta\mathcal{O_{\lambda, \mathfrak{K}}}(sh)+T^2\theta^2\mathcal{O_{\lambda, \mathfrak{K}}}(sh) \big)v^2(a)} +\int_0^T{\mathcal{O_{\lambda, \mathfrak{K}}}(sh)\partial_t v(a)(c_dDv)_{n+\frac{1}{2}}}\nonumber\\
&+&\int_0^T{\mathcal{O}_{{\lambda, \mathfrak{K}}}(1)\partial_t v(a) h(rf_1)_{n+1}}+2\underline{X}+2\underline{Y}+2\underline{Z}\quad\quad\quad\quad,
\end{eqnarray}
where $\underline{Z}=\mu_r+\mu_1$ with $\mu_r$ and $\mu_1$ are given as in Lemma~\ref{lem:estimate Y11 plus Y21 again} and where
\begin{eqnarray}\underline{X}&=&\quad\int_{Q'_0}{\bar{\mu}v^2}\quad+\quad\int_{Q'_0}{\bar{\nu}(Dv)^2}\nonumber\\
&+&X_{12}+X_{13}+X_{23}+X_{31}+X_{32}+X_{33}, \nonumber
\end{eqnarray}
 with $\bar{\mu}=s^2\mathcal{O_{\lambda, \mathfrak{K}}}(1)+s^3\mathcal{O_{\lambda, \mathfrak{K}}}(sh)$ and $\bar{\nu}$ of the form $s\mathcal{O_{\lambda, \mathfrak{K}}}(sh)$.

\par By using the Young's inequality, we estimate in turn all the terms of $\underline{Y}$, $\underline{Z}$ and the two terms at the RHS of~\eqref{eq:4.5} through the following Lemma whose proof can be found in Appendix A

\begin{Lemma}\label{lem:Young ineq}For $sh\le\mathfrak{K}$, we have

\small{
\begin{equation}\int_0^T{\mathcal{O}_{{\lambda, \mathfrak{K}}}(1)\partial_t v(a)h(rf_1)_{n+1}}\le \epsilon\int_0^T{\mathcal{O}_{{\lambda, \mathfrak{K}}}(1)h(\partial_t v(a))^2}+C_{\epsilon}\int_0^T{\mathcal{O}_{{\lambda, \mathfrak{K}}}(1)h(rf_1)^2_{n+1}},\nonumber
\end{equation}
}
%\begin{align}\beta_{1\epsilon}&=\mathcal{O}_{{\lambda, \mathfrak{K}}}(1) &\eta_{1\epsilon}=\mathcal{O}_{{\lambda, \mathfrak{K}}}(1).\nonumber
%\end{align}
\small{
\begin{equation}\int_0^T{\mathcal{O}_{{\lambda, \mathfrak{K}}}(sh)\partial_tv(a)(c_dDv)_{n+\frac{1}{2}}} \le \epsilon\int_0^T{\mathcal{O}_{{\lambda, \mathfrak{K}}}(1)h(\partial_t v(a))^2}+C_{\epsilon}\int_0^T{s\mathcal{O}_{{\lambda, \mathfrak{K}}}(sh)(c_dDv)^2_{n+\frac{1}{2}}}.\nonumber
\end{equation}
}

%\begin{align}\beta_{2\epsilon}&=\mathcal{O}_{{\lambda, \mathfrak{K}}}(1) &\eta_{2\epsilon}=s\mathcal{O}_{{\lambda, \mathfrak{K}}}(sh).\nonumber
%\end{align}

\begin{equation}\left|{Y_{11}^{(2,2)}}\right|\le\int_0^T{\alpha_{11}v^2_{n+1}}+\int_0^T{ \beta_{11}h (\partial_t v)^2_{n+1}}+\int_0^T{\gamma_{11}(c_dDv)^2_{n+\frac{1}{2}}}+\int_0^T{\eta_{11}h(rf_1)^2_{n+1}},\nonumber
\end{equation}

\begin{align}\alpha_{11}=\Big(s^3\mathcal{O_{\lambda, \mathfrak{K}}}(sh)^2+sT^2\theta^2\mathcal{O_{\lambda, \mathfrak{K}}}(sh)^4\Big) &\quad \quad \beta_{11}=\mathcal{O_{\lambda, \mathfrak{K}}}(sh)^3,\nonumber\\
\gamma_{11}=s\mathcal{O_{\lambda, \mathfrak{K}}}(sh)^2 &\quad \quad \eta_{11}=\mathcal{O_{\lambda, \mathfrak{K}}}(sh)^3.\nonumber
\end{align}

\begin{equation}\left|{Y_{12}}\right|\le\int_0^T{\alpha_{12}v^2_{n+1}}+\int_0^T{\beta_{12}h (\partial_t v)^2_{n+1}}+\int_0^T{\gamma_{12}(c_dDv)^2_{n+\frac{1}{2}}}+\int_0^T{\eta_{12}h(rf_1)^2_{n+1}},\nonumber
\end{equation}
%+sT^2\theta^2\mathcal{O_{\lambda, \mathfrak{K}}}(sh)^2
\begin{align}\alpha_{12}=\Big(s^2\mathcal{O_{\lambda, \mathfrak{K}}}(1)+sT^2\theta^2\mathcal{O_{\lambda, \mathfrak{K}}}(sh)^2\Big)&\quad \quad \beta_{12}=\mathcal{O_{\lambda, \mathfrak{K}}}(sh),\nonumber\\
\gamma_{12}=\mathcal{O_{\lambda, \mathfrak{K}}}(1) &\quad \quad \eta_{12}= \mathcal{O_{\lambda, \mathfrak{K}}}(sh).\nonumber
\end{align}

\begin{equation}\left|{Y_{21}^{(1,22)}}\right|\le\int_0^T{\alpha_{21}v^2_{n+1}}+\int_0^T{\beta_{21}h (\partial_t v)^2_{n+1}}+\int_0^T{\gamma_{21}(c_dDv)^2_{n+\frac{1}{2}}}+\int_0^T{ \eta_{21}h(rf_1)^2_{n+1}},\nonumber
\end{equation}

\begin{align}\alpha_{21}=\Big(s^3\mathcal{O_{\lambda, \mathfrak{K}}}(1)+sT^2\theta^2\mathcal{O_{\lambda, \mathfrak{K}}}(sh)^4\Big)&\quad \quad\beta_{21}=\mathcal{O_{\lambda, \mathfrak{K}}}(sh)^3,\nonumber\\
\gamma_{21}=s\mathcal{O_{\lambda, \mathfrak{K}}}(sh)^3 &\quad \quad \eta_{21}=\mathcal{O_{\lambda, \mathfrak{K}}}(sh)^3.\nonumber
\end{align}

\begin{equation}\left|{Y_{22}^{(1)}}\right|\le\int_0^T{\alpha_{22}v^2_{n+1}}+\int_0^T{\beta_{22}h (\partial_t v)^2_{n+1}}+\int_0^T{\gamma_{22}(c_dDv)^2_{n+\frac{1}{2}}}+\int_0^T{\eta_{22}h(rf_1)^2_{n+1}},\nonumber
\end{equation}

\begin{align}\alpha_{22}=\Big(s^2\mathcal{O_{\lambda, \mathfrak{K}}}(sh)^2+sT^2\theta^2\mathcal{O_{\lambda, \mathfrak{K}}}(sh)^4\Big)&\quad\quad \beta_{22}=\mathcal{O_{\lambda, \mathfrak{K}}}(sh)^3,\nonumber\\
\gamma_{22}=\mathcal{O_{\lambda, \mathfrak{K}}}(sh)^3 &\quad\quad \eta_{22}=\mathcal{O_{\lambda, \mathfrak{K}}}(sh)^3.\nonumber
\end{align}

\begin{equation}\left|{Y_{23}^{(1)}}\right|\le \int_0^T{\alpha_{23}v^2_{n+1}}+ \int_0^T{\beta_{23}h(\partial_t v(a))^2}+\int_0^T{\gamma_{23}(c_dDv)^2_{n+\frac{1}{2}}}+\int_0^T{\eta_{22}h(rf_1)^2_{n+1}},\nonumber
\end{equation}

\begin{align}\alpha_{23}=\Big(s^3\mathcal{O_{\lambda, \mathfrak{K}}}(1)+sT^2\theta^2\mathcal{O_{\lambda, \mathfrak{K}}}(sh)^3\Big)& \quad\quad\beta_{23}=\mathcal{O_{\lambda, \mathfrak{K}}}(sh),\nonumber\\
\gamma_{23}=s\mathcal{O_{\lambda, \mathfrak{K}}}(sh)^2 &\quad \quad \eta_{22}=\mathcal{O_{\lambda, \mathfrak{K}}}(sh)^2.\nonumber
\end{align}

\begin{equation}\left|{Y_{31}^{(1)}}\right|\le \int_0^T{\alpha_{31}v^2_{n+1}}+\int_0^T{\beta_{23}h(\partial_t v)^2_{n+1}}+\int_0^T{\gamma_{23}(c_dDv)^2_{n+\frac{1}{2}}}+\int_0^T{\eta_{22}h(rf_1)^2_{n+1}},\nonumber
\end{equation}

\begin{align}\alpha_{31}=\Big( s^2T\theta\mathcal{O_{\lambda, \mathfrak{K}}}(sh)+sT^2\theta^2\mathcal{O_{\lambda, \mathfrak{K}}}(sh)\Big) &\quad\quad \beta_{23}= \mathcal{O_{\lambda, \mathfrak{K}}}(sh),\nonumber\\
\gamma_{23}=s\mathcal{O_{\lambda, \mathfrak{K}}}(sh) &\quad \quad \eta_{22}=\mathcal{O_{\lambda, \mathfrak{K}}}(sh).\nonumber
\end{align}

\begin{equation}\mu_1\le \int_0^T{\alpha_{1}v^2_{n+1}}\nonumber+\int_0^T{\beta_{1}h(\partial_t v)^2_{n+1}}+\int_0^T{\gamma_{1}(c_dDv)^2_{n+\frac{1}{2}}}+\int_0^T{\eta_{1}h(rf_1)^2_{n+1}},\nonumber
\end{equation}

\begin{align}\alpha_{1}=\Big(s^3\mathcal{O_{\lambda, \mathfrak{K}}}(sh)+sT^2\theta^2\mathcal{O_{\lambda, \mathfrak{K}}}(sh)^3\Big) &\quad\quad \beta_{1}=\mathcal{O_{\lambda, \mathfrak{K}}}(sh)^2,\nonumber\\
\gamma_{1}=s\mathcal{O_{\lambda, \mathfrak{K}}}(sh) &\quad \quad \eta_{1}=\mathcal{O_{\lambda, \mathfrak{K}}}(sh)^2.\nonumber
\end{align}
%\begin{eqnarray}&&\alpha_{1}=\Big(s^3\mathcal{O_{\lambda, \mathfrak{K}}}(sh)+sT^2\theta^2\mathcal{O_{\lambda, \mathfrak{K}}}(sh)^2\Big),\nonumber\\
%&& \beta_{1}=\Big(\lambda\phi(a)\mathcal{O_{\lambda, \mathfrak{K}}}(sh)+\lambda^3\phi^3(a)\mathcal{O_{\lambda, \mathfrak{K}}}(sh)^3+\lambda^2\phi^3(a)\mathcal{O_{\lambda, \mathfrak{K}}}(sh)^2\Big),\nonumber\\
%&&\gamma_{1}=\Big(s\lambda\phi(a)\mathcal{O_{\lambda, \mathfrak{K}}}(sh)^2+s\lambda^3\phi^3(a)\mathcal{O_{\lambda, \mathfrak{K}}}(sh)^3+s\lambda^2\phi^3(a)\mathcal{O_{\lambda, \mathfrak{K}}}(sh)^2\Big), \nonumber\\
%&& \eta_{1}=\Big(\lambda\phi(a)\mathcal{O_{\lambda, \mathfrak{K}}}(sh)+\lambda^3\phi^3(a)\mathcal{O_{\lambda, \mathfrak{K}}}(sh)^3+\lambda^2\phi^3(a)\mathcal{O_{\lambda, \mathfrak{K}}}(sh)^2\Big).\nonumber
%\end{eqnarray}

\begin{equation}\mu_r\le \int_0^T{\alpha_{r}v^2_{n+1}}+\int_0^T{\beta_{r}h(\partial_t v)^2_{n+1}}+\int_0^T{\gamma_{r}(c_dDv)^2_{n+\frac{1}{2}}}+\int_0^T{\eta_{r}h(rf_1)^2_{n+1}},\nonumber
\end{equation}
\end{Lemma}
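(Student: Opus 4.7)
The plan is to prove each of the stated inequalities by applying Young's inequality $|XY|\le \epsilon X^{2}+C_\epsilon Y^{2}$ in a uniform way, taking advantage of the fact that each $Y_{ij}$ (or $\mu_{1}, \mu_{r}$) is a sum of bilinear expressions in the interface quantities
$$
v(a)=v_{n+1},\quad (c_{d}Dv)_{n+\tfrac{1}{2}},\quad (c_{d}Dv)_{n+\tfrac{3}{2}},\quad \partial_{t}v(a),\quad h(rf_{1})_{n+1},\quad \tilde{v}_{n+\tfrac{1}{2}},\ \tilde{v}_{n+\tfrac{3}{2}},
$$
where at least one of the two factors carries an explicit smallness factor of the form $\mathcal{O}_{\lambda,\mathfrak{K}}((sh)^{k})$ or the prefactor $h$ (visible in the coefficients $\delta_{12},\bar\delta_{12},\beta_{22}$, etc.). The idea is to put the small factor on the side of the RHS term and keep the full weight of the "large" interface quantity on the side of the term to be absorbed on the LHS.

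Before applying Young, I first reduce every occurrence of $(c_{d}Dv)_{n+\tfrac{3}{2}}$ and of $\tilde v_{n+\tfrac{1}{2}},\tilde v_{n+\tfrac{3}{2}}$ to the four "canonical" boundary unknowns $v_{n+1}$, $(c_{d}Dv)_{n+\tfrac{1}{2}}$, $\partial_{t}v(a)$ and $h(rf_{1})_{n+1}$. For the flux, I use Lemma~\ref{lem:transmission condition}, which gives
$$
(c_{d}Dv)_{n+\tfrac{3}{2}}=(c_{d}Dv)_{n+\tfrac{1}{2}}+\lambda s[\star c\phi\psi']_{a}\,v_{n+1}+r_{0},
$$
with $r_{0}$ controlled in terms of the canonical unknowns. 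For the averages I use the elementary identities
$$
\tilde v_{n+\tfrac{1}{2}}=v_{n+1}-\tfrac{h}{2}(Dv)_{n+\tfrac{1}{2}},\qquad \tilde v_{n+\tfrac{3}{2}}=v_{n+1}+\tfrac{h}{2}(Dv)_{n+\tfrac{3}{2}},
$$
and then again the transmission identity to eliminate $(Dv)_{n+\tfrac{3}{2}}$ (dividing by $c$ which is bounded below). After this substitution every $Y_{ij}$ becomes a sum of products of two canonical unknowns with coefficients expressed in terms of $\mathcal{O}_{\lambda,\mathfrak{K}}$ and powers of $s$, $sh$, $T\theta$.

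With this normalization the application of Young's inequality is mechanical: for a product $\alpha\cdot X\cdot Y$ one splits $\alpha=\alpha_{1}\alpha_{2}$ and writes $|\alpha X Y|\le \tfrac{\alpha_{1}^{2}}{2}X^{2}+\tfrac{\alpha_{2}^{2}}{2}Y^{2}$. I choose the split so that the "coarse" factor $v_{n+1}^{2}$, $(c_{d}Dv)^{2}_{n+\tfrac{1}{2}}$, $h(\partial_{t}v(a))^{2}$ or $h(rf_{1})^{2}_{n+1}$ carries the exact order announced in the statement (compare with the prefactors $\alpha_{ij},\beta_{ij},\gamma_{ij},\eta_{ij}$). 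The quadratic-form terms involving $\partial_{t}v(a)$ are treated with the extra factor $h$ that is already present via Lemma~\ref{lem:transmission condition}, which explains the $h(\partial_{t}v)_{n+1}^{2}$ shape of the $\beta$-coefficients. For $\mu_{r}$ one first replaces $r_{0}$ by its expression from Lemma~\ref{lem:transmission condition} and then applies Young as above term by term; the orders in $s$ and $sh$ that appear in $\alpha_{r},\beta_{r},\gamma_{r},\eta_{r}$ are obtained by multiplying the orders of the factors of $r_{0}$ by the orders of the coefficients in $\mu_{r}$.

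The computations are elementary but the main obstacle is bookkeeping. One has to verify, for every single term, that the chosen splitting of $\alpha$ in Young's inequality produces on one side exactly one of the four canonical squared quantities with coefficients matching those announced (so that it will later be absorbable by the positive LHS terms $C\alpha_{0}\int s^{3}v^{2}(a)$, $C\alpha_{0}\int s (c_{d}Dv)^{2}_{n+\tfrac{1}{2}}$, $\int \mathcal{C}_{\lambda,\mathfrak{K}}h(\partial_{t}v(a))^{2}$ in \eqref{eq:4.5}), and on the other side either a source term $h(rf_{1})^{2}_{n+1}$ (which will be integrated into $\|rf_{1}\|_{L^{2}(Q'_{0})}^{2}$ at the cost of one factor $h$) or a term containing enough factors of $sh$ and $T\theta$ to be of lower order than the canonical LHS quantities once $\tau h(\alpha T)^{-1}\le \epsilon_{0}$ is used. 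Carrying out this arithmetic estimate by estimate, in the order $Y_{11}^{(2,2)},Y_{12},Y_{21}^{(1,22)},Y_{22}^{(1)},Y_{23}^{(1)},Y_{31}^{(1)},\mu_{1},\mu_{r}$ and then the two residual terms involving $\partial_{t}v(a)$, yields exactly the coefficients $\alpha_{\ast},\beta_{\ast},\gamma_{\ast},\eta_{\ast}$ stated in the lemma.
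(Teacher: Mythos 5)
Your plan follows the same route as the paper: reduce every interface quantity to the canonical unknowns $v_{n+1}$, $(c_dDv)_{n+\frac{1}{2}}$, $\partial_t v(a)$ and the source, by means of the discrete transmission relation of Lemma~\ref{lem:transmission condition} and the identities $\tilde v_{n+\frac{1}{2}}=v_{n+1}-\frac{h}{2}(Dv)_{n+\frac{1}{2}}$, $\tilde v_{n+\frac{3}{2}}=v_{n+1}+\frac{h}{2}(Dv)_{n+\frac{3}{2}}$, and then apply Young's inequality term by term with the small factors $(sh)^k$ placed so that the squared canonical quantities carry the announced orders. This is exactly how the appendix proceeds.

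There is, however, one concrete step you omit, and without it the bookkeeping cannot close. Lemma~\ref{lem:transmission condition} expresses the jump $[\star c_dDv]_a$ and the remainder $r_0$ in terms of $h(rf)_{n+1}$ with $f=\bar D(c_dDu)$, whereas your list of canonical unknowns (and the right-hand sides of all the stated estimates) involves $h(rf_1)_{n+1}$ with $f_1=\partial_t u+\bar D(c_dDu)$. The bridge is the identity
\begin{equation}
(rf)_{n+1}=(rf_1)_{n+1}-(\partial_t v)_{n+1}-sT\theta\,\mathcal{O}_{\lambda}(1)\,v_{n+1},
\qquad\text{hence}\qquad
\left|(rf)_{n+1}\right|^2\le C\Big((rf_1)^2_{n+1}+(\partial_t v)^2_{n+1}+s^2T^2\theta^2\mathcal{O}_{\lambda}(1)v^2_{n+1}\Big),
\end{equation}
obtained from $f=f_1-\partial_t(\rho v)$ and $r\partial_t\rho=-\varphi(\partial_t s)=sT\theta\mathcal{O}(1)$. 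This conversion is not cosmetic: it is the source of the $sT^2\theta^2\mathcal{O}_{\lambda,\mathfrak{K}}(sh)^k$ contributions in every $\alpha_{\ast}$ and, more importantly, of the $h(\partial_t v)^2_{n+1}$ terms (the $\beta_{\ast}$ coefficients) in estimates such as the one for $Y_{11}^{(2,2)}$, which contains no explicit time derivative. If you keep $h(rf)_{n+1}$ as a black box, or silently identify it with $h(rf_1)_{n+1}$, you will not reproduce the announced coefficients and, worse, you will carry an uncontrolled $(\partial_t v)_{n+1}$ inside the source term. Once this identity is inserted at the point where $r_0$ (and $h(rf)_{n+1}$) first appears, the rest of your Young-inequality bookkeeping goes through as you describe.
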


\begin{align}\alpha_{r}=\Big(s^3\mathcal{O_{\lambda, \mathfrak{K}}}(sh)+sT^2\theta^2\mathcal{O_{\epsilon, \lambda, \mathfrak{K}}}(sh)^2+\epsilon s^3\mathcal{O_{\lambda, \mathfrak{K}}}(1)\Big) &\quad\quad \beta_{r}=\mathcal{O_{\epsilon, \lambda, \mathfrak{K}}}(sh),\nonumber\\
\gamma_{r}=\Big(s\mathcal{O_{\lambda, \mathfrak{K}}}(sh)+\epsilon s\mathcal{O_{\lambda, \mathfrak{K}}}(1)\Big)&\quad \quad \eta_{r}=\mathcal{O_{\epsilon, \lambda, \mathfrak{K}}}(sh).\nonumber
\end{align}

%\begin{eqnarray}&&\alpha_{r}=\Big(s^3\mathcal{O_{\lambda, \mathfrak{K}}}(sh)+sT^2\theta^2\mathcal{O_{\lambda, \mathfrak{K}}}(sh)^2+\epsilon s^3\mathcal{O_{\lambda, \mathfrak{K}}}(1)\Big),\nonumber\\
%&& \beta_{r}=\mathcal{O_{\lambda, \mathfrak{K}}}(sh),\nonumber\\
%&&\gamma_{r}=\Big(s\mathcal{O_{\lambda, \mathfrak{K}}}(sh)+\epsilon s\mathcal{O_{\lambda, \mathfrak{K}}}(1)\Big), \nonumber\\
%&&\eta_{r}=\mathcal{O_{\lambda, \mathfrak{K}}}(sh).\nonumber
%\end{eqnarray}
\par Futhermore, we can estimate the term in $X_{12}$ as follows
\begin{eqnarray}\int_{Q'_0}{s\mathcal{O_{\lambda, \mathfrak{K}}}(1)\tilde{v}Dv}&\le& \int_{Q'_0}{s\mathcal{O_{\lambda, \mathfrak{K}}}(1)(\tilde{v})^2}+\int_{Q'_0}{s\mathcal{O_{\lambda, \mathfrak{K}}}(1)(Dv)^2}\nonumber\\
&\le& \int_{Q'_0}{s\mathcal{O_{\lambda, \mathfrak{K}}}(1)\wtilde{\left|{v}\right|}^2}+\int_{Q'_0}{s\mathcal{O_{\lambda, \mathfrak{K}}}(1)(Dv)^2}\nonumber\\
&=& \int_{Q_0}{s\mathcal{O_{\lambda, \mathfrak{K}}}(1)v^2}+\int_{Q'_0}{s\mathcal{O_{\lambda, \mathfrak{K}}}(1)(Dv)^2},\nonumber
\end{eqnarray}
by Lemma~\ref{lem:average of two function} and as $\int_{\Omega'_0}{\mathcal{O_{\lambda, \mathfrak{K}}}(1)\wtilde{\left|{v}\right|}^2}=\int_{\Omega_0}{\mathcal{O_{\lambda, \mathfrak{K}}}(1){v}^2}$.

%$\int_{\Omega'}{\mathcal{O_{\lambda, \mathfrak{K}}}(1)\wtilde{\left|{\partial_t v}\right|^2}}=\int_{\Omega}{\mathcal{O_{\lambda, \mathfrak{K}}}(1)\left|{\partial_t v}\right|^2}$.

\par Observe that 
\begin{equation} 1\le T^2\theta ~\textrm{and}~ \left|{\partial^2_t \theta}\right|\le CT^2\theta^3.\nonumber 
\end{equation}

\par We can now choose $\epsilon_4$ and $h_0$ sufficiently small, with $0<\epsilon_4\le \epsilon_3(\lambda_2)$, $0<h_0\le h_1(\lambda_2)$, and $\tau_2\ge 1$ sufficiently large, such that for $\tau\ge \tau_2(T+T^2)$, $0<h\le h_0$, and $\tau h(max_{[0,T]}\theta)\le \epsilon_4$, from ~\eqref{eq:4.5} and Lemma~\ref{lem:Young ineq} we get
\small{
\begin{eqnarray}\label{eq:4.6}&&\left\|{Av}\right\|^2_{L^2(Q'_0)}+\left\|{Bv}\right\|^2_{L^2(Q'_0)}+\int_{Q'_0}{s\left|{Dv}\right|^2}+\int_{Q'_0}{s^3v^2}\nonumber\\
&+&C\alpha_0\int_0^T{s(c_dDv)^2_{n+\frac{1}{2}}}+C\alpha_0\int_0^T{s^3v^2_{n+1}}+C_{\lambda, \mathfrak{K}}\int_0^T{h(\partial_t v(a))^2}\nonumber\\
&\le&C_{{\lambda, \mathfrak{K}}}\left(\left\|{rf_1}\right\|^2_{L^2(Q'_0)}+\int^T_0{\int_{\omega_0}{s(Dv)^2}}+\int^T_0{\int_{\omega_0}{s^3v^2}}\right. \nonumber\\
&&\left.+ h^{-2}\big(\int_{\Omega'_0}v^2_{|_{t=0}}+\int_{\Omega'_0}v^2_{|_{t=T}}\big)+sv^2(a)|_{t=0}^{t=T}\right)+\int_0^T{\mathcal{O_{\lambda, \mathfrak{K}}}(1)h(rf_1)^2_{n+1}}\nonumber\\
&+&\int_{Q_0}{s\mathcal{O_{\lambda, \mathfrak{K}}}(1)v^2}+\int_{Q_0}{s^{-1}\mathcal{O_{\lambda, \mathfrak{K}}}(sh)(\partial_t v)^2}+\int_{Q_0}s^2T\theta \mathcal{O_{\lambda, \mathfrak{K}}}(1)v^2.\nonumber\\
\end{eqnarray}
}
where we used that $(Dv)^2\le Ch^{-2}((\tau^+ v)^2+(\tau^- v)^2)$ and the last three terms whose integral taken on domain $Q_0$ come from the term in $X_{12}$, $X_{13}$ and $X_{23}$ respectively.

\par As $\tau\ge\tau_2(T+T^2)$ then $s\ge \tau_2>0$ and furthermore we observe that
\begin{eqnarray}\left\|{s^{-\frac{1}{2}}\partial_t v}\right\|^2_{L^2(Q'_0)}&\le& \mathcal{C_{\lambda, \mathfrak{K}}}\Big(\left\|{s^{-\frac{1}{2}}Bv}\right\|^2_{L^2(Q'_0)}+\left\|{s^{\frac{1}{2}}v}\right\|^2_{L^2(Q'_0)}+\left\|{s^{\frac{1}{2}}Dv}\right\|^2_{L^2(Q'_0)}\Big)\nonumber\\
&\le& \mathcal{C_{\lambda, \tau,\mathfrak{K}}}\Big(\left\|{Bv}\right\|^2_{L^2(Q'_0)}+\left\|{s^{\frac{3}{2}}v}\right\|^2_{L^2(Q'_0)}+\left\|{s^{\frac{1}{2}}Dv}\right\|^2_{L^2(Q'_0)}\Big).\nonumber
\end{eqnarray}
\par We then add the following terms $\int_0^T hs^3v^2_{n+1}$ and $\int_0^T hs^{-1}(\partial_t v(a))^2$ on both the right hand side and the left hand side of ~\eqref{eq:4.6}. This allows us to change the domain of integration from $Q'_0$ to $Q_0$ for the discrete integrals on the primal mesh. No additional term is required for discrete integrals on the dual mesh. For $sh$ sufficiently small and $s\ge 1$ sufficiently large, these terms at the right hand side are then absorbed by the terms at the left hand side. More precisely, with $0<\epsilon_0\le \epsilon_4$ sufficiently small and for $\tau\ge \tau_2 (T+T^2)$, $0<h\le h_0$, and $0<\tau h (max_{[0,T]}\theta)\le \epsilon_0$ we thus obtain

%\begin{eqnarray}&&\left\|{s^{-\frac{1}{2}}\partial_t v}\right\|^2_{L^2(Q')}+2\int_{Q'}{s\lambda^2\phi\left|{Dv}\right|^2dt}+2\int_{Q'}{s^3\lambda^4v^2dt}\nonumber\\
%&+&C\alpha\int_0^T{s\lambda\phi(a)(c_dDv)^2_{N+\frac{1}{2}}}+C\alpha\int_0^T{s^3\lambda^3\phi^3(a)(v)^2_{N+1}}\nonumber\\
%&+&C\alpha\int_0^T{s\lambda\phi(b)(c_dDv)^2_{N+m+\frac{3}{2}}}+C\alpha\int_0^T{s^3\lambda^3\phi^3(b)(v)^2_{N+m+2}}\nonumber\\
%&\le&C_{{\lambda, \mathfrak{K}}}(\left\|{rf_1}\right\|^2_{L^2(Q')}+2\int^T_0{\int_{\omega_0}{s\lambda^2\phi\left|{Dv}\right|^2dt}}+2\int^T_0{\int_{\omega_0}{s^3\lambda^4v^2dt}}\nonumber\\
%&+&\int_{\Omega'}(s^2\left|{v}\right|^2)|_{t=0}+\int_{\Omega'}\left|{Dv(T)}\right|^2)+2X\nonumber\\
%&+&(\int_0^T{\mathcal{O_{\lambda, \mathfrak{K}}}(sh)h(rf)^2_{N+1}}+\int_0^T{\mathcal{O_{\lambda, \mathfrak{K}}}(sh)h(rf)^2_{N+m+2}})+(\int_0^T{\mathcal{O_{\lambda, \mathfrak{K}}}(1)(\partial_t v(a))^2dt}+\int_0^T{\mathcal{O_{\lambda, \mathfrak{K}}}(1)(\partial_t v(b))^2dt})\nonumber
%\end{eqnarray}

%\par We can now choose $\epsilon$ , $h_0$ sufficiently small and $\tau_1\ge 1$ sufficiently large such that $\tau\ge \tau_1(T+T^2)$, $0<h<h_0$ and $\tau h(max_{[0,T]})\le \epsilon$, we can obtain:
\begin{eqnarray}\label{eq:4.7}&&\left\|{s^{-\frac{1}{2}}\partial_t v}\right\|^2_{L^2(Q_0)}\quad+\quad\int_{Q_0}{s(Dv)^2}\quad+\quad\int_{Q_0}{s^3v^2}\nonumber\\
&\le&C_{{\lambda, \mathfrak{K}}}\left( \left\|{rf_1}\right\|^2_{L^2(Q_0)}+\int^T_0{\int_{\omega_0}{s(Dv)^2}}+\int^T_0{\int_{\omega_0}{s^3v^2}}\right.\nonumber\\
&&\left.+ h^{-2}\big(\int_{\Omega_0}v^2_{|_{t=0}}+\int_{\Omega_0}v^2_{|_{t=T}}\big)+s\mathcal{O_{\lambda, \mathfrak{K}}}(1)v^2(a)|_{t=0}^{t=T}\right).
\end{eqnarray}
\par Now we shall estimate the term $s\mathcal{O_{\lambda, \mathfrak{K}}}(1)v^2(a)_{|_{t=T}}$. We have
\begin{equation}
\left\| {v_{|_{t=T}}}\right\|^2_{L^2(\Omega_0)}=\sum_{j=1}^{n+m+1}hv^2_{j|_{t=T}}\ge h \left\| {v_{|_{t=T}}}\right\|^2_{L^{\infty}(\Omega_0)}.\nonumber
\end{equation}
It follows that, as $sh$ is bounded
\small{
\begin{eqnarray}
\left| s\mathcal{O_{\lambda, \mathfrak{K}}}(1)v^2(a)_{|_{t=T}}\right|\le C_{{\lambda, \mathfrak{K}}}\ s\left\| {v_{|_{t=T}}}\right\|^2_{L^{\infty}(\Omega_0)}&\le& C_{{\lambda, \mathfrak{K}}}\ sh^{-1}\left\| {v_{|_{t=T}}}\right\|^2_{L^{2}(\Omega_0)}\nonumber\\
&\le& C_{{\lambda, \mathfrak{K}}}\ h^{-2}\left\| {v_{|_{t=T}}}\right\|^2_{L^{2}(\Omega_0)}.\nonumber
\end{eqnarray}
}
\par Similarly, we treat the term $s\mathcal{O_{\lambda, \mathfrak{K}}}(1)v^2(a)_{|_{t=0}}$ as
\begin{equation}
\left| s\mathcal{O_{\lambda, \mathfrak{K}}}(1)v^2(a)_{|_{t=0}}\right|\le C_{{\lambda, \mathfrak{K}}}\ h^{-2}\left\| {v_{|_{t=0}}}\right\|^2_{L^{2}(\Omega_0)}.\nonumber
\end{equation}
 Therefore, ~\eqref{eq:4.7} can be written as
\small{
 \begin{eqnarray}&&\left\|{s^{-\frac{1}{2}}\partial_t v}\right\|^2_{L^2(Q_0)}\quad+\quad\left\|{s^{\frac{1}{2}}Dv} \right\|^2_{L^2(Q_0)} +\quad\left\|{s^{\frac{3}{2}}v} \right\|^2_{L^2(Q_0)}\nonumber\\
 &\le&C_{{\lambda, \mathfrak{K}}}\left( \left\|{rf_1}\right\|^2_{L^2(Q_0)}+\left\|{s^{\frac{1}{2}}Dv} \right\|^2_{L^2\big({(0,T)\times \omega_0}\big)}+\left\|{s^{\frac{3}{2}}v} \right\|^2_{L^2\big({(0,T)\times \omega_0}\big)}\right.\nonumber\\
 &&\left. \quad\quad\quad\quad\quad\quad\quad\quad\quad\quad\quad\quad\quad\quad+ h^{-2}\big(\int_{\Omega_0}v^2_{|_{t=0}}+\int_{\Omega_0}v^2_{|_{t=T}}\big)\right).\nonumber
 \end{eqnarray}
 }
\par We next remove the volume norm $\left\|{s^{\frac{1}{2}}Dv}\right\|^2_{L^2( (0,T)\times \omega_0)}$ by proceeding as in the proof of Theorem 4.1 in \cite{BHL10a} we thus write
\begin{eqnarray}\label{eq:Carleman est}&&\tau^{-1}\left\|{\theta^{-\frac{1}{2}}e^{\tau \theta \varphi}\partial_t u}\right\|^2_{L^2(Q_0)}+\tau\left\|{\theta^{\frac{1}{2}}e^{\tau \theta\varphi}Du}\right\|^2_{L^2(Q_0)}+\tau^3\left\|{\theta^{\frac{3}{2}}e^{\tau\theta\varphi}u}\right\|^2_{L^2(Q_0)}\nonumber\\
&&\ \ \le C_{\lambda,\mathfrak{K}}\left( \left\|{e^{\tau\theta\varphi}P^{\mathfrak{M}}u}\right\|^2_{L^2(Q_0)}+\tau^3\left\|{\theta^{\frac{3}{2}}e^{\tau \theta\varphi}u}\right\|^2_{L^2( (0,T) \times \omega_0)} \right.\nonumber\\
&&\ \ \ \ \ \ \ \ \ \ \ \ \ \ \ \ \ \ \ \ \ \left. +h^{-2} \left|{ e^{\tau\theta\varphi}u|_{t=0}}\right|^2_{L^2(\Omega_0)}+ h^{-2}\left|{e^{\tau\theta\varphi}u|_{t=T}}\right|^2_{L^2(\Omega_0)} \right),
\end{eqnarray}

\par As we have $\underset{[0,T]}{\max}\theta\le \frac{1}{T\alpha}$, we see that a sufficient condition for $\tau h\big({\underset{[0,T]}{\max}\theta}\big)\le \epsilon_0$ then becomes $\tau h(T\alpha)^{-1}\le \epsilon_0$. To finish the proof, we need to express all the terms in the estimate above in terms of the original function $u$. We can proceed exactly as in the end of proof of Theorem 4.1 in \cite{BHL10a}.
%We set $h^*=\max\left\{{h^*_1,h^*_2}\right\}$.
\section{Carleman estimates for regular non uniform meshes}

\label{sec:non uniform meshes}

\par In this section we focus on extending the above result to the class of non piecewise uniform meshes introduced in Section~\ref{subsec:Families of non-uniform meshes}. We choose a function $\vartheta$ satisfying ~\eqref{eq:vartheta} and further $\vartheta|_{[a-\delta, a+\delta]}$ is chosen affine (for some $\delta>0$ to remain fixed in the sequel). The way we proceed here is similar to what is done in \cite{BHL10a}. In this framework, we shall prove a non-uniform Carleman estimate for the parabolic operator $\mathcal{P}^{\mathfrak{M}}=-\partial_t+\mathcal{A}^{\mathfrak{M}}$ on the mesh $\mathfrak{M}$ by using the result on uniform meshes of Section~\ref{sec: Carleman estimate uniform mesh}.
%\par We set $h=\max\left\{{h_1,h_2}\right\}$. Once the function $\vartheta$ is fixed we assume that for some $C>0$ we have
%\begin{equation}Ch\le h_i \le h \quad\quad i=1,2\nonumber 
%\end{equation}
\par By using first-order Taylor formulae we obtain the following result.
\begin{Lemma}\label{lem:define zeta}Let us define $\zeta\in \mathbb{R}^{\overline{\mathfrak{M}}}$ and $\overline{\zeta}\in \mathbb{R}^{\mathfrak{M}}$ as follows
%\begin{center}$\zeta_{i+\frac{1}{2}}=\begin{cases}
%{\frac{h'_{i+1/2}}{h_1}}\quad\quad\quad i=0,\ldots,n , &\\
%{\frac{h'_{i+1/2}}{h_2}}\quad\quad\quad i=n+1,\ldots,n+m+2  &
%\end{cases}$
%\end{center}
%\begin{center} $\overline{\zeta}_{n+1}=\frac{h_{n+\frac{3}{2}}+h_{n+\frac{1}{2}}}{h_1+h_2}$ \end{center}
%and 
%\begin{center}$\overline{\zeta_i}=\begin{cases}
%{\frac{h'_{i}}{h_1}}\quad\quad\quad i=1,\ldots,n , &\\
%{\frac{h'_{i}}{h_2}}\quad\quad\quad i=n+2,\ldots,n+m+2  &
%\end{cases}$
%\end{center}
\begin{align}
 \zeta_{i+\frac{1}{2}}=\frac{h'_{i+\frac{1}{2}}}{h}, i\in \left\{{0,\ldots, n+m+1}\right\}& \quad\quad \overline{\zeta_{i}}=\frac{h'_{i}}{h}, i\in \left\{{1,\ldots, n+m+1}\right\}\nonumber
\end{align}
These two discrete functions are connected to the geometry of the primal and dual meshes ${\mathfrak{M}}$ and $\overline{{\mathfrak{M}}}$ and we have
\begin{eqnarray}&&0<{\underset{\Omega_0}{\inf \vartheta'}}\le \zeta_{i+\frac{1}{2}}\le {\underset{\Omega_0}{\sup \vartheta'}}, \quad\quad \forall i\in{0,\ldots,n+m+1}\nonumber\\
&&0<{\underset{\Omega_0}{\inf \vartheta'}}\le\overline{ \zeta}_{i}\le {\underset{\Omega_0}{\sup \vartheta'}}, \quad\quad \forall i\in{1,\ldots,n+m+1}\nonumber
\end{eqnarray}
\begin{equation} \left|{\bar{D}\zeta}\right|_{L_{\infty}(\Omega)}\le \frac{\left\|{\vartheta''}\right\|_{L_{\infty}}}{\underset{\Omega_0}{\inf}\vartheta'}, \ \ \ \  \left|{{D}\bar{\zeta}}\right|_{L_{\infty}(\Omega)}\le \frac{\left\|{\vartheta''}\right\|_{L_{\infty}}}{\underset{\Omega_0}{\inf}\vartheta'}.\nonumber
\end{equation}
% with 
% \begin{eqnarray}&&C_1=\min\Big({\underset{\Omega_0\setminus a}{\inf \vartheta'},\frac{\vartheta'_-(a)+\vartheta'_+(a)}{2}}\Big),\nonumber\\
% &&C'_1=\max\Big({\underset{\Omega_0\setminus a}{\sup \vartheta'},\frac{\vartheta'_-(a)+\vartheta'_+(a)}{2}}\Big).\nonumber\end{eqnarray}
% where $\vartheta'_-(a),\vartheta'_+(a) $ be left-sided and right-sided derivative of $\vartheta$ at $a$.
\end{Lemma}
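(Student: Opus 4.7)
The plan is to rewrite each of the four quantities in the statement in integral form (via the fundamental theorem of calculus applied to $\vartheta$ and $\vartheta'$), and then read off the claimed bounds. I will treat the generic index $i$ away from the jump point first, and show that the exceptional indices $i=n$, $n+1$, $n+2$ are handled by the assumption that $\vartheta$ is affine on $[a-\delta,a+\delta]$.

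Away from the jump, for $i$ such that $x'_i=\vartheta(ih)$ and $x'_{i+1}=\vartheta((i+1)h)$ I would write
\begin{equation*}
h'_{i+\frac{1}{2}} \;=\; \vartheta((i+1)h)-\vartheta(ih) \;=\; h\int_0^1 \vartheta'(ih+sh)\,ds,
\end{equation*}
so $\zeta_{i+\frac{1}{2}}=\int_0^1\vartheta'(ih+sh)\,ds$, which immediately gives $\inf\vartheta'\le\zeta_{i+\frac{1}{2}}\le\sup\vartheta'$. Because $h'_i=(h'_{i+\frac{1}{2}}+h'_{i-\frac{1}{2}})/2$ by construction of the dual mesh, the bound $\inf\vartheta'\le\overline\zeta_i\le\sup\vartheta'$ is an immediate consequence by averaging. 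For the index $i=n+1$ (and the neighbouring indices $n$, $n+2$ which use $x'_{n+1}=\vartheta(a)$ instead of $\vartheta((n+1)h)$) I invoke the fact that $\vartheta$ is affine on $[a-\delta,a+\delta]$: for $r$ large enough the relevant mesh points lie inside this interval, so $\vartheta'$ is constant there and the same integral representation (possibly over a translated interval of length $h'_{i+\frac{1}{2}}$) again yields the inclusion.

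For the discrete derivative estimates I would, for the same generic index, compute
\begin{equation*}
\zeta_{i+\frac{1}{2}}-\zeta_{i-\frac{1}{2}}
= \int_0^1\bigl(\vartheta'(ih+sh)-\vartheta'((i-1)h+sh)\bigr)\,ds
= \int_0^1\!\int_{(i-1)h+sh}^{ih+sh}\vartheta''(u)\,du\,ds,
\end{equation*}
from which $|\zeta_{i+\frac{1}{2}}-\zeta_{i-\frac{1}{2}}|\le h\,\|\vartheta''\|_{L^\infty}$. Dividing by $h'_i\ge h\inf\vartheta'$ (by the first part of the lemma) gives
\begin{equation*}
|(\bar D\zeta)_i| \;\le\; \frac{\|\vartheta''\|_{L^\infty}}{\inf\vartheta'}.
\end{equation*}
The bound on $|D\overline\zeta|_{L^\infty}$ follows exactly in the same way from the identity $\overline\zeta_{i+1}-\overline\zeta_i=(\zeta_{i+\frac{3}{2}}-\zeta_{i-\frac{1}{2}})/2$ and a two-step Taylor estimate.

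I expect no real obstacle beyond the bookkeeping at the interface: the only subtle point is ensuring the Taylor identities still apply at the three indices adjacent to $x'_{n+1}=\vartheta(a)$, where $a$ need not coincide with $(n+1)h$. Since $\vartheta$ was chosen affine on $[a-\delta,a+\delta]$, $\vartheta''$ vanishes in a neighbourhood of $a$; both $\zeta_{n+\frac{1}{2}}$ and $\zeta_{n+\frac{3}{2}}$ are then exactly equal to the common constant value $\vartheta'(a)$ (up to the affine rescaling), so the discrete second-difference terms are actually zero at the interface and the global bound $\|\vartheta''\|_{L^\infty}/\inf\vartheta'$ is preserved.
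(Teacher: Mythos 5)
Your treatment of the generic indices is correct and is precisely the route the paper indicates (the paper gives no written proof of this lemma, only the remark that it follows ``by first-order Taylor formulae''): the representation $\zeta_{i+\frac{1}{2}}=\int_0^1\vartheta'(ih+sh)\,ds$ yields the two-sided bounds, averaging yields the bounds on $\bar{\zeta}$, and the first-difference estimates divided by $h'_i\ge h\inf\vartheta'$ yield the two discrete-derivative bounds. Away from the interface nothing is missing.

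The gap is in your final paragraph. Under the paper's definition \eqref{eq:non uniform mesh} the reference mesh point of index $n+1$ is $a=p/q$ itself, not $(n+1)h$; since $h=1/(qr+2)$ and $n=pr$, one has $(n+1)h=(pr+1)/(qr+2)$, which equals $p/q$ only when $q=2p$, i.e. $a=\tfrac{1}{2}$. A direct computation gives $a-nh=2ah$ and $(n+2)h-a=2(1-a)h$, so on the affine piece $\zeta_{n+\frac{1}{2}}=2a\,\vartheta'(a)$ and $\zeta_{n+\frac{3}{2}}=2(1-a)\,\vartheta'(a)$: these are \emph{not} ``exactly equal to the common constant value $\vartheta'(a)$'' unless $a=\tfrac{1}{2}$. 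Hence the vanishing of $\vartheta''$ near $a$ does not rescue the estimates there: the first difference $\zeta_{n+\frac{3}{2}}-\zeta_{n+\frac{1}{2}}=2(1-2a)\vartheta'(a)$ is of order $1$, while $h'_{n+1}=h\vartheta'(a)$, so $(\bar{D}\zeta)_{n+1}=2(1-2a)/h$ is of order $h^{-1}$; moreover for $a$ far from $\tfrac{1}{2}$ the inclusion of $\zeta_{n+\frac{1}{2}}$ and $\zeta_{n+\frac{3}{2}}$ in $[\inf\vartheta',\sup\vartheta']$ fails as well. To close the argument you must either take the reference mesh to be genuinely piecewise uniform (steps $a/(n+1)$ on $[0,a]$ and $(1-a)/(m+1)$ on $[a,1]$, whose ratios to $h$ are $1+\mathcal{O}(h)$ and whose mismatch across $a$ is $\mathcal{O}(h^2)$, in which case your Taylor argument goes through at every index with at most an $\mathcal{O}(h)$ degradation of the constants), or restrict to the case $a=(n+1)h$. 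As written, the three interface indices are not covered, and this is the one point where the lemma actually needs an argument rather than bookkeeping.
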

\par We introduce some notation. To any $u\in \mathbb{C}^{\mathfrak{M}\cup \partial \mathfrak{M}}$, we associate the discrete function denoted by $\mathcal{Q}^{\mathfrak{M}_0}_{\mathfrak{M}}u\in \mathbb{C}^{\mathfrak{M}_0\cup \partial \mathfrak{M}_0}$ defined on the uniform mesh $\mathfrak{M}_0$ which takes the same values as $u$ at the corresponding nodes. More precisely, if $u=\sum_{i=1}^{n+m+1}\textbf{1}_{[x'_{i-\frac{1}{2}},x'_{i+\frac{1}{2}}]}u_i$, we let 
\begin{equation} \mathcal{Q}^{\mathfrak{M}_0}_{\mathfrak{M}}u=\sum_{i=1}^{n+m+1}\textbf{1}_{[(i-\frac{1}{2})h,(i+\frac{1}{2})h]}u_i\nonumber\end{equation}
and  $(\mathcal{Q}^{\mathfrak{M}_0}_{\mathfrak{M}}u)_0=u_0$, $({\mathcal{Q}}^{\mathfrak{M}_0}_{\mathfrak{M}}u)_{n+m+2}=u_{n+m+2}$.
Similarly, for $u\in \mathbb{C}^{\overline{\mathfrak{M}}}$, $u=\sum_{i=1}^{n+m+1}\textbf{1}_{[x'_{i},x'_{i+1}]}u_{i+\frac{1}{2}}$, we set 
\begin{equation}{\mathcal{Q}}^{\overline{\mathfrak{M}_0}}_{\overline{\mathfrak{M}}}u=\sum_{i=0}^{n+m+1}\textbf{1}_{[ih,(i+1)h]}u_{i+\frac{1}{2}}.\nonumber\end{equation} 
The operators ${\mathcal{Q}}^{\mathfrak{M}_0}_{{\mathfrak{M}}}$ and ${\mathcal{Q}}^{\overline{\mathfrak{M}_0}}_{\overline{\mathfrak{M}}}$ are invertible and we denote by ${\mathcal{Q}}^{\mathfrak{M}}_{{\mathfrak{M}_0}}$ and ${\mathcal{Q}}^{\overline{\mathfrak{M}}}_{\overline{\mathfrak{M}_0}}$ their respective inverses. We give commutation properties between these operators and discrete-difference operators through the following Lemmata whose proofs can be found in \cite{BHL10a}.   

\begin{Lemma}\label{lem:property of zeta}\cite[see the proof of  Lemma 5.2]{BHL10a}
\begin{enumerate}
\item For any $u\in \mathbb{C}^{\mathfrak{M}\cup \partial \mathfrak{M}}$ and any $v\in\mathbb{C}^{\overline{\mathfrak{M}}}$, we have
\begin{equation}D({\mathcal{Q}}^{\mathfrak{M}_0}_{\mathfrak{M}}u)={\mathcal{Q}}^{\overline{\mathfrak{M}_0}}_{\overline{\mathfrak{M}}}(\zeta Du), \quad \quad\bar{D}{\mathcal{Q}}^{\overline{\mathfrak{M}_0}}_{\overline{\mathfrak{M}}}v={\mathcal{Q}}^{\mathfrak{M}_0}_{\mathfrak{M}}(\overline{\zeta}\ \bar{ D}v)\nonumber
\end{equation}
\item For any $u\in \mathbb{C}^{\mathfrak{M}\cup \partial \mathfrak{M}}$ we have
\begin{equation}\bar{D}(c_dDu)=(\overline{\zeta})^{-1}{\mathcal{Q}}^{\mathfrak{M}}_{\mathfrak{M}_0}\Big(\bar{D}\big({({\mathcal{Q}}^{\overline{\mathfrak{M}_0}}_{\overline{\mathfrak{M}}}\frac{c_d}{\zeta})D({\mathcal{Q}}^{\mathfrak{M}_0}_{\mathfrak{M}}u)}\big)\Big).\nonumber
\end{equation}
\end{enumerate}
\end{Lemma}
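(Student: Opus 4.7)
The plan is to prove both parts by direct node-by-node identification, using that each operator $\mathcal{Q}$ merely transfers a value from a node of one mesh to the corresponding node of the other and is therefore multiplicative for pointwise products, and that the discrete functions $\zeta$ and $\overline{\zeta}$ have been designed precisely to absorb the ratio between the non-uniform step and the uniform step $h$.

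For part (1), first identity, I would evaluate both sides at an arbitrary dual node $(i+\tfrac12)h$ of the uniform grid. On the left, by the definition of $D$ on a uniform mesh of step $h$ and of $\mathcal{Q}^{\mathfrak{M}_0}_{\mathfrak{M}}$, the value is $(u_{i+1}-u_i)/h$. On the right, $(\zeta Du)_{i+\frac{1}{2}} = (h'_{i+\frac{1}{2}}/h)\cdot(u_{i+1}-u_i)/h'_{i+\frac{1}{2}} = (u_{i+1}-u_i)/h$, and applying $\mathcal{Q}^{\overline{\mathfrak{M}_0}}_{\overline{\mathfrak{M}}}$ simply relocates this value unchanged. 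The second identity of (1) is proved symmetrically at a primal node $ih$, using $\overline{\zeta}_i = h'_i/h$ together with the definitions of $\bar{D}$ on the uniform and non-uniform dual meshes.

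For part (2), the strategy is to apply (1) twice and exploit the pointwise multiplicativity of $\mathcal{Q}^{\overline{\mathfrak{M}_0}}_{\overline{\mathfrak{M}}}$. Introducing
\begin{equation*}
w := \bigl(\mathcal{Q}^{\overline{\mathfrak{M}_0}}_{\overline{\mathfrak{M}}}\tfrac{c_d}{\zeta}\bigr)\,D\bigl(\mathcal{Q}^{\mathfrak{M}_0}_{\mathfrak{M}} u\bigr),
\end{equation*}
the first identity of (1) rewrites the second factor as $\mathcal{Q}^{\overline{\mathfrak{M}_0}}_{\overline{\mathfrak{M}}}(\zeta Du)$, and multiplicativity of $\mathcal{Q}^{\overline{\mathfrak{M}_0}}_{\overline{\mathfrak{M}}}$ then collapses $w$ to $\mathcal{Q}^{\overline{\mathfrak{M}_0}}_{\overline{\mathfrak{M}}}(c_d Du)$. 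The second identity of (1), applied to $c_d Du \in \mathbb{C}^{\overline{\mathfrak{M}}}$, yields $\bar{D} w = \mathcal{Q}^{\mathfrak{M}_0}_{\mathfrak{M}}\bigl(\overline{\zeta}\,\bar{D}(c_d Du)\bigr)$. Applying $\mathcal{Q}^{\mathfrak{M}}_{\mathfrak{M}_0}$ and dividing by $\overline{\zeta}$ recovers exactly the announced formula.

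The only real obstacle is bookkeeping: keeping track of which quantity lives on the primal versus the dual mesh, on the uniform versus the non-uniform grid, and ensuring that each multiplicativity step $\mathcal{Q}(fg)=\mathcal{Q}(f)\,\mathcal{Q}(g)$ is invoked on the mesh where both factors are defined. No analytical difficulty is involved; the argument is combinatorial and reduces to the fact that the maps $\mathcal{Q}$ are constant-on-cells extensions, while $\zeta$ and $\overline{\zeta}$ encode the metric ratio between the meshes.
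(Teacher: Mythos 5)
Your proof is correct and is essentially the standard argument: the paper itself does not prove this lemma but defers to \cite[Lemma 5.2]{BHL10a}, and the node-by-node verification you give (the uniform-mesh difference quotient equals $\zeta\,Du$ relocated by $\mathcal{Q}$, then part (2) by applying part (1) twice together with the cell-wise multiplicativity of $\mathcal{Q}^{\overline{\mathfrak{M}_0}}_{\overline{\mathfrak{M}}}$) is exactly the computation carried out in that reference. No gaps.
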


\begin{Lemma}\cite[see proof of  Lemma 5.3]{BHL10a}\label{lem:property of zeta1}
\par For any $u\in \mathbb{C}^{\mathfrak{M}}$ and any $v\in\mathbb{C}^{\overline{\mathfrak{M}}}$, we have
\begin{eqnarray}&&(\underset{\Omega_0}{\sup\vartheta'})^{-1}\left|{u}\right|^2_{L^2(\Omega)}\le\left|{{\mathcal{Q}}^{\mathfrak{M}_0}_{\mathfrak{M}}u}\right|^2_{L^2(\Omega_0)}\le(\underset{\Omega_0}{\inf\vartheta'})^{-1}\left|{u}\right|^2_{L^2(\Omega)}\nonumber\\
&&(\underset{\Omega_0}{\sup\vartheta'})^{-1}\left|{v}\right|^2_{L^2(\Omega)}\le\left|{{\mathcal{Q}}^{\overline{\mathfrak{M}_0}}_{\overline{\mathfrak{M}}}u}\right|^2_{L^2(\Omega_0)}\le(\underset{\Omega_0}{\inf\vartheta'})^{-1}\left|{v}\right|^2_{L^2(\Omega)}\nonumber
\end{eqnarray}
Futhermore, the same inequalities hold by replacing $\Omega$ by $\omega$ and $\Omega_0$ by $\omega_0$, respectively.
\end{Lemma}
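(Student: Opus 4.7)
The plan is to prove these norm equivalences by directly unfolding the definitions of the discrete $L^2$ norms on the two meshes and comparing them term-by-term using the pointwise bounds on $\zeta$ and $\overline{\zeta}$ already recorded in Lemma~\ref{lem:define zeta}. There is essentially no difficulty beyond bookkeeping; the only thing to be careful about is keeping the primal/dual distinction straight and handling the restriction to $\omega$ correctly.

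First, for $u\in\mathbb{C}^{\mathfrak{M}}$, recall from Section~\ref{subsec:discrete setting} that $|u|^2_{L^2(\Omega)}=\sum_{i=1}^{n+m+1}h'_i u_i^2$, while by construction $\mathcal{Q}^{\mathfrak{M}_0}_{\mathfrak{M}}u$ takes the same nodal values $u_i$ on the uniform mesh $\mathfrak{M}_0$, so $|\mathcal{Q}^{\mathfrak{M}_0}_{\mathfrak{M}}u|^2_{L^2(\Omega_0)}=\sum_{i=1}^{n+m+1}h u_i^2$. Since $\overline{\zeta}_i=h'_i/h$, Lemma~\ref{lem:define zeta} yields $(\inf_{\Omega_0}\vartheta')\,h\le h'_i\le(\sup_{\Omega_0}\vartheta')\,h$ for every $i$. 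Multiplying by $u_i^2$ and summing gives
\begin{equation}
(\inf_{\Omega_0}\vartheta')\,|\mathcal{Q}^{\mathfrak{M}_0}_{\mathfrak{M}}u|^2_{L^2(\Omega_0)}\le |u|^2_{L^2(\Omega)}\le(\sup_{\Omega_0}\vartheta')\,|\mathcal{Q}^{\mathfrak{M}_0}_{\mathfrak{M}}u|^2_{L^2(\Omega_0)},\nonumber
\end{equation}
which, upon rearrangement, is exactly the announced two-sided estimate. For $v\in\mathbb{C}^{\overline{\mathfrak{M}}}$ the argument is word-for-word the same with the primal sums replaced by dual sums $\sum_{i=0}^{n+m+1}h'_{i+\frac{1}{2}}v_{i+\frac{1}{2}}^2$ and with the bounds $\zeta_{i+\frac{1}{2}}=h'_{i+\frac{1}{2}}/h\in[\inf_{\Omega_0}\vartheta',\sup_{\Omega_0}\vartheta']$ taken from the same lemma.

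Finally, for the analogous inequalities on the observation region it suffices to notice that, since $\vartheta$ is an increasing diffeomorphism, the natural choice $\omega_0:=\vartheta^{-1}(\omega)\subset\Omega_0$ has the property that a primal (resp.\ dual) node $x'_i$ of $\mathfrak{M}$ belongs to $\omega$ if and only if the corresponding node $ih$ (resp.\ $(i+\tfrac{1}{2})h$) of $\mathfrak{M}_0$ belongs to $\omega_0$. The node-by-node comparison above then restricts verbatim to the indices lying in $\omega$ (equivalently, to those whose image under the identification lies in $\omega_0$), giving the same inequalities with $\Omega,\Omega_0$ replaced by $\omega,\omega_0$. This concludes the proof; the only ``step'' worth flagging is the elementary observation that the restriction of the sum to the $\omega$-indices preserves the pointwise sandwich bound on $h'_i$ and $h'_{i+\frac{1}{2}}$, which requires no calculation at all.
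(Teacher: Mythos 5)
Your proof is correct and is essentially the argument the paper delegates to \cite[Lemma 5.3]{BHL10a}: unfold the discrete $L^2$ norms as weighted sums over nodal values, use the pointwise bounds $\inf_{\Omega_0}\vartheta'\le h'_i/h,\ h'_{i+\frac{1}{2}}/h\le\sup_{\Omega_0}\vartheta'$ from Lemma~\ref{lem:define zeta}, and restrict the index set for the $\omega$ version. Nothing is missing.
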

\par For any continuous function $f$ defined on $\overline{\Omega}$ (resp. on $\overline{\Omega_0}$) we denote by $\Pi_{\mathfrak{M}}f= (f(x'_i))_{0\le i\le n+m+2}\in \mathbb{C}^{\mathfrak{M}\cup \partial \mathfrak{M}}$ the sampling of $f$ on $\mathfrak{M}$ (resp. $\Pi_{\mathfrak{M}_0}f= (f(ih))_{0\le i\le n+m+2}\in \mathbb{C}^{{\mathfrak{M}_0}\cup \partial \mathfrak{M}_0}$ the sampling of $f$ on $\mathfrak{M}_0$).
\begin{Lemma}\cite[see the proof of  Lemma 5.4]{BHL10a}\label{lem:property of zeta2}
\par Let $f$ be a continuous function defined on $\Omega$
\begin{equation}{\mathcal{Q}}^{\mathfrak{M}_0}_{\mathfrak{M}}\Pi_{\mathfrak{M}}f=\Pi_{\mathfrak{M}_0}(f\circ\vartheta).\nonumber
\end{equation}
\par In particular, for  $u\in \mathbb{C}^{\mathfrak{M}\cup \partial \mathfrak{M}}$ we have
\begin{equation}{\mathcal{Q}}^{\mathfrak{M}_0}_{\mathfrak{M}}\Big({(\Pi_{\mathfrak{M}}f)u}\Big)=\Pi_{\mathfrak{M}_0}(f\circ\vartheta)({\mathcal{Q}}^{\mathfrak{M}_0}_{\mathfrak{M}}u).\nonumber
\end{equation}
\end{Lemma}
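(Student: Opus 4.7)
The proof is essentially a direct unfolding of definitions, so my plan is to verify the identity node by node on the uniform primal mesh $\mathfrak{M}_0$. First I would recall the three ingredients: the sampling $\Pi_{\mathfrak{M}} f=(f(x'_i))_{0\le i\le n+m+2}$ assigns to the $i$-th node of $\mathfrak{M}$ the value $f(x'_i)$; the transfer operator $\mathcal{Q}^{\mathfrak{M}_0}_{\mathfrak{M}}$ sends a discrete function on $\mathfrak{M}$ to the discrete function on $\mathfrak{M}_0$ that carries the same value at the $i$-th node; and, by the definition of the non-uniform mesh in \eqref{eq:non uniform mesh}, the nodes satisfy $x'_i=\vartheta(ih)$ for every $i\in\{0,\ldots,n+m+2\}$ (including $i=n+1$, since $a'=\vartheta(a)=\vartheta(pr\cdot h)$).

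For the first identity, I would therefore evaluate both sides at the generic node $ih$ of $\mathfrak{M}_0$. On the left, $(\mathcal{Q}^{\mathfrak{M}_0}_{\mathfrak{M}}\Pi_{\mathfrak{M}}f)_i = (\Pi_{\mathfrak{M}}f)_i = f(x'_i) = f(\vartheta(ih))$. On the right, $(\Pi_{\mathfrak{M}_0}(f\circ\vartheta))_i = (f\circ\vartheta)(ih) = f(\vartheta(ih))$. The two values coincide at every node, which, together with the agreement at the boundary nodes $i=0$ and $i=n+m+2$ built into the definition of $\mathcal{Q}^{\mathfrak{M}_0}_{\mathfrak{M}}$, yields the first equality.

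For the second identity, I would first observe that $\mathcal{Q}^{\mathfrak{M}_0}_{\mathfrak{M}}$ is multiplicative on pointwise products of discrete functions on $\mathfrak{M}$, since it only relabels nodes: for $u,w\in\mathbb{C}^{\mathfrak{M}\cup\partial\mathfrak{M}}$ one has $(\mathcal{Q}^{\mathfrak{M}_0}_{\mathfrak{M}}(uw))_i = u_i w_i = (\mathcal{Q}^{\mathfrak{M}_0}_{\mathfrak{M}}u)_i (\mathcal{Q}^{\mathfrak{M}_0}_{\mathfrak{M}}w)_i$. Applying this with $w=\Pi_{\mathfrak{M}}f$ and then invoking the first identity gives
\[
\mathcal{Q}^{\mathfrak{M}_0}_{\mathfrak{M}}\bigl((\Pi_{\mathfrak{M}}f)u\bigr)=(\mathcal{Q}^{\mathfrak{M}_0}_{\mathfrak{M}}\Pi_{\mathfrak{M}}f)(\mathcal{Q}^{\mathfrak{M}_0}_{\mathfrak{M}}u)=\Pi_{\mathfrak{M}_0}(f\circ\vartheta)\,\mathcal{Q}^{\mathfrak{M}_0}_{\mathfrak{M}}u.
\]

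There is no real obstacle here: the statement holds node by node by construction, and the only point requiring minimal care is making sure that the identification $x'_i=\vartheta(ih)$ also holds at the distinguished index $i=n+1$ corresponding to the discontinuity $a'=\vartheta(a)$, which is precisely how the non-uniform mesh was defined in Section~\ref{subsec:Families of non-uniform meshes}. No continuity or smoothness of $f$ beyond pointwise definition is used.
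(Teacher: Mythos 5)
Your proof is correct and is exactly the definition-unfolding argument that the paper delegates to the reference \cite[Lemma 5.4]{BHL10a}: both identities hold node by node because ${\mathcal{Q}}^{\mathfrak{M}_0}_{\mathfrak{M}}$ merely relabels nodes and is therefore multiplicative on pointwise products. The only blemish is the parenthetical $a'=\vartheta(pr\cdot h)$, which should read $a'=\vartheta\big((pr+1)h\big)$ since the paper's convention is $a=x_{n+1}=x_{pr+1}=(pr+1)h$; this off-by-one in the aside does not affect the argument.
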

\par Moreover, by making use of Taylor formulae we get the following result
\begin{Lemma}\label{lem:bounded property}With $\zeta$ defined as in Lemma~\ref{lem:define zeta} we have
\begin{equation}\left\|{\bar{D}D\nu}\right\|_{\infty}< \infty, \quad\quad  \left\|{\bar{D}\tilde{\nu}}\right\|_{\infty}< \infty,\quad\quad 0<\left\|{\nu} \right\|_{\infty}, \left\|\tilde{\nu} \right\|_{\infty}< \infty \nonumber
\end{equation}
where $\nu:=\frac{1}{{\mathcal{Q}}^{\mathfrak{M}_0}_{\mathfrak{M}}\overline{\zeta}}$.
\end{Lemma}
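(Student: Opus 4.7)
The plan is to prove each of the four bounds in order, starting with the easy pointwise estimates on $\nu$ and $\tilde\nu$ and then turning to the bounds on $\bar D \tilde\nu$ and $\bar D D \nu$, for which the smoothness of $\vartheta$ is decisive. First I would invoke Lemma~\ref{lem:define zeta}, which already gives the uniform two-sided bound $0<\inf_{\Omega_0}\vartheta'\le \overline\zeta_i\le \sup_{\Omega_0}\vartheta'<\infty$. Since the operator $\mathcal{Q}^{\mathfrak{M}_0}_{\mathfrak{M}}$ merely relocates values without modifying them, the same bound transfers to $\mathcal{Q}^{\mathfrak{M}_0}_{\mathfrak{M}}\overline\zeta$ on $\mathfrak{M}_0$. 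Taking reciprocals yields $(\sup\vartheta')^{-1}\le\nu_i\le(\inf\vartheta')^{-1}$, and since $\tilde\nu = \tfrac12(\tau^+\nu+\tau^-\nu)$ is a convex combination, the same pointwise estimates transfer to $\tilde\nu$, giving $0<\|\nu\|_\infty,\|\tilde\nu\|_\infty<\infty$.

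For the discrete-derivative bounds I would view $\mathcal{Q}^{\mathfrak{M}_0}_{\mathfrak{M}}\overline\zeta$ on the uniform mesh $\mathfrak{M}_0$ as the sampling of a smooth, $h$-dependent function. Writing $\overline\zeta_i=h'_i/h=(x'_{i+1}-x'_{i-1})/(2h)$ and using the construction $x'_j=\vartheta(jh)$, with the forced value $x'_{n+1}=\vartheta(a)$ being consistent with this formula in the affine neighborhood $[a-\delta,a+\delta]$, one obtains
\[
\bigl(\mathcal{Q}^{\mathfrak{M}_0}_{\mathfrak{M}}\overline\zeta\bigr)_i \;=\; F_h(ih), \qquad F_h(x):=\frac{\vartheta(x+h)-\vartheta(x-h)}{2h}.
\]
Because $\vartheta\in C^\infty(\mathbb{R})$ and $\inf\vartheta'>0$, $F_h$ is of class $C^\infty$ with $C^k$ norms bounded independently of $h$ for every $k$, and satisfies $F_h\ge\inf\vartheta'>0$. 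Consequently $G_h:=1/F_h\in C^\infty$ with uniform-in-$h$ bounds on all its derivatives; moreover, on $[a-\delta,a+\delta]$ the function $\vartheta$ is affine, so $F_h$ and $G_h$ are constant there, and all discrete derivatives of $\nu=\Pi_{\mathfrak{M}_0}G_h$ vanish in a neighborhood of the jump index.

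I would then apply Lemma~\ref{lem:traslate function} to $\nu=\Pi_{\mathfrak{M}_0}G_h$ on the uniform mesh $\mathfrak{M}_0$, which gives
\[
(\bar D\tilde\nu)_i=\partial_x G_h(ih)+\mathcal{O}(h^2), \qquad (\bar D D\nu)_i=\partial_x^2 G_h(ih)+\mathcal{O}(h^2),
\]
the remainders being controlled by higher derivatives of $G_h$ and therefore uniformly bounded in both $i$ and $h$. This yields $\|\bar D\tilde\nu\|_\infty<\infty$ and $\|\bar D D\nu\|_\infty<\infty$, completing the argument.

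The hard part will be justifying the representation $(\mathcal{Q}^{\mathfrak{M}_0}_{\mathfrak{M}}\overline\zeta)_i=F_h(ih)$ uniformly across the jump index $i=n+1$, where a priori the prescribed value $x'_{n+1}=\vartheta(a)$ could perturb the centered differences at $i=n,n+1,n+2$ and so destroy the boundedness of $\bar D D\nu$. The affineness of $\vartheta$ on $[a-\delta,a+\delta]$ is exactly what dissolves this obstruction: there, all relevant centered differences reduce to differences of values of a single affine function, so the stencil formula holds unchanged through the jump, and the standard discrete-Taylor estimates of Lemma~\ref{lem:traslate function} deliver the result.
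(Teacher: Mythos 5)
Your proof is correct, but it takes a more structural route than the paper. The paper works directly on the explicit rational expression $(\bar{D}D\nu)_i=\frac{1}{h}\,\frac{(h'_i-h'_{i+1})h'_{i-1}-(h'_{i-1}-h'_i)h'_{i+1}}{h'_{i-1}h'_ih'_{i+1}}$, Taylor-expands each $h'_{i},h'_{i\pm1}$ around $\vartheta(ih)$ to fifth order, and checks by hand that the numerator is $\tfrac{(2h)^4}{2}(\vartheta''_i)^2+\mathcal{O}(h^5)$ while the denominator is $(2h)^4(\vartheta'_i)^3+\mathcal{O}(h^5)$, so the ratio is bounded by $(\inf\vartheta')^{-3}$ up to constants; $\bar D\tilde\nu$ is treated by the same explicit cancellation. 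You instead recognize $\nu$ as the sampling $\Pi_{\mathfrak{M}_0}G_h$ of the smooth family $G_h=1/F_h$ with $F_h(x)=\frac{1}{2h}\int_{x-h}^{x+h}\vartheta'$, observe that all derivatives of $G_h$ are bounded uniformly in $h$ because $\partial_x^kF_h$ is controlled by $\|\vartheta^{(k+1)}\|_\infty$ and $F_h\ge\inf\vartheta'>0$, and then let Lemma~\ref{lem:traslate function} do the cancellation for you. What your approach buys is that the delicate leading-order cancellations are absorbed once and for all into the discrete-Taylor lemma, and the argument generalizes immediately to higher-order difference quotients; what it costs is the extra step of justifying the representation $(\mathcal{Q}^{\mathfrak{M}_0}_{\mathfrak{M}}\overline\zeta)_i=F_h(ih)$ at every index. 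On that last point your discussion of the jump index is the right instinct but slightly misplaced: since $h'_i=(x'_{i+1}-x'_{i-1})/2$, the only stencils that see the prescribed value $x'_{n+1}=\vartheta(a)$ are $h'_n$ and $h'_{n+2}$, and affineness of $\vartheta$ near $a$ alone does not repair them unless $a$ actually coincides with the uniform grid point $(n+1)h$ (a displacement of order $h$ would make $\bar DD\nu$ blow up like $h^{-1}$). The paper's own proof silently assumes $x'_j=\vartheta(jh)$ for all $j$, i.e., that $a=(n+1)h$ exactly, which is clearly the intended reading of the mesh construction; under that reading your representation holds at every index with no special pleading, and your proof is complete.
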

 \begin{proof} 
\par From the definition of $\zeta$, ${\mathcal{Q}}^{\mathfrak{M}_0}_{\mathfrak{M}}$ and $D$ acting on $ \mathbb{C}^{\mathfrak{M}_0}$, $\bar{D}$ acting on $\mathbb{C}^{\overline{\mathfrak{M}_0}}$ we have
\begin{eqnarray}\label{eq:5.1}\Big(\bar{D}D(\frac{1}{\mathcal{Q}^{\mathfrak{M}_0}_{\mathfrak{M}}\overline{\zeta}})\Big)_i&:=&(\bar{D}D\nu)_i\nonumber\\
&=&\frac{\nu_{i+1}-2\nu_i+\nu_{i-1}}{h^2}\nonumber\\
&=&\frac{1}{h}\frac{(h'_i-h'_{i+1})h'_{i-1}-(h'_{i-1}-h'_i)h'_{i+1}}{h'_{i-1}h'_ih'_{i+1}}.
\end{eqnarray}
\par We find
\begin{eqnarray}&&h'_i=x'_{i+\frac{1}{2}}-x'_{i-\frac{1}{2}}=\frac{x'_{i+1}-x'_{i-1}}{2}=\frac{\vartheta\big({(i+1)h}\big)-\vartheta\big((i-1)h\big)}{2}=\frac{\vartheta_{i+1}-\vartheta_{i-1}}{2},\nonumber\\
&&h'_{i+1}=\frac{\vartheta\big({(i+2)h}\big)-\vartheta\big(ih\big)}{2}=\frac{\vartheta_{i+2}-\vartheta_{i}}{2},\nonumber\\
&&h'_{i-1}=\frac{\vartheta(ih)-\vartheta\big((i-2)h\big)}{2}=\frac{\vartheta_{i}-\vartheta_{i-2}}{2}.\nonumber
\end{eqnarray}

\par By using Taylor formulae we write
\begin{eqnarray}\vartheta_{i+2}&=&\vartheta_i+(2h)\vartheta'_i+\frac{(2h)^2}{2}\vartheta''_i+\frac{(2h)^3}{6}\vartheta'''_i+\frac{(2h)^4}{24}\vartheta^{(4)}_i+\mathcal{O}(h^5),\nonumber\\
\vartheta_{i-2}&=&\vartheta_i-(2h)\vartheta'_i+\frac{(2h)^2}{2}\vartheta''_i-\frac{(2h)^3}{6}\vartheta'''_i+\frac{(2h)^4}{24}\vartheta^{(4)}_i+\mathcal{O}(h^5),\nonumber\\
\vartheta_{i+1}&=&\vartheta_i+h\vartheta'_i+\frac{h^2}{2}\vartheta''_i+\frac{h^3}{6}\vartheta'''_i+\frac{(h)^4}{24}\vartheta^{(4)}_i+\mathcal{O}(h^5),\nonumber\\
\vartheta_{i-1}&=&\vartheta_i-h\vartheta'_i+\frac{h^2}{2}\vartheta''_i-\frac{h^3}{6}\vartheta'''_i+\frac{(h)^4}{24}\vartheta^{(4)}_i+\mathcal{O}(h^5).\nonumber
\end{eqnarray}
\par Thus we have
\begin{eqnarray}h'_i&=&{2h\vartheta'_i+\frac{2h^3}{6}\vartheta'''_i+\mathcal{O}(h^5)},\nonumber\\
h'_{i+1}&=&{2h\vartheta'_i+\frac{(2h)^2}{2}\vartheta''_i+\frac{(2h)^3}{6}\vartheta'''+\frac{(2h)^4}{24}\vartheta^{(4)}+\mathcal{O}(h^5)},\nonumber\\
h'_{i-1}&=&{2h\vartheta'_i-\frac{(2h)^2}{2}\vartheta''_i+\frac{(2h)^3}{6}\vartheta'''-\frac{(2h)^4}{24}\vartheta^{(4)}+\mathcal{O}(h^5)}.\nonumber
\end{eqnarray}
\par From ~\eqref{eq:5.1} we obtain
\begin{equation}
\bar{D}D(\frac{1}{{\mathcal{Q}}^{\mathfrak{M}_0}_{\mathfrak{M}}\overline{\zeta}})_i=\frac{N}{D},\nonumber\\
\end{equation}
where
\begin{eqnarray}
N&=&(h'_i-h'_{i+1})h'_{i-1}-(h'_{i-1}-h'_i)h'_{i+1}\nonumber\\
&=&\Big({-\frac{(2h)^2}{2}\vartheta''_i-h^3\vartheta'''_i-\frac{(2h)^4}{24}\vartheta^{(4)}_i+\mathcal{O}(h^5)}\Big)\big({(2h)\vartheta'_i-\frac{(2h)^2}{2}\vartheta''_i+\mathcal{O}(h^3)}\big)\nonumber\\
&-&\Big({-\frac{(2h)^2}{2}\vartheta''_i-h^3\vartheta'''_i-\frac{(2h)^4}{24}\vartheta^{(4)}_i+\mathcal{O}(h^5)}\Big)\big({(2h)\vartheta'_i+\frac{(2h)^2}{2}\vartheta''_i+\mathcal{O}(h^3)}\big)\nonumber\\
&=&\frac{(2h)^4}{2}(\vartheta''_i)^2+\mathcal{O}(h^5),\nonumber
\end{eqnarray}
and
\begin{equation}
D=h\times h'_{i-1}\times h'_i\times h'_{i+1}=(2h)^4(\vartheta'_i)^3+\mathcal{O}(h^5).\nonumber
\end{equation}
%\begin{eqnarray}&&\bar{D}D(\frac{1}{{\mathcal{Q}}^{\mathfrak{M}_0}_{\mathfrak{M}}\overline{\zeta}})_i\nonumber\\
%&=&\frac{1}{h}\frac{\Big({-\frac{(2h)^2}{2}\vartheta''_i-h^3\vartheta'''_i-\frac{(2h)^4}{24}\vartheta^{(4)}_i+\mathcal{O}(h^5)}\Big)\big({(2h)\vartheta'_i-\frac{(2h)^2}{2}\vartheta''_i+\mathcal{O}(h^3)}\big)}{(2h)^3(\vartheta'_i)^3+\mathcal{O}(h^4)}\nonumber\\
%&-&\frac{1}{h}\frac{\Big({-\frac{(2h)^2}{2}\vartheta''_i-h^3\vartheta'''_i-\frac{(2h)^4}{24}\vartheta^{(4)}_i+\mathcal{O}(h^5)}\Big)\big({(2h)\vartheta'_i+\frac{(2h)^2}{2}\vartheta''_i+\mathcal{O}(h^3)}\big)}{(2h)^3(\vartheta'_i)^3+\mathcal{O}(h^4)}.\nonumber\\
%\end{eqnarray}
Thus, we have
\begin{equation}
\left| \bar{D}D(\frac{1}{{\mathcal{Q}}^{\mathfrak{M}_0}_{\mathfrak{M}}\overline{\zeta}})_i\right|  \lesssim (\inf \vartheta')^{-3}<\infty,\nonumber
\end{equation}
which proves the first result. Next, we proceed with the second result in the same manner as above. We have
\begin{equation}
(\bar{D}\tilde{\nu})_i=\frac{\tilde{\nu}_{i+\frac{1}{2}}-\tilde{\nu}_{i+\frac{1}{2}}}{h}=\frac{\nu_{i+1}-\nu_{i-1}}{2h}=\frac{1}{2h}\big(\frac{h}{h'_{i+1}}-\frac{h}{h'_{i-1}}\big)=\frac{h'_{i-1}-h'_{i+1}}{h'_{i+1}h'_{i-1}}.\nonumber
\end{equation}
\par By using the computations of $h'_{i-1}, h'_{i+1}$ above we find
\begin{equation}
\left|(\bar{D}\tilde{\nu})_i \right|=\left|\frac{-(2h)^2{\vartheta''}+\mathcal{O}(h^4)}{(2h)^2{\vartheta'}^2+\mathcal{O}(h^3)} \right| \lesssim \frac{\left\|{\vartheta''}\right\|_{\infty}}{(\inf \vartheta')^2}<\infty,\nonumber 
\end{equation}
which yields the second result.
\par Moreover, with the properties of $\zeta$ shown as in Lemma~\ref{lem:define zeta} we can assert
\begin{equation}
0<\left\|{\nu} \right\|_{\infty}, \left\|\tilde{\nu} \right\|_{\infty}< \infty. \nonumber
\end{equation}
\end{proof}
\par From Lemmata~\ref{lem:property of zeta}~--~\ref{lem:property of zeta2} we thus obtain the following discrete Carleman estimate for the operator $P^{\mathfrak{M}}=-\partial_t -D(c_dD.)$ on the mesh $\mathfrak{M}$.
\begin{Theorem}\label{theo:carleman non uniform} Let $\omega \subset \Omega_2$ be a non-empty open set and we set  $f:=\bar{D}(c_dDu)$. For the parameter $\lambda>1$ sufficiently large, there exists C, $\tau_0\ge 1$, $h_0>0$, $\epsilon_0>0$, depending on $\omega$ such that for any mesh $\mathfrak{M}$ obtained from $\vartheta$ by ~\eqref{eq:vartheta}~--~\eqref{eq:non uniform mesh}, we have
\begin{eqnarray}\label{eq:5.3}&&\tau^{-1}\left\|{\theta^{-\frac{1}{2}}e^{\tau \theta \varphi}\partial_t u}\right\|^2_{L^2(Q)}+\tau\left\|{\theta^{\frac{1}{2}}e^{\tau \theta\varphi}Du}\right\|^2_{L^2(Q)}+\tau^3\left\|{\theta^{\frac{3}{2}}e^{\tau\theta\varphi}u}\right\|^2_{L^2(Q)}\nonumber\\
&&\le C_{\lambda,\mathfrak{K}}\left( \left\|{e^{\tau\theta\varphi}P^{\mathfrak{M}}u}\right\|^2_{L^2(Q)}+\tau^3\left\|{\theta^{\frac{3}{2}}e^{\tau \theta\varphi}u}\right\|^2_{L^2( (0,T) \times \omega)} \right.\nonumber\\
&&\left. \quad\quad\quad\quad\quad\quad\quad\quad +h^{-2} \left|{ e^{\tau\theta\varphi}u|_{t=0}}\right|^2_{L^2(\Omega)}+ h^{-2}\left|{e^{\tau\theta\varphi}u|_{t=T}}\right|^2_{L^2(\Omega)} \right),
\end{eqnarray}
for all $\tau\ge \tau_0(T+T^2)$, $0<h\le h_0$ and $\tau h(\alpha T)^{-1}\le  \epsilon_0$ and for all $u\in C^\infty(0,T;\mathbb{C}^{\mathfrak{M}})$ satisfying $u|_{\partial\Omega}=0$.
\end{Theorem}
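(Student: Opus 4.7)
The plan is to reduce the non-uniform-mesh estimate to the uniform-mesh estimate of Theorem~\ref{theo:Carleman estinmate uniform} by transporting everything to $\Omega_0$ via the map $\vartheta$. Given $u \in C^\infty(0,T; \mathbb{C}^{\mathfrak{M}})$ with $u|_{\partial \Omega}=0$, I would set $u_0 := \mathcal{Q}^{\mathfrak{M}_0}_{\mathfrak{M}} u \in C^\infty(0,T; \mathbb{C}^{\mathfrak{M}_0})$ with $u_0|_{\partial \Omega_0}=0$, and similarly transport the weights and the coefficient $c$. Because $\vartheta \in C^\infty$, $\inf \vartheta' > 0$ and $\vartheta$ is affine near $a$, the pulled-back data on $\Omega_0$ fall within the hypotheses of Theorem~\ref{theo:Carleman estinmate uniform}.

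First I would use Lemma~\ref{lem:property of zeta}(2) to write
\begin{equation*}
\bar{D}(c_d D u) = \bar{\zeta}^{-1} \mathcal{Q}^{\mathfrak{M}}_{\mathfrak{M}_0}\!\Bigl( \bar{D}\bigl( \tilde{c}_d\, D u_0 \bigr) \Bigr), \qquad \tilde{c}_d := \mathcal{Q}^{\overline{\mathfrak{M}_0}}_{\overline{\mathfrak{M}}}(c_d/\zeta),
\end{equation*}
so that $P^{\mathfrak{M}} u = f_1$ on $Q$ is equivalent, after applying $\mathcal{Q}^{\mathfrak{M}_0}_{\mathfrak{M}}$ and multiplying by $\nu = 1/\mathcal{Q}^{\mathfrak{M}_0}_{\mathfrak{M}}\bar{\zeta}$, to
\begin{equation*}
P_0^{\mathfrak{M}_0} u_0 := -\partial_t u_0 - \bar{D}(\tilde{c}_d D u_0) = g_1 - \nu^{-1}\,\partial_t u_0 + \partial_t u_0 \text{-type corrections},
\end{equation*}
where $g_1 = \mathcal{Q}^{\mathfrak{M}_0}_{\mathfrak{M}}(f_1)/\nu$ up to bounded multiplicative factors. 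The coefficient $\tilde{c}_d$ is still uniformly bounded above and below, is piecewise $C^1$ on $(0,a) \cup (a,1)$ (because $\vartheta$ is affine near $a$, the ratio $c_d/\zeta$ has a single jump at $a$ of the same type as $c$), and transmission conditions at $a$ transport cleanly for the same reason.

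Next I would choose the transported weight $\psi_0 := \psi \circ \vartheta$, where $\psi$ is the weight of Lemma~\ref{lem:weight function} associated with $\tilde{\Omega}_1, \tilde{\Omega}_2$. Since $\vartheta$ is a diffeomorphism with $\inf \vartheta'>0$ and is affine near $a$, $\psi_0$ satisfies all the requirements on $\Omega_0$: positivity, zero boundary values, non-vanishing derivative outside $\omega_0 := \vartheta^{-1}(\omega) \Subset \Omega_{02}$, and the quadratic form condition at $a$ (whose matrix entries only involve the values of $\psi_0'$ and $\tilde{c}$ at $a^\pm$, which are explicit multiples of the corresponding quantities for $\psi, c$ by $\vartheta'(a)$). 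By Lemma~\ref{lem:property of zeta2}, $e^{\tau \theta \varphi \circ \vartheta}$ on $\Omega_0$ corresponds via $\mathcal{Q}^{\mathfrak{M}_0}_{\mathfrak{M}}$ to $e^{\tau \theta \varphi}$ on $\Omega$.

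Applying Theorem~\ref{theo:Carleman estinmate uniform} to $u_0$ with the weight $\varphi_0 = e^{\lambda \psi_0}-e^{\lambda K}$ on the uniform mesh $\mathfrak{M}_0$ gives the desired inequality for $u_0$; then Lemma~\ref{lem:property of zeta1} transports every $L^2(\Omega_0)$ and $L^2(\omega_0)$ norm back to $L^2(\Omega), L^2(\omega)$ up to the harmless constants $(\inf \vartheta')^{\pm 1}, (\sup \vartheta')^{\pm 1}$, and Lemma~\ref{lem:property of zeta}(1) identifies $D u_0$ on $\mathfrak{M}_0$ with $\zeta \,\mathcal{Q}^{\overline{\mathfrak{M}_0}}_{\overline{\mathfrak{M}}}(Du)$, producing norms of $Du$ on $\Omega$. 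The right-hand side of the uniform estimate, after this transport, yields exactly the right-hand side of \eqref{eq:5.3}.

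The main obstacle, as I see it, is handling the extra terms generated by $P^{\mathfrak{M}}$ versus $P_0^{\mathfrak{M}_0}$: the factor $\nu = 1/\mathcal{Q}^{\mathfrak{M}_0}_{\mathfrak{M}}\bar{\zeta}$ is not constant, and absorbing its discrete derivatives when moving them past $\partial_t$ and past $\bar{D}(\tilde{c}_d D\cdot)$ produces commutator terms of the schematic form $(\bar{D}\tilde{\nu})(D u_0)$ and $(\bar{D}D\nu)\, u_0$ etc., which must be absorbed into the left-hand side of the uniform Carleman estimate. This is exactly where Lemma~\ref{lem:bounded property} is crucial: $\|\bar{D}D\nu\|_\infty, \|\bar{D}\tilde{\nu}\|_\infty < \infty$ independently of $h$, so these commutator contributions are dominated by $\tau \|\theta^{1/2}e^{\tau\theta\varphi_0}Du_0\|_{L^2}^2 + \tau^3 \|\theta^{3/2}e^{\tau\theta\varphi_0}u_0\|_{L^2}^2$ once $\tau$ is taken large enough. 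Once these nuisance terms are controlled, the rest is bookkeeping using the three transport lemmata, mirroring the non-uniform-mesh argument of \cite{BHL10a}.
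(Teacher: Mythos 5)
Your overall strategy --- transport to the uniform mesh via $\mathcal{Q}^{\mathfrak{M}_0}_{\mathfrak{M}}$, use the pulled-back weight $\psi\circ\vartheta$ (with the affineness of $\vartheta$ near $a$ guaranteeing the trace condition at the interface), apply Theorem~\ref{theo:Carleman estinmate uniform}, and transport back with Lemmata~\ref{lem:property of zeta}--\ref{lem:property of zeta2} --- is exactly the paper's. The gap is in the reduction of the transported operator to one covered by the uniform-mesh theorem. From Lemma~\ref{lem:property of zeta} one gets $\mathcal{Q}^{\mathfrak{M}_0}_{\mathfrak{M}}(\bar{\zeta}P^{\mathfrak{M}}u)=-\xi'\partial_t w-\bar{D}(\xi_d Dw)$ with $\xi'=\mathcal{Q}^{\mathfrak{M}_0}_{\mathfrak{M}}\bar{\zeta}$ and $\xi_d=\mathcal{Q}^{\overline{\mathfrak{M}_0}}_{\overline{\mathfrak{M}}}(c_d/\zeta)$. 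Your proposal takes the reference operator to be $-\partial_t u_0-\bar{D}(\xi_d Du_0)$ and relegates the mismatch to ``$\partial_t u_0$-type corrections'' of the form $(\nu^{-1}-1)\partial_t u_0$. That coefficient is $O(1)$ (it vanishes only if $\vartheta'\equiv 1$), and such a term cannot be absorbed: the left-hand side of the Carleman estimate only controls $\tau^{-1}\|\theta^{-1/2}e^{s\varphi_0}\partial_t u_0\|^2_{L^2}$, whereas your correction contributes $\|e^{s\varphi_0}\partial_t u_0\|^2_{L^2}$ to the right-hand side, which is larger by the factor $\tau\theta\ge\tau_0$. If instead you multiply through by $\nu=1/\xi'$ to normalize the time derivative, the discrepancy moves to the second-order part as $(\nu-1)\bar{D}(\xi_d Du_0)$, a full-strength second-order term that is equally non-absorbable.

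The paper resolves this by never perturbing the $\partial_t$ coefficient: it factors $P^{\mathfrak{M}_0}w=\xi'\big(-\partial_t w-\nu\bar{D}(\xi_d Dw)\big)$, which is harmless since $\xi'$ is bounded above and below, and then rewrites the second-order part \emph{exactly}, using $\bar{\tilde{\nu}}=\nu+h^2\mathcal{O}(1)$ (Lemmata~\ref{lem:average double} and~\ref{lem:bounded property}) together with the discrete Leibniz rule, as $\bar{D}(b_d Dw)$ with $b_d=\tilde{\nu}\xi_d$ --- note the extra factor $\tilde{\nu}$ missing from your $\tilde{c}_d$ --- plus the two genuinely absorbable remainders $\bar{D}(\tilde{\nu})\overline{\xi_dDw}$ (first order, bounded coefficient) and $h^2\mathcal{O}(1)\bar{D}(\xi_dDw)$ (second order but carrying $h^2$). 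Theorem~\ref{theo:Carleman estinmate uniform} is then applied to $P^{\mathfrak{M}_0}_0=-\partial_t-\bar{D}(b_dD\cdot)$. You identify the right commutators and the right lemma (Lemma~\ref{lem:bounded property}) for controlling them, but without this exact divergence-form rewriting the reduction does not close.
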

\begin{proof} We set $w={\mathcal{Q}}^{\mathfrak{M}_0}_{\mathfrak{M}}u$ defined on the uniform mesh ${\mathfrak{M}_0}$. By using Lemma~\ref{lem:property of zeta} we have
\begin{equation}\label{eq:5.4}{\mathcal{Q}}^{\mathfrak{M}_0}_{\mathfrak{M}}(\bar{\zeta}P^{\mathfrak{M}}u)=-({\mathcal{Q}}^{\mathfrak{M}_0}_{\mathfrak{M}}\bar{\zeta})\partial_t w-\bar{D}\Big(\big({\mathcal{Q}}^{\overline{\mathfrak{M}_0}}_{\overline{\mathfrak{M}}}\frac{c_d}{\zeta}\big)Dw\Big).
\end{equation}
We observe that the right-hand side of ~\eqref{eq:5.4} is a semi-discrete parabolic operator of the form $P^{\mathfrak{M}_0}=\xi'(-\partial_t -\frac{1}{\xi'}D(\xi_d D.))$ applied to $w$, where 
\begin{equation}\label{eq:5.5}\xi'={\mathcal{Q}}^{\mathfrak{M}_0}_{\mathfrak{M}}\bar{\zeta},\quad \xi_d={\mathcal{Q}}^{\overline{\mathfrak{M}_0}}_{\overline{\mathfrak{M}}}\frac{c_d}{\zeta}.
\end{equation}

\par We set $\nu:=\frac{1}{\xi'}=\frac{1}{{\mathcal{Q}}^{\mathfrak{M}_0}_{\mathfrak{M}}\bar{\zeta}}$ and we find
\begin{equation}
\bar{\tilde{\nu}}=\nu+h^2\bar{D}D\nu=\nu+h^2\mathcal{O}(1),\nonumber
\end{equation}
by using Lemma~\ref{lem:average of two function} and Lemma~\ref{lem:bounded property}.
\par Thus, the operator ${P}^{\mathfrak{M}_0}$ can be written in form as 
\begin{equation}{P}^{\mathfrak{M}_0}w=\xi'\big(-\partial_t w-\bar{\tilde{\nu}}\bar{D}(\xi_dDw)+h^2\mathcal{O}(1)\bar{D}(\xi_dDw)\big).\nonumber
\end{equation}

\par Moreover, using Lemma~\ref{lemma: discrete leibnitz} we have
\begin{equation}\bar{\tilde{\nu}}\bar{D}(\xi_dDw)=\bar{D}(\tilde{\nu}\xi_dDw)-\bar{D}(\tilde{\nu})\overline{\xi_dDw}.\nonumber
\end{equation}
\par We set ${P}^{\mathfrak{M}_0}_0w:=-\partial_tw-\bar{D}(\tilde{\nu}\xi_dDw)=-\partial_tw-\bar{D}(b_dDw)$ with $b_d=\tilde{\nu}\xi_d$. From the properties of $\tilde{\nu}$ and $\xi_d$ it follows that %$\left\| \bar{D}(b_d)\right\| _{\infty}<\infty$.$b_d$ and $\bar{D}(b_d)$ are bounded. Furthermore, the behavior of function $b$ is inherited from the behavior of $\xi$ and $\nu $ then we can read 
\begin{equation}0<b_{min}\le b\le b_{max}\quad\quad ~\textrm{and}\quad\quad \left\| \bar{D}(b_d)\right\| _{\infty}<\infty.\nonumber
\end{equation} 
%\begin{center}$b=\begin{cases}
%{b_0}~\textrm{in}~ \Omega_1, &\\
%{b_1}~\textrm{in}~ {\Omega_2},&
%\end{cases}$
%\end{center}
%with ${b_i}\in C^1(\overline{{\Omega_i}}), i=1,2$.
\par First, we shall obtain a Carleman estimate for ${P}^{\mathfrak{M}_0}_0$. Then we shall deduce a Carleman estimate for the operator 
\begin{equation}\label{eq:5.6}{P}^{\mathfrak{M}_0}w=\xi'\big(\ {P}^{\mathfrak{M}_0}_0w+\bar{D}(\tilde{\nu})\overline{\xi_dDw}+h^2\mathcal{O}(1)\bar{D}(\xi_dDw) \ \big)
\end{equation}

\par Now, we consider the function $\psi\circ\vartheta: (t,x)\mapsto \psi\big(t,\vartheta(x)\big)$. By using the properties listed in Lemma~\ref{lem:weight function} and ~\eqref{eq:vartheta}, we shall see that $\psi\circ\vartheta$ is a suitable weight function associated to the control domain $\omega_0=\vartheta^{-1}(w)$ in $\Omega_0$, i.e., that $\psi\circ\vartheta$ satisfies Lemma~\ref{lem:weight function} for the domaims $\Omega_0$ and $\omega_0$.

\par The important property to checking is the trace property. The remaining properties are left to the reader. We set
\begin{displaymath}
\textbf{B} =
\left( \begin{array}{cc}
b_{11} & b_{12}\\
b_{21} & b_{22} \\
\end{array} \right),
\end{displaymath}
with
\begin{eqnarray}&&b_{11}=[(\psi\circ\vartheta)'\star]_a\nonumber\\
&&b_{22}=[b(\psi\circ\vartheta)'\star]^2_a(\psi\circ\vartheta)'(a^+)+[b^2(\psi\circ\vartheta)'^3\star]_a\nonumber\\
&&b_{12}=b_{21}=[b(\psi\circ\vartheta)'\star]_a(\psi\circ\vartheta)'(a^+)\nonumber
\end{eqnarray}
where $b=\frac{1}{{\mathcal{Q}}^{\mathfrak{M}_0}_{\mathfrak{M}}\bar{\zeta}}{\mathcal{Q}}^{\mathfrak{M}_0}_{\mathfrak{M}}\frac{c}{\bar{\zeta}}=\frac{c\circ\vartheta}{\bar{\zeta}^2}$.  Morever, we have $\vartheta'_+(a)=\vartheta'_-(a)$ and $\bar{\zeta}_{n+1}=\vartheta'_{n+1}$ (recall that $\vartheta|_{[a-\delta, a+\delta]}$ is an affine function). It follows that
\begin{eqnarray}&&b_{11}=[\psi'(\vartheta)\star]_a\vartheta'(a)=[\psi'\star]_{a'}\vartheta'(a),\nonumber\\
&&b_{22}=[\frac{c\circ\vartheta}{\bar{\zeta}^2}\psi'(\vartheta)\vartheta'\star]^2_a\psi'(\vartheta)(a^+)\vartheta'(a^+)+[(\frac{c\circ\vartheta}{\bar{\zeta}^2})^2\big({\psi'(\vartheta)\vartheta'}\big)^3\star]_a\nonumber\\
&&\quad \ \ =[c\psi'\star]_{a'}\psi'(a^+)\frac{1}{\vartheta'(a)}+[c^2(\psi')^3\star]_{a'}\frac{1}{\vartheta'(a)},\nonumber\\
&&b_{12}=b_{21}=[\frac{c\circ\vartheta}{\bar{\zeta}^2}\psi'(\vartheta)\vartheta'\star]_a\psi'(\vartheta(a^+))\vartheta'(a^+)\nonumber\\
&&\quad\quad\quad\quad=[c\psi'\star]_{a'}\psi'(a'^+).\nonumber
\end{eqnarray}

\par We can see that $(Bw,w)=(Aw,w)\ge \alpha_0 \left\|{w}\right\|^2$. This means that $\psi\circ\vartheta$ satisfies the trace property. 
\par Through Theorem ~\ref{theo:Carleman estinmate uniform}, we obtained a discrete uniform Carleman estimate for $P^{\mathfrak{M}_0}_0$ and the Carleman weight function is of the form $r_0=e^{s\varphi_0}$, with $\varphi_0=\varphi\circ\vartheta=e^{\lambda\psi_0}-e^{\lambda K}$ where $\psi_0=\psi\circ\vartheta$ on the uniform mesh ${\mathfrak{M}_0}$ . We can deduce the same result on the non-uniform mesh ${\mathfrak{M}}$. Namely, through ~\eqref{Carleman estinmate uniform } we see that the following estimate holds
\small{
\begin{eqnarray}\label{eq:5.7}&&\tau^{-1}\left\|{\theta^{-\frac{1}{2}}e^{\tau \theta \varphi_0}\partial_t w}\right\|^2_{L^2(Q_0)}+\tau\left\|{\theta^{\frac{1}{2}}e^{\tau \theta\varphi_0}Dw}\right\|^2_{L^2(Q_0)}+\tau^3\left\|{\theta^{\frac{3}{2}}e^{\tau\theta\varphi_0}w}\right\|^2_{L^2(Q_0)}\nonumber\\
&&\le C\left( \left\|{e^{\tau\theta\varphi_0}P^{\mathfrak{M}_0}_0w}\right\|^2_{L^2(Q_0)}+\tau^3\left\|{\theta^{\frac{3}{2}}e^{\tau \theta\varphi_0}w}\right\|^2_{L^2( (0,T) \times \omega_0)} \right.\nonumber\\
&&\left. \quad\quad\quad\quad\quad\quad+h^{-2}\left|{ e^{\tau\theta\varphi_0}w|_{t=0}}\right|^2_{L^2(\Omega_0)}+ h^{-2}\left|{e^{\tau\theta\varphi_0}w|_{t=T}}\right|^2_{L^2(\Omega_0)} \right),
\end{eqnarray}
}
and the constant $C$ is uniform in $h$ for $\tau$ sufficiently large and with $\tau h(\alpha T)^{-1}\le \epsilon_0$, for $\epsilon_0$ sufficiently small. Note that, setting $\tilde{\epsilon}_0=(\inf_{\Omega_0}\vartheta')\epsilon_0$, we see that the condition $\tau h'(\alpha T)^{-1}\le\tilde{ \epsilon}_0$ on the size of the non-uniform mesh ${\mathfrak{M}}$ implies the condition $\tau h(\alpha T)^{-1}\le \epsilon_0$ for the uniform mesh ${\mathfrak{M}_0}$.
\par From ~\eqref{eq:5.6}~--~\eqref{eq:5.7} we deduce the following Carleman estimate for $P^{\mathfrak{M}_0}$ 
\small{
\begin{eqnarray}\label{eq:5.8}&&\tau^{-1}\left\|{\theta^{-\frac{1}{2}}e^{\tau \theta \varphi_0}\partial_t w}\right\|^2_{L^2(Q_0)}+\tau\left\|{\theta^{\frac{1}{2}}e^{\tau \theta\varphi_0}Dw}\right\|^2_{L^2(Q_0)}+\tau^3\left\|{\theta^{\frac{3}{2}}e^{\tau\theta\varphi_0}w}\right\|^2_{L^2(Q_0)}\nonumber\\
&&\le C\left( \left\|{e^{\tau\theta\varphi_0}P^{\mathfrak{M}_0}w}\right\|^2_{L^2(Q_0)}+\left\|{e^{\tau\theta\varphi_0}\bar{D}(\tilde{\nu})\overline{\xi_dDw}}\right\|^2_{L^2(Q_0)}+ h^4\left\| e^{\tau\theta\varphi_0}\bar{D}(\xi_dDw)\right\|^2_{L^2(Q_0)} \right.\nonumber\\
&&\left. +\tau^3\left\|{\theta^{\frac{3}{2}}e^{\tau \theta\varphi_0}w}\right\|^2_{L^2( (0,T) \times \omega_0)} +h^{-2} \left|{ e^{\tau\theta\varphi_0}w|_{t=0}}\right|^2_{L^2(\Omega_0)}+ h^{-2}\left|{e^{\tau\theta\varphi_0}w|_{t=T}}\right|^2_{L^2(\Omega_0)} \right).\nonumber\\
\end{eqnarray}
}
%\frac{1}{\xi^{'}}

\par Now, by using Lemma~\ref{lem:bounded property} we estimate $\left\|{e^{\tau\theta\varphi_0}\bar{D}(\tilde{\nu})\overline{\xi_dDw}}\right\|^2_{L^2(Q_0)}$ in the RHS of ~\eqref{eq:5.8} as
\begin{equation}\left\|{e^{\tau\theta\varphi_0}\bar{D}(\tilde{\nu})\overline{\xi_dDw}}\right\|^2_{L^2(Q_0)}\le C\left\|{e^{s\varphi_0}\overline{\xi_dDw}}\right\|^2_{L^2(Q_0)}.\nonumber
\end{equation}
\par We see that
\begin{equation}\overline{\xi_dDw}=\frac{1}{2}\Big({\bar{\tau}_+(\xi_dDw)+\bar{\tau}_-(\xi_dDw)}\Big).\nonumber
\end{equation}
\par Hence we find
\begin{eqnarray}&&\left\|{e^{s\varphi_0}\overline{\xi_dDw}}\right\|^2_{L^2(Q_0)}\nonumber\\
&\le& C\Big({\left\|{e^{s\varphi_0}{\bar{\tau}_+(\xi_dDw)}}\right\|^2_{L^2(Q_0)}+\left\|{e^{s\varphi_0}{\bar{\tau}_-(\xi_dDw)}}\right\|^2_{L^2(Q_0)}}\Big)\nonumber\\
&\le&C\Big({\left\|{e^{s\varphi_0}{\bar{\tau}_+\big(\xi_dD({\mathcal{Q}}^{\mathfrak{M}_0}_{\mathfrak{M}}u)\big)}}\right\|^2_{L^2(Q_0)}+\left\|{e^{s\varphi_0}{\bar{\tau}_-\big(\xi_dD({\mathcal{Q}}^{\mathfrak{M}_0}_{\mathfrak{M}}u)\big)}}\right\|^2_{L^2(Q_0)}}\Big)\nonumber\\
&\le&C\Big({\left\|{e^{s\varphi_0}{\bar{\tau}_+\big(\xi_d{\mathcal{Q}}^{\overline{\mathfrak{M}_0}}_{\overline{\mathfrak{M}}}(\zeta Du)\big)}}\right\|^2_{L^2(Q_0)}+\left\|{e^{s\varphi_0}{\bar{\tau}_-\big(\xi_d{\mathcal{Q}}^{\overline{\mathfrak{M}_0}}_{\overline{\mathfrak{M}}}(\zeta Du)\big)}}\right\|^2_{L^2(Q_0)}}\Big)\nonumber\\
&\le&C\Big({\left\|{e^{s\varphi_0}{\bar{\tau}_+{\mathcal{Q}}^{\overline{\mathfrak{M}_0}}_{\overline{\mathfrak{M}}}\big(c_dDu\big)}}\right\|^2_{L^2(Q_0)}+\left\|{e^{s\varphi_0}{\bar{\tau}_-{\mathcal{Q}}^{\overline{\mathfrak{M}_0}}_{\overline{\mathfrak{M}}}\big(c_dDu\big)}}\right\|^2_{L^2(Q_0)}}\Big)\nonumber\\
&\le&C\Big(\left\|{{\mathcal{Q}}^{\mathfrak{M}_0}_{\mathfrak{M}}\Big({e^{s\varphi}\bar{\tau}_+(c_dDu)}\Big)}\right\|^2_{L^2(Q_0)}+\left\|{{\mathcal{Q}}^{\mathfrak{M}_0}_{\mathfrak{M}}\Big({e^{s\varphi}\bar{\tau}_-(c_dDu)}\Big)}\right\|^2_{L^2(Q_0)}\Big)\nonumber\\
&\le&C({\inf\vartheta'})^{-1}\Big(\left\|{{e^{s\varphi}\bar{\tau}_+(c_dDu)}}\right\|^2_{L^2(Q)}+\left\|{{e^{s\varphi}\bar{\tau}_-(c_dDu)}}\right\|^2_{L^2(Q)}\Big),\nonumber
\end{eqnarray}
by using ~\eqref{eq:5.5} and Lemmata~\ref{lem:property of zeta}~--~\ref{lem:property of zeta2}.
\par We treat $\left\|{{e^{s\varphi}\bar{\tau}_+(c_dDu)}}\right\|_{L^2(Q)}$ (the term $\left\|{{e^{s\varphi}\bar{\tau}_-(c_dDu)}}\right\|_{L^2(Q)}$ can be treated similarly). We find
\begin{equation}\label{eq:5.9}\left\|{{e^{s\varphi}\bar{\tau}_+(c_dDu)}}\right\|_{L^2(Q)}=\left\|{{r\bar{\tau}_+(c_dDu)}}\right\|_{L^2(Q)}\le \left\|{{(\tau_-r)(c_dDu)}}\right\|_{L^2(Q)}\le C\left\|{{(\tau_-r)Du}}\right\|_{L^2(Q)}.
\end{equation}
\par We have $\tau_-r=r(\rho\tau_-r)=r\big({1+\mathcal{O_{\lambda, \mathfrak{K}}}(sh)}\big)$ (due to Proposition~\ref{prop:property 1}). From that we can write
\begin{equation}\left\|{e^{\tau\theta\varphi_0}\bar{D}(\tilde{\nu})\overline{\xi_dDw}}\right\|^2_{L^2(Q_0)}\le C({\inf\vartheta'})^{-1} \left\|{e^{s\varphi}Du}\right\|_{L^2(Q)},\nonumber
\end{equation}
which allows one to absorb by the term at the LHS of the Carleman estimate by choosing $\tau$ sufficiently large.
\par Next, we estimate $h^4\left\| e^{\tau\theta\varphi_0}\bar{D}(\xi_dDw)\right\|^2_{L^2(Q_0)}$ in the RHS of ~\eqref{eq:5.8} as
\begin{eqnarray}
&&h^4\left\| e^{\tau\theta\varphi_0}\bar{D}(\xi_dDw)\right\|^2_{L^2(Q_0)}\nonumber\\
&=&h^2\left\|{e^{s\varphi_0}{\bar{\tau}_+(\xi_dDw)}}-{e^{s\varphi_0}{\bar{\tau}_-(\xi_dDw)}} \right\|^2_{L^2(Q_0)} \nonumber\\
&\le& Ch^2\Big({\left\|{e^{s\varphi_0}{\bar{\tau}_+(\xi_dDw)}}\right\|^2_{L^2(Q_0)}+\left\|{e^{s\varphi_0}{\bar{\tau}_-(\xi_dDw)}}\right\|^2_{L^2(Q_0)}}\Big)\nonumber\\
&\le&Ch^2\Big({\left\|{e^{s\varphi_0}{\bar{\tau}_+\big(\xi_dD({\mathcal{Q}}^{\mathfrak{M}_0}_{\mathfrak{M}}u)\big)}}\right\|^2_{L^2(Q_0)}+\left\|{e^{s\varphi_0}{\bar{\tau}_-\big(\xi_dD({\mathcal{Q}}^{\mathfrak{M}_0}_{\mathfrak{M}}u)\big)}}\right\|^2_{L^2(Q_0)}}\Big)\nonumber\\
&\le&Ch^2\Big({\left\|{e^{s\varphi_0}{\bar{\tau}_+\big(\xi_d{\mathcal{Q}}^{\overline{\mathfrak{M}_0}}_{\overline{\mathfrak{M}}}(\zeta Du)\big)}}\right\|^2_{L^2(Q_0)}+\left\|{e^{s\varphi_0}{\bar{\tau}_-\big(\xi_d{\mathcal{Q}}^{\overline{\mathfrak{M}_0}}_{\overline{\mathfrak{M}}}(\zeta Du)\big)}}\right\|^2_{L^2(Q_0)}}\Big)\nonumber\\
&\le&Ch^2\Big({\left\|{e^{s\varphi_0}{\bar{\tau}_+{\mathcal{Q}}^{\overline{\mathfrak{M}_0}}_{\overline{\mathfrak{M}}}\big(c_dDu\big)}}\right\|^2_{L^2(Q_0)}+\left\|{e^{s\varphi_0}{\bar{\tau}_-{\mathcal{Q}}^{\overline{\mathfrak{M}_0}}_{\overline{\mathfrak{M}}}\big(c_dDu\big)}}\right\|^2_{L^2(Q_0)}}\Big)\nonumber\\
&\le&Ch^2\Big(\left\|{{\mathcal{Q}}^{\mathfrak{M}_0}_{\mathfrak{M}}\Big({e^{s\varphi}\bar{\tau}_+(c_dDu)}\Big)}\right\|^2_{L^2(Q_0)}+\left\|{{\mathcal{Q}}^{\mathfrak{M}_0}_{\mathfrak{M}}\Big({e^{s\varphi}\bar{\tau}_-(c_dDu)}\Big)}\right\|^2_{L^2(Q_0)}\Big)\nonumber\\
&\le&Ch^2({\inf\vartheta'})^{-1}\Big(\left\|{{e^{s\varphi}\bar{\tau}_+(c_dDu)}}\right\|^2_{L^2(Q)}+\left\|{{e^{s\varphi}\bar{\tau}_-(c_dDu)}}\right\|^2_{L^2(Q)}\Big),\nonumber
\end{eqnarray}
by using ~\eqref{eq:5.5} and Lemmata~\ref{lem:property of zeta}~--~\ref{lem:property of zeta2}.
We proceed with an estimate as in ~\eqref{eq:5.9}. We thus obtain
\begin{equation}
h^4\left\| e^{\tau\theta\varphi_0}\bar{D}(\xi_dDw)\right\|^2_{L^2(Q_0)}\le Ch^2({\inf\vartheta'})^{-1} \left\|{e^{s\varphi}Du}\right\|_{L^2(Q)},\nonumber 
\end{equation}
which allows one to absorb by the term in the LHS of Carleman estimate by choosing $\tau$ sufficiently large.

\par Futhermore, by using the previous Lemmata ~\ref{lem:define zeta}~--~\ref{lem:property of zeta2} and considering each term in ~\eqref{eq:5.8} separately, we see that we have the following estimates
 \begin{itemize}
\item For the first term in LHS of ~\eqref{eq:5.8}
\begin{eqnarray}&&\left\|{\theta^{-\frac{1}{2}}e^{\tau \theta \varphi_0}\partial_t w}\right\|^2_{L^2(Q_0)}=\left\|{{\mathcal{Q}}^{\mathfrak{M}_0}_{\mathfrak{M}}(\theta^{-\frac{1}{2}}e^{\tau \theta \varphi}\partial_t u})\right\|^2_{L^2(Q_0)}\nonumber\\
&&\quad \quad \quad \quad \quad\quad\quad\quad\quad\ge (\underset{\Omega_0}{\sup\vartheta'})^{-1}\left\|{\theta^{-\frac{1}{2}}e^{\tau \theta \varphi} \partial_tu}\right\|^2_{L^2(Q)},\nonumber
\end{eqnarray}
and a similar inequality holds for $\left\|{\theta^{\frac{3}{2}}e^{\tau\theta\varphi_0}w}\right\|^2_{L^2(Q_0)}$.
\item For the second term of LHS of ~\eqref{eq:5.8} we use Lemma~\ref{lem:property of zeta} and Lemma ~\ref{lem:property of zeta1} as follows
\begin{eqnarray}\left\|{\theta^{\frac{1}{2}}e^{\tau \theta\varphi_0}Dw}\right\|^2_{L^2(Q_0)}&=&\left\|{\theta^{\frac{1}{2}}e^{\tau \theta\varphi_0}D({\mathcal{Q}}^{\mathfrak{M}_0}_{\mathfrak{M}}u)}\right\|^2_{L^2(Q_0)}=\left\|{\theta^{\frac{1}{2}}\mathcal{Q}}^{\overline{\mathfrak{M}_0}}_{\overline{\mathfrak{M}}}(e^{\tau \theta\varphi}){\mathcal{Q}}^{\overline{\mathfrak{M}_0}}_{\overline{\mathfrak{M}}}(\zeta Du)\right\|^2_{L^2(Q_0)}\nonumber\\
&=&\left\|{\theta^{\frac{1}{2}}\mathcal{Q}}^{\overline{\mathfrak{M}_0}}_{\overline{\mathfrak{M}}}(\zeta e^{\tau \theta\varphi}Du)\right\|^2_{L^2(Q_0)}\gtrsim \left\|{\theta^{\frac{1}{2}}e^{\tau \theta\varphi}Du}\right\|^2_{L^2(Q)}.\nonumber
\end{eqnarray} 

\item By using ~\eqref{eq:5.4} and Lemma ~\ref{lem:property of zeta1} we have 
\begin{eqnarray} \left\|{e^{\tau\theta\varphi_0}P^{\mathfrak{M}_0}w}\right\|^2_{L^2(Q_0)}&=& \left\|{e^{\tau\theta\varphi_0}{\mathcal{Q}}^{\mathfrak{M}_0}_{\mathfrak{M}}(\bar{\zeta}P^{\mathfrak{M}}u)}\right\|^2_{L^2(Q_0)}=\left\|{{\mathcal{Q}}^{\mathfrak{M}_0}_{\mathfrak{M}}(e^{\tau\theta\varphi}\bar{\zeta}P^{\mathfrak{M}}u)}\right\|^2_{L^2(Q_0)}\nonumber\\
&&\lesssim  \left\|{e^{\tau\theta\varphi}\bar{\zeta}P^{\mathfrak{M}}u}\right\|^2_{L^2(Q)}\lesssim\left\|{e^{\tau\theta\varphi}P^{\mathfrak{M}}u}\right\|^2_{L^2(Q)}.\nonumber
\end{eqnarray}
\item For the third term of RHS of ~\eqref{eq:5.8}
\begin{eqnarray}\left|{ e^{\tau\theta\varphi_0}w|_{t=0}}\right|^2_{L^2(\Omega_0)}&=&\left|{{\mathcal{Q}}^{\mathfrak{M}_0}_{\mathfrak{M}}( e^{\tau\theta\varphi}w|_{t=0}})\right|^2_{L^2(\Omega_0)}\nonumber\\
&\lesssim& \left|{ e^{\tau\theta\varphi}w|_{t=0}}\right|^2_{L^2(\Omega)}\nonumber
\end{eqnarray}
and a similar inequality holds for $\left|{e^{\tau\theta\varphi_0}w|_{t=T}}\right|^2_{L^2(\Omega_0)}$, $\left\|{\theta^{\frac{3}{2}}e^{\tau \theta\varphi_0}w}\right\|^2_{L^2( (0,T) \times \omega_0)} $.
\item Finally, since $\vartheta(\omega_0)=\omega$ we have
\begin{eqnarray}\left\|{\theta^{\frac{3}{2}}e^{\tau \theta\varphi_0}Du}\right\|^2_{L^2( (0,T) \times \omega_0)}&=&\left\|{{\mathcal{Q}}^{\mathfrak{M}_0}_{\mathfrak{M}}(\theta^{\frac{3}{2}}e^{\tau \theta\varphi}Du)}\right\|^2_{L^2( (0,T) \times \omega_0)}\nonumber\\
&\lesssim&\left\|{\theta^{\frac{3}{2}}e^{\tau \theta\varphi}Du}\right\|^2_{L^2( (0,T) \times \omega}\nonumber
\end{eqnarray}
\end{itemize} 
The proof is complete.
\end{proof}

\section{Controllability results}
\label{sec:Controllability results}
\par The Carleman estimate proved in the previous Section allows to give observability estimate that yields results of controllability to the trajectories for classes of semi-linear heat equations.
\subsection{The linear case} We consider the following semi-discrete parabolic problem with potential

\begin{equation}\label{eq:6.1}\partial_t y+\mathcal{A}^{\mathfrak{M}}y+ay=\textbf{1}_{\omega}v, \quad  t\in (0,T)\quad\quad y|_{\partial \Omega}=0
\end{equation}
\par The adjoint system associated with the controlled system with potential ~\eqref{eq:6.1} is given by
\begin{equation}\label{eq:6.2}-\partial_t q+\mathcal{A}^{\mathfrak{M}}y+ay=0, \quad  t\in (0,T)\quad\quad q|_{\partial \Omega}=0
\end{equation}
We assume that a piecewise $C^1$ diffusion coefficient $c$ satisfies ~\eqref{eq:coefficient} and $\Omega=(0,1)$. From Carleman estimate ~\eqref{Carleman estinmate uniform } we obtain a following observability estimate.
\begin{Proposition}There exists positive constants $C_0, C_1$ and $C_2$ such that for all $T>0$ and all potential fucntion $a$, under the condition $h\le \min(h_0,h_1)$ with
\begin{equation}h_1=C_0\big(1+\frac{1}{T}+\left\|{a}\right\|_{\infty}^{\frac{2}{3}}\big)^{-1}\nonumber
\end{equation}
any solution of ~\eqref{eq:6.2} satisfies
\begin{equation}\label{eq:6.3}\left|{q(0)}\right|_{L^2(\Omega)}\le C_{obs}\left\|{q}\right\|^2_{L^2( (0,T)\times \omega)}+e^{-\frac{C_1}{h}+T\left\|{a}\right\|_{\infty}}\left|{q(T)}\right|^2_{L^2(\Omega)},
\end{equation}
with $C_{obs}=e^{C_2\big(1+\frac{1}{T}+T\left\|{a}\right\|_{\infty}+\left\|{a}\right\|_{\infty}^{\frac{2}{3}}\big)}$.
\end{Proposition}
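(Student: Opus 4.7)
The approach is to apply the semi-discrete Carleman estimate of Theorem~\ref{theo:carleman non uniform} to the adjoint solution $q$ and then extract the observability inequality through the standard combination of an energy estimate and absorption. Since $q$ satisfies $-\partial_t q + \mathcal{A}^{\mathfrak{M}}q + aq = 0$, i.e., $P^{\mathfrak{M}}q = -aq$, the source term in the Carleman inequality is bounded by
\begin{equation*}
\|e^{\tau\theta\varphi}P^{\mathfrak{M}}q\|^2_{L^2(Q)} \le \|a\|_\infty^2\,\|e^{\tau\theta\varphi}q\|^2_{L^2(Q)}.
\end{equation*}
The first move is to absorb this into the LHS term $\tau^3\|\theta^{3/2}e^{\tau\theta\varphi}q\|^2_{L^2(Q)}$ by enforcing $\tau^3\theta^3 \ge 2C_{\lambda,\mathfrak{K}}\|a\|_\infty^2$ pointwise. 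Using $\theta \ge c\,T^{-2}$ (valid for $\alpha$ comparable to $T$), this forces $\tau \ge C'T^2\|a\|_\infty^{2/3}$, which combined with the Carleman lower bound $\tau \ge \tau_0(T+T^2)$ leads to the natural choice $\tau = \tau_*(T+T^2)(1+\|a\|_\infty^{2/3})$. With this choice the smallness condition $\tau h(\alpha T)^{-1}\le \epsilon_0$ translates, after adjustment of the universal constants, into exactly the hypothesis $h\le h_1 = C_0(1+1/T+\|a\|_\infty^{2/3})^{-1}$.

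Next I would derive a time-averaged lower bound for $|q(0)|^2_{L^2(\Omega)}$ by an elementary energy estimate. From the positivity of $\mathcal{A}^{\mathfrak{M}}$ and the adjoint equation,
\begin{equation*}
\tfrac{d}{dt}|q(t)|^2_{L^2(\Omega)} = 2(\mathcal{A}^{\mathfrak{M}}q,q)_{L^2} + 2\int_\Omega a\,q^2 \;\ge\; -2\|a\|_\infty\,|q(t)|^2_{L^2(\Omega)},
\end{equation*}
so $|q(0)|^2 \le e^{2\|a\|_\infty s}|q(s)|^2$ for every $s\in[0,T]$, and integrating over $[T/4,3T/4]$ gives $|q(0)|^2_{L^2(\Omega)} \le (2/T)\,e^{2T\|a\|_\infty}\int_{T/4}^{3T/4}|q(s)|^2_{L^2(\Omega)}\,ds$. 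On this middle interval both $\theta$ and the weight $e^{2\tau\theta\varphi}$ are bounded below by constants of size $\sim T^{-2}$ and $\sim e^{-C\tau/T^2}$ respectively, so combining with the energy estimate yields
\begin{equation*}
\tau^3\|\theta^{3/2}e^{\tau\theta\varphi}q\|^2_{L^2(Q)} \;\ge\; c\,\tau^3\,T^{-5}\,e^{-C\tau/T^2 - 2T\|a\|_\infty}\,|q(0)|^2_{L^2(\Omega)}.
\end{equation*}

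For the $\omega$-term on the RHS of the Carleman inequality, the uniform bound $\theta^{3/2}e^{\tau\theta\varphi}\le CT^{-3}$ gives the upper bound $C\tau^3 T^{-6}\|q\|^2_{L^2((0,T)\times\omega)}$. Dividing by the lower bound from the previous paragraph produces a coefficient of $\|q\|^2_{L^2((0,T)\times\omega)}$ of the form $(C/T)\,e^{C\tau/T^2 + 2T\|a\|_\infty}$; substituting $\tau/T^2 = \tau_*(1/T+1+\|a\|_\infty^{2/3})$ then yields exactly the announced $C_{obs} = e^{C_2(1+1/T+T\|a\|_\infty+\|a\|_\infty^{2/3})}$, with the polynomial $1/T$ prefactor absorbed into the constant in the exponent.

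The step I expect to be the hardest is the treatment of the boundary contributions $h^{-2}|e^{\tau\theta\varphi}q(0)|^2_{L^2(\Omega)}$ and $h^{-2}|e^{\tau\theta\varphi}q(T)|^2_{L^2(\Omega)}$, which have to be turned respectively into something absorbable into the LHS and into the obstruction $e^{-C_1/h+T\|a\|_\infty}|q(T)|^2_{L^2(\Omega)}$. The key is to exploit the freedom in the parameter $\alpha$: choosing $\alpha$ small enough (compatible with $\tau h(\alpha T)^{-1}\le\epsilon_0$, which roughly forces $\alpha\sim \tau h/T$) makes $\tau\theta(0)\ge C_1/h$, so that $|e^{\tau\theta\varphi(x)}|^2\big|_{t=0}\le e^{-2C_1/h}$ uniformly in $x$, with $C_1$ proportional to the minimum of $|\varphi|$ on $\overline{\Omega}$. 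The resulting prefactor $h^{-2}e^{-2C_1/h}$ is exponentially small in $1/h$, so the $|q(0)|^2$ piece can be re-absorbed into the LHS lower bound from the second paragraph, while the $|q(T)|^2$ piece is kept as the obstruction. Balancing $\tau$, $\alpha$ and $h$ so that this cancellation leaves exactly the exponent $-C_1/h+T\|a\|_\infty$ (with $T\|a\|_\infty$ coming from the energy factor $e^{2T\|a\|_\infty}$) is the real technical content, and the origin of the precise form of $h_1$.
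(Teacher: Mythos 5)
Your overall strategy is the standard one and coincides with the route the paper itself intends: the paper gives no proof of this proposition but defers to \cite{BL12}, and the argument there is precisely the combination you describe --- apply the Carleman estimate to $q$ with source $-aq$, absorb the potential term by taking $\tau\gtrsim T^2\|a\|_\infty^{2/3}$ on top of $\tau\ge\tau_0(T+T^2)$ (whence the form of $h_1$ through the constraint $\tau h(\alpha T)^{-1}\le\epsilon_0$), recover $|q(0)|^2_{L^2(\Omega)}$ from the time average over $[T/4,3T/4]$ via the dissipation inequality, and exploit the blow-up of $\theta$ at $t=0,T$ (with $\alpha\sim\tau h/T$) to make the two $h^{-2}$ boundary terms exponentially small in $1/h$. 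These steps are sound; note only that with $\alpha$ chosen small your pointwise bound $\theta^{3/2}e^{\tau\theta\varphi}\le CT^{-3}$ fails near the endpoints, but the product $\tau^3\theta^3e^{2\tau\theta\varphi}$ remains uniformly bounded because $\varphi\le-\delta<0$, so the $\omega$-term estimate survives.

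The one step that does not work as you state it is the treatment of the $t=0$ boundary term. You propose to re-absorb $h^{-2}e^{-2C_1'/h}\,|q(0)|^2_{L^2(\Omega)}$ into the left-hand side, whose lower bound in terms of $|q(0)|^2_{L^2(\Omega)}$ carries the factor $e^{-C\tau/T^2-2T\|a\|_\infty}$. The $e^{-C\tau/T^2}$ part is indeed dominated by $e^{-C_1'/h}$ once $C_0$ is chosen small enough, since $\tau/T^2\le \tau_*C_0/h$ under $h\le h_1$; but the factor $e^{-2T\|a\|_\infty}$ is not: the hypothesis $h\le h_1=C_0\big(1+\tfrac1T+\|a\|_\infty^{2/3}\big)^{-1}$ places no upper bound on $T\|a\|_\infty$ in terms of $1/h$ (take $T$ large with $\|a\|_\infty$ fixed), so the required inequality $e^{-C_1'/h}\lesssim e^{-2T\|a\|_\infty}$ can fail and the absorption breaks down. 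The correct move --- and the reason the obstruction term in \eqref{eq:6.3} carries the exponent $-\tfrac{C_1}{h}+T\|a\|_\infty$ rather than $-\tfrac{C_1}{h}$ --- is to apply the energy estimate once more in the form $|q(0)|^2_{L^2(\Omega)}\le e^{2T\|a\|_\infty}|q(T)|^2_{L^2(\Omega)}$ and send the $t=0$ boundary contribution into the $|q(T)|^2_{L^2(\Omega)}$ obstruction term together with the $t=T$ one, rather than into the left-hand side. With that single correction your argument closes.
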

\begin{Remark}In comparision the observability inequality in continuous case which performed in \cite{BDL07}, we find that the observability inequality obtained here is weak since there is an additional term depending upon $h$ at right-hand-side of inequality ~\eqref{eq:6.3}.
\end{Remark}
\par From the result of Proposition 6.1 we deduce the following controllability result for system ~\eqref{eq:6.1}.

\begin{Proposition}There exists positive constants $C_1, C_2$, $C_3$ and for $T>0$ a map $L_{T,a}:\mathbb{R}^{\mathfrak{M}}\to L^2(0,T;\mathbb{R}^{\mathfrak{M}})$ such that if $h\le \min(h_0,h_2)$ with
\begin{equation}h_1=C_0\big(1+\frac{1}{T}+T\left\|{a}\right\|_{\infty}+\left\|{a}\right\|_{\infty}^{\frac{2}{3}}\big)^{-1}\nonumber
\end{equation}
for all initial data $y_0\in \mathbb{R}^{\mathfrak{M}}$, there exists a semi-discrete control function $v$ given by $v=L_{a}(y_0)$ such that the solution to ~\eqref{eq:6.1} satisfies
$$\left|{y(T)}\right|_{L^2(\Omega)}\le C_0 e^{-C_2/h}\left|{y_0}\right|_{L^2(\Omega)}$$ and $$ \left\|{v}\right\|_{L^2(Q)} \le C_0 \left|{y_0}\right|_{L^2(\Omega)}$$,
with $C_{0}=e^{C_3\big(1+\frac{1}{T}+T\left\|{a}\right\|_{\infty}+\left\|{a}\right\|_{\infty}^{\frac{2}{3}}\big)}$.
\end{Proposition}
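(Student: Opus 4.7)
The plan is to derive the controllability result from the weak observability estimate of Proposition 6.1 via a HUM-type duality argument. Since the system is finite-dimensional (semi-discrete), existence of a minimizer for the dual functional is automatic once coercivity is established, so the work reduces to (i) choosing the correct penalization, (ii) reading off the Euler--Lagrange equation, and (iii) squeezing the estimates through the observability inequality.

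First I would introduce the penalized quadratic functional on terminal adjoint data $q_T \in \mathbb{R}^{\mathfrak{M}}$,
\begin{equation*}
J(q_T) \;=\; \tfrac{1}{2}\,\|q\|_{L^2((0,T)\times\omega)}^2 \;+\; \tfrac{\mu_h}{2}\,|q_T|_{L^2(\Omega)}^2 \;+\; (y_0,q(0))_{L^2(\Omega)},
\end{equation*}
where $q$ solves the adjoint system \eqref{eq:6.2} backwards from $q_T$ and
\begin{equation*}
\mu_h \;=\; e^{-C_1/h + T\|a\|_\infty}
\end{equation*}
is exactly the coefficient appearing in front of $|q(T)|^2$ in the weak observability estimate \eqref{eq:6.3}. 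The point of matching this constant is that the two quadratic terms in $J$ then combine to dominate $|q(0)|^2$ by observability: applying \eqref{eq:6.3} yields
\begin{equation*}
\tfrac{1}{2}\|q\|_{L^2((0,T)\times\omega)}^2 + \tfrac{\mu_h}{2}|q_T|^2 \;\ge\; \tfrac{1}{2\max(C_{\mathrm{obs}},1)}\,|q(0)|^2,
\end{equation*}
so $J$ is strictly convex and coercive in $q(0)$, and (since we are in finite dimension at each time) a unique minimizer $\hat q_T$ exists.

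Next I would write the Euler--Lagrange condition at $\hat q_T$: for every direction $p_T$ with associated backward solution $p$,
\begin{equation*}
\int_0^T\!\!\int_\omega \hat q\,p\;dx\,dt \;+\; \mu_h(\hat q_T,p_T)_{L^2(\Omega)} \;+\; (y_0,p(0))_{L^2(\Omega)} \;=\; 0.
\end{equation*}
Defining the control $v := \hat q\,\mathbf{1}_\omega$ and letting $y$ solve \eqref{eq:6.1}, the standard integration-by-parts duality identity $(y(T),p_T) - (y_0,p(0)) = \int_0^T\!\int_\omega v\,p$ then forces $y(T) = -\mu_h\,\hat q_T$. Testing the Euler--Lagrange relation against $p_T=\hat q_T$ and invoking Cauchy--Schwarz together with \eqref{eq:6.3} gives $|\hat q(0)| \le \max(C_{\mathrm{obs}},1)\,|y_0|$, and therefore
\begin{equation*}
\|v\|_{L^2(Q)}^2 + \mu_h\,|\hat q_T|^2 \;\le\; |y_0|\,|\hat q(0)| \;\le\; \max(C_{\mathrm{obs}},1)\,|y_0|^2.
\end{equation*}
The control bound $\|v\|_{L^2(Q)} \le C_0\,|y_0|$ is immediate, and the state bound follows from $|y(T)|^2 = \mu_h^2|\hat q_T|^2 = \mu_h\cdot(\mu_h|\hat q_T|^2) \le \mu_h\cdot\max(C_{\mathrm{obs}},1)|y_0|^2$, i.e.\ $|y(T)| \le \sqrt{\max(C_{\mathrm{obs}},1)\,\mu_h}\,|y_0|$. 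Defining $L_{T,a}(y_0):=v$ provides the announced map.

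Finally I would package the constants. Writing out $\mu_h$ and $C_{\mathrm{obs}}$ from Proposition 6.1, the prefactor in $|y(T)|\le\sqrt{C_{\mathrm{obs}}\mu_h}|y_0|$ takes the form $\exp\!\bigl(C\bigl(1+\tfrac1T+T\|a\|_\infty+\|a\|_\infty^{2/3}\bigr)-\tfrac{C_1}{2h}\bigr)$; imposing the mesh-size restriction $h\le h_2:=C_0'(1+\tfrac1T+T\|a\|_\infty+\|a\|_\infty^{2/3})^{-1}$ ensures that $C_1/(2h)$ dominates the polynomial-in-$\|a\|_\infty$ growth, leaving a genuine $e^{-C_2/h}$ decay as claimed. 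The main (though mild) obstacle is the bookkeeping of these constants, in particular making sure the mesh smallness condition absorbs the $T\|a\|_\infty$ term inside $\mu_h$ so that the residual estimate $|y(T)|\le C_0 e^{-C_2/h}|y_0|$ is uniform in $h$ in the stated regime; everything else is a routine HUM duality once the correct weight $\mu_h$ has been placed in the penalization.
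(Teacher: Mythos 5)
Your proposal is correct and follows essentially the same route as the proof the paper relies on: the paper does not reproduce the argument but defers to \cite{BL12}, where the result is obtained by exactly this penalized HUM duality, with the penalization weight matched to the coefficient $e^{-C_1/h+T\left\|a\right\|_\infty}$ of the $\left|q(T)\right|^2$ term in the weak observability inequality \eqref{eq:6.3}. The Euler--Lagrange identity, the relation $y(T)=-\mu_h\hat q_T$, and the final bookkeeping $\left|y(T)\right|\le\sqrt{C_{obs}\,\mu_h}\left|y_0\right|$ are all as in the cited reference, so no gap.
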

\par Note that the final state is of size $ e^{-C/h}\left|{y_0}\right|_{L^2(\Omega)}$. The proof of these proposition are given in \cite{BL12}. 
%\par Morever, we have a following classical regularity result for the above equation
% \begin{proposition}
% \end{proposition}
% 
\subsection{The semilinear case}We consider the following semilinear semi-discrete control problem
\begin{equation}\label{eq:6.4}\big(\partial_t +\mathcal{A}^{\mathfrak{M}}\big)y+{G}(y)=\textbf{1}_{\omega}v, \quad y\in (0,T) \quad\quad y|_{\partial \Omega}=0,\quad y(0)=y_0
\end{equation}
where $\omega \subset \Omega$. The function ${G}:\mathbb{R}\to \mathbb{R}$ is assumed of the form
\begin{equation}\label{eq:6.5}{G}(x)=xg(x),\quad\quad x\in \mathbb{R},
\end{equation}
with $g$ Lipschitz continuous. Here, we consider the function $g$ in two cases: $g\in L^{\infty}(\mathbb{R})$ and the more general case as
\begin{equation}\label{eq:6.6}\left|{g(x)}\right|\le K\ln^r(e+\left|{x}\right|), \quad x\in \mathbb{R},\quad\quad\textmd{with} \quad\quad 0\le r<\frac{3}{2}
\end{equation}
\par The results of semi-discrete parabolic with potential above allows one to obtain controllability results for parabolic equation with semi-linear terms whose proofs are given in \cite{BL12}
\begin{Theorem}We assume that $g\in L^{\infty}(\mathbb{R})$ and $c$ satisfies ~\eqref{eq:coefficient}. There exists positive constants $C_0, C_1$ such that for all $T>0$ and $h$ chosen sufficiently small, for all initial data $y_0\in \mathbb{R}^{\mathfrak{M}}$, there exists a semi-discrete control function $v$ with
\begin{equation}\left\|{v}\right\|_{L^2(Q)}\le C\left|{y_0}\right|_{L^2(\Omega)}\nonumber
\end{equation}
such that the solution to the semi-linear parabolic equation ~\eqref{eq:6.4} satisfies
\begin{equation}\left|{y(T)}\right|_{L^2(\Omega)}\le Ce^{-C_0/h}\left|{y_0}\right|_{L^2(\Omega)}\nonumber
\end{equation}
with $C_{0}=e^{C_1\big(1+\frac{1}{T}+T\left\|{g}\right\|_{\infty}+\left\|{g}\right\|_{\infty}^{\frac{2}{3}}\big)}$.
\end{Theorem}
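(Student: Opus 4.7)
The plan is a standard fixed-point reduction to the linear controllability result of Proposition~6.2: freeze $y$ inside the nonlinearity $g$ to produce a bounded potential, invoke Proposition~6.2 to control the linearized equation, and then close the loop via Schauder's fixed point theorem. This is the semi-discrete analogue of the Fern\'andez-Cara/Zuazua method, and follows the scheme used in \cite{BL12} in the smooth-coefficient case; the discontinuous coefficient $c$ plays no role at this stage since it has already been absorbed into $\mathcal{A}^{\mathfrak{M}}$ and into the constants of the Carleman and observability estimates.

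First I would set, for every $z\in L^2(0,T;\mathbb{R}^{\mathfrak{M}})$, the potential $a_z(t,x):=g(z(t,x))$. Since $g\in L^\infty(\mathbb{R})$, we have $\|a_z\|_{L^\infty(Q)}\le\|g\|_{L^\infty(\mathbb{R})}=:M$ \emph{independently of $z$}. I then apply Proposition~6.2 with $a=a_z$: it produces a control $v_z=L_{T,a_z}(y_0)$ such that the solution $y_z$ of $(\partial_t+\mathcal{A}^{\mathfrak{M}})y_z+a_z y_z=\mathbf{1}_\omega v_z$, $y_z(0)=y_0$, satisfies $\|v_z\|_{L^2(Q)}\le \mathcal{C}_M|y_0|_{L^2(\Omega)}$ and $|y_z(T)|_{L^2(\Omega)}\le\mathcal{C}_M e^{-C_0/h}|y_0|_{L^2(\Omega)}$, where $\mathcal{C}_M=e^{C_3(1+1/T+TM+M^{2/3})}$ and the smallness condition $h\le h_2(M,T)$ are both uniform in $z$. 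This uniformity is the crucial input, and is inherited from the fact that the Carleman constant $C_{\lambda,\mathfrak{K}}$ in Theorem~\ref{theo:carleman non uniform} does not depend on the potential.

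Next, I define the solution map $\Lambda:L^2(Q)\to L^2(Q)$ by $\Lambda(z):=y_z$. By construction, any fixed point $y^*=\Lambda(y^*)$ satisfies $a_{y^*}y^*=g(y^*)y^*=G(y^*)$, so $(y^*,v^*):=(\Lambda(y^*),v_{y^*})$ solves \eqref{eq:6.4} and inherits the claimed bounds. A standard energy estimate for the linear parabolic equation, using $\|v_z\|_{L^2(Q)}\le\mathcal{C}_M|y_0|$ together with $\|a_z\|_\infty\le M$, yields $\|y_z\|_{L^2(Q)}\le R$ for some $R=R(M,T,|y_0|)$ independent of $z$; hence $\Lambda$ stabilizes the closed convex ball of radius $R$ in $L^2(Q)$.

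The main obstacle is checking the hypotheses of Schauder's theorem on this ball. Compactness is comparatively cheap: at fixed $h$ the spatial variable lives in the finite-dimensional space $\mathbb{R}^{\mathfrak{M}}$, and the parabolic equation gives a uniform $H^1$-in-time bound on $y_z$, so $\{y_z\}_z$ is relatively compact in $L^2(0,T;\mathbb{R}^{\mathfrak{M}})$ by Arzel\`a--Ascoli. The genuinely delicate point will be the continuity $z\mapsto y_z$, which reduces to the continuity $z\mapsto v_z$: the control $v_z$ is obtained by minimizing a strictly convex penalized functional of the form $J_\varepsilon(q_T)=\tfrac12\|q\|^2_{L^2((0,T)\times\omega)}+\tfrac{\varepsilon}{2}|q_T|^2_{L^2(\Omega)}+(q(0),y_0)_{L^2(\Omega)}$ taken over the adjoint data $q_T$, where $q$ solves the adjoint equation~\eqref{eq:6.2} with potential $a_z$. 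Continuous dependence of this minimizer on $z$ follows from continuous dependence of the (finite-dimensional) adjoint semigroup on its bounded potential, together with the coercivity of $J_\varepsilon$ provided uniformly by the observability inequality \eqref{eq:6.3}. Once continuity of $\Lambda$ is secured, Schauder furnishes a fixed point and the theorem follows; the missing details are identical to those of the smooth-coefficient semi-discrete case in \cite{BL12}.
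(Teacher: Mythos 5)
Your proposal reconstructs exactly the argument the paper intends: the paper itself gives no proof for this theorem but defers to \cite{BL12}, where the result is obtained by the same scheme you describe — freezing the nonlinearity to get a potential $a_z=g(z)$ bounded by $\left\|{g}\right\|_{\infty}$ uniformly in $z$, invoking the linear controllability result with potential (Proposition 6.2) with constants uniform in $z$, and closing with Schauder's fixed point theorem using finite-dimensionality in space for compactness. Your outline is correct and consistent with the paper's (delegated) proof.
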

\begin{Theorem}Let $\Omega=(0,1)$, $c$ satisfy ~\eqref{eq:coefficient} and $G$ satisfy ~\eqref{eq:6.5}~-~\eqref{eq:6.6}. There exists $C_0$ such that, for $T>0$ and $M>0$, there exists positive constants $C, h_0$ such that for $0<h\le h_0$ and for all initial data $y_0\in \mathbb{R}^{\mathfrak{M}}$ satisfying $\left|y_0\right|_{L^2(\Omega)}\le M$ there exists a semi-discrete control function $v$ such that the solution to the semi-linear parabolic equation
 \begin{equation}\big(\partial_t -\bar{D}cD\big)y+{G}(y)=\textbf{1}_{\omega}v, \quad y\in (0,T) \quad\quad y|_{\partial \Omega}=0,\quad y(0)=y_0
\end{equation}
satisfies
\begin{equation}\left|{y(T)}\right|_{L^2(\Omega)}\le Ce^{-C_0/h}\left|{y_0}\right|_{L^2(\Omega)}\nonumber
\end{equation}
where $C=C(T,M)$.
\end{Theorem}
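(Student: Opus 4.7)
The plan is to reduce the semi-linear problem to the linear controllability result proved just above via a Schauder fixed-point argument, as carried out in \cite{BL12} in the smooth-coefficient case. Given a ``frozen'' state $z$ in a (large) ball of $L^\infty(0,T;\mathbb{R}^{\mathfrak{M}})$, one defines the time-dependent potential $a_z(t,x):=g(z(t,x))$, which satisfies $\|a_z\|_\infty\le K\ln^r(e+\|z\|_{L^\infty(Q)})$, and applies the linear controllability result to
\[
\partial_t y + \mathcal{A}^{\mathfrak{M}} y + a_z y = \mathbf{1}_\omega v,\qquad y(0)=y_0,
\]
obtaining a control $v_z = L_{T,a_z}(y_0)$ and a solution $y_z$ satisfying $|y_z(T)|_{L^2(\Omega)}\le C_0 e^{-C_2/h}|y_0|_{L^2(\Omega)}$ and $\|v_z\|_{L^2(Q)}\le C_0|y_0|_{L^2(\Omega)}$, with $C_0 = \exp\!\bigl(C_3(1+\tfrac1T+T\|a_z\|_\infty+\|a_z\|_\infty^{2/3})\bigr)$. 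The map $\Phi(z):=y_z$ has the property that any fixed point $y=\Phi(y)$ solves the semi-linear equation with control $v_y$ and, by the above, satisfies the required final-state bound.

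The delicate step is the a priori estimate that makes $\Phi$ self-map a suitable ball. An energy estimate for the linearized equation combined with the observability bound gives $\|y_z\|_{C([0,T];L^2(\Omega))}\le F(\|a_z\|_\infty)\,|y_0|_{L^2(\Omega)}$ for a function $F$ with $\ln F(\sigma)\le C(1+\tfrac{1}{T}+T\sigma+\sigma^{2/3})$, and in the finite-dimensional spatial setting one has $\|y_z\|_{L^\infty(Q)}\le h^{-1/2}\|y_z\|_{C([0,T];L^2)}$. Writing $\|z\|_{L^\infty(Q)}\le R$ and using \eqref{eq:6.6}, the self-map condition reduces to
\[
\ln\bigl(h^{-1/2} R/M\bigr)\ge C\bigl(\tfrac{1}{T}+TK\ln^r(e+R)+K^{2/3}\ln^{2r/3}(e+R)\bigr).
\]
Since $r<3/2$ implies $2r/3<1$, one has $\ln^{2r/3}(R)=o(\ln R)$; the remaining $TK\ln^r(R)$ contribution -- which would otherwise prevent the self-map property for $r\ge 1$ -- is handled as in \cite{BL12} by replacing the crude Gronwall bound on $y_z$ with the sharper bound that follows directly from the Carleman inequality (whose weight vanishes at $t=0$ and $t=T$), retaining only the $e^{C\|a_z\|_\infty^{2/3}}$ dependence in the relevant factor. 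This allows $R$ to be chosen large enough, depending on $T$, $M$ and $h$, so that $\Phi(B_R)\subset B_R$.

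Compactness of $\Phi$ in $L^\infty(0,T;\mathbb{R}^{\mathfrak{M}})$ follows from the finite-dimensionality of the spatial grid together with an $L^2(0,T)$ bound on $\partial_t y_z$ read off the equation, and continuity of $\Phi$ reduces to Lipschitz continuity of $g$ on bounded sets and continuous dependence of the HUM-type map $a\mapsto L_{T,a}(y_0)$ on the potential, together with standard continuous dependence for the linearized ODE. Schauder's theorem then yields $y\in B_R$ with $\Phi(y)=y$, and the desired bound $|y(T)|_{L^2(\Omega)}\le Ce^{-C_0/h}|y_0|_{L^2(\Omega)}$ is inherited directly from the linear estimate applied with $a=g(y)$. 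The main obstacle is precisely the self-map estimate: the threshold $r<3/2$ is essentially sharp for this approach, since without it the $\|a_z\|_\infty^{2/3}$ exponential factor in the observability constant would grow faster than any power of $R$ and no ball could absorb the iterates; the extra care needed in the semi-discrete setting lies in keeping all constants uniform in $h$ so that the final-state bound $Ce^{-C_0/h}|y_0|_{L^2(\Omega)}$ indeed improves on $|y_0|_{L^2(\Omega)}$ for $h$ small enough depending on $T$ and $M$.
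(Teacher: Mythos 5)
Your overall strategy --- freeze the state, set $a_z=g(z)$, apply the linear result of Proposition 6.3, and close with a Schauder fixed point in a ball of $L^\infty(0,T;\mathbb{R}^{\mathfrak{M}})$ via the inverse inequality $\left\|y\right\|_{L^\infty(\Omega)}\le h^{-1/2}\left|y\right|_{L^2(\Omega)}$ --- is exactly the route the paper intends, since it defers the proof to \cite{BL12}, where this fixed-point argument is carried out. However, two points need repair, and both sit in the step you yourself single out as delicate.

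First, your mechanism for removing the $T\left\|a_z\right\|_\infty$ contribution from the exponent is misattributed. In this paper the time weight is $\theta(t)=((t+\alpha)(T+\alpha-t))^{-1}$, which is \emph{regularized}: $e^{\tau\theta\varphi}$ does not vanish at $t=0$ or $t=T$ (this is precisely why the Carleman estimate carries the terms $h^{-2}\left|e^{\tau\theta\varphi}u|_{t=0}\right|^2$ and $h^{-2}\left|e^{\tau\theta\varphi}u|_{t=T}\right|^2$, and why the observability inequality ~\eqref{eq:6.3} is only weak). So one cannot argue that ``the Carleman weight vanishes at the endpoints'' to retain only the $e^{C\left\|a_z\right\|_\infty^{2/3}}$ dependence. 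The device actually used (in \cite{BDL07}, \cite{BL12}, going back to Fern\'andez-Cara and Zuazua) is to shrink the control horizon: control on $[0,T^*]$ with $T^*\le \min\big(T,\left\|a_z\right\|_\infty^{-1/3}\big)$ so that $T^*\left\|a_z\right\|_\infty\le\left\|a_z\right\|_\infty^{2/3}$ and $1/T^*\lesssim 1+1/T+\left\|a_z\right\|_\infty^{2/3}$, then extend the control by zero on $[T^*,T]$. Second, that extension is not free in the semi-discrete setting: one does not reach $y(T^*)=0$ but only $\left|y(T^*)\right|\le Ce^{-C/h}\left|y_0\right|$, so on $[T^*,T]$ the free semilinear flow must be run from a small but nonzero state, and one has to check by Gronwall that the growth factor $e^{(T-T^*)\left\|g(y)\right\|_\infty}$, with $\left\|g(y)\right\|_\infty\le K\ln^r(e+R(h))$ and $R(h)$ growing as $h\to 0$, does not destroy the $e^{-C_0/h}$ gain; this is where the constant $C=C(T,M)$ and the threshold $h_0$ actually come from. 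As written, your self-map estimate only closes for $r<1$, where $e^{CTK\ln^r(e+R)}$ is subpolynomial in $R$; reaching the stated range $r<3/2$ requires the two fixes above. (There is also a harmless slip: the self-map condition should read $R\ge h^{-1/2}M e^{C(\cdots)}$, i.e.\ $\ln\big(h^{1/2}R/M\big)\ge C(\cdots)$.)
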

\par Observe that the constants are uniform with respect to discretization parameter $h$.

\appendix 
\label{app:appendix}
\section{Proofs of Lemma ~\ref{lem:transmission condition} and intermediate results in Section~\ref{sec: Carleman estimate uniform mesh}}

\subsection{Proof of Lemma ~\ref{lem:transmission condition}}
\par  We have
\begin{equation}(c_dDu)_{n+\frac{3}{2}}-(c_dDu)_{n+\frac{1}{2}}=hf_{n+1}.\nonumber\end{equation}
\par As $Du=\tilde{\rho}Dv+D\rho\tilde{v}$ we obtain
% \begin{equation}\tilde{\rho}_{N+\frac{3}{2}}(cDv)_{N+\frac{3}{2}}-\tilde{\rho}_{N+\frac{1}{2}}(cDv)_{N+\frac{1}{2}}+(cD\rho)_{N+\frac{3}{2}}(\tilde{v})_{N+\frac{3}{2}}-(cD\rho)_{N+\frac{1}{2}}(\tilde{v})_{N+\frac{1}{2}}=\frac{h_1+h_2}{2}f_{N+1}.\nonumber\end{equation}
\small{
\begin{eqnarray}\label{eq:3.3}&&r_{n+1}\Big(\tilde{\rho}_{n+\frac{3}{2}}(cDv)_{n+\frac{3}{2}}-\tilde{\rho}_{n+\frac{1}{2}}(cDv)_{n+\frac{1}{2}}+(D\rho)_{n+\frac{3}{2}}(c\tilde{v})_{n+\frac{3}{2}}-(D\rho)_{n+\frac{1}{2}}(c\tilde{v})_{n+\frac{1}{2}}\Big)\nonumber\\
&&=h(rf)_{n+1}.\end{eqnarray}
}
\par We write

\begin{equation}r_{n+1}\tilde{\rho}_{n+\frac{3}{2}}=\frac{r_{n+1}\rho_{n+1}+r_{n+1}\rho_{n+2}}{2}=\frac{1+(((\tau^{+})^2\rho)r)_{n+1}}{2}:=K_{11},\nonumber\end{equation}
\begin{eqnarray}r_{n+1}(c_dD\rho)_{n+\frac{3}{2}}&=&(r\tau^+\rho )_{n+1}(c_drD\rho )_{n+\frac{3}{2}}\nonumber\\
&=&(r\tau^+\rho )_{n+1}\Big({(c_dr\partial\rho)_{n+\frac{3}{2}}+(c_drD\rho )_{n+\frac{3}{2}}-(c_dr\partial\rho)_{n+\frac{3}{2}}}\Big)\nonumber\\
&=&K_{21}\Big({(c_dr\partial\rho)_{n+\frac{3}{2}}+K_{22}}\Big),\nonumber
\end{eqnarray}
where $K_{21}=(r\tau^+\rho )_{n+1}$ and $K_{22}=(c_drD\rho )_{n+\frac{3}{2}}-(c_dr\partial\rho)_{n+\frac{3}{2}}$.
\par Similarly,
\begin{equation}r_{n+1}\tilde{\rho}_{n+\frac{1}{2}}=\frac{r_{n+1}\rho_{n+1}+r_{n+1}\rho_{n}}{2}=\frac{1+(((\tau^{-})^2\rho)r)_{n+1}}{2}:=K_{31},\nonumber
\end{equation}
\begin{eqnarray}
r_{n+1}(c_dD\rho)_{n+\frac{1}{2}}&=&(r\tau^-\rho )_{n+1}(c_drD\rho )_{n+\frac{1}{2}}\nonumber\\
&=&(r\tau^-\rho )_{n+1}\Big({(c_dr\partial\rho)_{n+\frac{1}{2}}+(c_drD\rho )_{n+\frac{1}{2}}-(c_dr\partial\rho)_{n+\frac{1}{2}}}\Big)\nonumber\\
&:=&K_{41}\Big({(c_dr\partial\rho)_{n+\frac{1}{2}}+K_{42}}\Big),\nonumber
\end{eqnarray}
where $K_{41}=(r\tau^-\rho )_{n+1}$ and $(c_drD\rho )_{n+\frac{1}{2}}-(c_dr\partial\rho)_{n+\frac{1}{2}}$.
\par Additionally,
\begin{eqnarray}&&(\tilde{v})_{n+\frac{1}{2}}=v_{n+1}+\frac{v_{n}-v_{n+1}}{2}=v_{n+1}+\mathcal{O}(h)(Dv)_{n+\frac{1}{2}},\nonumber\\
&&(\tilde{v})_{n+\frac{3}{2}}=v_{n+1}+\frac{v_{n+2}-v_{n+1}}{2}=v_{n+1}+\mathcal{O}(h)(Dv)_{n+\frac{3}{2}}.\nonumber
\end{eqnarray}

\par From ~\eqref{eq:3.3} we thus write
\begin{eqnarray}&&K_{11}(c_dDv)_{n+\frac{3}{2}}-K_{31}(c_dDv)_{n+\frac{1}{2}}\nonumber\\
&+&K_{21}\Big({(cr\partial\rho)_{n+\frac{3}{2}}+K_{22}}\Big)\Big({v_{n+1}+\mathcal{O}(h)(Dv)_{n+\frac{3}{2}}}\Big)\nonumber\\
&-&K_{41}\Big({(cr\partial\rho)_{n+\frac{1}{2}}+K_{42}}\Big)\Big({v_{n+1}+\mathcal{O}(h)(Dv)_{n+\frac{1}{2}}}\Big)\nonumber\\
&=&h(rf)_{n+1}.\nonumber
\end{eqnarray}

\par Then
\begin{eqnarray}&&K_{11}\Big((c_dDv)_{n+\frac{3}{2}}-(c_dDv)_{n+\frac{1}{2}}\Big)+(K_{11}-K_{31})(c_dDv)_{n+\frac{1}{2}}\nonumber\\
&+&K_{21}[\star cr\partial \rho]_a v_{n+1}+(K_{21}-K_{41})(cr\partial \rho)_{n+\frac{1}{2}}v_{n+1}+(K_{21}K_{22}-K_{41}K_{42})v_{n+1}\nonumber\\
&+&K_{21}\Big({(cr\partial\rho)_{n+\frac{3}{2}}+K_{22}}\Big)\mathcal{O}(h)(Dv)_{n+\frac{3}{2}}+K_{41}\Big({(cr\partial\rho)_{n+\frac{1}{2}}+K_{42}}\Big)\mathcal{O}(h)(Dv)_{n+\frac{1}{2}}\nonumber\\
&=&h(rf)_{n+1}.\nonumber
\end{eqnarray}

\par Moreover, as $r\partial\rho=-\lambda s\phi\partial\psi=s\mathcal{O_{\lambda}}(1)$ we have
\small{
\begin{eqnarray}&&K_{11}\Big((c_dDv)_{n+\frac{3}{2}}-(c_dDv)_{n+\frac{1}{2}}\Big)+(K_{11}-K_{31})(c_dDv)_{n+\frac{1}{2}}\nonumber\\
&=&K_{21}\lambda s[\star c\phi \partial \psi]_a v_{n+1}-Kv_{n+1}+\Big(K_{21}\mathcal{O_{\lambda}}(sh)+K_{21}K_{22}\mathcal{O}(h)\Big)\Big((c_dDv)_{n+\frac{3}{2}}-(c_dDv)_{n+\frac{1}{2}}\Big)\nonumber\\
&+&\Big({K_{21}\mathcal{O_{\lambda}}(sh)+K_{21}K_{22}\mathcal{O}(h)+K_{41}\mathcal{O_{\lambda}}(sh)+K_{41}K_{42}\mathcal{O}(h)}\Big)(c_dDv)_{n+\frac{1}{2}}\nonumber\\
&+&h(rf)_{n+1},\nonumber
\end{eqnarray}
}
where 
\begin{eqnarray}K&=&(K_{21}-K_{41})(cr\partial\rho)_{n+\frac{1}{2}}+K_{21}K_{22}-K_{41}K_{42}\nonumber\\
&=&(K_{21}-K_{41})s\mathcal{O_{\lambda}}(1)+K_{21}K_{22}-K_{41}K_{42}.\nonumber
\end{eqnarray}
\par From that, we can write
\begin{eqnarray}&&L\Big((c_dDv)_{n+\frac{3}{2}}-(c_dDv)_{n+\frac{1}{2}}\Big)\nonumber\\
&=&K_{21}\lambda s[\star c\phi \partial \psi]_a v_{n+1}-Kv_{n+1}+H(c_dDv)_{n+\frac{1}{2}}+h(rf)_{n+1},\nonumber
\end{eqnarray}
where
\begin{eqnarray}&&L=K_{11}-K_{21}\mathcal{O_{\lambda}}(sh)-K_{21}K_{22}\mathcal{O}(h),\nonumber\\
&&K=(K_{21}-K_{41})s\mathcal{O_{\lambda}}(1)+K_{21}K_{22}-K_{41}K_{42},\nonumber\\
&&H=K_{21}\mathcal{O_{\lambda}}(sh)+K_{21}K_{22}\mathcal{O}(h)+K_{41}\mathcal{O_{\lambda}}(sh)+K_{41}K_{42}\mathcal{O}(h)-K_{11}+K_{31}.\nonumber
\end{eqnarray}
\par As $L=1+\mathcal{O_{\lambda, \mathfrak{K}}}(sh)\neq 0$ (see below) then we read
\begin{eqnarray}&&(c_dDv)_{n+\frac{3}{2}}-(c_dDv)_{n+\frac{1}{2}}\nonumber\\
&=&\Big({L^{-1}K_{21}\lambda s[\star c\phi \partial \psi]_a -L^{-1}K}\Big)v_{n+1}+L^{-1}H(c_dDv)_{n+\frac{1}{2}}+L^{-1}h(rf)_{n+1}.\nonumber
\end{eqnarray}
\par We set 
\begin{eqnarray}&&J_1=L^{-1}K_{21}\lambda s[\star c\phi \partial \psi]_a -L^{-1}K,\nonumber\\
&&J_2=L^{-1}H, \quad\quad\quad\quad\quad\quad\quad\quad\quad\quad\quad\quad\quad\quad\quad\quad\quad\quad\quad J_3=L^{-1}.\nonumber
\end{eqnarray}
\par We thus have
\small{
\begin{equation}\label{eq:3.4}(c_dDv)_{n+\frac{3}{2}}-(c_dDv)_{n+\frac{1}{2}}=J_1v_{n+1}+J_2(c_dDv)_{n+\frac{1}{2}}+J_3h(rf)_{n+1}
\end{equation}
}

\par By using Proposition~\ref{prop:property 1} we find
\begin{eqnarray}&&K_{11}=\frac{1+(((\tau^{+})^2\rho)r)_{n+1}}{2}=1+\mathcal{O_{\lambda, \mathfrak{K}}}(sh),\nonumber\\
&&K_{31}=\frac{1+(((\tau^{-})^2\rho)r)_{n+1}}{2}=1+\mathcal{O_{\lambda, \mathfrak{K}}}(sh),\nonumber\\
&&K_{21}=(r\tau^+\rho)_{n+1}=1+\mathcal{O_{\lambda, \mathfrak{K}}}(sh),\nonumber\\
&&K_{41}=(r\tau^-\rho)_{n+1}=1+\mathcal{O_{\lambda, \mathfrak{K}}}(sh),\nonumber\\
&&K_{22}=(c_drD\rho)_{n+\frac{3}{2}}-(c_dr\partial\rho)_{n+\frac{3}{2}}=s\mathcal{O_{\lambda, \mathfrak{K}}}(sh)^2,\nonumber\\
&&K_{42}=(c_drD\rho)_{n+\frac{1}{2}}-(c_dr\partial\rho)_{n+\frac{1}{2}}=s\mathcal{O_{\lambda, \mathfrak{K}}}(sh)^2.\nonumber
\end{eqnarray}
\par From that we estimate
\begin{eqnarray}&&K=(K_{21}-K_{41})s\mathcal{O_{\lambda}}(1)+K_{21}K_{22}-K_{41}K_{42}=s\mathcal{O_{\lambda, \mathfrak{K}}}(sh),\nonumber\\
&&H=K_{21}\mathcal{O_{\lambda}}(sh)+K_{21}K_{22}\mathcal{O}(h)+K_{41}\mathcal{O_{\lambda}}(sh)+K_{41}K_{42}\mathcal{O}(h)-K_{11}+K_{31}\nonumber\\
&&\ \ \ \ =\mathcal{O_{\lambda, \mathfrak{K}}}(sh),\nonumber\\
&&L=K_{11}-K_{21}\mathcal{O_{\lambda}}(sh)+K_{21}K_{22}\mathcal{O}(h)=1+\mathcal{O_{\lambda, \mathfrak{K}}}(sh).\nonumber
\end{eqnarray}
For $sh$ sufficiently small we have $L^{-1}=1+\mathcal{O_{\lambda, \mathfrak{K}}}(sh)$ and then we obtain
\begin{eqnarray}
&&J_1=L^{-1}K_{21}\lambda s[\star c\phi \partial \psi]_a -L^{-1}K\nonumber\\
&&\ \ \ \ =\Big(1+\mathcal{O_{\lambda, \mathfrak{K}}}(sh)\Big)\lambda s[\star c\phi \partial \psi]_a+s\mathcal{O_{\lambda, \mathfrak{K}}}(sh),\nonumber\\
&&J_2=L^{-1}H=\mathcal{O_{\lambda, \mathfrak{K}}}(sh),\nonumber\\
&&J_3=L^{-1}=1+\mathcal{O_{\lambda, \mathfrak{K}}}(sh).\nonumber
\end{eqnarray}
\par By using Proposition~\ref{prop:property 5}, Lemma~\ref{lem:derivative wrt t} and Lemma~\ref{lem:traslate function} yield
\begin{eqnarray}&&\partial_t K_{11}=\partial_t\Big({(((\tau^{+})^2\rho)r)_{n+1}}\Big)=T\theta(t)\mathcal{O_{\lambda, \mathfrak{K}}}(sh),\nonumber\\
&&\partial_t K_{31}=\partial_t\Big({(((\tau^{-})^2\rho)r)_{n+1}}\Big)=T\theta(t)\mathcal{O_{\lambda, \mathfrak{K}}}(sh),\nonumber\\
&&\partial_t K_{21}=\partial_t(r\tau^+\rho)_{n+1}=T\theta(t)\mathcal{O_{\lambda, \mathfrak{K}}}(sh),\nonumber\\
&&\partial_t K_{41}=\partial_t(r\tau^-\rho)_{n+1}=T\theta(t)\mathcal{O_{\lambda, \mathfrak{K}}}(sh),\nonumber\\
&&\partial_t K_{22}=\partial_t\Big((c_drD\rho)_{n+\frac{3}{2}}-(c_dr\partial\rho)_{n+\frac{3}{2}}\Big)=sT\theta(t)\mathcal{O_{\lambda, \mathfrak{K}}}(sh)^2,\nonumber\\
&&\partial_t K_{42}=\partial_t\Big((c_drD\rho)_{n+\frac{1}{2}}-(c_dr\partial\rho)_{n+\frac{1}{2}}\Big)=sT\theta(t)\mathcal{O_{\lambda, \mathfrak{K}}}(sh)^2,\nonumber
\end{eqnarray} 
which give
\begin{eqnarray}\partial_tL^{-1}=-\frac{\partial_t L}{L^2}&=&\Big(1+\mathcal{O_{\lambda, \mathfrak{K}}}(sh)\Big)\left(\partial_tK_{11}+\partial_tK_{21}\mathcal{O_{\lambda}}(sh)+K_{21}(\partial_t s)\mathcal{O_{\lambda}}(h)\right.\nonumber\\
&&\left.\quad\quad\quad\quad\quad\quad\quad\quad+\partial_tK_{21}K_{22}\mathcal{O}(h)+\partial_tK_{22}K_{21}\mathcal{O}(h)\right)\nonumber\\
&=&T\theta(t)\mathcal{O_{\lambda, \mathfrak{K}}}(sh),\nonumber
\end{eqnarray}
where $sh$ sufficiently small and
\small{
\begin{eqnarray}&&\partial_tH=\partial_tK_{21}\mathcal{O_{\lambda}}(sh)+K_{21}(\partial_t s)\mathcal{O_{\lambda}}(h)+\partial_tK_{21}K_{22}\mathcal{O}(h)+K_{21}\partial_tK_{22}\mathcal{O}(h)\nonumber\\
&+&\partial_tK_{41}\mathcal{O_{\lambda}}(sh)+K_{41}(\partial_t s)\mathcal{O_{\lambda}}(h)+\partial_tK_{41}K_{42}\mathcal{O}(h)+K_{41}\partial_tK_{42}\mathcal{O}(h)-\partial_t K_{11}+\partial_t K_{31}\nonumber\\
&=&T\theta(t)\mathcal{O_{\lambda, \mathfrak{K}}}(sh).\nonumber
\end{eqnarray}
}
\par It follows that we have
\begin{eqnarray}&&\partial_t J_1=sT\theta(t)\mathcal{O_{\lambda, \mathfrak{K}}}(sh),\nonumber\\
&&\partial_t J_2=T\theta(t)\mathcal{O_{\lambda, \mathfrak{K}}}(sh),\quad\quad\quad\quad\quad\quad\partial_t J_3=T\theta(t)\mathcal{O_{\lambda, \mathfrak{K}}}(sh).\nonumber 
\end{eqnarray}
\par Furthermore, we can write ~\eqref{eq:3.4} in the simple form
\begin{eqnarray}&&(c_dDv)_{n+\frac{3}{2}}-(c_dDv)_{n+\frac{1}{2}}\nonumber\\
&=&\lambda s[\star c\phi \partial \psi]_a v_{n+1}+\lambda s\mathcal{O_{\lambda, \mathfrak{K}}}(sh)v_{n+1}\nonumber\\
&+&\mathcal{O_{\lambda, \mathfrak{K}}}(sh)(c_dDv)_{n+\frac{1}{2}}+\Big(1+\mathcal{O_{\lambda, \mathfrak{K}}}(sh)\Big)h(rf)_{n+1},\nonumber
\end{eqnarray}
which yields the conclusion.

\subsection{Proof of Lemma~\ref{lem: estimate I11}}

\par By using Lemma~\ref{lemma: discrete leibnitz} in each domain ${{\Omega}_{01}} $, ${{\Omega}_{02}} $ , we have
\begin{eqnarray} I_{11}&=&2\int_{Q'_0}{cr^2\bar{\tilde{\rho}}\ {\overline{D \rho}}\bar{D}(c_d Dv)\overline{Dv}}\nonumber\\
&=&2\int_{Q_{01}}{cr^2\bar{\tilde{\rho}}\ {\overline{D \rho}}\bar{D}(c_d Dv)\overline{Dv}}+2\int_{Q_{02}}{cr^2\bar{\tilde{\rho}}\ {\overline{D \rho}}\bar{D}(c_d Dv)\overline{Dv}}\nonumber\\
&=&2\int_{Q_{01}}{cr^2\bar{\tilde{\rho}}\ \overline{D \rho}\bar{c}_d\bar{D}(Dv)\overline{Dv}}+2\int_{Q_{01}}{cr^2}\bar{\tilde{\rho}}\ \overline{D \rho}(\bar{D}c_d)(\overline{Dv})^2\nonumber\\
&+&2\int_{Q_{02}}{cr^2\bar{\tilde{\rho}}\ \overline{D \rho}\bar{c}_d\bar{D}(Dv)\overline{Dv}}+2\int_{Q_{02}}{cr^2}\bar{\tilde{\rho}}\ \overline{D \rho}(\bar{D}c_d)(\overline{Dv})^2\nonumber\\
&=&\sum_{i=1}^2\int_{Q_{0i}}{cr^2\bar{\tilde{\rho}}\ \overline{D \rho}\bar{c}_d\bar{D}(Dv)^2}+2\sum_{i=1}^2\int_{Q_{0i}}{cr^2}\bar{\tilde{\rho}}\ \overline{D \rho}(\bar{D}c_d)(\overline{Dv})^2.\nonumber
\end{eqnarray}

\par We then apply a discrete integration by parts (Proposition~\ref{prop:IVP}) in each domain ${{\Omega}_{01}} $, ${{\Omega}_{02}} $ with $\partial\Omega_{01}=\left\{0,a\right\}$ and $\partial\Omega_{02}=\left\{a,1\right\}$ for the first two terms and we obtain
\begin{eqnarray}I_{11}&=&-\sum_{i=1}^2\int_{Q_{0i}}{D(c\bar{c_d}r^2\bar{\tilde{\rho}}\ \overline{D \rho})(Dv)^2}+2\sum_{i=1}^2\int_{Q_{0i}}{cr^2}\bar{\tilde{\rho}}\ \overline{D \rho}(\bar{D}c_d)(\overline{Dv})^2\nonumber\\
&+&\int_0^T{(c\bar{c}_d r^2\bar{\tilde{\rho}}\ \overline{D \rho})(1)(Dv)^2_{n+m+\frac{3}{2}}}-\int_0^T{(c\bar{c}_d r^2\bar{\tilde{\rho}}\ \overline{D \rho})(a^+)(Dv)^2_{n+\frac{3}{2}}}\nonumber\\
&+&\int_0^T(c\bar{c}_dr^2\bar{\tilde{\rho}}\ \overline{D \rho})(a^-)(Dv)^2_{n+\frac{1}{2}}-\int_0^T{(c\bar{c}_d r^2\bar{\tilde{\rho}}\ \overline{D \rho})(0)(Dv)^2_{\frac{1}{2}}}\nonumber\\
&=&-\sum_{i=1}^2\int_{Q_{0i}}{D(c\bar{c}_dr^2\bar{\tilde{\rho}}\ \overline{D \rho})(Dv)^2}+2\sum_{i=1}^2\int_{Q_{0i}}{cr^2\bar{\tilde{\rho}}\ \overline{D \rho}(\bar{D}c_d)(\overline{Dv})^2}+Y_{11}.\nonumber
\end{eqnarray}
where
\begin{eqnarray}&&Y_{11}=Y_{11}^{(1)}+Y_{11}^{(2)}\nonumber\\
&&Y_{11}^{(1)}=\int_0^T{(c\bar{c}_d r^2\bar{\tilde{\rho}}\ \overline{D \rho})(1)(Dv)^2_{n+m+\frac{3}{2}}}-\int_0^T{(c\bar{c}_d r^2\bar{\tilde{\rho}}\ \overline{D \rho})(a^+)(Dv)^2_{n+\frac{3}{2}}}\nonumber\\
&&Y_{11}^{(2)}=\int_0^T(c\bar{c}_dr^2\bar{\tilde{\rho}}\ \overline{D \rho})(a^-)(Dv)^2_{n+\frac{1}{2}}-\int_0^T{(c\bar{c}_d r^2\bar{\tilde{\rho}}\ \overline{D \rho})(0)(Dv)^2_{\frac{1}{2}}}\nonumber
\end{eqnarray}

\begin{Lemma} (see Lemma B.3 in [BHL10a]) Provided $sh\le\mathfrak{K}$ we have
\begin{eqnarray}&&D(c_i\bar{c}_{di}r^2_i\bar{\tilde{\rho}}_i\ \overline{D \rho_i})=-s\lambda^2(c^2_i\phi_i(\psi'_i)^2)_d+s\lambda\phi_{di} \mathcal{O}(1)+s\mathcal{O_{\lambda, \mathfrak{K}}}(sh),\nonumber\\
&&{c_ir^2_i}\bar{\tilde{\rho}}_i\overline{D \rho_i}(\bar{D}c_{di})=s\lambda\phi_i\mathcal{O}(1)+s\mathcal{O_{\lambda, \mathfrak{K}}}((sh)^2),\nonumber\\
&& r^2_i\bar{\tilde{\rho}}_i\overline{D \rho_i}=r_i\partial \rho_i+s\mathcal{O_{\lambda, \mathfrak{K}}}((sh)^2)=-s\lambda\phi_i\psi'_i+s\mathcal{O_{\lambda, \mathfrak{K}}}((sh)^2),\nonumber\\
&&r^2_i\bar{\tilde{\rho}}_i\overline{D \rho_i}=r_i\bar{\tilde{\rho}}_ir_i\overline{D\rho_i}=(1+\mathcal{O_{\lambda, \mathfrak{K}}}(sh))r_i\overline{D\rho_i}.\nonumber\end{eqnarray}
\end{Lemma}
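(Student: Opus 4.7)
The plan is to prove the four identities in sequence, from simplest to most elaborate, leveraging the weight-function estimates collected in Propositions~\ref{prop:property 1} and \ref{prop:property 4}, together with the discrete Leibniz rule of Lemma~\ref{lemma: discrete leibnitz}. Throughout I work on one side $\Omega_{0i}$ at a time, so everything involves only the smooth weight $\psi_i\in C^{\infty}(\overline{\tilde{\Omega}_{0i}})$ and the smooth coefficient $c_i\in C^{1}(\overline{\Omega_i})$.

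First I would handle the factorization identity (fourth line). Since $\bar{\tilde{\rho}}_i$ is the twofold discrete average $A^{2}\rho_i$, Proposition~\ref{prop:property 4} gives $r_i\bar{\tilde{\rho}}_i=1+\mathcal{O}_{\lambda,\mathfrak{K}}((sh)^2)$. Writing $r_i^2\bar{\tilde{\rho}}_i\overline{D\rho_i}=(r_i\bar{\tilde{\rho}}_i)\cdot(r_i\overline{D\rho_i})$ and absorbing the $(sh)^2$ error into $\mathcal{O}_{\lambda,\mathfrak{K}}(sh)$ yields the fourth identity at once. For the third identity, Proposition~\ref{prop:property 1} asserts $r_iA^{k}D\rho_i=r_i\partial_x\rho_i+s\mathcal{O}_{\lambda,\mathfrak{K}}((sh)^2)$ for $k=0,1$; taking $k=1$ and recalling from Lemma~\ref{lem:derivative wrt x} (case $\alpha=1$, $\beta=0$) that $r_i\partial_x\rho_i=-s\lambda\phi_i\psi'_i$, one gets $r_i\overline{D\rho_i}=-s\lambda\phi_i\psi'_i+s\mathcal{O}_{\lambda,\mathfrak{K}}((sh)^2)$, and combining with the fourth identity produces the third.

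The second identity follows once one observes that $\bar{D}c_{di}$ is the dual-to-primal average of the discrete derivative of a $C^{1}$ sampling; Lemma~\ref{lem:traslate function} implies $\bar{D}c_{di}=\partial_x c_i+\mathcal{O}(h^2)=\mathcal{O}(1)$. Multiplying this by the value $c_i\cdot r_i^2\bar{\tilde{\rho}}_i\overline{D\rho_i}=-s\lambda c_i\phi_i\psi'_i+s\mathcal{O}_{\lambda,\mathfrak{K}}((sh)^2)$ obtained in the previous step gives the announced $s\lambda\phi_i\mathcal{O}(1)+s\mathcal{O}_{\lambda,\mathfrak{K}}((sh)^2)$.

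The main identity (first line) is handled via discrete Leibniz: with $f=c_ir_i^2\bar{\tilde{\rho}}_i\overline{D\rho_i}$ and $g=\bar{c}_{di}$, one has $D(fg)=D(f)\tilde{g}+\tilde{f}Dg$. The piece $\tilde{f}Dg$ is of exactly the shape covered by the second identity and contributes only $s\lambda\phi_{di}\mathcal{O}(1)+s\mathcal{O}_{\lambda,\mathfrak{K}}(sh)$. For $D(f)\tilde{g}$, note $\tilde{g}=c_{di}+\mathcal{O}(h)$ (Lemma~\ref{lem:traslate function} again), and the key work is $D(c_ir_i^2\bar{\tilde{\rho}}_i\overline{D\rho_i})$: Proposition~\ref{prop:property 6} (applied with $\alpha=\beta=0$) reduces this to $\partial_x(c_ir_i^2\partial_x\rho_i)+s\mathcal{O}_{\lambda,\mathfrak{K}}((sh)^2)$ evaluated on the dual mesh. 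Using $r_i\partial_x\rho_i=-s\lambda\phi_i\psi'_i$ and $\phi'_i=\lambda\phi_i\psi'_i$, the continuous differentiation gives $\partial_x(-s\lambda c_i\phi_i\psi'_i)=-s\lambda^2 c_i\phi_i(\psi'_i)^2+s\lambda\phi_i\mathcal{O}(1)$; multiplication by $\tilde{g}\simeq c_{di}$ and collection onto the dual mesh produces the leading term $-s\lambda^2(c_i^2\phi_i(\psi'_i)^2)_d$ with remainder $s\lambda\phi_{di}\mathcal{O}(1)+s\mathcal{O}_{\lambda,\mathfrak{K}}(sh)$. The principal obstacle is purely bookkeeping: one must consistently track on which mesh each factor lives, so that the final sampling of $c^{2}\phi(\psi')^{2}$ is correctly identified with the dual-mesh value $(c_i^2\phi_i(\psi'_i)^2)_d$ and no spurious $(sh)^{-1}$ factors are produced when discrete operators are swapped with continuous derivatives; all the analytic ingredients are already packaged in Propositions~\ref{prop:property 1}, \ref{prop:property 4}, \ref{prop:property 6} and Lemma~\ref{lem:traslate function}.
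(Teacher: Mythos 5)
Your proposal is correct in substance, but note that the paper does not actually prove this lemma: it is imported verbatim as Lemma B.3 of [BHL10a], so there is no in-paper argument to match against. What you supply is a self-contained reconstruction from the paper's own toolbox, and it hangs together: the fourth identity from $r_i\bar{\tilde{\rho}}_i=r_iA^2\rho_i=1+\mathcal{O}_{\lambda,\mathfrak{K}}((sh)^2)$ (Proposition~\ref{prop:property 4}), the third from $r_iA^kD\rho_i=r_i\partial_x\rho_i+s\mathcal{O}_{\lambda,\mathfrak{K}}((sh)^2)$ (Proposition~\ref{prop:property 1}) together with $r_i\partial_x\rho_i=-s\lambda\phi_i\psi_i'$ (Lemma~\ref{lem:derivative wrt x}), the second by multiplying by the bounded discrete derivative of the coefficient, and the first by the discrete Leibniz rule (Lemma~\ref{lemma: discrete leibnitz}) combined with Proposition~\ref{prop:property 6} and the elementary computation $\partial_x(r_i\partial_x\rho_i)=-s\lambda^2\phi_i(\psi_i')^2+s\lambda\phi_i\mathcal{O}(1)$. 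Two small caveats. First, you justify $\bar{D}c_{di}=\partial_xc_i+\mathcal{O}(h^2)$ by Lemma~\ref{lem:traslate function}, whose expansion requires $\partial_x^{3}c_i$, whereas $c_i$ is only $C^1(\overline{\Omega_i})$; all you need, and all you actually use, is $\bar{D}c_{di}=\mathcal{O}(1)$ and $\tilde{\bar{c}}_{di}=c_{di}+\mathcal{O}(h)$, which follow from the mean value theorem on each side of the interface — you should state it that way rather than via the Taylor lemma. Second, in the first identity the extra smooth factor $c_i$ in front of $r_i^2\bar{\tilde{\rho}}_i\overline{D\rho_i}$ requires one more application of the Leibniz rule before Proposition~\ref{prop:property 6} applies; you gloss over this, but the cross term it produces is of the admissible form $s\lambda\phi_{di}\mathcal{O}(1)$, so nothing breaks. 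With those two points made explicit, your argument is a complete proof of the quoted lemma restricted to each $\Omega_{0i}$, which is exactly the setting in which the paper uses it.
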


\par Moreover, by Lemma~\ref{lem:average of two function} and Proposition~\ref{prop:IVP} in each domain $\Omega_{01}$, $\Omega_{02}$ we obtain
\begin{equation}\int_{\Omega'_{0}}{s\lambda \phi({\overline{Dv}})^2}\le \int_{\Omega'_{0}}{s\lambda \phi\overline{(Dv)^2}}=\int_{\Omega'_{0}}{s\lambda \tilde{\phi}(Dv)^2}-\frac{h}{2} \sum_{i=1}^2 BT_i\le\int_{\Omega'_{0}}{s\lambda \tilde{\phi}(Dv)^2} \nonumber
\end{equation}
since
\begin{eqnarray}&&BT_1=s\lambda\phi(a)(Dv)^2_{n+\frac{1}{2}}+s\lambda\phi(a)(Dv)^2_{\frac{1}{2}}\ge 0\nonumber\\
&&BT_2=s\lambda\phi(1)(Dv)^2_{n+m+\frac{3}{2}}+s\lambda\phi(a)(Dv)^2_{n+\frac{3}{2}}\ge 0\nonumber
\end{eqnarray}
and $\tilde{\phi}=\phi+h^2\mathcal{O_{\lambda}}(1)$ then we can write
\begin{equation}\int_{\Omega_{0i}}{s\lambda \phi({\overline{Dv}})^2}\le\int_{\Omega_{0i}}{s\lambda \phi(Dv)^2}+\int_{\Omega_{0i}}{s\lambda h^2\mathcal{O_{\lambda}}(1)(Dv)^2}\nonumber
\end{equation}

\par Similarly, we have
\begin{equation}\left|\int_{\Omega_{0i}}{s\mathcal{O_{\lambda, \mathfrak{K}}}(sh)^2({\overline{Dv}})^2}\right|\le \int_{\Omega'}{s\left|\mathcal{O_{\lambda, \mathfrak{K}}}(sh)^2\right|\overline{(Dv)^2}}\le \int_{\Omega'}{s\left|\mathcal{O_{\lambda, \mathfrak{K}}}(sh)^2\right|(Dv)^2}.\nonumber\end{equation}
                                                                                                                                                                                                                                     
\par Thus 
\begin{equation}I_{11}\ge -\int_{Q'}{s\lambda^2}(c^2\phi(\psi')^2)_d (Dv)^2-X_{11}+Y_{11},\nonumber\end{equation}
where $X_{11}=\int_{Q'_0}{\nu_{11}(Dv)^2}$ with $\nu_{11}$ of the form $s\lambda\phi \mathcal{O}(1)+s\mathcal{O_{\lambda, \mathfrak{K}}}(sh)$
and
\begin{eqnarray} &&Y_{11}=Y_{11}^{(1)}+Y_{11}^{(2,1)}+Y_{11}^{(2,2)},\nonumber\\
&& Y_{11}^{(1)}=\int_0^T\big(1+\mathcal{O_{\lambda, \mathfrak{K}}}(sh)\big)(c\bar{c}_d)(1)(r\overline{D\rho})(1)(Dv)^2_{n+m+\frac{3}{2}}\nonumber\\
&&\quad \quad-\int_0^T \big(1+\mathcal{O_{\lambda, \mathfrak{K}}}(sh)\big)(c\bar{c}_d)(0)(r\overline{D\rho})(0)(Dv)^2_{\frac{1}{2}},\nonumber\\
&&Y_{11}^{(2,1)}=\int_0^T{s\lambda\phi(a)\bar{c}_d(a)\Big(-(c\psi')(a^-)(Dv)^2_{n+\frac{1}{2}}+(c\psi')(a^+)(Dv)^2_{n+\frac{3}{2}}\Big)},\nonumber\\
&&Y_{11}^{(2,2)}=\int_0^T{s\mathcal{O_{\lambda, \mathfrak{K}}}(sh)^2(Dv)^2_{n+\frac{1}{2}}}-\int_0^T{s\mathcal{O_{\lambda, \mathfrak{K}}}(sh)^2(Dv)^2_{n+\frac{3}{2}}}.\nonumber\end{eqnarray}

\subsection{Proof of Lemma~\ref{lem: estimate I12}}% Duoi muc
\par We set $q=r\bar{\tilde{\rho}}c\phi^{''}$. By using a discrete integrations by parts (Proposition~\ref{prop:IVP}) and Lemma~\ref{lemma: discrete leibnitz} in each domain ${{\Omega}_{01}}$, ${{\Omega}_{02}}$ we have

\begin{eqnarray} I_{12}&=&-2\sum_{i=1}^2\int_{Q_{0i}}{sqv\bar{D}(c_dDv)}\nonumber\\
&=&2\sum_{i=1}^2\int_{Q_{0i}}{s\tilde{q}c_d(Dv)^2}+2\sum_{i=1}^2\int_{Q_{0i}}{sDqc_d\tilde{v}Dv}\nonumber\\
&-&\int_0^T{sq(a^-)v(a)(c_dDv)_{n+\frac{1}{2}}}+\int_0^T{sq(a^+)v(a)(c_dDv)_{n+\frac{3}{2}}}\nonumber\\
&=&2\sum_{i=1}^2\int_{Q_{0i}}{s\tilde{q}c_d(Dv)^2}+2\sum_{i=1}^2\int_{Q_{0i}}{sDqc_d\tilde{v}Dv}dt+Y_{12},\nonumber
\end{eqnarray}\\
since $v|_{\partial \Omega_0}=0$ and with $\partial\Omega_{01}=\left\{{0,a}\right\}$, $\partial\Omega_{02}=\left\{{a,1}\right\}$.
\begin{Lemma} \label{lem:A2}(see the proof as given  in Lemma 4.4 of \cite{BHL10a}) Let $i=1,2$. Provided $sh\le \mathfrak{K}$ we have
\begin{eqnarray} &&\phi''_{i}=\lambda^2\phi_{i}(\psi'_{i})^2+\lambda\phi_{i}\mathcal{O}(1),\nonumber\\
&&q_{i}=r_{i}\bar{\tilde{\rho_{i}}}c \phi^{''}_{i}=\lambda^2c\phi_{i} (\psi'_{i})^2+\lambda\phi_{i}\mathcal{O}(1)+\mathcal{O_{\lambda, \mathfrak{K}}}(sh)^2,\nonumber\\
&&\tilde{q_{i}}=\lambda^2 (c\phi_{i}(\psi'_{i})^2)_d+\lambda\phi_{i}\mathcal{O}(1)+\mathcal{O_{\lambda, \mathfrak{K}}}((sh)^2+h),\nonumber\\
&&Dq_{i}=D(r_{i}\bar{\tilde{\rho_{i}}})\wtilde{c\phi^{''}_i}+\wtilde{(r_{i}\overline{\tilde{\rho_{i}}})}D(c\phi^{''}_{i})=\mathcal{O_{\lambda, \mathfrak{K}}}(1).\nonumber\end{eqnarray}
\end{Lemma}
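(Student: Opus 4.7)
The plan is to prove the four identities in sequence, with each building on the previous. The first identity is immediate from the definition $\phi_i = e^{\lambda\psi_i}$: differentiating twice gives $\phi''_i = \lambda^2 (\psi'_i)^2 \phi_i + \lambda \psi''_i \phi_i$, and since the construction in Section~\ref{sec: weight functions} produces $\psi_i \in C^\infty(\overline{\tilde{\Omega}_{0i}})$, the factor $\psi''_i$ is bounded independently of $\lambda$, giving the announced $\lambda\phi_i\mathcal{O}(1)$ remainder. The purpose of this identity is just to isolate the leading $\lambda^2$ behaviour before inserting it into the later expansions.

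For the second identity I would observe that $\bar{\tilde{\rho}}_i = A^2 \rho_i$ in the notation of Section~\ref{sec: prelim results}, so Proposition~\ref{prop:property 4} (with $\alpha = 0$) yields $r_i \bar{\tilde{\rho}}_i = 1 + \mathcal{O}_{\lambda,\mathfrak{K}}(sh)^2$ under the standing hypothesis $sh \le \mathfrak{K}$. Multiplying by $c\phi''_i$ and substituting the first identity produces the leading terms $\lambda^2 c \phi_i (\psi'_i)^2 + \lambda \phi_i \mathcal{O}(1)$, and the residual $\mathcal{O}_{\lambda,\mathfrak{K}}(sh)^2 \cdot c\phi''_i$ is of order $\mathcal{O}_{\lambda,\mathfrak{K}}(sh)^2$ since $c\phi''_i = \mathcal{O}_\lambda(1)$ on each side of the interface. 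For the third identity I would apply the averaging $\tilde{\phantom{q}}$ to the expansion just obtained. By Lemma~\ref{lem:traslate function} applied to the smooth function $c\phi_i(\psi'_i)^2$ on a single subdomain, one has $\widetilde{c\phi_i(\psi'_i)^2} = (c\phi_i(\psi'_i)^2)_d + h^2 \mathcal{O}_\lambda(1)$; averaging the $\lambda \phi_i \mathcal{O}(1)$ contribution introduces an $h$-translation error of order $h$, and averaging the $\mathcal{O}_{\lambda,\mathfrak{K}}(sh)^2$ remainder leaves it unchanged up to the same order, which together account for the announced $\mathcal{O}_{\lambda,\mathfrak{K}}((sh)^2 + h)$ remainder.

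For the fourth identity I would apply the discrete Leibniz formula of Lemma~\ref{lemma: discrete leibnitz} to the product $q_i = (r_i \bar{\tilde{\rho}}_i)(c\phi''_i)$, which yields exactly the first equality $Dq_i = D(r_i \bar{\tilde{\rho}}_i)\widetilde{c\phi''_i} + \widetilde{r_i \bar{\tilde{\rho}}_i}\, D(c\phi''_i)$. To bound the right-hand side, Proposition~\ref{prop:property 4} supplies $D(r_i \bar{\tilde{\rho}}_i) = \mathcal{O}_{\lambda,\mathfrak{K}}(sh)^2$ and $\widetilde{r_i \bar{\tilde{\rho}}_i} = 1 + \mathcal{O}_{\lambda,\mathfrak{K}}(sh)^2$, while Lemma~\ref{lem:traslate function} gives $\widetilde{c\phi''_i} = \mathcal{O}_\lambda(1)$ and $D(c\phi''_i) = \partial_x(c\phi''_i) + h^2 \mathcal{O}_\lambda(1) = \mathcal{O}_\lambda(1)$, and summing these contributions gives $Dq_i = \mathcal{O}_{\lambda,\mathfrak{K}}(1)$. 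I do not anticipate a genuine obstacle since the calculations are mechanical propagation of the Taylor-type estimates of Proposition~\ref{prop:property 4} and Lemma~\ref{lem:traslate function}; the only substantive point is that everything must be carried out on each subdomain $\Omega_{0i}$ in isolation, because the operators $D$, $\tilde{\phantom{q}}$, $\bar{\phantom{q}}$ only couple neighbouring grid nodes on one side of the interface $a$, so the piecewise smoothness of $\psi_i$ and $c_i$ suffices, whereas applying them across $a$ would feel the discontinuity of $c$ and produce an uncontrolled $h^{-1}$ factor.
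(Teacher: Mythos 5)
Your proof is correct. The paper itself gives no argument for this lemma --- it simply defers to Lemma 4.4 of \cite{BHL10a} --- and your derivation (the identity $\bar{\tilde{\rho}}_i=A^2\rho_i$ combined with Proposition~\ref{prop:property 4}, the discrete Leibniz rule of Lemma~\ref{lemma: discrete leibnitz}, the averaging identity of Lemma~\ref{lem:average of two function}, and the Taylor estimates of Lemma~\ref{lem:traslate function}, all applied within a single subdomain $\Omega_{0i}$) is exactly the intended route; your closing observation that the operators $D$ and $\tilde{\cdot}$ must not be applied across the interface $a$, where the jump of $c$ would cost a factor $h^{-1}$, is the one genuinely paper-specific point and it matches the paper's remark that the proof and use of this lemma are carried out in each of $\Omega_{01}$, $\Omega_{02}$ independently.
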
 
\par Note that the proof and the use of Lemma~\ref{lem:A2} are carried out in each domain ${{\Omega}_{01}}$, ${{\Omega}_{02}}$ independently.
\par It follows that
\small{
\begin{equation} I_{12}= 2\sum_{i=1}^2\int_{Q_{0i}}{s\lambda^2(  c^2\phi(\psi')^2)_d (Dv)^2}+\sum_{i=1}^2\int_{Q_{0i}}{\nu_{12}(Dv)^2}+\sum_{i=1}^2\int_{Q_{0i}}{s\mathcal{O_{\lambda, \mathfrak{K}}}(1)\tilde{v}Dv}+Y_{12},\nonumber
\end{equation}
}
\par Then
\begin{equation}I_{12}= 2\int_{Q'_0}{s\lambda^2(c^2\phi(\psi')^2)_d(Dv)^2}-X_{12}+Y_{12},\nonumber
\end{equation}
with 

\begin{eqnarray} Y_{12}&=&\int_0^T{s \lambda^2 \phi(a)v(a)[c(\psi')^2\star c_dDv]_a}\nonumber\\
&+&\int_0^T{\delta_{12}v(a)(cDv)_{n+\frac{3}{2}}+\bar{\delta}_{12}v(a)(cDv)_{n+\frac{1}{2}}},\nonumber
\end{eqnarray}
where $\delta_{12}, \bar{\delta}_{12}$ are of form $s\big(\lambda\phi(a)\mathcal{O}(1)+\mathcal{O_{\lambda, \mathfrak{K}}}(sh)^2\big)$ and
\begin{equation}X_{12}=\int_{Q'_0}{\nu_{12}(Dv)^2}+\int_{Q'_0}{s\mathcal{O_{\lambda, \mathfrak{K}}}(1)\tilde{v}Dv},\nonumber
\end{equation}
where
\begin{equation}\nu_{12}=s\lambda \phi \mathcal{O}(1)+s\mathcal{O_{\lambda, \mathfrak{K}}}(h+(sh)^2).\nonumber
\end{equation}

\subsection{Proof of Lemma~\ref{lem:estimate I13}}
\par  We carry out a discrete integration by parts (Proposition~\ref{prop:IVP}) in each domain ${{\Omega}_{01}}$, ${{\Omega}_{02}}$ with $\partial\Omega_{01}=\left\{{0,a}\right\}$ and $\partial\Omega_{02}=\left\{{a,1}\right\}$ as follows
\begin{eqnarray} I_{13}&=&\int_{Q_{01}}{r\bar{\tilde{\rho}}\bar{D}(c_d Dv)\partial_t v }+\int_{Q_{02}}{r\bar{\tilde{\rho}}\bar{D}(c_d Dv)\partial_t v }\nonumber\\
&=&-\int_{Q_{01}}{D(r\bar{\tilde{\rho}}\partial_t v)c_d Dv }-\int_{Q_{02}}{D(r\bar{\tilde{\rho}}\partial_t v)c_d Dv }\nonumber\\
&+&\int_0^T{(r\bar{\tilde{\rho}})(a^-)\partial_t v(a)(c_d Dv)_{n+\frac{1}{2}}}-\int_0^T{(r\bar{\tilde{\rho}})(0)\partial_t v(0)(c_d Dv)_{\frac{1}{2}}}\nonumber\\
&+&\int_0^T{(r\bar{\tilde{\rho}})(1)\partial_t v(1)(c_d Dv)_{n+m+\frac{3}{2}}}-\int_0^T{(r\bar{\tilde{\rho}})(a^+)\partial_t v(a)(c_d Dv)_{n+\frac{3}{2}}}\nonumber\\
&=&-\int_{Q_{01}}{D(r\bar{\tilde{\rho}}\partial_t v)c_d Dv }-\int_{Q_{02}}{D(r\bar{\tilde{\rho}}\partial_t v)c_d Dv }\nonumber\\
&+&\int_0^T{(r\bar{\tilde{\rho}})(a^-)\partial_t v(a)(c_d Dv)_{n+\frac{1}{2}}}-\int_0^T{(r\bar{\tilde{\rho}})(a^+)\partial_t v(a)(c_d Dv)_{n+\frac{3}{2}}}\nonumber\\
&=&\underbrace{-\sum_{i=1}^2\int_{Q_{0i}}{D(r\bar{\tilde{\rho}})\partial_t \tilde{v}c_dDv}}_{\bar{Q}_1} \underbrace{-\sum_{i=1}^2\int_{Q_{0i}}{\wtilde{r\bar{\tilde{\rho}}}(\partial_tDv)c_dDv}}_{\bar{Q}_2}+Y_{13},\nonumber
\end{eqnarray}
by Lemma~\ref{lemma: discrete leibnitz} and with
\begin{equation}Y_{13}=\int_0^T{(r\bar{\tilde{\rho}})(a^-)\partial_t v(a)(c_d Dv)_{n+\frac{1}{2}}}-\int_0^T{(r\bar{\tilde{\rho}})(a^+)\partial_t v(a)(c_d Dv)_{n+\frac{3}{2}}},\nonumber
\end{equation}
as $v|_{\partial\Omega_0}=0$.
\par By applying Proposition~\ref{prop:property 4} in each domain ${{\Omega}_{01}}$, ${{\Omega}_{02}}$ we find
\begin{eqnarray}&&D(r_{i}\bar{\tilde{\rho_{i}}})=\mathcal{O_{\lambda, \mathfrak{K}}}(sh),\nonumber\\
&&r_{i}\bar{\tilde{\rho_{i}}}=1+\mathcal{O_{\lambda, \mathfrak{K}}}(sh)^2=\mathcal{O_{\lambda, \mathfrak{K}}}(1).\nonumber\end{eqnarray}

% \par Note that the proof and the use of above Lemma are carried out in each domain ${{\Omega}_{01}}$, ${{\Omega}_{02}}$ independently.
\par On the one hand, we have
\begin{eqnarray}\left|{\bar{Q}_1}\right| &\le& \sum_{i=1}^2\int_{Q_{0i}}{s^{-1}\mathcal{O_{\lambda, \mathfrak{K}}}(sh)(\partial_t \tilde{v})^2}+\sum_{i=1}^2\int_{Q_{0i}}{s\mathcal{O_{\lambda, \mathfrak{K}}}(sh)(Dv)^2}\nonumber\\
&\le&  \sum_{i=1}^2\int_{Q_{0i}}{s^{-1}\mathcal{O_{\lambda, \mathfrak{K}}}(sh)\wtilde{(\partial_t v)^2}}+\sum_{i=1}^2\int_{Q_{0i}}{s\mathcal{O_{\lambda, \mathfrak{K}}}(sh)(Dv)^2}\nonumber\\
&=&  \int_{Q_0}{s^{-1}\mathcal{O_{\lambda, \mathfrak{K}}}(sh)(\partial_t v)^2}+\int_{Q'_0}{s\mathcal{O_{\lambda, \mathfrak{K}}}(sh)(Dv)^2},\nonumber\end{eqnarray}
by $({\partial_t \tilde{v}})^2\le \wtilde{({\partial_t \tilde{v}})^2}$ in each domain ${{\Omega}_{01}}$, ${{\Omega}_{02}}$ and $ \small{\sum_{i=1}^2\int_{\Omega_{0i}}{\mathcal{O_{\lambda, \mathfrak{K}}}(1)\wtilde{({\partial_t \tilde{v}})^2}}= \int_{\Omega_0}{\mathcal{O_{\lambda, \mathfrak{K}}}(1)({\partial_t \tilde{v}})^2}}$.
\par On the other hand, by an integrations by parts w.r.t $t$ we write as
\begin{eqnarray} \bar{Q}_2&=&-\frac{1}{2} \sum_{i=1}^2\int_{Q_{0i}}{\wtilde{r\bar{\tilde{\rho}}}c_d.\partial_t(Dv)^2}\nonumber\\
&=&\frac{1}{2} \sum_{i=1}^2\int_{Q_{0i}}{\partial_t(\wtilde{r\bar{\tilde{\rho}}})c_d(Dv)^2}-\frac{1}{2} \sum_{i=1}^2\int_{\Omega_{0i}}{\wtilde{r\bar{\tilde{\rho}}}c_d.(Dv)^2|_{t=0}^{t=T}}.\nonumber\end{eqnarray}
\par We observe that for  $sh\le \epsilon_1(\lambda)$ with $\epsilon_1(\lambda)$ sufficiently small we have $r\bar{\tilde{\rho}}>0$ by Proposition~\ref{prop:property 4}. The sign of the term at $t=T$ and $t=0$ are thus prescribed. Furthermore, Proposition~\ref{prop:property 5} leads to $ \partial_t({r_{i}\bar{\tilde{\rho_{i}}}})=T(sh)^2\theta\mathcal{O_{\lambda, \mathfrak{K}}}(1)$, so that, for $sh\le\mathfrak{K}$ we obtain

\begin{equation} \bar{Q}_2\ge\sum_{i=1}^2\int_{Q_{0i}}{T(sh)^2\theta\mathcal{O_{\lambda, \mathfrak{K}}}(1)(Dv)^2}-\mathcal{C_{\lambda, \mathfrak{K}}}(1)\sum_{i=1}^2\int_{\Omega_{0i}}{(Dv(T))^2}.\nonumber\end{equation}

\par Thus, 
\begin{eqnarray} I_{13} \ge -\int_{\Omega'_0}\mathcal{C_{\lambda, \mathfrak{K}}}(1)(Dv(T))^2-X_{13}+Y_{13}.\nonumber\end{eqnarray}
with
\small{
\begin{eqnarray}&&X_{13}=\int_{Q'_0}{\big(s(sh)+T(sh)^2\theta\big)\mathcal{O_{\lambda, \mathfrak{K}}}(1)(Dv)^2}+\int_{Q_0}{s^{-1}\mathcal{O_{\lambda, \mathfrak{K}}}(sh)(\partial_t v)^2}.\nonumber\\
&&Y_{13}=\int_0^T{(r\bar{\tilde{\rho}})(a^-)\partial_t v(a)(c_d Dv)_{n+\frac{1}{2}}}-\int_0^T{(r\bar{\tilde{\rho}})(a^+)\partial_t v(a)(c_d Dv)_{n+\frac{3}{2}}}\nonumber
\end{eqnarray}
{

\subsection{Proof of Lemma~\ref{lem:estimate I21}}
\par We set $q=c^2r^2(\bar{D}D\rho)\overline{D\rho}$. Observing that $\overline{Dv}=\bar{D}\tilde{v}$ we get
\begin{eqnarray} I_{21}&=&2\int_{Q_{01}}{\underbrace{c^2r^2(\bar{D}D\rho)\overline{{D\rho}}}_{q}\bar{\tilde{v}}\ \overline{Dv}}+2\int_{Q_{02}}{\underbrace{c^2r^2(\bar{D}D\rho)\overline{{D\rho}}}_{q}\bar{\tilde{v}}\ \overline{Dv}}\nonumber\\
&=&\int_{Q_{01}}{q \bar{D}(\tilde{v})^2}+\int_{Q_{02}}{q \bar{D}(\tilde{v})^2}\nonumber\\
&=&-\int_{Q_{01}}{Dq(\tilde{v})^2}-\int_{Q_{02}}{Dq(\tilde{v})^2}\nonumber\\
&+&\int_0^T{q(a^-)(\tilde{v})^2_{n+\frac{1}{2}}}-\int_0^T{q(0)(\tilde{v})^2_{\frac{1}{2}}}\nonumber\\
&+& \int_0^T{q(1)(\tilde{v})^2_{n+m+\frac{3}{2}}}-\int_0^T{q(a^+)(\tilde{v})^2_{n+\frac{3}{2}}}\nonumber\\
&=&-\sum_{i=1}^2\int_{Q_{0i}}{Dq\wtilde{v^2}}+\sum_{i=1}^{2}\frac{h^{2}}{4}\int_{Q_{0i}}{(Dq)(Dv)^2}+Y_{21}^{(1)}\nonumber\\
&=&-\sum_{i=1}^2\int_{Q_{0i}}{\overline{Dq}(v)^2}+\sum_{i=1}^{2}\frac{h^{2}}{4}\int_{Q_{0i}}{(Dq)(Dv)^2}+Y_{21}^{(1)}+Y_{21}^{(2)},\nonumber\end{eqnarray}
by means of  Proposition~\ref{prop:IVP}, Lemma~\ref{lemma: discrete leibnitz}, Lemma~\ref{lem:average of two function} in each domain ${{\Omega}_{01}}$, ${{\Omega}_{02}}$ independently and where 

\begin{eqnarray}&&Y_{21}=Y_{21}^{(1)}+Y_{21}^{(2)}=Y_{21}^{(1,1)}+Y_{21}^{(1,2)}+Y_{21}^{(2)},\nonumber\\
&&Y_{21}^{(1,1)}=\int_0^T{q(1)(\tilde{v})^2_{n+m+\frac{3}{2}}}-\int_0^T{q(0)(\tilde{v})^2_{\frac{1}{2}}},\nonumber\\
&&Y_{21}^{(1,2)}=\int_0^T{q(a^-)(\tilde{v})^2_{n+\frac{1}{2}}}-\int_0^T{q(a^+)(\tilde{v})^2_{n+\frac{3}{2}}},\nonumber\\
&&Y_{21}^{(2)}=-\frac{h}{2}\int_0^T{v^2(a)(Dq)_{n+\frac{1}{2}}}-\frac{h}{2}\int_0^T{v^2(0)(Dq)_{\frac{1}{2}}}\nonumber\\
&&-\frac{h}{2}\int_0^T{v^2(1)(Dq)_{n+m+\frac{3}{2}}}-\frac{h}{2}\int_0^T{v^2(a)(Dq)_{n+\frac{3}{2}}}\nonumber\\
&&=-\frac{h}{2}\int_0^T{v^2(a)(Dq)_{n+\frac{1}{2}}}-\frac{h}{2}\int_0^T{v^2(a)(Dq)_{n+\frac{3}{2}}},\nonumber
\end{eqnarray}
as $v|_{\partial\Omega_0}=0.$

\par We note that $ \tilde{v}_{\frac{1}{2}}=\frac{h}{2}(Dv)_{\frac{1}{2}}, \ \  \tilde{v}_{n+m+\frac{3}{2}}=-\frac{h}{2}(Dv)_{n+m+\frac{3}{2}}$ . On the one hand, by Proposition~\ref{prop:property 1} we have $q=s^2\mathcal{O_{\lambda, \mathfrak{K}}}(1)r\overline{D\rho}$ in each domain ${{\Omega}_{01}}$, ${{\Omega}_{02}}$. It follows that

\begin{eqnarray}Y_{21}^{(1,1)}&=&\int_0^T{s^2\mathcal{O_{\lambda, \mathfrak{K}}}(1)(r\overline{D\rho})(1)(\tilde{v})^2_{n+m+\frac{3}{2}}}+\int_0^T{s^2\mathcal{O_{\lambda, \mathfrak{K}}}(1)(r\overline{D\rho})(0)(\tilde{v})^2_{\frac{1}{2}}}\nonumber\\
&=&\int_0^T{\mathcal{O_{\lambda, \mathfrak{K}}}(sh)^2(r\overline{D\rho})(1)(Dv)^2_{n+m+\frac{3}{2}}}+\int_0^T{\mathcal{O_{\lambda, \mathfrak{K}}}(sh)^2(r\overline{D\rho})(0)(Dv)^2_{\frac{1}{2}}}.\nonumber\end{eqnarray}
\par On the other hand,  by Proposition~\ref{prop:property 6}, Corollary~\ref{cor:multi derivative} we have $q=-c^2(s\phi\lambda)^3(\psi')^3+s^2\mathcal{O_{\lambda}}(1)+s^3\mathcal{O_{\lambda, \mathfrak{K}}}(sh)^2$ in each domain ${{\Omega}_{01}}$, ${{\Omega}_{02}}$. We thus obtain
\begin{eqnarray}Y_{21}^{(1,2)}&=&\int_0^T{\big(s\lambda\phi(a)\big)^3\Big(-(c^2\psi^{'3})(a^-)(\tilde{v})^2_{n+\frac{1}{2}}+(c^2\psi^{'3})(a^+)(\tilde{v})^2_{n+\frac{3}{2}}\Big)}\nonumber\\
&&+\int_0^T\Big(s^2\mathcal{O_{\lambda}}(1)+s^3\mathcal{O_{\lambda, \mathfrak{K}}}(sh)^2\Big)\Big((\tilde{v})^2_{n+\frac{1}{2}}-(\tilde{v})^2_{n+\frac{3}{2}}\Big)\nonumber\\
&=&Y_{21}^{(1,21)}+Y_{21}^{(1,22)},\nonumber\end{eqnarray}
where
\begin{equation}Y_{21}^{(1,21)}=\int_0^T{s^3\lambda^3\phi^3(a)[c^2(\psi')^3\star\tilde{v}^2]_a}.\nonumber
\end{equation}

\begin{Lemma}\label{lem:A3}(see Lemma B.8 in \cite{BHL10a}) Provided $sh\le\mathfrak{K}$ we have
\begin{eqnarray}&&Dq_{i}=s^3\mathcal{O_{\lambda, \mathfrak{K}}}(1),\nonumber\\
&&\overline{Dq_{i}}=-3s^3\lambda^4\phi_{i}^3 c^2(\psi'_{i})^4+(s\lambda \phi_{i})^3\mathcal{O}(1)+s^2\mathcal{O_{\lambda, \mathfrak{K}}}(1)+s^3\mathcal{O_{\lambda, \mathfrak{K}}}(sh)^2.\nonumber
\end{eqnarray}
\end{Lemma}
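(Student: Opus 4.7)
}
Recall that $q_i = c^2 r_i^2 (\bar{D}D\rho_i)(\overline{D\rho_i})$, so the task is to compute the action of the operators $D$ and $\overline{\cdot\,}$ on this product, tracking the principal term and the $\tau,\lambda,h$-dependence of the remainders. The plan is to reduce the discrete derivatives and averages of the weight factors to their continuous counterparts (derivatives of $\rho_i$) up to errors of the expected order in $sh$, then invoke the continuous Leibniz/chain rule estimate of Corollary~\ref{cor:multi derivative} to produce the explicit leading term.

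First I would rewrite the weight factor. By Proposition~\ref{prop:property 4},
\begin{equation*}
r_i\bar{D}D\rho_i = r_i\partial^2_x\rho_i + s^2\mathcal{O}_{\lambda,\mathfrak{K}}(sh)^2, \qquad r_i\overline{D\rho_i} = r_i\partial_x\rho_i + s\,\mathcal{O}_{\lambda,\mathfrak{K}}(sh)^2,
\end{equation*}
so that, factoring $r_i^2 = r_i\cdot r_i$ and using that $r_i\partial^\alpha_x\rho_i = s^\alpha\mathcal{O}_{\lambda,\mathfrak{K}}(1)$,
\begin{equation*}
q_i \;=\; c^2\, r_i^2(\partial_x\rho_i)(\partial^2_x\rho_i) \;+\; s^3\mathcal{O}_{\lambda,\mathfrak{K}}(sh)^2.
\end{equation*}
Since $c\in C^1(\overline{\Omega_{0i}})$ in each sub-domain, this is a smooth function in each $\Omega_{0i}$ and Corollary~\ref{cor:multi derivative} (with $\alpha=1$, $\beta=2$, $\delta=1$) applies directly to the principal part.

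For $Dq_i$, apply the discrete Leibniz formula (Lemma~\ref{lemma: discrete leibnitz}) to the product. Each factor of type $D(r_i^{\,2}\partial_x^\alpha\rho_i\partial_x^\beta\rho_i)$ is handled via Proposition~\ref{prop:property 6} (with $k=1$), which yields $\partial_x(r_i^{\,2}(\partial_x\rho_i)\partial^2_x\rho_i)+s^3\mathcal{O}_{\lambda,\mathfrak{K}}(sh)^2$; and the $D(c^2)$ contribution is $\mathcal{O}(1)$ because $c$ is $C^1$ on $\overline{\Omega_{0i}}$. Combined with the previous display and the bound of Corollary~\ref{cor:multi derivative}, this gives $Dq_i = s^3\mathcal{O}_{\lambda,\mathfrak{K}}(1)$ as required.

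For $\overline{Dq_i}$ the same computation is carried out, but this time I keep the leading term explicit. Applying Proposition~\ref{prop:property 6} with the averaging $A$ as well (i.e.\ $j=1$, $k=1$, $\delta=0$) replaces $\overline{D(\cdot)}$ by $\partial_x(\cdot)$ modulo $h^2\mathcal{O}_{\lambda,\mathfrak{K}}(s^3)$, so
\begin{equation*}
\overline{Dq_i} \;=\; \partial_x\!\bigl(c^2 r_i^{\,2}(\partial_x\rho_i)(\partial^2_x\rho_i)\bigr) \;+\; s^3\mathcal{O}_{\lambda,\mathfrak{K}}(sh)^2.
\end{equation*}
By the usual (continuous) Leibniz rule, $\partial_x(c^2)$ gives a contribution of order $s^3\lambda^3\phi_i^3$, i.e.\ $(s\lambda\phi_i)^3\mathcal{O}(1)$. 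For the term $c^2\partial_x(r_i^{\,2}(\partial_x\rho_i)(\partial^2_x\rho_i))$ I use Corollary~\ref{cor:multi derivative} with $\alpha=1,\beta=2,\delta=1$, whose principal part is $(\alpha+\beta)(-s\phi_i)^{3}\lambda^4(\psi'_i)^4 = -3s^3\phi_i^3\lambda^4(\psi'_i)^4$, with remainders $(s\lambda\phi_i)^3\mathcal{O}(1)+s^2\mathcal{O}_\lambda(1)$. Collecting all contributions yields exactly
\begin{equation*}
\overline{Dq_i} = -3 s^3\lambda^4\phi_i^3 c^2(\psi'_i)^4 + (s\lambda\phi_i)^3\mathcal{O}(1) + s^2\mathcal{O}_{\lambda,\mathfrak{K}}(1) + s^3\mathcal{O}_{\lambda,\mathfrak{K}}(sh)^2.
\end{equation*}
The only real obstacle is bookkeeping: one must be careful that every time a discrete operator is converted to a continuous derivative the associated $h^2\mathcal{O}_{\lambda,\mathfrak{K}}(s^k)$ remainder is reabsorbed in one of the three (coarser) remainder classes displayed above, and that the work is performed within a single sub-domain $\Omega_{0i}$ so that $c$ and $\psi_i$ can be treated as $C^1$/$C^\infty$.
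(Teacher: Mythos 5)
Your proposal is correct and follows exactly the route the paper intends: the paper itself omits the proof and defers to Lemma B.8 of \cite{BHL10a}, and the toolbox you invoke (Proposition~\ref{prop:property 4} to replace $r_i\bar{D}D\rho_i$ and $r_i\overline{D\rho_i}$ by $r_i\partial_x^2\rho_i$ and $r_i\partial_x\rho_i$ up to $\mathcal{O}_{\lambda,\mathfrak{K}}(sh)^2$ errors, Proposition~\ref{prop:property 6} for $A^jD^k$ of the product, and Corollary~\ref{cor:multi derivative} with $\alpha=1,\beta=2,\delta=1$ for the explicit leading term $-3(s\phi_i)^3\lambda^4(\psi_i')^4$) is precisely the one the paper assembles for this purpose. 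The only cosmetic slip is describing the conversion error as $h^2\mathcal{O}_{\lambda,\mathfrak{K}}(s^3)$ in the prose (it is $s^3\mathcal{O}_{\lambda,\mathfrak{K}}(sh)^2=s^5h^2\mathcal{O}(1)$), but your displayed remainders carry the correct order, so the argument stands.
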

\par Note that the proof and the use of Lemma~\ref{lem:A3} are done in each domain ${{\Omega}_{01}}$, ${{\Omega}_{02}}$ separately.
\par We then obtain 
\begin{eqnarray}Y_{21}^{(2)}&=&-\frac{h}{2}\int_0^T{v^2(a)(Dq)_{n+\frac{1}{2}}}-\frac{h}{2}\int_0^T{v^2(a)(Dq)_{n+\frac{3}{2}}}\nonumber\\
&=&\int_0^T{s^2\mathcal{O_{\lambda, \mathfrak{K}}}(sh)v^2(a)}.\nonumber\end{eqnarray}

\par We thus write $I_{21}$ 
\begin{equation} I_{21} \ge 3\int_{Q'_0}{\lambda^4s^3\phi^3c^2(\psi')^4(v)^2}-\int_{Q'_0}{\mu_{21}(v)^2}-\int_{Q'_0}{\nu_{21}(Dv)^2}+Y_{21},\nonumber\end{equation}
\ where 
\begin{eqnarray}&&\mu_{21}=(s\lambda\phi)^3\mathcal{O}(1)+s^2\mathcal{O_{\lambda, \mathfrak{K}}}(1)+s^3\mathcal{O_{\lambda, \mathfrak{K}}}(sh)^2,\ \ \nu_{21}=s\mathcal{O_{\lambda, \mathfrak{K}}}(sh)^2,\nonumber\\
&&Y_{21}=Y_{21}^{(1,1)}+Y_{21}^{(1,21)}+Y_{21}^{(1,22)}+Y_{21}^{(2)}.\nonumber\end{eqnarray}

\subsection{ Proof of Lemma~\ref{lem:estimate I22}}
\par We set $q=c^2r(\bar{D}D\rho)\phi''$ and by Lemma~\ref{lem:average double} we have $\overline{\tilde{v}}=v+h^2\bar{D}Dv/4$ in each domain ${{\Omega}_{01}}$, ${{\Omega}_{02}}$. It follows that
\begin{eqnarray}I_{22}&=&-2\int_{Q_{01}}{sq\bar{\tilde{v}}v}-2\int_{Q_{02}}{sq\bar{\tilde{v}}v}\nonumber\\
&=&-2\int_{Q_{01}}{sqv^2}- \int_{Q_{01}}{\frac{sh^{2}}{2}q(\bar{D}Dv)v}\nonumber\\
&&-2\int_{Q_{02}}{sqv^2}-\int_{Q_{02}}{\frac{sh^{2}}{2}q(\bar{D}Dv)v}.\nonumber
\end{eqnarray}

\par Applying a discrete integration by parts (Proposition~\ref{prop:IVP}) and Lemma~\ref{lemma: discrete leibnitz} in each domain ${{\Omega}_{01}}$, ${{\Omega}_{02}}$ yield
\begin{eqnarray}I_{22}&=&-2\sum_{i=1}^2\int_{Q_{0i}}{sqv^2}+\sum_{i=1}^2 \int_{Q_{0i}}{\frac{sh^{2}}{2}D(qv)Dv}+Y_{22}^{(1)}\nonumber\\
&=&-2\sum_{i=1}^2\int_{Q_{0i}}{sqv^2}+\sum_{i=1}^2 \int_{Q_{0i}}{\frac{sh^{2}}{2}\tilde{q}(Dv)^2}+\sum_{i=1}^2 \int_{Q_{0i}}{\frac{sh^{2}}{2}D(q)\tilde{v}Dv}+Y_{22}^{(1)}\nonumber\\
&=&-2\sum_{i=1}^2\int_{Q_{0i}}{sqv^2}+\sum_{i=1}^2 \int_{Q_{0i}}{\frac{sh^{2}}{2}\tilde{q}(Dv)^2}+\sum_{i=1}^2\int_{Q_{0i}}{\frac{sh^{2}}{4}D(q)D(v^2)}+Y_{22}^{(1)}\nonumber\\
&=&-2\sum_{i=1}^2\int_{Q_{0i}}{sqv^2}+\sum_{i=1}^2 \int_{Q_{0i}}{\frac{sh^{2}}{2}\tilde{q}(Dv)^2}-\sum_{i=1}^2 \int_{Q_{0i}}{\frac{sh^{2}}{4}\bar{D}Dq v^2}+Y_{22}^{(1)}+Y_{22}^{(2)},\nonumber
\end{eqnarray}
where
\begin{eqnarray} &&Y_{22}^{(1)}=-\int_0^T{\frac{sh^{2}}{2}q(a^-)v(a)(Dv)_{n+\frac{1}{2}}}+\int_0^T{\frac{sh^{2}}{2}q(a^+)v(a)(Dv)_{n+\frac{3}{2}}},\nonumber\\
&&Y_{22}^{(2)}=\int_0^T{\frac{sh^{2}}{4}v^2(a)(Dq)_{n+\frac{1}{2}}}-\int_0^T{\frac{sh^{2}}{4}v^2(a)(Dq)_{n+\frac{3}{2}}},\nonumber
\end{eqnarray}
as $v|_{\partial\Omega_0}=0.$ 
\par In each domain ${{\Omega}_{01}}$, ${{\Omega}_{02}}$, we have $\phi^{''}=\mathcal{O_{\lambda}}(1)$ and from Proposition~\ref{prop:property 4} we have $q=s^2\mathcal{O_{\lambda, \mathfrak{K}}}(1)$ and $Dq=s^2\mathcal{O_{\lambda, \mathfrak{K}}}(1)$. We thus obtain 
\begin{eqnarray}&&Y_{22}^{(1)}=\int_0^T{s^3\mathcal{O_{\lambda, \mathfrak{K}}}(1)v(a)\frac{h^{2}}{2}(Dv)_{n+\frac{1}{2}}+s^3\mathcal{O_{\lambda, \mathfrak{K}}}(1)v(a)\frac{h^{2}}{2}(Dv)_{n+\frac{3}{2}}},\nonumber\\
&&Y_{22}^{(2)}=\int_0^T{s\mathcal{O_{\lambda, \mathfrak{K}}}(sh)^2v^2(a)}.\nonumber
\end{eqnarray}

\begin{Lemma}\label{lem:A4}(see Lemma B.9 and Lemma B.10 in \cite{BHL10a}) Provided $sh\le \mathfrak{K}$ we have
\begin{eqnarray}&&c^2r_{i}\bar{D}D\rho_{i}=c^2(r_{i}\partial^2\rho_{i}+s^2\mathcal{O_{\lambda, \mathfrak{K}}}(sh)^2)=c^2(s\lambda\phi_{i})^2(\psi'_{i})^2+s\mathcal{O_{\lambda}}(1)+s^2\mathcal{O_{\lambda, \mathfrak{K}}}(sh)^2,\nonumber\\
&&h^2\bar{D}Dq_{i}=s(sh)\mathcal{O_{\lambda, \mathfrak{K}}}(1).\nonumber\end{eqnarray}
\end{Lemma}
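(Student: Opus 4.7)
The plan is to derive both estimates by systematically combining the discrete calculus results developed in Section~\ref{sec: prelim results}, treating each subdomain separately (so the regular weight $\psi_i$ is available). For the first estimate, I would first apply Lemma~\ref{lem:derivative wrt x} with $\alpha=2,\ \beta=0$, which yields
\begin{equation*}
r_i \partial_x^2 \rho_i = (s\phi_i)^2 \lambda^2 (\psi'_i)^2 + s\cdot 2\cdot \mathcal{O}_\lambda(1) = (s\lambda\phi_i)^2(\psi'_i)^2 + s\mathcal{O}_\lambda(1),
\end{equation*}
and multiply by $c^2$ to get the leading term. Then I would invoke the last line of Proposition~\ref{prop:property 1}, namely $r_i D^2\rho_i = r_i \partial_x^2\rho_i + s^2 \mathcal{O}_{\lambda,\mathfrak{K}}((sh)^2)$, to pass from the continuous to the discrete second derivative. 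Since $\bar{D}D\rho_i$ agrees with $D^2\rho_i$ up to a consistent $h^2$-factor on the primal mesh (an immediate Taylor-expansion check using Lemma~\ref{lem:traslate function}), the same $s^2\mathcal{O}_{\lambda,\mathfrak{K}}((sh)^2)$-error persists, yielding the first chain of equalities.

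For the second estimate, I would exploit the factored form $q_i = c^2\phi''_i\cdot (r_i \bar{D}D\rho_i)$. By the first estimate just proved together with $\phi'' = \lambda^2\phi(\psi')^2 + \lambda\phi\mathcal{O}(1)$ (see Lemma~\ref{lem:A2}), one has $q_i = s^2\mathcal{O}_{\lambda,\mathfrak{K}}(1)$, and then repeated application of Lemma~\ref{lem:property 2}, Proposition~\ref{prop:property 4} and Proposition~\ref{prop:property 6} (combined via the discrete Leibniz rule of Lemma~\ref{lemma: discrete leibnitz}) gives $\bar{D}Dq_i = s^2\mathcal{O}_{\lambda,\mathfrak{K}}(1)$. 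Multiplying by $h^2$ and writing $(sh)^2 = s(sh)\cdot h = \mathcal{O}(sh)\cdot s\cdot h$, one absorbs a factor of $sh \leq \mathfrak{K}$ to obtain $h^2\bar{D}Dq_i = s(sh)\mathcal{O}_{\lambda,\mathfrak{K}}(1)$, as claimed.

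The main technical obstacle will be the bookkeeping in the second estimate: $\bar{D}Dq_i$ involves applying two discrete derivatives to a product of four factors ($c^2$, $r$, $\bar{D}D\rho$, and $\phi''$), each of which generates shifted arguments and averaging errors. One must carefully unfold these using Lemma~\ref{lemma: discrete leibnitz} and Lemma~\ref{lem:average of two function}, track that every intermediate term is at most $\mathcal{O}_{\lambda,\mathfrak{K}}(s^2)$, and verify that no hidden factor of $s$ arises when $\bar{D}$ hits a term of the form $r\partial_x^\alpha\rho$ (this is precisely the content of Proposition~\ref{prop:property 6}). Once that is in place, the multiplication by $h^2$ and the constraint $sh\le\mathfrak{K}$ trade one power of $s$ for the factor $(sh)$ as required.
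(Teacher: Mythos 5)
Your proposal is essentially correct, and it is worth noting that the paper itself gives no argument for this lemma at all: it simply cites Lemmas~B.9 and~B.10 of \cite{BHL10a} and adds the remark that the computation is carried out on each subdomain $\Omega_{01}$, $\Omega_{02}$ separately (which you also correctly identify as the reason the smooth weight $\psi_i$ is available). Your reconstruction from the paper's own Section~\ref{sec: prelim results} toolbox is the natural one and matches what the cited reference does: Lemma~\ref{lem:derivative wrt x} with $\alpha=2$, $\beta=0$ gives the leading term of $r_i\partial_x^2\rho_i$, and the $k=0$ case of the second line of Proposition~\ref{prop:property 4} (equivalently the last line of Proposition~\ref{prop:property 1}) supplies the $s^2\mathcal{O}_{\lambda,\mathfrak{K}}(sh)^2$ correction. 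Two small imprecisions are worth fixing. First, on the piecewise uniform meshes of Section~\ref{sec: Carleman estimate uniform mesh} the operator $\bar{D}D$ restricted to each subdomain \emph{is} the operator $\mathtt{D}^2$ of Lemma~\ref{lem:traslate function}, so no additional ``consistent $h^2$-factor'' argument is needed to pass between them; invoking one suggests an extra error term that is not actually there. Second, for $\bar{D}Dq_i$ the relevant estimates are the $D^kA^j$ bounds of Proposition~\ref{prop:property 4} applied to $r_iD^2\rho_i$ (which guarantee that discrete derivatives of $r_i\bar{D}D\rho_i$ remain $s^2\mathcal{O}_{\lambda,\mathfrak{K}}(1)$), combined with the Leibniz rules of Lemmata~\ref{lemma: discrete leibnitz} and~\ref{lem:average of two function} for the smooth bounded factors $c^2$ and $\phi''_i$; Proposition~\ref{prop:property 6}, which concerns quantities of the form $r_i^2(\partial^\alpha\rho_i)\partial^\beta\rho_i$, is not the tool that carries this step. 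With those adjustments the final bookkeeping $h^2\cdot s^2\mathcal{O}_{\lambda,\mathfrak{K}}(1)=s(sh)\,h\,\mathcal{O}_{\lambda,\mathfrak{K}}(1)=s(sh)\mathcal{O}_{\lambda,\mathfrak{K}}(1)$ is correct.
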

Note that the proof and use of above Lemma~\ref{lem:A4} are done in each domain ${{\Omega}_{01}}$, ${{\Omega}_{02}}$ separately.
\par Futhermore, we have $\phi''=\lambda^2(\psi')^2\phi+\lambda\phi\mathcal{O}(1)$ in each domain ${{\Omega}_{01}}$, ${{\Omega}_{02}}$. It follows that
\begin{eqnarray}sq_{i}&=&s\Big(c^2(s\lambda\phi_{i})^2(\psi'_{i})^2+s\mathcal{O_{\lambda}}(1)+s^2\mathcal{O_{\lambda, \mathfrak{K}}}(sh)^2\Big)\Big(\lambda^2(\psi'_{i})^2\phi_{i}+\lambda\phi_{i}\mathcal{O}(1)\Big)\nonumber\\
&=&c^2s^3\lambda^4(\psi'_{i})^4\phi_{i}^3+s^3\lambda^3\phi_{i}^3\mathcal{O}(1)+s^2\mathcal{O_{\lambda}}(1)+s^3\mathcal{O_{\lambda, \mathfrak{K}}}(sh)^2, \nonumber
\end{eqnarray} in each domain ${{\Omega}_{01}}$, ${{\Omega}_{02}}$.
\par We thus write $I_{22}$ as
\begin{equation}I_{22}=-2\int_{Q'_0}{c^2s^3\lambda^4\phi^3(\psi')^4 v^2}+\int_{Q'_0}{\mu_{22}v^2}+\int_{Q'_0}{\nu_{22}(Dv)^2}+Y_{22},\nonumber\end{equation}
\ where
\begin{eqnarray} &&\mu_{22}=(s\lambda\phi)^3\mathcal{O}(1)+s^2\mathcal{O_{\lambda, \mathfrak{K}}}(1)+s^3\mathcal{O_{\lambda, \mathfrak{K}}}(sh)^2,\ \  \nu_{22}=s\mathcal{O_{\lambda, \mathfrak{K}}}(sh)^2,\nonumber\\
&&Y_{22}=Y_{22}^{(1)}+Y_{22}^{(2)}.\nonumber\end{eqnarray}

\subsection{ Proof of Lemma~\ref{lem:estimate I23}}
%\par We have $I_{23}=\int_{Q'}{cr(\bar{D}D\rho)\bar{\tilde{v}}\partial_t v dt}$.
\par By means of  a discrete integration by parts (Proposition~\ref{prop:IVP}) in each domain ${{\Omega}_{01}}$, ${{\Omega}_{02}}$, we obtain
\begin{eqnarray}I_{23}&=&\sum_{i=1}^2\int_{Q_{0i}}{cr(\bar{D}D\rho)\bar{\tilde{v}}\partial_t v }\nonumber\\
&=&\sum_{i=1}^2\int_{Q_{0i}}{\wtilde{cr(\bar{D}D\rho)\partial_t v}\tilde{v}}\nonumber\\
&-&\frac{h}{2}\int_0^T{(cr(\bar{D}D\rho))(0)\partial_t v(0)\tilde{v}_{\frac{1}{2}}}-\frac{h}{2}\int_0^T{(cr(\bar{D}D\rho))(a^-)\partial_t v(a)\tilde{v}_{n+\frac{1}{2}}}\nonumber\\
&-&\frac{h}{2}\int_0^T{(cr(\bar{D}D\rho))(a^+)\partial_t v(a)\tilde{v}_{n+\frac{3}{2}}}-\frac{h}{2}\int_0^T{(cr(\bar{D}D\rho))(1)\partial_t v(1)\tilde{v}_{n+m+\frac{3}{2}}}\nonumber\\
&=&\bar{Q}_1+\bar{Q}_2+Y_{23}^{(1)},\nonumber
%&=&\underbrace{\int_{Q'}{\wtilde{(cr(\bar{D}D\rho))}\partial_t \tilde{v} \tilde{v}dt}}_{Q_1}+\underbrace{\sum_{i=1}^2{\frac{h_i^2}{4}}{\int_{Q'}{D(cr\bar{D}D\rho)(D\partial_t v)\tilde{v}dt}}}_{Q_2}+Y_{23}^{(1)}.\nonumber\
\end{eqnarray}
by Lemma~\ref{lem:average of two function} and where
\begin{eqnarray}&&\bar{Q}_1=\sum_{i=1}^2\int_{Q_{0i}}{\wtilde{(cr(\bar{D}D\rho))}\partial_t \tilde{v} \tilde{v}},\nonumber\\
&&\bar{Q}_2=\sum_{i=1}^2{\frac{h^{2}}{4}}{\int_{Q_{0i}}{D(cr\bar{D}D\rho)(D\partial_t v)\tilde{v}}},\nonumber\\
&&Y_{23}^{(1)}=-\frac{h}{2}\int_0^T{(cr(\bar{D}D\rho))(a^-)\partial_t v(a)\tilde{v}_{n+\frac{1}{2}}}-\frac{h}{2}\int_0^T{(cr(\bar{D}D\rho))(a^+)\partial_t v(a)\tilde{v}_{n+\frac{3}{2}}}\nonumber
\end{eqnarray}
as $\partial_t v|_{\partial\Omega_0}=0$.
\par With an integrations by parts w.r.t $t$ we have
\begin{equation} \bar{Q}_1=\frac{-1}{2}\sum_{i=1}^2\int_{Q_{0i}}{\partial_t{\wtilde{(cr\bar{D}D\rho)}}({\tilde{v}})^2}+\frac{1}{2}\sum_{i=1}^2\int_{\Omega_{0i}}{\wtilde{(cr(\bar{D}D\rho))}({\tilde{v}})^2|_{t=0}^{t=T}}.\nonumber\end{equation}
\par By means of Proposition~\ref{prop:property 4} and Lemma~\ref{lem:derivative wrt x} in each domain ${{\Omega}_{01}}$, ${{\Omega}_{02}}$ we get
\begin{eqnarray}&&\wtilde{cr_{i}(\bar{D}D\rho_{i})}=s^2\mathcal{O_{\lambda, \mathfrak{K}}}(1),\nonumber\\
&&r_{i}\bar{D}D\rho_{i}=s^2\mathcal{O_{\lambda, \mathfrak{K}}}(1),\nonumber\end{eqnarray}
and we further have

\begin{Lemma}\label{lem:A5}(see Lemma A.1 in \cite{BL12})
\begin{equation}\partial_t(\wtilde{cr_{i}\bar{D}D\rho_{i}})=Ts^2\theta\mathcal{O_{\lambda, \mathfrak{K}}}(1).\nonumber
\end{equation}
\end{Lemma}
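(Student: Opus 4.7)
My plan is to first note that $\wtilde{\cdot}$ is a purely spatial averaging operator and $c=c(x)$ does not depend on $t$, so $\partial_t$ commutes with both $\wtilde{\cdot}$ and multiplication by $c$. The problem thus reduces to the pointwise $L^{\infty}$ bound
\begin{equation*}
\partial_t(r_{i}\bar{D}D\rho_{i}) = Ts^2\theta(t)\,\mathcal{O_{\lambda, \mathfrak{K}}}(1),
\end{equation*}
after which the averaging and the bounded multiplier $c$ preserve the estimate.

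The natural starting point for this pointwise bound is the decomposition already established in Lemma~\ref{lem:A4}:
\begin{equation*}
r_{i}\bar{D}D\rho_{i} = r_{i}\partial_{x}^{2}\rho_{i} + s^2\mathcal{O_{\lambda, \mathfrak{K}}}((sh)^2).
\end{equation*}
Applying $\partial_{t}$ to the main term is handled directly by Lemma~\ref{lem:derivative wrt t} with $\alpha=2$, or equivalently by the first line of Proposition~\ref{prop:property 5} with $\sigma=0$, and this delivers exactly $s^{2}T\theta\,\mathcal{O_{\lambda, \mathfrak{K}}}(1)$, which is the announced order.

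For the remainder, I would unwind the Taylor-with-integral-remainder representation of Lemma~\ref{lem:traslate function}, which writes $D^{j}\rho_{i}$ as $\partial_{x}^{j}\rho_{i}$ plus an $h^{2}$-integral over $\sigma\in[-1,1]$ of $\partial_{x}^{j+2}\rho_{i}$ sampled at the shifted point $x+\sigma h$. Combining two such formulas for $\bar{D}D\rho_{i}$ and differentiating under the integral, the shifted version of Proposition~\ref{prop:property 5},
\begin{equation*}
\partial_{t}\big(r_{i}(t,\cdot)(\partial_{x}^{\alpha}\rho_{i})(t,\cdot+\sigma h)\big) = Ts^{\alpha}\theta(t)\,\mathcal{O_{\lambda, \mathfrak{K}}}(1),
\end{equation*}
together with the standing bound $sh\le\mathfrak{K}$, absorbs the residual factors and yields a contribution of the required size $Ts^{2}\theta\,\mathcal{O_{\lambda, \mathfrak{K}}}(1)$.

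The only genuine obstacle in this plan is the bookkeeping required to push $\partial_{t}$ through $\bar{D}D$ so that every Leibniz-expanded term falls within the scope of the shifted Proposition~\ref{prop:property 5}, with all $(sh)$-type factors produced by discrete differentiation absorbed via $\tau h(\max_{[0,T]}\theta)\le\mathfrak{K}$. Once this is in place, summing the principal contribution and the shifted remainders and reinstating the bounded factor $c$ together with the averaging $\wtilde{\cdot}$ gives the stated $L^{\infty}$ estimate, in line with the analogous computation in \cite{BL12}.
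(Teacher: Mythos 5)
Your proposal is correct, and it is essentially the intended argument: the paper itself offers no proof here (it simply defers to Lemma A.1 of \cite{BL12}), and your reconstruction runs exactly along the lines of the paper's own toolbox. In fact you can shorten it: the third estimate of Proposition~\ref{prop:property 5}, $\partial_t(r_iD^2\rho_i)=Ts^2\theta(t)\mathcal{O_{\lambda, \mathfrak{K}}}(1)$, already covers the full unaveraged quantity $\partial_t(r_i\bar{D}D\rho_i)$ in each subdomain (its proof being the Taylor-remainder argument you sketch), so the detour through the static decomposition of Lemma~\ref{lem:A4} is unnecessary; one then only needs your observation that $\partial_t$ commutes with $\wtilde{\cdot}$ and with multiplication by the time-independent bounded $c$, and that the averaging preserves $L^\infty$ bounds on the enlarged neighborhoods $\tilde{\Omega}_{0i}$.
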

\par Note that the proof and use of Lemma~\ref{lem:A5} are done in each domain ${{\Omega}_{01}}$, ${{\Omega}_{02}}$ separately.
\par It follows that
\begin{equation}\bar{Q}_1=\sum_{i=1}^2\int_{Q_{0i}}{Ts^2\theta\mathcal{O_{\lambda, \mathfrak{K}}}(1) \wtilde{{v}^2}}+\sum_{i=1}^2\int_{\Omega_{0i}}{s^2\big(\mathcal{O_{\lambda, \mathfrak{K}}}(1)\wtilde{{v}^2}_{|_{t=0}}+\mathcal{O_{\lambda, \mathfrak{K}}}(1)\wtilde{{v}^2}_{|_{t=T}}\big)}\nonumber
\end{equation}
as $\left|{\tilde{v}}\right|^2\le \wtilde{\left|{v}\right|^2}$ in each domain ${{\Omega}_{01}}$, ${{\Omega}_{02}}$.
\par Moreover, we observe that $ \sum_{i=1}^2\int_{\Omega_{0i}}{\mathcal{O_{\lambda, \mathfrak{K}}}(1)\wtilde{{v}^2}}=\int_{\Omega_0}{\mathcal{O_{\lambda, \mathfrak{K}}}(1){v}^2}$. Then,
\begin{equation}
  \label{eq: computation Qtilde1}
  \bar{Q}_1= \int_{Q_0}{Ts^2\theta\mathcal{O_{\lambda, \mathfrak{K}}}(1)v^2}+\int_{\Omega_{0}}{s^2\big(\mathcal{O_{\lambda, \mathfrak{K}}}(1)\wtilde{{v}^2}_{|_{t=0}}+\mathcal{O_{\lambda, \mathfrak{K}}}(1)\wtilde{{v}^2}_{|_{t=T}}\big)}.
\end{equation}
\par We have

\begin{equation}Y_{23}^{(1)}=\int_0^T{s^2\mathcal{O_{\lambda, \mathfrak{K}}}(1)\partial_t v(a)\frac{ h}{2}(\tilde{v}_{n+\frac{1}{2}})+s^2\mathcal{O_{\lambda, \mathfrak{K}}}(1)\partial_t v(a)\frac{h}{2}(\tilde{v}_{n+\frac{3}{2}})}.\nonumber
\end{equation} 
\par By an integration by parts w.r.t $t$ and Lemma~\ref{lemma: discrete leibnitz} in each domain ${{\Omega}_{01}}$, ${{\Omega}_{02}}$ we find
\begin{equation}\bar{Q}_2=\underbrace{-\sum_{i=1}^2 \frac{h^{2}}{4}\int_{Q_{0i}}{\partial_t(D(cr\bar{D}D\rho)\tilde{v})Dv}}_{\bar{Q}_2^1}+\underbrace{\sum_{i=1}^2\frac{h^{2}}{8}\int_{\Omega_{0i}}{D(cr\bar{D}D\rho)D(v)^2}|_{t=0}^{t=T}}_{\bar{Q}_2^2}.\nonumber\end{equation}

\par By means of Lemma~\ref{lemma: discrete leibnitz} and a discrete intergration by parts in space (Proposition~\ref{prop:IVP}) in each domain ${{\Omega}_{01}}$, ${{\Omega}_{02}}$ we see that
\begin{eqnarray}\bar{Q}_2^1&=&\sum_{i=1}^2 \frac{h^{2}}{8}\int_{Q_{0i}}{\partial_t(\bar{D}D(cr\bar{D}D\rho))v^2}-\sum_{i=1}^2 \frac{h^{2}}{4}\int_{Q_{0i}}{D(cr\bar{D}D\rho)(\partial_t{\tilde{v}})Dv}\nonumber\\
&&-\frac{h^{2}}{8}\int_0^T{v^2(a)\partial_t(D(cr\bar{D}D\rho))_{n+\frac{1}{2}}}+\frac{h^{2}}{8}\int_0^T{v^2(a)\partial_t(D(cr\bar{D}D\rho))_{n+\frac{3}{2}}}\nonumber\\
&=&\sum_{i=1}^2 \frac{h^{2}}{8}\int_{Q_{0i}}{\partial_t(\bar{D}D(cr\bar{D}D\rho))v^2}-\sum_{i=1}^2 \frac{h^{2}}{4}\int_{Q_{0i}}{D(cr\bar{D}D\rho)(\partial_t{\tilde{v}})Dv}+Y_{23}^{(2)}\nonumber\
\end{eqnarray}
as $v|_{\partial\Omega_0}=0$.
\begin{Lemma}\label{lem:A6}(Lemma A.2 in \cite{BL12}) Provided $sh\le \mathfrak{K}$ we have
\begin{eqnarray}&&h^2\bar{D}D(c_{i}r_{i}(\bar{D}D\rho_{i}))=s(sh)\mathcal{O_{\lambda, \mathfrak{K}}}(1),\nonumber\\
&&h^2\partial_t (\bar{D}D(c_ir_{i}\bar{D}D\rho_{i}))=Ts^2\theta\mathcal{O_{\lambda, \mathfrak{K}}}(1),\nonumber\\
&&h\partial_t (D(c_ir_{i}\bar{D}D\rho_{i}))=Ts^2\theta\mathcal{O_{\lambda, \mathfrak{K}}}(1),\nonumber\\
&&D(c_ir_{i}\bar{D}D\rho_{i})=s^2\mathcal{O_{\lambda, \mathfrak{K}}}(1).\nonumber\
\end{eqnarray}
\end{Lemma}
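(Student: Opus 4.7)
The plan is to carry out the proof as a mechanical application of the discrete calculus tools gathered in Section~\ref{sec: prelim results} together with the already-established estimate $r_i\bar{D}D\rho_i = s^2\mathcal{O}_{\lambda,\mathfrak{K}}(1)$ from Lemma~\ref{lem:A4}. Since we work inside each fixed subdomain $\Omega_{0i}$, where $c_i\in C^1(\overline{\tilde{\Omega}_{0i}})$ and $\psi_i$ is smooth, no interface issue arises; everything reduces to bookkeeping in the parameters $s$, $\lambda$, and the small quantity $sh$. The common ingredient is that each of the four quantities in the lemma has the form $\mathcal{D}(c_i r_i\bar{D}D\rho_i)$ where $\mathcal{D}$ is either a first- or second-order discrete spatial difference, possibly composed with $\partial_t$, and the strategy is to peel off $c_i$ via the Leibniz rule, compare $\mathcal{D}$ to the corresponding continuous derivative, and then use the explicit estimates on the weight factor.

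I will begin with the fourth estimate $D(c_i r_i\bar{D}D\rho_i)=s^2\mathcal{O}_{\lambda,\mathfrak{K}}(1)$. The discrete Leibniz rule (Lemma~\ref{lemma: discrete leibnitz}) gives
\[
D(c_i r_i\bar{D}D\rho_i) = (Dc_{i,d})\,\tilde{r_i\bar{D}D\rho_i} + \tilde{c_{i,d}}\,D(r_i\bar{D}D\rho_i).
\]
The first summand is $\mathcal{O}(1)\cdot s^2\mathcal{O}_{\lambda,\mathfrak{K}}(1)$ since $c_i\in C^1$ and averaging preserves the order (Lemma~\ref{lem:average of two function}). For the second summand I invoke Lemma~\ref{lem:property 2} to replace $D(r_i\bar{D}D\rho_i)$ by $\partial_x(r_i\partial_x^2\rho_i)$ modulo $h^2\mathcal{O}_{\lambda,\mathfrak{K}}(s^2)$, and then Lemma~\ref{lem:derivative wrt x} to bound the continuous expression by $s^2\mathcal{O}_\lambda(1)$; the remainder satisfies $h^2 s^2=(sh)^2\le\mathfrak{K}\,sh$, which is absorbed into $\mathcal{O}_{\lambda,\mathfrak{K}}(1)$.

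The first estimate is proved by the same mechanism but with $\bar{D}D$ in place of $D$, using Proposition~\ref{prop:property 4} to control the second-order discrete difference of the weight combination. That yields $\bar{D}D(c_i r_i\bar{D}D\rho_i)=s^2\mathcal{O}_{\lambda,\mathfrak{K}}(1)$, and multiplying by $h^2$ and rewriting $s^2h^2=h\cdot s(sh)$ with $h\le h_0$ bounded produces the announced form $s(sh)\mathcal{O}_{\lambda,\mathfrak{K}}(1)$.

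Finally, for the two time-derivative statements I will use that the spatial operators $D$ and $\bar{D}D$ commute with $\partial_t$, so that $\partial_t$ only acts on $r_i\bar{D}D\rho_i$. Proposition~\ref{prop:property 5} provides $\partial_t(r_i\bar{D}D\rho_i) = Ts^2\theta\,\mathcal{O}_{\lambda,\mathfrak{K}}(1)$, and the same Leibniz-plus-comparison argument as above (with $c_i$ being $t$-independent) gives $\partial_t D(c_i r_i\bar{D}D\rho_i) = Ts^2\theta\,\mathcal{O}_{\lambda,\mathfrak{K}}(1)$ and $\partial_t \bar{D}D(c_i r_i\bar{D}D\rho_i) = Ts^2\theta\,\mathcal{O}_{\lambda,\mathfrak{K}}(1)$; multiplying by $h$ or $h^2$ only reinforces these bounds. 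I do not anticipate any genuine obstacle: the only delicate point is making sure that the implicit $\mathcal{O}_{\lambda,\mathfrak{K}}(1)$ factors remain regular enough that repeated discrete differentiation does not generate uncontrolled $h$-dependence, which is guaranteed by Propositions~\ref{prop:property 4}--\ref{prop:property 6} under the standing assumption $sh\le\mathfrak{K}$.
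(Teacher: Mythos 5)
Your argument is essentially sound, but note that the paper does not prove this lemma at all: it is stated with the citation ``(Lemma A.2 in \cite{BL12})'' and used as a black box in the proof of Lemma~\ref{lem:estimate I23}. So there is no internal proof to match against; what you have written is a self-contained reconstruction from the paper's own discrete-calculus toolbox (Lemmata~\ref{lemma: discrete leibnitz}, \ref{lem:derivative wrt x}, \ref{lem:property 2} and Propositions~\ref{prop:property 4}, \ref{prop:property 5}), which is exactly the style of argument that \cite{BL12} carries out. Your order-counting is correct throughout: $D^k(r_iD^2\rho_i)=s^2\mathcal{O}_{\lambda,\mathfrak{K}}(1)$ from Proposition~\ref{prop:property 4} gives the fourth estimate after peeling off $c_i$ by the discrete Leibniz rule; $h^2s^2=h\cdot s(sh)\le h_0\, s(sh)$ gives the first; and since the discrete difference operators commute with $\partial_t$ and $\partial_x^\beta\partial_t(r_i\partial_x^\alpha\rho_i)=s^\alpha T\theta\,\mathcal{O}_{\lambda,\mathfrak{K}}(1)$ (the spatial derivative costs no extra power of $s$ because the $r\rho$ cancellation leaves only polynomials in $s\phi$ times smooth functions), the two time-derivative bounds follow, the prefactors $h$ and $h^2$ being superfluous for the stated order.

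One point deserves to be made explicit rather than swept into ``$c_i\in C^1$ and averaging preserves the order.'' In the first and second estimates the Leibniz expansion of $\bar{D}D(c_{i,d}\,F)$ produces the term $(\bar{D}Dc_{i,d})$ times an average of $F=r_i\bar{D}D\rho_i$, and $\bar{D}Dc_{i,d}$ is \emph{not} bounded uniformly in $h$ when $c_i$ is only $C^1$ on $\overline{\Omega_{0i}}$: one only gets $\bar{D}Dc_{i,d}=\mathcal{O}(h^{-1})$ from the boundedness of $Dc_{i,d}$. The estimate is nevertheless saved precisely by the prefactor $h^2$ in the statement, since $h^2\,\mathcal{O}(h^{-1})\cdot s^2\mathcal{O}_{\lambda,\mathfrak{K}}(1)=s(sh)\mathcal{O}_{\lambda,\mathfrak{K}}(1)$; this is presumably why the lemma is stated with the $h^2$ (resp.\ $h$) weights rather than as a bare bound on $\bar{D}D(c_ir_i\bar{D}D\rho_i)$. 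You should insert this one-line computation so that the role of the $h^2$ factor is visible and the claim does not appear to rely on a nonexistent bound for the second discrete difference of a merely $C^1$ coefficient.
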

Note that all above terms are done in each domain ${{\Omega}_{01}}$, ${{\Omega}_{02}}$ separately.
\par We thus obtain
\begin{equation}Y_{23}^{(2)}=\int_0^T{sT\theta\mathcal{O_{\lambda, \mathfrak{K}}}(sh)v^2(a)}.\nonumber\
\end{equation}

\par Applying the Young's inequality and using that $\left|{\partial_t\tilde{v}}\right|^2\le \wtilde{\left|{\partial_t v}\right|^2} $ in each domain ${{\Omega}_{01}}$, ${{\Omega}_{02}}$, we have 
\small{
\begin{eqnarray}\label{eq:A2}\bar{Q}_2^1&\ge&\int_{Q'_0}{Ts^2\theta\mathcal{O_{\lambda, \mathfrak{K}}}(1)v^2}+\int_{Q'_0}{s^{-1}\mathcal{O_{\lambda, \mathfrak{K}}}(sh)^2\wtilde{\left|{\partial_t v}\right|^2}}+\int_{Q'_0}{s\mathcal{O_{\lambda, \mathfrak{K}}}(sh)^2(Dv)^2}+Y_{23}^{(2)}\nonumber\\
&\ge&\int_{Q'_0}{Ts^2\theta\mathcal{O_{\lambda, \mathfrak{K}}}(1)v^2}+\int_{Q_0}{s^{-1}\mathcal{O_{\lambda, \mathfrak{K}}}(sh)^2{\left|{\partial_t v}\right|^2}}+\int_{Q'_0}{s\mathcal{O_{\lambda, \mathfrak{K}}}(sh)^2(Dv)^2}+Y_{23}^{(2)}\nonumber\\
\end{eqnarray}
}
as $\sum_{i=1}^2\int_{\Omega_{0i}}{\mathcal{O_{\lambda, \mathfrak{K}}}(1)\wtilde{\left|{\partial_t v}\right|^2}}=\int_{\Omega}{\mathcal{O_{\lambda, \mathfrak{K}}}(1)\left|{\partial_t v}\right|^2}$.

\par By using Proposition~\ref{prop:IVP}, Lemma~\ref{lem:A6} in each domain ${{\Omega}_{01}}$, ${{\Omega}_{02}}$ separately yield
 \begin{eqnarray}\label{eq:A3}\bar{Q}_2^2&=&-\sum_{i=1}^2\frac{h^{2}}{8}\int_{\Omega_{0i}}{\bar{D}D(cr\bar{D}D\rho)(v)^2|_{t=0}^{t=T}}\nonumber\\
&&+\frac{h^{2}}{8}v^2(a)\big(D(cr\bar{D}D\rho)\big)_{n+\frac{1}{2}}|_{t=0}^{t=T}-\frac{h^{2}}{8}v^2(a)\big(D(cr\bar{D}D\rho)\big)_{n+\frac{3}{2}}|_{t=0}^{t=T}\nonumber\\
&=&\int_{\Omega'_0}{s\mathcal{O_{\lambda, \mathfrak{K}}}(sh)(v)^2|_{t=T}}+\int_{\Omega'_0}{s\mathcal{O_{\lambda, \mathfrak{K}}}(sh)(v)^2|_{t=0}}+\mathcal{O_{\lambda, \mathfrak{K}}}(sh)^2v^2(a)|_{t=0}^{t=T}\nonumber\\
&=&\int_{\Omega'_0}{s\mathcal{O_{\lambda, \mathfrak{K}}}(sh)(v)^2|_{t=T}}+\int_{\Omega'_0}{s\mathcal{O_{\lambda, \mathfrak{K}}}(sh)(v)^2|_{t=0}}+Y_{23}^{(3)}, 
\end{eqnarray}
as $v|_{\partial\Omega_0}=0$ where 
\begin{equation}Y_{23}^{(3)}=\mathcal{O_{\lambda, \mathfrak{K}}}(sh)^2v^2(a)|_{t=0}^{t=T}.\nonumber\
\end{equation}

\par Collecting~\eqref{eq: computation Qtilde1}, ~\eqref{eq:A2} and ~\eqref{eq:A3} we obtain
\begin{equation}I_{23}\ge \int_{\Omega_0}{s^2\big(\mathcal{O_{\lambda, \mathfrak{K}}}(1)v^2_{|_{t=0}}+\mathcal{O_{\lambda, \mathfrak{K}}}(1)v^2_{|_{t=T}}\big)}-X_{23}+Y_{23},\nonumber\
\end{equation}
where $X_{23}$ and $Y_{23}$ are as given in the statement of Lemma~\ref{lem:estimate I23}.

\subsection{Proof of Lemma~\ref{lem:estimate I31}}
%\par We have $I_{31}=-2\tau\int_{Q'}{(\partial_t \theta)\varphi cr\overline{D\rho}v\overline{Dv}}$

\par By means of a discrete integration by parts (Proposition~\ref{prop:IVP}) in each domain ${{\Omega}_{01}}$, ${{\Omega}_{02}}$ separately, we get
 \begin{eqnarray}I_{31}&=&-2\tau\int_{Q_{01}}{(\partial_t \theta)\varphi cr\overline{D\rho}v\overline{Dv}}-2\tau\int_{Q_{02}}{(\partial_t \theta)\varphi cr\overline{D\rho}v\overline{Dv}}\nonumber\\
&=&-2\tau\int_{Q_{01}}{(\partial_t \theta)\wtilde{\varphi cr\overline{D\rho} v }{Dv}}-2\tau\int_{Q_{02}}{(\partial_t \theta)\wtilde{\varphi cr\overline{D\rho} v }{Dv}}+Y_{31}^{(1)},\nonumber
\end{eqnarray}
with
\begin{equation}Y_{31}^{(1)}=\tau\frac{h}{2}\int_0^T{(\partial_t \theta)(cr\overline{D\rho}\varphi v)(a^-)(Dv)_{n+\frac{1}{2}}}+\tau\frac{h}{2}\int_0^T{(\partial_t \theta)(cr\overline{D\rho}\varphi v)(a^+)(Dv)_{n+\frac{3}{2}}}\nonumber
\end{equation}
as $v|_{\partial\Omega_0}=0$.
\par We have $\wtilde{\varphi cr\overline{D\rho} v }=\wtilde{\varphi cr\overline{D\rho} }\tilde{v}+\frac{h^2}{4}D(\varphi cr\overline{D\rho})Dv$ in each domain ${{\Omega}_{01}}$, ${{\Omega}_{02}}$. It follows that

\begin{eqnarray}I_{31}&=&-\tau\sum_{i=1}^2\int_{Q_{0i}}{(\partial_t\theta)\wtilde{(cr\overline{D\rho}\varphi) }D(v)^2}-\sum_{i=1}^2 \tau\frac{h^{2}}{2}\int_{Q_{0i}}{(\partial_t \theta)D(cr\overline{D\rho}\varphi)(Dv)^2}+Y_{31}^{(1)}\nonumber\\
&=&\tau\sum_{i=1}^2\int_{Q_{0i}}{(\partial_t\theta)\overline{(D(cr\overline{D\rho}\varphi))}v^2}-\sum_{i=1}^2 \tau\frac{h^{2}}{2}\int_{Q_{0i}}{(\partial_t \theta)D(cr\overline{D\rho}\varphi)(Dv)^2}+Y_{31}^{(1)}\nonumber\\
&-&\tau \int_0^T{(\partial_t \theta)v^2(a)\wtilde{(cr\overline{D\rho}\varphi)}_{n+\frac{1}{2}}}+\tau \int_0^T{(\partial_t \theta)v^2(a)\wtilde{(cr\overline{D\rho}\varphi)}_{n+\frac{3}{2}}}\nonumber\\
&=&\tau\sum_{i=1}^2\int_{Q_{0i}}{(\partial_t\theta)\overline{(D(cr\overline{D\rho}\varphi))}v^2}-\sum_{i=1}^2 \tau\frac{h^{2}}{2}\int_{Q_{0i}}{(\partial_t \theta)D(cr\overline{D\rho}\varphi)(Dv)^2}+Y_{31}^{(1)}+Y_{31}^{(2)}\nonumber\end{eqnarray}
by using a discrete integration by parts in each domain ${{\Omega}_{01}}$, ${{\Omega}_{02}}$ separately and
\begin{equation}Y_{31}^{(2)}=-\tau \int_0^T{(\partial_t \theta)v^2(a)\wtilde{(cr\overline{D\rho}\varphi)}_{n+\frac{1}{2}}}+\tau \int_0^T{(\partial_t \theta)v^2(a)\wtilde{(cr\overline{D\rho}\varphi)}_{n+\frac{3}{2}}}\nonumber
\end{equation}
as $v|_{\partial \Omega}=0$.
%\begin{eqnarray}I_{31}=
%\end{eqnarray}
\par By using the Lipschitz continuity and Proposition~\ref{prop:property 4} we get
\small{
\begin{eqnarray}&&D(cr_{i}\overline{D\rho_{i}}\varphi_{i})=s\mathcal{O_{\lambda, \mathfrak{K}}}(1),\nonumber\\
&&\overline{D(cr_{i}\overline{D\rho_{i}}\varphi_{i})}=s\mathcal{O_{\lambda, \mathfrak{K}}}(1),\nonumber\\
&&\wtilde{cr_{i}\overline{D\rho_{i}}\varphi_{i}}=s\mathcal{O_{\lambda, \mathfrak{K}}}(1),\nonumber\\
&&cr_{i}\overline{D\rho_{i}}=c(r_{i}\partial \rho_{i}+s^2\mathcal{O_{\lambda, \mathfrak{K}}}(sh)^2)=c(-s\lambda\phi_{i}\psi'_{i}+s\mathcal{O_{\lambda, \mathfrak{K}}}(sh)^2)=s\mathcal{O_{\lambda, \mathfrak{K}}}(1).\nonumber\
\end{eqnarray}
}

The proof is done in each domain ${{\Omega}_{01}}$, ${{\Omega}_{02}}$ separately. Note that $\underset{ t}{\max}{\partial_t\theta}=T\theta^2.$
\par It thus follows that
\begin{equation}I_{31}=\int_{Q'_0}{T\theta s^2\mathcal{O_{\lambda, \mathfrak{K}}}(1)v^2}+\int_{Q'_0}{T\theta\mathcal{O_{\lambda, \mathfrak{K}}}(sh)^2 (Dv)^2}+Y_{31},\nonumber\
\end{equation}
where 
\begin{eqnarray}&&Y_{31}=Y_{31}^{(1)}+Y_{31}^{(2)},\nonumber\\
&&Y_{31}^{(1)}=\int_0^T{T\theta s^2\mathcal{O_{\lambda, \mathfrak{K}}}(1)v(a)\frac{h}{2}(Dv)_{n+\frac{1}{2}}}+\int_0^T{T\theta s^2\mathcal{O_{\lambda, \mathfrak{K}}}(1)v(a)\frac{h}{2}(Dv)_{n+\frac{3}{2}}},\nonumber\\
&&Y_{31}^{(2)}=\int_0^T{T\theta s^2\mathcal{O_{\lambda, \mathfrak{K}}}(1)v^2(a)}.\nonumber
\end{eqnarray}

\subsection{Proof of Lemma~\ref{lem:estimate Y11 plus Y21}}
\par We see that
\begin{eqnarray}&&Y_{11}^{(1)}+Y_{21}^{(1,1)}\nonumber\\
&=&\int_0^T{\big(1+\mathcal{O_{\lambda, \mathfrak{K}}}(sh)\big)(c\bar{c}_d)(1)(r\overline{D\rho})_1(Dv)^2_{n+m+\frac{3}{2}}}\nonumber\\
&-&\int_0^T{\big(1+\mathcal{O_{\lambda, \mathfrak{K}}}(sh)\big)(c\bar{c}_d)(0)(r\overline{D\rho})_0(Dv)^2_{\frac{1}{2}}}\nonumber\\
&+&\int_0^T{\mathcal{O_{\lambda, \mathfrak{K}}}(sh)^2(r\overline{D\rho})_0(Dv)^2_{\frac{1}{2}}}+\int_0^T{\mathcal{O_{\lambda, \mathfrak{K}}}(sh)^2(r\overline{D\rho})_1(Dv)^2_{n+m+\frac{3}{2}}}.\nonumber
\end{eqnarray}
\par Moreover, by ~\eqref{constrainted boundary} we have $Y_{11}^{(1)}+Y_{21}^{(1,1)}\ge 0$ for sh sufficiently small.\\

\par We next focus our attention on the trace term at $'a'$ on $Y_{11}^{(2,1)}+Y_{21}^{(1,21)}$ as follows
%\section{Estimation on the trace at a and b}

\subsection{Proof of Lemma~\ref{lem:estimate Y11 plus Y21 again}}
\begin{eqnarray}\label{eq:A4}(\tilde{v})^2_{n+\frac{3}{2}}&=&\big({\frac{v_{n+1}+v_{n+2}}{2}}\big)^2=\big({v_{n+1}+\frac{h}{2}(Dv)_{n+\frac{3}{2}}}\big)^2\nonumber\\
&=&v^2_{n+1}+\frac{h^{2}}{4}(Dv)^2_{n+\frac{3}{2}}+ hv_{n+1}(Dv)_{n+\frac{3}{2}}\nonumber\\
&=&v^2_{n+1}+\frac{h^{2}}{4(c_d)^2_{n+\frac{3}{2}}}(c_dDv)^2_{n+\frac{3}{2}}+ v_{n+1}\frac{h}{(c_d)_{n+\frac{3}{2}}}(c_dDv)_{n+\frac{3}{2}}.\nonumber\\
\end{eqnarray}
\par Similarly, we have

\begin{eqnarray}\label{eq:A5}&&(\tilde{v})^2_{n+\frac{1}{2}}=v^2_{n+1}+\frac{h^{2}}{4(c_d)^2_{n+\frac{1}{2}}}(c_dDv)^2_{n+\frac{1}{2}}- v_{n+1}\frac{h}{(c_d)_{n+\frac{1}{2}}}(c_dDv)_{n+\frac{1}{2}}.
\end{eqnarray}

\par We thus write $Y_{21}^{(1,21)}$ as follows:
\begin{eqnarray}\label{eq:A6}&&Y_{21}^{(1,21)}\nonumber\\
&=&\int_0^T{(s\lambda\phi(a))^3[c^2(\psi')^3\star \left|{\tilde{v}}\right|^2]_a}\nonumber\\
&=&\int_0^T{(s\lambda\phi(a))^3(c^2\psi^{'3})(a^+)\Big(v^2_{n+1}+\frac{h^{2}}{4(c_d)^2_{n+\frac{3}{2}}}(c_dDv)^2_{n+\frac{3}{2}}+v_{n+1}\frac{h}{2(c_d)_{n+\frac{3}{2}}}(c_dDv)_{n+\frac{3}{2}}\Big)}\nonumber\\
&-&\int_0^T{(s\lambda\phi(a))^3(c^2\psi^{'3})(a^-)\Big(v^2_{n+1}+\frac{h^{2}}{4(c_d)^2_{n+\frac{1}{2}}}(cDv)^2_{N+\frac{1}{2}}-v_{n+1}\frac{h}{2(c_d)_{n+\frac{1}{2}}}(c_dDv)_{n+\frac{1}{2}}\Big)}\nonumber\\
&=&\int_0^T{(s\lambda\phi(a))^3[c^2\psi^{'3}\star]_a v^2_{n+1}}\nonumber\\
&+&\int_0^T{(s\lambda\phi(a))^3\Big((c^2\psi'^3)(a^+)\frac{h^{2}}{4(c_d)^2_{n+\frac{3}{2}}}(c_dDv)^2_{n+\frac{3}{2}}-(c^2\psi'^3)(a^-)\frac{h^{2}}{4(c_d)^2_{n+\frac{1}{2}}}(c_dDv)^2_{n+\frac{1}{2}}\Big)}\nonumber\\
&+&\int_0^T{(s\lambda\phi(a))^3\Big((c^2\psi'^3)(a^+)\frac{h}{2(c_d)_{n+\frac{3}{2}}}(c_dDv)_{n+\frac{3}{2}}-(c^2\psi'^3)(a^-)\frac{h}{2(c_d)_{n+\frac{1}{2}}}(c_dDv)_{n+\frac{1}{2}}\Big)v(a)}.\nonumber\\
\end{eqnarray}

\par Moreover, the term $Y_{11}^{(2,1)}$ is given by
%copyand paste vao day

\begin{eqnarray}Y_{11}^{(2,1)}&=&\int_0^T{s\lambda\phi(a)\Big(-\psi'(a^-)c(a^-)\bar{c}_d(a)(Dv)^2_{n+\frac{1}{2}}+\psi'(a^+)c(a^+)\bar{c}_d(a)(Dv)^2_{n+\frac{3}{2}}\Big)}\nonumber\\
&=&\int_0^T{s\lambda\phi(a)\Big(-\psi'(a^-)\frac{c(a^-)\bar{c}_d(a)}{(c_d)^2_{n+\frac{1}{2}}}(c_dDv)^2_{n+\frac{1}{2}}+\psi'(a^+)\frac{c(a^+)\bar{c}_d(a)}{(c_d)^2_{n+\frac{3}{2}}}(c_dDv)^2_{n+\frac{3}{2}}\Big)}.\nonumber
\end{eqnarray}

\par We estimate as
\begin{eqnarray}\frac{c(a^-)\bar{c}_d(a)}{(c_d)^2_{n+\frac{1}{2}}}&=&\frac{\big( (c_d)_{n+\frac{1}{2}}+\mathcal{O}(h)\big)\big( (c_d)_{n+\frac{1}{2}}+(c_d)_{n+\frac{3}{2}}\big)}{2(c_d)^2_{n+\frac{1}{2}}}\nonumber\\
&=&\frac{\big( (c_d)_{n+\frac{1}{2}}+\mathcal{O}(h)\big)\big(2 (c_d)_{n+\frac{1}{2}}+\mathcal{O}(h)\big)}{2(c_d)^2_{n+\frac{1}{2}}}\nonumber\\
&=&1+h\mathcal{O}(1).\nonumber
\end{eqnarray}
\par Similarly,
\begin{equation}\frac{c(a^+)\bar{c}_d(a)}{(c_d)^2_{n+\frac{3}{2}}}=1+h\mathcal{O}(1).\nonumber\end{equation}

\par We thus obtain $Y_{11}^{(2,1)}$ 
\begin{eqnarray}\label{eq:A.7}Y_{11}^{(2,1)}&=&\int_0^T{s\lambda\phi(a)\Big(-\psi'(a^-)\big(1+h\mathcal{O}(1)\big)(c_dDv)^2_{n+\frac{1}{2}}+\psi'(a^+)\big(1+h\mathcal{O}(1)\big)(c_dDv)^2_{n+\frac{3}{2}}\Big)}\nonumber\\
&=&\int_0^T{s\lambda\phi(a)[\psi'\star (c_dDv)^2]_a}\nonumber\\
&+&\int_0^T{s\lambda\phi(a)\psi'(a^+)\mathcal{O}(h)(c_dDv)^2_{n+\frac{3}{2}}}+\int_0^T{s\lambda\phi(a)\psi'(a^-)\mathcal{O}(h)(c_dDv)^2_{n+\frac{1}{2}}}.
\end{eqnarray}

\par Combining ~\eqref{eq:A6} with ~\eqref{eq:A.7} we obtain
\begin{eqnarray}&&Y_{11}^{(2,1)}+Y_{21}^{(1,21)}\nonumber\\
&=&\int_0^T{s\lambda\phi(a)[\psi'\star (c_dDv)^2]_a}+\int_0^T{s^3\lambda^3\phi^3(a)[(\psi')^3c^2\star]_a v^2_{n+1}}\nonumber\\
&+&\int_0^T{s\lambda\phi(a)\psi'(a^+)\mathcal{O}(h)(c_dDv)^2_{n+\frac{3}{2}}}+\int_0^T{s\lambda\phi(a)\psi'(a^-)\mathcal{O}(h)(c_dDv)^2_{n+\frac{1}{2}}}\nonumber\\
&+&\int_0^T{(s\lambda\phi(a))^3\Big((c^2\psi'^3)(a^+)\frac{h^2}{4(c_d)^2_{n+\frac{3}{2}}}(c_dDv)^2_{n+\frac{3}{2}}-(c^2\psi'^3)(a^-)\frac{h^2}{4(c_d)^2_{n+\frac{1}{2}}}(c_dDv)^2_{n+\frac{1}{2}}\Big)}\nonumber\\
&+&\int_0^T{(s\lambda\phi(a))^3\Big((c^2\psi'^3)(a^+)\frac{h}{2(c_d)_{n+\frac{3}{2}}}(c_dDv)_{n+\frac{3}{2}}-(c^2\psi'^3)(a^-)\frac{h}{2(c_d)_{n+\frac{1}{2}}}(c_dDv)_{n+\frac{1}{2}}\Big)v(a)}\nonumber\\
&=&\mu+\mu_1,\nonumber
\end{eqnarray}
where 
\begin{equation}\mu=\int_0^T{s\lambda\phi(a)[\psi'\star (c_dDv)^2]_a}+\int_0^T{s^3\lambda^3\phi^3(a)[c^2(\psi')^3\star]_a v^2_{n+1}}.\nonumber
\end{equation}
and $\mu_1$ can be written as
\small{
\begin{eqnarray}
\mu_1&=&\int_0^T{s\mathcal{O_{\lambda, \mathfrak{K}}}(sh)(c_dDv)^2_{n+\frac{3}{2}}}+\int_0^T{s\mathcal{O_{\lambda, \mathfrak{K}}}(sh)(c_dDv)^2_{n+\frac{1}{2}}}\nonumber\\
&+&\int_0^T{s^2\mathcal{O_{\lambda, \mathfrak{K}}}(sh)(c_dDv)_{n+\frac{3}{2}}v_{n+1}}+\int_0^T{s^2\mathcal{O_{\lambda, \mathfrak{K}}}(sh)(c_dDv)_{n+\frac{1}{2}}v_{n+1}}.\nonumber
\end{eqnarray}
}
\par We can write
\begin{eqnarray}&&[(\psi')\star(c_dDv)^2]_a\nonumber\\
&=&[(\psi')\star]_a(c_dDv)^2_{n+\frac{1}{2}}+[\star (c_dDv)]^2_a\psi'(a^+)+2[\star(c_dDv)]_a\psi'(a^+)(c_dDv)_{n+\frac{1}{2}}.\nonumber
\end{eqnarray}
\par Indeed, we have
\begin{equation}
 [(\psi')\star(c_dDv)^2]_a=(c_dDv)^2_{n+\frac{3}{2}}\psi'(a^+)-(c_dDv)^2_{n+\frac{1}{2}}\psi'(a^-),\nonumber
\end{equation}
and

\begin{eqnarray}
 &&[(\psi')\star]_a(c_dDv)^2_{n+\frac{1}{2}}+[\star (c_dDv)]^2_a\psi'(a^+)+2[\star(c_dDv)]_a\psi'(a^+)(c_dDv)_{n+\frac{1}{2}}\nonumber\\
&=&(c_dDv)^2_{n+\frac{1}{2}}\psi'(a^+)-(c_dDv)^2_{n+\frac{1}{2}}\psi'(a^-)\nonumber\\
&+&(c_dDv)^2_{n+\frac{3}{2}}\psi'(a^+)+(c_dDv)^2_{n+\frac{1}{2}}\psi'(a^+)-2(c_dDv)_{n+\frac{3}{2}}(c_dDv)_{n+\frac{1}{2}}\psi'(a^+)\nonumber\\
&+&2(c_dDv)_{n+\frac{3}{2}}(c_dDv)_{n+\frac{1}{2}}\psi'(a^+)-2(c_dDv)^2_{n+\frac{1}{2}}\psi'(a^+)\nonumber\\
&=&(c_dDv)^2_{n+\frac{3}{2}}\psi'(a^+)-(c_dDv)^2_{n+\frac{1}{2}}\psi'(a^-).\nonumber
\end{eqnarray}

\par Moreover, by using Lemma~\ref{lem:transmission condition}, we obtain
\begin{eqnarray}&&[(\psi')\star(c_dDv)^2]_a\nonumber\\
&=&[(\psi')\star]_a(c_dDv)^2_{n+\frac{1}{2}}+[\star (c_dDv)]^2_a\psi'(a^+)+2[\star(c_dDv)]_a\psi'(a^+)(c_dDv)_{n+\frac{1}{2}}\nonumber\\
&=&[(\psi')\star]_a(c_dDv)^2_{n+\frac{1}{2}}+\Big(\lambda^2 s^2[\star(c\phi\psi')]^2_a v^2_{n+1}+r_0^2+2\lambda s r_0[\star c\phi \psi']_a v_{n+1}\Big)\psi'(a^+)\nonumber\\
&& \ \ \ \ \ \ \ \ \ \ \ \ \ \ \ +2\Big(s\lambda[\star c\phi\psi']_a v_{n+1}+r_0\Big)\psi'(a^+)(c_dDv)_{n+\frac{1}{2}},\nonumber
\end{eqnarray}
which gives
\begin{eqnarray}\mu&=&\int_0^T{s\lambda\phi(a)[\psi'\star]_a(c_dDv)^2_{n+\frac{1}{2}}}\nonumber\\
&+&\int_0^T{2s^2\lambda^2\phi(a)[\star c \phi\psi']_a\psi'(a^+)v_{N+1}(c_dDv)_{n+\frac{1}{2}}}\nonumber\\
&+&\int_0^T{s^3\lambda^3\phi(a)\Big( [\star c\phi\psi']^2_a\psi'(a^+)+[c^2(\psi')^3 \star]_a\phi^2(a)\Big)v^2_{n+1} }\nonumber\\
&+&\int_0^T{s\lambda\phi(a)\psi'(a^+)r_0^2}+2\int_0^T{s^2\lambda^2\phi(a)[\star c \phi\psi']_a\psi'(a^+)r_0v_{n+1}}\nonumber\\
&+&2\int_0^T{s\lambda\phi(a)\psi'(a^+)r_0(c_dDv)_{n+\frac{1}{2}}}\quad.\nonumber
\end{eqnarray}

\par Moreover, we have:
\begin{eqnarray}&&[\star c \phi\psi']_a=c\phi\psi' |_{n+\frac{3}{2}}-c\phi\psi'|_{n+\frac{1}{2}}=c\phi\psi'|_{a^+}-c\phi\psi'|_{a^-}+h\mathcal{O_{\lambda}}(1)=\phi(a)[c\psi'\star]_a+h\mathcal{O_{\lambda}}(1),\nonumber\\
&&[\star c\phi\psi']^2_a=[c\phi\psi' \star]^2_a+2[c\phi\psi' \star]_a h\mathcal{O_{\lambda}}(1)+h^2\mathcal{O_{\lambda}}(1)=\phi^2(a)[c\psi' \star]^2_a+h\mathcal{O_{\lambda}}(1).\nonumber\
\end{eqnarray}

\par We thus write $\mu $ as
\begin{eqnarray}\mu&=&\int_0^T{s\lambda\phi(a)[\psi'\star]_a(c_dDv)^2_{n+\frac{1}{2}}}\nonumber\\
&+&\int_0^T{2s^2\lambda^2\phi^2(a)[ c\psi'\star]_a\psi'(a^+)v_{n+1}(c_dDv)_{n+\frac{1}{2}}}\nonumber\\
&+&\int_0^T{s^3\lambda^3\phi^3(a)\Big( [ c\psi'\star]^2_a\psi'(a^+)+[c^2(\psi')^3 \star]_a\Big)v^2_{n+1} } \nonumber\\
&+&\int_0^T{s\lambda\phi(a)\psi'(a^+)r_0^2}+2\int_0^T{s^2\lambda^2\phi^2(a)[ c\psi'\star]_a\psi'(a^+)r_0v_{n+1}}\nonumber\\
&+&2\int_0^T{s\lambda\phi(a)\psi'(a^+)r_0(c_dDv)_{n+\frac{1}{2}}}+\int_0^T{s^2\mathcal{O_{\lambda, \mathfrak{K}}}(sh)v^2_{n+1}}\nonumber\\
&+&\int_0^T{s\mathcal{O_{\lambda, \mathfrak{K}}}(sh)v_{n+1}(c_dDv)_{n+\frac{1}{2}}}+\int_0^T{s\mathcal{O_{\lambda, \mathfrak{K}}}(sh)r_0v_{n+1}}\nonumber\\
&=&\int_0^T{s\lambda\phi(a)[\psi'\star]_a(c_dDv)^2_{n+\frac{1}{2}}}\nonumber\\
&+&\int_0^T{2s^2\lambda^2\phi^2(a)[c\psi'\star]_a\psi'(a^+)v_{n+1}(c_dDv)_{n+\frac{1}{2}}}\nonumber\\
&+&\int_0^T{s^3\lambda^3\phi^3(a)\Big( [c\psi'\star]^2_a\psi'(a^+)+[c^2(\psi')^3 \star]_a\Big)v^2_{n+1} }\ \ +\mu_r \nonumber
\end{eqnarray}
where $\mu_r$ can be written as
\small{
\begin{eqnarray}
\mu_r&=&\int_0^T{s\mathcal{O_{\lambda}}(1)r_0^2}+\int_0^T{s^2\mathcal{O_{\lambda}}(1)r_0v_{n+1}}+\int_0^T{s\mathcal{O_{\lambda}}(1)r_0(c_dDv)_{n+\frac{1}{2}}}\nonumber\\
&+&\int_0^T{s^2\mathcal{O_{\lambda, \mathfrak{K}}}(sh)v^2_{n+1}}+\int_0^T{s\mathcal{O_{\lambda, \mathfrak{K}}}(sh)v_{n+1}(c_dDv)_{n+\frac{1}{2}}}+\int_0^T{s\mathcal{O_{\lambda, \mathfrak{K}}}(sh)r_0v_{n+1}}.\nonumber
\end{eqnarray}
}

% \begin{eqnarray}\mu_r&=&\int_0^T{s\lambda\phi(a)\psi'(a^+)r_0^2}+2\int_0^T{s^2\lambda^2\phi^2(a)[c\psi'\star]_a\psi'(a^+)r_0v_{N+1}}\nonumber\\
% &+&2\int_0^T{s\lambda\phi(a)\psi'(a^+)r_0(c_dDv)_{N+\frac{1}{2}}}+\int_0^T{s^2\lambda^3\phi(a)\mathcal{O_{\lambda, \mathfrak{K}}}(sh)v^2_{N+1}}\nonumber\\
% &+&\int_0^T{s\lambda^2\phi(a)\mathcal{O_{\lambda, \mathfrak{K}}}(sh)v_{N+1}(c_dDv)_{N+\frac{1}{2}}}+\int_0^T{s\lambda^2\phi(a)\mathcal{O_{\lambda, \mathfrak{K}}}(sh)r_0v_{N+1}},\nonumber
% \end{eqnarray}
\par We have thus achieved

\begin{equation}\mu=\int_0^T{s\lambda\phi(a)\big(\textbf{A}u(t,a),u(t,a)\big)}+\mu_r,\nonumber
\end{equation}
with $u(t,a)=\big( (c_dDv)_{n+\frac{1}{2}},s\lambda\phi(a)v_{n+1}\big)^t$ and the symmetric matrix $\textbf{A}$ defined in Lemma~\ref{lem:weight function}.

\par From the choice made for the weight function $\beta$ in Lemma~\ref{lem:weight function} we find that:
\begin{equation}\mu \ge C\alpha_0 \int_0^T{s\lambda\phi(a)(c_dDv)^2_{n+\frac{1}{2}}}+C\alpha_0\int_0^T{s^3\lambda^3\phi^3(a)v^2_{n+1}}+\mu_r,\nonumber\
\end{equation}
with $\alpha_0>0$.

%\section{Estimate boundary terms}

\subsection{Proof of Lemma~\ref{lem:estimate Y13}}
\par By using Lemma~\ref{lem:transmission condition} we have
\small{
\begin{eqnarray}Y_{13}&=&-\int_0^T{r\bar{\tilde{\rho}}(a^+)\partial_t v(a)(c_dDv)_{n+\frac{3}{2}}}+\int_0^T{r\bar{\tilde{\rho}}(a^-)\partial_t v(a)(c_dDv)_{n+\frac{1}{2}}}\nonumber\\
&=&-\int_0^Tr\bar{\tilde{\rho}}(a^+)\partial_t v(a)\Big({(c_dDv)_{n+\frac{1}{2}}+J_1 v_{n+1}}+J_2(c_dDv)_{n+\frac{1}{2}}+J_3 h(rf)_{n+1}\Big)\nonumber\\
&&+\int_0^T{r\bar{\tilde{\rho}}(a^-)\partial_t v(a)(c_dDv)_{n+\frac{1}{2}}},\nonumber
\end{eqnarray}
}
where $J_1, J_2$ and $J_3$ are given as in Lemma~\ref{lem:transmission condition}.

\par Since $J_2=\mathcal{O_{\lambda, \mathfrak{K}}}(sh)$ and $r\bar{\tilde{\rho}}=1+\mathcal{O_{\lambda, \mathfrak{K}}}(sh)$ we can write
\begin{eqnarray}
Y_{13}&=&\int_0^T \mathcal{O_{\lambda, \mathfrak{K}}}(sh)\partial_t v(a)(c_dDv)_{n+\frac{1}{2}}-\int_0^Tr\bar{\tilde{\rho}}(a^+)J_1 v(a)\partial_t v(a)\nonumber\\
&-&\int_0^Tr\bar{\tilde{\rho}}(a^+)J_3\partial_t v(a)h(rf)_{n+1} .\nonumber
\end{eqnarray}

\par Futhermore, as $f=f_1-\partial_t (\rho v)$ we thus find 
\begin{eqnarray}
Y_{13}&=&\int_0^T \mathcal{O_{\lambda, \mathfrak{K}}}(sh)\partial_t v(a)(c_dDv)_{n+\frac{1}{2}}-\int_0^Tr\bar{\tilde{\rho}}(a^+)J_1 v(a)\partial_t v(a)\nonumber\\
&-&\int_0^Tr\bar{\tilde{\rho}}(a^+)J_3\partial_t v(a)h\big(rf_1-r\partial_t(\rho v)\big)_{n+1} .\nonumber
\end{eqnarray}

\par With an integration by parts w.r.t $t$ for the second term above we obtain

\begin{eqnarray}Y_{13}&=&\int_0^T{\mathcal{O_{\lambda, \mathfrak{K}}}(sh)\partial_tv(a)(c_dDv)_{n+\frac{1}{2}}}+\frac{1}{2}\int_0^T{\partial_t\big(r\bar{\tilde{\rho}}(a^+)J_1\big) v^2(a)}\nonumber\\
&-&\frac{1}{2} r\bar{\tilde{\rho}}(a^+)J_1v^2(a)|_{t=0}^{t=T}-\int_0^T{r\bar{\tilde{\rho}}(a^+)J_3\partial_t v(a)h(rf_1)_{n+1}}\nonumber\\
&+&\int_0^T{r\bar{\tilde{\rho}}(a^+)J_3\partial_t v(a)hr_{n+1}\big(\rho \partial_t v+\partial_t\rho v\big)_{n+1}}\nonumber\\
&=&\int_0^T{\mathcal{O_{\lambda, \mathfrak{K}}}(sh)\partial_tv(a)(c_dDv)_{n+\frac{1}{2}}}+\frac{1}{2}\int_0^T{\partial_t\big(r\bar{\tilde{\rho}}(a^+)J_1\big) v^2(a)}\nonumber\\
&+& s\mathcal{O_{\lambda, \mathfrak{K}}}(1)v^2(a)|_{t=0}^{t=T}+\int_0^T{\mathcal{O_{\lambda, \mathfrak{K}}}(1)\partial_t v(a)h(rf_1)_{n+1}}\nonumber\\
&+&\int_0^T{\big(1+\mathcal{O_{\lambda, \mathfrak{K}}}(sh)\big)h\big({\partial_tv(a)}\big)^2}+\frac{1}{2}\int_0^T{r\bar{\tilde{\rho}}(a^+)J_3h(r\partial_t \rho)_{n+1}\partial_t \big(v^2(a)\big)},\nonumber
\end{eqnarray}
where $r\bar{\tilde{\rho}}, J_{3} $ are of the form $ 1+\mathcal{O_{\lambda, \mathfrak{K}}}(sh)$ and $J_1$ of the form $s\mathcal{O_{\lambda, \mathfrak{K}}}(1)$.

\par We apply an integration by parts in time for the last term 
\begin{eqnarray}Y_{13}&=&\int_0^T{\mathcal{O_{\lambda, \mathfrak{K}}}(sh)\partial_tv(a)(c_dDv)_{n+\frac{1}{2}}}+\frac{1}{2}\int_0^T{\partial_t\big(r\bar{\tilde{\rho}}(a^+)J_1\big) v^2(a)}\nonumber\\
&+&s\mathcal{O_{\lambda, \mathfrak{K}}}(1)v^2(a)|_{t=0}^{t=T}+\int_0^T{\mathcal{O_{\lambda, \mathfrak{K}}}(1)\partial_t v(a)h(rf_1)_{n+1}}\nonumber\\
&+&\int_0^T{\big(1+\mathcal{O_{\lambda, \mathfrak{K}}}(sh)\big)h\big({\partial_tv(a)}\big)^2}-\frac{1}{2}\int_0^T{\partial_t\big(r\bar{\tilde{\rho}}(a^+)J_3(r\partial_t \rho)_{n+1}\big)h v^2(a)}\nonumber\\
&+&\frac{1}{2}r\bar{\tilde{\rho}}(a^+)J_3(r\partial_t \rho)_{n+1}hv^2(a,.)|_{t=0}^{t=T}.\nonumber
\end{eqnarray}

\par Moreover, we have
\begin{eqnarray}\label{eq:A.8}&&\partial_t s=s(2t-T)\theta=sT\theta\mathcal{O}(1),\nonumber\\
&&\partial_t \rho=-\varphi(x)(\partial_t s)\rho=-\varphi(x)s(2t-T)\theta\rho,\nonumber\\
&&r\partial_t\rho=-\varphi(x)s(2t-T)\theta\\
%&&\partial_t^2 s=2s\theta+(2t-T)^2s\theta^2+(2t-T)^2s\theta^2=sT^2\theta^2\mathcal{O}(1),=(\partial_t r)(\partial_t \rho)+r\partial_t^2\rho=-(r\rho) \varphi(x)\partial_t^2 s\nonumber\\
&&\partial_t(r\partial_t \rho)=sT^2\theta^2\mathcal{O}(1),\nonumber
\end{eqnarray} 
by using ~\eqref{eq:theta}- ~\eqref{eq:derivative of theta}.
\par Now we estimate the terms $\partial_t\big(r\bar{\tilde{\rho}}(a^+)J_1\big)$ and $\partial_t\big(r\bar{\tilde{\rho}}(a^+)J_3(r\partial_t \rho)_{n+1}\big)$. By recalling $\partial_tJ_1= sT\theta\mathcal{O_{\lambda, \mathfrak{K}}}(sh)$, $\partial_tJ_3=T\theta\mathcal{O_{\lambda, \mathfrak{K}}}(sh)$ as well as using Proposition~\ref{prop:property 5} and ~\eqref{eq:A.8} we obtain
\begin{eqnarray}
\partial_t\big(r\bar{\tilde{\rho}}(a^+)J_1\big)&=&\partial_t\big(r\bar{\tilde{\rho}}(a^+)\big)J_1+r\bar{\tilde{\rho}}(a^+)\partial_t J_1\nonumber\\
&=&sT\theta\mathcal{O_{\lambda, \mathfrak{K}}}(sh),\nonumber
\end{eqnarray} and
\begin{eqnarray}
&&\partial_t\big(r\bar{\tilde{\rho}}(a^+)J_3(r\partial_t \rho)_{n+1}\big)\nonumber\\
&=&\partial_t\big(r\bar{\tilde{\rho}}(a^+)\big)J_3(r\partial_t \rho)_{n+1}+r\bar{\tilde{\rho}}(a^+)\partial_tJ_3 (r\partial_t \rho)_{n+1}+r\bar{\tilde{\rho}}(a^+)J_3\partial_t\big((r\partial_t \rho)_{n+1}\big)\nonumber\\
&=&sT^2\theta^2\mathcal{O_{\lambda, \mathfrak{K}}}(1).\nonumber
\end{eqnarray}

\par Thus $Y_{13}$ can be written

\begin{eqnarray}Y_{13}&=&\int_0^T{\mathcal{O_{\lambda, \mathfrak{K}}}(sh)\partial_t v(a)(c_dDv)_{n+\frac{1}{2}}}+\int_0^T{sT\theta\mathcal{O_{\lambda, \mathfrak{K}}}(sh)v^2(a)}\nonumber\\
&+&s\mathcal{O_{\lambda, \mathfrak{K}}}(1)v^2(a)|_{t=0}^{t=T}+\int_0^T{\mathcal{O_{\lambda, \mathfrak{K}}}(1)\partial_t v(a)h(rf_1)_{n+1}}\nonumber\\
&+&\int_0^T{\big(1+\mathcal{O_{\lambda, \mathfrak{K}}}(sh)\big)h\big({\partial_tv(a)}\big)^2}+\int_0^T{T^2\theta^2\mathcal{O_{\lambda, \mathfrak{K}}}(sh)v^2(a) }\nonumber\\
&+&\frac{1}{2}(1+\mathcal{O_{\lambda, \mathfrak{K}}}(s(t)h)) h\Big(-\varphi(a)s(t)(2t-T)\theta(t) v^2(a,.)\Big)|_{t=0}^{t=T}.\nonumber
\end{eqnarray}

\par We observe that for $0<sh<\epsilon_3(\lambda)$ with $\epsilon_3(\lambda)$ sufficiently small we have 
\begin{equation}r\bar{\tilde{\rho}}= 1+\mathcal{O_{\lambda, \mathfrak{K}}}(sh)>0.\nonumber
\end{equation}

\par Additionally, $\varphi(x)<0$ then the last term of $Y_{13}$ are non-negative. From that, we estimate $Y_{13}$ as follows

\small{
\begin{eqnarray}&&Y_{13}\ge\int_0^T{\mathcal{C}_{\lambda, \mathfrak{K}}h(\partial_t v(a))^2}+ \int_0^T{\big(s T\theta\mathcal{O_{\lambda, \mathfrak{K}}}(sh)+T^2\theta^2\mathcal{O_{\lambda, \mathfrak{K}}}(sh) \big)v^2(a)}\nonumber\\
&+&s\mathcal{O_{\lambda, \mathfrak{K}}}(1)v^2(a)|_{t=0}^{t=T}+\int_0^T{\mathcal{O_{\lambda, \mathfrak{K}}}(sh)\partial_t v(a)(c_dDv)_{n+\frac{1}{2}}}+\int_0^T{\mathcal{O_{\lambda, \mathfrak{K}}}(1)\partial_t v(a)h(rf_1)_{n+1}}.\nonumber
\end{eqnarray}
}

\subsection{ Proof of Lemma~\ref{lem:Young ineq}}
\par On the one hands, as $f=f_1-\partial_t(\rho v) $  we write

\begin{eqnarray}(rf)_{n+1}&=&(rf_1)_{n+1}-(r\partial_t (\rho v))_{n+1}\nonumber\\
&=&(rf_1)_{n+1}-\Big((r\rho)\partial_t v+(r\partial_t \rho)v\Big)_{n+1}\nonumber\\
&=&(rf_1)_{n+1}-(\partial_t v)_{n+1}-sT\theta\mathcal{O_{\lambda}}(1)v_{n+1}\nonumber.
\end{eqnarray}

\par We thus obtain
\begin{equation}\label{eq:A.9}\left|{(rf)_{n+1}}\right|^2\le C\Big((rf_1)^2_{n+1}+(\partial_t v)^2_{n+1}+s^2T^2\theta^2\mathcal{O_{\lambda}}(1)v^2_{n+1}\Big).
\end{equation}

\par On the other hands,
\begin{equation}\label{eq:A10}[\rho_1\star \rho_2]=[\rho_1\star]_a(\rho_2)_{n+\frac{1}{2}}+\rho_1(a^+)[\star \rho_2]_a,
\end{equation}
and we recall
\begin{equation}
(c_dDv)_{n+\frac{3}{2}}-(c_dDv)_{n+\frac{1}{2}}=[\star c_dDv]_a=\lambda s[\star c\phi\psi']_av_{n+1}+r_0, \nonumber
\end{equation}
where $r_0$ is given in Lemma~\ref{lem:transmission condition} as
\begin{equation}r_0=s\mathcal{O_{\lambda, \mathfrak{K}}}(sh)v_{n+1}+\mathcal{O_{\lambda, \mathfrak{K}}}(sh)(c_dDv)_{n+\frac{1}{2}}+h\mathcal{O_{\lambda, \mathfrak{K}}}(1)(rf)_{n+1},\nonumber
\end{equation}
\par We then have
\begin{eqnarray}\label{eq:A11}(c_dDv)^2_{n+\frac{3}{2}}&=&(c_dDv)^2_{n+\frac{1}{2}}+[\star c_dDv]^2_a+2[\star c_dDv]_a(c_dDv)_{n+\frac{1}{2}}\nonumber\\
&=&(c_dDv)^2_{n+\frac{1}{2}}+\lambda^2s^2[\star c\phi\psi']^2_av^2_{n+1}+r_0^2+2\lambda s[\star c\phi\psi']_a r_0v_{n+1}\nonumber\\
&&+ 2\lambda s[\star c\phi\psi']_a v_{n+1}(c_dDv)_{n+\frac{1}{2}}+2r_0(c_dDv)_{n+\frac{1}{2}}.
\end{eqnarray}

and we compute
\begin{eqnarray}r^2_0&=& s^2\mathcal{O_{\lambda, \mathfrak{K}}}(sh)^2v^2_{n+1}+\mathcal{O_{\lambda, \mathfrak{K}}}(sh)^2(c_dDv)^2_{n+\frac{1}{2}}+h^2\mathcal{O_{\lambda, \mathfrak{K}}}(1)(rf)^2_{n+1}\nonumber\\
&&+ s\mathcal{O_{\lambda, \mathfrak{K}}}(sh)^2(c_dDv)_{n+\frac{1}{2}}v_{n+1}+ s\mathcal{O_{\lambda, \mathfrak{K}}}(sh)h(rf)_{n+1}v_{n+1}\nonumber\\
&&+\mathcal{O_{\lambda, \mathfrak{K}}}(sh)(c_dDv)_{n+\frac{1}{2}}h(rf)_{n+1}.\nonumber
\end{eqnarray}
\par By applying Cauchy-Schwartz inequality we have

\begin{eqnarray}\label{eq:A18}(c_dDv)^2_{n+\frac{3}{2}}\le\mathcal{O}(1)(c_dDv)^2_{n+\frac{1}{2}}+s^2\mathcal{O_{\lambda}}(1)v^2_{n+1}+\mathcal{O}(1)r_0^2
\end{eqnarray}

\begin{equation}\label{eq:A12}
r^2_0\le s^2\mathcal{O_{\lambda, \mathfrak{K}}}(sh)^2v^2_{n+1}+\mathcal{O_{\lambda, \mathfrak{K}}}(sh)^2(c_dDv)^2_{n+\frac{1}{2}}+h^2\mathcal{O_{\lambda, \mathfrak{K}}}(1)(rf)^2_{n+1},
\end{equation}

\begin{equation}\label{eq:A13}
sr_0v_{n+1}\le \big(s^2\mathcal{O_{\lambda, \mathfrak{K}}}(sh)+s\mathcal{O_{\lambda, \mathfrak{K}}}(1)\big)v^2_{n+1}+\mathcal{O_{\lambda, \mathfrak{K}}}(sh)(c_dDv)^2_{n+\frac{1}{2}}+h\mathcal{O_{\lambda, \mathfrak{K}}}(sh)(rf)^2_{n+1},
\end{equation}

\begin{equation}\label{eq:A14}
s^2r_0v_{n+1}\le \big(s^3\mathcal{O_{\lambda, \mathfrak{K}}}(sh)+\epsilon s^3\mathcal{O_{\lambda, \mathfrak{K}}}(1)\big)v^2_{n+1}+s\mathcal{O_{\lambda, \mathfrak{K}}}(sh)(c_dDv)^2_{n+\frac{1}{2}}+C_{\epsilon}h\mathcal{O_{\lambda, \mathfrak{K}}}(sh)(rf)^2_{n+1},
\end{equation}
\small{
\begin{eqnarray}\label{eq:A15}
sT\theta r_0v_{n+1}&\le& \big(s^2T\theta\mathcal{O_{\lambda, \mathfrak{K}}}(sh)+sT^2\theta^2\mathcal{O_{\lambda, \mathfrak{K}}}(sh)\big)v^2_{n+1}\nonumber\\
&+&s\mathcal{O_{\lambda, \mathfrak{K}}}(sh)(c_dDv)^2_{n+\frac{1}{2}}+h\mathcal{O_{\lambda, \mathfrak{K}}}(sh)(rf)^2_{n+1},
\end{eqnarray}
}
\begin{equation}\label{eq:A16}
sr_0(cDv)_{n+\frac{1}{2}}\le s^3\mathcal{O_{\lambda, \mathfrak{K}}}(sh)v^2_{n+1}+\big( s\mathcal{O_{\lambda, \mathfrak{K}}}(sh)+\epsilon s\mathcal{O_{\lambda, \mathfrak{K}}}(1)\big) (c_dDv)^2_{n+\frac{1}{2}}+C_{\epsilon}\mathcal{O_{\lambda, \mathfrak{K}}}(sh)h(rf)^2_{n+1},
\end{equation}
\small{
\begin{equation}\label{eq:A17}{(\partial_t v(a))r_0}\le{\mathcal{O_{\lambda, \mathfrak{K}}}(1)h(\partial_t v(a))^2}+{s\mathcal{O_{\lambda, \mathfrak{K}}}(sh)(c_dDv)^2_{n+\frac{1}{2}}}+{s^3\mathcal{O_{\lambda, \mathfrak{K}}}(sh)v^2_{n+1}}+{\mathcal{O_{\lambda, \mathfrak{K}}}(1)h(rf)^2_{n+1}}.
\end{equation}
}

%\begin{eqnarray}{(\partial_t v(a))r_0}&\le& {\mathcal{O_{\lambda, \mathfrak{K}}}(1)h(\partial_t v(a))^2}+{s\mathcal{O_{\lambda, \mathfrak{K}}}(sh)(c_dDv)^2_{n+\frac{1}{2}}}\nonumber
%\\
%&+&{s^3\mathcal{O_{\lambda, \mathfrak{K}}}(sh)(v)^2_{n+1}}+{\mathcal{O_{\lambda, \mathfrak{K}}}(1)h(rf)^2_{n+1}}.
%\end{eqnarray}
\par We estimate following terms \\
\par The first term, by using  ~\eqref{eq:A18} we have
\small{
\begin{eqnarray}\left| Y_{11}^{(2,2)}\right| &=&\int_0^T{s\mathcal{O_{\lambda, \mathfrak{K}}}(sh)^2(c_dDv)^2_{n+\frac{1}{2}}}+{s\mathcal{O_{\lambda,\mathfrak{K}}}(sh)^2(c_dDv)^2_{n+\frac{3}{2}}}\nonumber\\
&\le&\int_0^T{s\mathcal{O_{\lambda, \mathfrak{K}}}(sh)^2(c_dDv)^2_{n+\frac{1}{2}}}+\int_0^T{s^3\mathcal{O_{\lambda, \mathfrak{K}}}(sh)^2v^2_{n+1}}+\int_0^T{s\mathcal{O_{\lambda, \mathfrak{K}}}(sh)^2r_0^2}.\nonumber
\end{eqnarray} 
}
%\small{
%\begin{eqnarray}\left| Y_{11}^{(2,2)}\right| &=&\int_0^T{s\mathcal{O_{\lambda, \mathfrak{K}}}(sh)^2(c_dDv)^2_{n+\frac{1}{2}}}+{s\mathcal{O_{\lambda,\mathfrak{K}}}(sh)^2(c_dDv)^2_{n+\frac{3}{2}}}\nonumber\\
%&=&\int_0^T{s\mathcal{O_{\lambda, \mathfrak{K}}}(sh)^2(c_dDv)^2_{n+\frac{1}{2}}}+s\mathcal{O_{\lambda, \mathfrak{K}}}(sh)^2\left((c_dDv)^2_{n+\frac{1}{2}}+s^2\mathcal{O_{\lambda,\mathfrak{K}}}(1)v^2_{n+1}\right.\nonumber\\
%&&\left.+r_0^2+s\mathcal{O_{\lambda,\mathfrak{K}}}(1) r_0v_{n+1}+ s\mathcal{O_{\lambda,\mathfrak{K}}}(1) v_{n+1}(c_dDv)_{n+\frac{1}{2}}+r_0(c_dDv)_{n+\frac{1}{2}}\right)\nonumber\\
%&=&\int_0^T{s\mathcal{O_{\lambda, \mathfrak{K}}}(sh)^2(c_dDv)^2_{n+\frac{1}{2}}}+\int_0^T{s^3\mathcal{O_{\lambda, \mathfrak{K}}}(sh)^2v^2_{n+1}}\nonumber\\
%&+&\int_0^T{s\mathcal{O_{\lambda, \mathfrak{K}}}(sh)^2r_0^2}+\int_0^T{s\mathcal{O_{\lambda, \mathfrak{K}}}(sh)^2r_0(c_dDv)_{n+\frac{1}{2}}}\nonumber\\
%&+&\int_0^T{s^2 \mathcal{O_{\lambda, \mathfrak{K}}}(sh)^2r_0v_{n+1}}+\int_0^T{s^2 \mathcal{O_{\lambda, \mathfrak{K}}}(sh)^2v_{n+1}(c_dDv)_{n+\frac{1}{2}}}.\nonumber
%\end{eqnarray} 
%}
\par Moreover, by using ~\eqref{eq:A12} we obtain

\begin{eqnarray}&&\int_0^T{s\mathcal{O_{\lambda, \mathfrak{K}}}(sh)^2r_0^2}\nonumber\\
&\le&\int_0^T{s^3\mathcal{O_{\lambda, \mathfrak{K}}}(sh)^4v^2_{n+1}}+\int_0^T{s\mathcal{O_{\lambda, \mathfrak{K}}}(sh)^4(c_dDv)^2_{n+\frac{1}{2}}}+\int_0^T{h\mathcal{O_{\lambda, \mathfrak{K}}}(sh)^3(rf)^2_{n+1}}.\nonumber
\end{eqnarray}

%\begin{eqnarray}&&\int_0^T{s\mathcal{O_{\lambda, \mathfrak{K}}}(sh)^2r_0(c_dDv)_{n+\frac{1}{2}}}\nonumber\\
%&\le&\int_0^T{s^3\mathcal{O_{\lambda, \mathfrak{K}}}(sh)^3v^2_{n+1}}+\int_0^T{s\mathcal{O_{\lambda, \mathfrak{K}}}(sh)^3(c_dDv)^2_{n+\frac{1}{2}}}+\int_0^T{h\mathcal{O_{\lambda, \mathfrak{K}}}(sh)^2(rf)^2_{n+1}}.\nonumber
%\end{eqnarray}

%\begin{eqnarray}&&\int_0^T{s^2 \mathcal{O_{\lambda, \mathfrak{K}}}(sh)^2r_0v_{n+1}}\nonumber\\
%&\le&\int_0^T{s^3\mathcal{O_{\lambda, \mathfrak{K}}}(sh)^3v^2_{n+1}}+\int_0^T{s\mathcal{O_{\lambda, \mathfrak{K}}}(sh)^3(c_dDv)^2_{n+\frac{1}{2}}}+\int_0^T{h\mathcal{O_{\lambda, \mathfrak{K}}}(sh)^2(rf)^2_{n+1}}.\nonumber
%\end{eqnarray}

\par Then, by using ~\eqref{eq:A.9} we estimate ${Y_{11}^{(2,2)}}$ 
\small{
\begin{eqnarray}\left|{Y_{11}^{(2,2)}}\right|&\le&\int_0^T{s^3\mathcal{O_{\lambda, \mathfrak{K}}}(sh)^2v^2_{n+1}}+\int_0^T{s\mathcal{O_{\lambda, \mathfrak{K}}}(sh)^2(c_dDv)^2_{n+\frac{1}{2}}}+\int_0^T{h\mathcal{O_{\lambda, \mathfrak{K}}}(sh)^3(rf)^2_{n+1}}\nonumber\\
&\le&\int_0^T{\Big(s^3\mathcal{O_{\lambda, \mathfrak{K}}}(sh)^2+sT^2\theta^2\mathcal{O_{\lambda, \mathfrak{K}}}(sh)^4\Big)v^2_{n+1}}+\int_0^T{\mathcal{O_{\lambda, \mathfrak{K}}}(sh)^3h (\partial_t v)^2_{n+1}}\nonumber\\
&+&\int_0^T{s\mathcal{O_{\lambda, \mathfrak{K}}}(sh)^2(c_dDv)^2_{n+\frac{1}{2}}}+\int_0^T{h\mathcal{O_{\lambda, \mathfrak{K}}}(sh)^3(rf_1)^2_{n+1}}.\nonumber
\end{eqnarray}
}

\par For the next term, using ~\eqref{eq:A10} and Lemma~\ref{lem:transmission condition}  we obtain
\small{
\begin{eqnarray}Y_{12}&=&\int_0^T{s\mathcal{O_{\lambda}}(1)v(a)[c\psi^{'2}\star (c_dDv)]_a}\nonumber\\
&+&\int_0^T{s\mathcal{O_{\lambda, \mathfrak{K}}}(1)v(a)(cDv)_{n+\frac{1}{2}}+s\mathcal{O_{\lambda, \mathfrak{K}}}(1)v(a)(cDv)_{n+\frac{3}{2}}}\nonumber\\
%&=&\int_0^T{s\lambda^2\phi(a)v(a)\Big([c(\psi')^2\star]_a (c_dDv)_{n+\frac{1}{2}}+c(a^+)(\psi')^2(a^+)[\star (c_dDv)]_a\Big)}\nonumber\\
&=&\int_0^T{s\mathcal{O_{\lambda, \mathfrak{K}}}(1)v(a)(c_dDv)_{n+\frac{1}{2}}+s\mathcal{O_{\lambda, \mathfrak{K}}}(1)v(a)\Big((c_dDv)_{n+\frac{1}{2}}+ s\mathcal{O_{\lambda}}(1)v(a)+r_0\Big)}\nonumber\\
%&=&\int_0^T{s\lambda^2\mathcal{O_{\lambda}}(1)v(a)(c_dDv)_{n+\frac{1}{2}}}+\int_0^T{s\lambda^2\mathcal{O_{\lambda}}(1)v(a)\Big(\lambda s\mathcal{O_{\lambda}}(1)v(a)+r_0\Big)}\nonumber\\
&=&\int_0^T{s\mathcal{O_{\lambda, \mathfrak{K}}}(1)v(a)(c_dDv)_{n+\frac{1}{2}}+s^2\mathcal{O_{\lambda, \mathfrak{K}}}(1)v^2(a)+s\mathcal{O_{\lambda, \mathfrak{K}}}(1)v(a)r_0}.\nonumber
%&+&\int_0^T{s\lambda^2\mathcal{O_{\lambda, \mathfrak{K}}}(1)v(a)r_0}.\nonumber
\end{eqnarray}
 }
\par Using ~\eqref{eq:A13} yields
\small{
\begin{eqnarray}&&\int_0^T{s\mathcal{O_{\lambda, \mathfrak{K}}}(1)v(a)r_0}\nonumber\\
&\le&\int_0^T{\Big(s^2\mathcal{O_{\lambda, \mathfrak{K}}}(sh)+s\mathcal{O_{\lambda, \mathfrak{K}}}(1)\Big)v^2_{n+1}}+\int_0^T{\mathcal{O_{\lambda, \mathfrak{K}}}(sh)(c_dDv)^2_{n+\frac{1}{2}}}+\int_0^T{ h\mathcal{O_{\lambda, \mathfrak{K}}}(sh)(rf)^2_{n+1}}.\nonumber
\end{eqnarray}
}
%\begin{equation}&&\int_0^T{s\mathcal{O_{\lambda, \mathfrak{K}}}(1)v(a)r_0}\nonumber\\
%&\le&\int_0^T{\Big(s^2\mathcal{O_{\lambda, \mathfrak{K}}}(sh)+s\mathcal{O_{\lambda, \mathfrak{K}}}(1)\Big)v^2_{n+1}}\nonumber\\
%&+&\int_0^T{\mathcal{O_{\lambda, \mathfrak{K}}}(sh)(c_dDv)^2_{n+\frac{1}{2}}}+\int_0^T{ h\mathcal{O_{\lambda, \mathfrak{K}}}(sh)(rf)^2_{n+1}}.\nonumber
%\end{equation}
%}
\par By using ~\eqref{eq:A.9} we obtain

\begin{eqnarray}\left|{Y_{12}}\right|&\le&\int_0^T{\big(s^2\mathcal{O_{\lambda, \mathfrak{K}}}(1)+sT^2\theta^2\mathcal{O_{\lambda, \mathfrak{K}}}(sh)^2\big)v^2_{n+1}}+\int_0^T{\mathcal{O_{\lambda, \mathfrak{K}}}(sh)h (\partial_t v)^2_{n+1}}\nonumber\\
&+&\int_0^T{\mathcal{O_{\lambda, \mathfrak{K}}}(1)(c_dDv)^2_{n+\frac{1}{2}}}+\int_0^T{ \mathcal{O_{\lambda, \mathfrak{K}}}(sh)h(rf_1)^2_{n+1}}.\nonumber
\end{eqnarray}

\par Moreover, we have
\begin{eqnarray}\label{eq:A19}&& \tilde{v}_{n+\frac{1}{2}}=v_{n+1}-\frac{h}{2(c_d)_{n+\frac{1}{2}}}(c_dDv)_{n+\frac{1}{2}}=v_{n+1}+\mathcal{O}(h)(c_dDv)_{n+\frac{1}{2}},\nonumber\\
&& \tilde{v}_{n+\frac{3}{2}}=v_{n+1}+\mathcal{O}(h)(c_dDv)_{n+\frac{3}{2}}.
\end{eqnarray}

\par  By using ~\eqref{eq:A19}, ~\eqref{eq:A18} we obtain
\begin{eqnarray}
(\tilde{v})^2_{n+\frac{1}{2}}+(\tilde{v})^2_{n+\frac{3}{2}}&\le& \mathcal{O}(1)v^2_{n+1}+\mathcal{O}(h^2)(cDv)^2_{n+\frac{3}{2}}+\mathcal{O}(h^2)(cDv)^2_{n+\frac{1}{2}}\nonumber\\
&\le&\big(\mathcal{O}(1)+\mathcal{O}(sh)^2\big)v^2_{n+1}+\mathcal{O}(h^2)(cDv)^2_{n+\frac{1}{2}}+\mathcal{O}(h^2)r^2_0\nonumber
\end{eqnarray}

\par Thus, we have the following estimate

\small{
\begin{eqnarray}\left| Y_{21}^{(1,22)}\right| &=&\int_0^T{\Big(s^2\mathcal{O}(1)+s^3\mathcal{O_{\lambda, \mathfrak{K}}}(sh)^2\Big)\Big((\tilde{v})^2_{n+\frac{1}{2}}+(\tilde{v})^2_{n+\frac{3}{2}}\Big)}\nonumber\\
&=&\int_0^T{s^3\mathcal{O_{\lambda, \mathfrak{K}}}(1)\Big((\tilde{v})^2_{n+\frac{1}{2}}+(\tilde{v})^2_{n+\frac{3}{2}}\Big)}\nonumber\\
&\le&\int_0^T{s^3\mathcal{O_{\lambda, \mathfrak{K}}}(1)v^2_{n+1}}+\int_0^T{s\mathcal{O_{\lambda, \mathfrak{K}}}(sh)^2(c_dDv)^2_{n+\frac{1}{2}}}+\int_0^T{s\mathcal{O_{\lambda, \mathfrak{K}}}(sh)^2r^2_0}.\nonumber
\end{eqnarray}
}

\par Futhermore, using ~\eqref{eq:A12} we have

\begin{eqnarray}&&\int_0^T{s\mathcal{O_{\lambda, \mathfrak{K}}}(sh)^2r^2_0}\nonumber\\
&\le&\int_0^T{s^3\mathcal{O_{\lambda, \mathfrak{K}}}(sh)^4v^2_{n+1}}+\int_0^T{s\mathcal{O_{\lambda, \mathfrak{K}}}(sh)^4(c_dDv)^2_{n+\frac{1}{2}}}+\int_0^T{h\mathcal{O_{\lambda, \mathfrak{K}}}(sh)^3(rf)^2_{n+1}}.\nonumber
\end{eqnarray}

\par By using ~\eqref{eq:A.9} we get
\small{
\begin{eqnarray}\left|{Y_{21}^{(1,22)}}\right|&\le&\int_0^T{\Big(s^3\mathcal{O_{\lambda, \mathfrak{K}}}(1)+sT^2\theta^2\mathcal{O_{\lambda, \mathfrak{K}}}(sh)^4\Big)v^2_{n+1}}+\int_0^T{\mathcal{O_{\lambda, \mathfrak{K}}}(sh)^3h (\partial_t v)^2_{n+1}}\nonumber\\
&+&\int_0^T{s\mathcal{O_{\lambda, \mathfrak{K}}}(sh)^3(c_dDv)^2_{n+\frac{1}{2}}}+\int_0^T{ h\mathcal{O_{\lambda, \mathfrak{K}}}(sh)^3(rf_1)^2_{n+1}}.\nonumber
\end{eqnarray}
}
%Y_{22}

\par For the term $Y_{22}^{(1)}$ we have
\small{
\begin{eqnarray}Y_{22}^{(1)}&=&\int_0^T{s^3\mathcal{O_{\lambda, \mathfrak{K}}}(1)v(a)\frac{h^2}{2}(Dv)_{n+\frac{1}{2}}+s^3\mathcal{O_{\lambda, \mathfrak{K}}}(1)v(a)\frac{h^2}{2}(Dv)_{n+\frac{3}{2}}}\nonumber\\
%&=&\int_0^T{s^3\mathcal{O_{\lambda, \mathfrak{K}}}(1)v(a)\frac{h^2}{(c_d)_{n+\frac{1}{2}}}(c_dDv)_{n+\frac{1}{2}}+s^3\mathcal{O_{\lambda, \mathfrak{K}}}(1)v(a)\frac{h^2}{(c_d)_{n+\frac{3}{2}}}(c_dDv)_{n+\frac{3}{2}}}\nonumber\\
&=&\int_0^T{\Big(s\mathcal{O_{\lambda, \mathfrak{K}}}(sh)^2(c_dDv)_{n+\frac{1}{2}}+s\mathcal{O_{\lambda, \mathfrak{K}}}(sh)^2(c_dDv)_{n+\frac{3}{2}}\Big)v(a)}\nonumber\\
&=&\int_0^T{\Big(s\mathcal{O_{\lambda, \mathfrak{K}}}(sh)^2(c_dDv)_{n+\frac{1}{2}}+ s^2\mathcal{O_{\lambda, \mathfrak{K}}}(sh)^2v(a)+s\mathcal{O_{\lambda, \mathfrak{K}}}(sh)^2r_0\Big)v(a)}\nonumber\\
&=&\int_0^T{s\mathcal{O_{\lambda, \mathfrak{K}}}(sh)^2v(a)(c_dDv)_{n+\frac{1}{2}}}+\int_0^T{s^2\mathcal{O_{\lambda, \mathfrak{K}}}(sh)^2v^2(a)}+\int_0^T{s\mathcal{O_{\lambda, \mathfrak{K}}}(sh)^2v(a)r_0}.\nonumber
\end{eqnarray}
}

\par Using ~\eqref{eq:A13} we achieve
\small{
\begin{eqnarray}&&\int_0^T{s\mathcal{O_{\lambda, \mathfrak{K}}}(sh)^2v(a)r_0}\nonumber\\
&\le&\int_0^T{s^2\mathcal{O_{\lambda, \mathfrak{K}}}(sh)^2v^2_{n+1}}+\int_0^T{\mathcal{O_{\lambda, \mathfrak{K}}}(sh)^3(c_dDv)^2_{n+\frac{1}{2}}}+\int_0^T{h\mathcal{O_{\lambda, \mathfrak{K}}}(sh)^3(rf)^2_{n+1}}.\nonumber
\end{eqnarray}
}

\par Using ~\eqref{eq:A.9}, we estimate $Y_{22}^{(1)}$ as:
\begin{eqnarray}\left|{Y_{22}^{(1)}}\right|&\le&\int_0^T{\Big(s^2\mathcal{O_{\lambda, \mathfrak{K}}}(sh)^2+sT^2\theta^2\mathcal{O_{\lambda, \mathfrak{K}}}(sh)^4\Big)v^2_{n+1}}+\int_0^T{\mathcal{O_{\lambda, \mathfrak{K}}}(sh)^3h (\partial_t v)^2_{n+1}}\nonumber\\
&+&\int_0^T{\mathcal{O_{\lambda, \mathfrak{K}}}(sh)^3(c_dDv)^2_{n+\frac{1}{2}}}+\int_0^T{h\mathcal{O_{\lambda, \mathfrak{K}}}(sh)^3(rf_1)^2_{n+1}}.\nonumber
\end{eqnarray}

\par And, using ~\eqref{eq:A19} and Lemma~\ref{lem:transmission condition} we obtain
\begin{eqnarray}&&Y_{23}^{(1)}=\int_0^T{s^2\mathcal{O_{\lambda, \mathfrak{K}}}(1)(\partial_t v(a))\frac{h}{2}\tilde{v}_{n+\frac{1}{2}}}+\int_0^T{s^2\mathcal{O_{\lambda, \mathfrak{K}}}(1)(\partial_t v(a))\frac{h}{2}\tilde{v}_{n+\frac{3}{2}}}\nonumber\\
&=&\int_0^T{s^2\mathcal{O_{\lambda, \mathfrak{K}}}(1)(\partial_t v(a))\frac{h}{2}\Big(v_{n+1}+\mathcal{O}(h)(c_dDv)_{n+\frac{1}{2}}\Big)}\nonumber\\
&+&\int_0^T{s^2\mathcal{O_{\lambda, \mathfrak{K}}}(1)(\partial_t v(a))\frac{h}{2}\Big(v_{n+1}+\mathcal{O}(h)(c_dDv)_{n+\frac{3}{2}}\Big)}\nonumber\\
&=&\int_0^T{s\mathcal{O_{\lambda, \mathfrak{K}}}(sh)(\partial_t v(a))v(a)}+\int_0^T{\mathcal{O_{\lambda, \mathfrak{K}}}(sh)^2(\partial_t v(a))(c_dDv)_{n+\frac{1}{2}}}\nonumber\\
&+&\int_0^T{\mathcal{O_{\lambda, \mathfrak{K}}}(sh)^2(\partial_t v(a))(c_dDv)_{n+\frac{3}{2}}}\nonumber\\
&=&\int_0^T{s\mathcal{O_{\lambda, \mathfrak{K}}}(sh)(\partial_t v(a))v(a)}+\int_0^T{\mathcal{O_{\lambda, \mathfrak{K}}}(sh)^2(\partial_t v(a))(c_dDv)_{n+\frac{1}{2}}}\nonumber\\
&+&\int_0^T{\mathcal{O_{\lambda, \mathfrak{K}}}(sh)^2(\partial_t v(a))\Big((c_dDv)_{n+\frac{1}{2}}+\lambda s[\star c\phi\psi']_av_{n+1}+r_0\Big)}.\nonumber
\end{eqnarray}

\par In addition, with $s, \lambda$ enough large, $sh$ enough small and with applying Young's inequality and ~\eqref{eq:A17} yield

\begin{equation}\int_0^T{s\mathcal{O_{\lambda, \mathfrak{K}}}(sh)(\partial_t v(a))v(a)}\le \int_0^T{\mathcal{O_{\lambda, \mathfrak{K}}}(sh)h(\partial_t v(a))^2}+\int_0^T{s^3\mathcal{O_{\lambda, \mathfrak{K}}}(1)v^2(a)}.\nonumber
\end{equation}

\begin{equation}\int_0^T{\mathcal{O_{\lambda, \mathfrak{K}}}(sh)^2(\partial_t v(a))(c_dDv)_{n+\frac{1}{2}}}\le \int_0^T{\mathcal{O_{\lambda, \mathfrak{K}}}(sh)h(\partial_t v(a))^2}+\int_0^T{s\mathcal{O_{\lambda, \mathfrak{K}}}(sh)^2(c_dDv)^2_{n+\frac{1}{2}}}.\nonumber
\end{equation}

\begin{eqnarray}\int_0^T{\mathcal{O_{\lambda, \mathfrak{K}}}(sh)^2(\partial_t v(a))r_0}&\le& \int_0^T{\mathcal{O_{\lambda, \mathfrak{K}}}(sh)^2h(\partial_t v(a))^2}+\int_0^T{s\mathcal{O_{\lambda, \mathfrak{K}}}(sh)^3(c_dDv)^2_{n+\frac{1}{2}}}\nonumber
\\
&+&\int_0^T{s^3\mathcal{O_{\lambda, \mathfrak{K}}}(sh)^3(v)^2_{n+1}}+\int_0^T{\mathcal{O_{\lambda, \mathfrak{K}}}(sh)^2h(rf)^2_{n+1}}.\nonumber
\end{eqnarray}

\par Using ~\eqref{eq:A.9}, we estimate $Y_{23}^{(1)}$ 

\begin{eqnarray}Y_{23}^{(1)}&\le& \int_0^T{\Big(s^3\mathcal{O_{\lambda, \mathfrak{K}}}(1)+sT^2\theta^2\mathcal{O_{\lambda, \mathfrak{K}}}(sh)^3\Big)v^2_{n+1}}+ \int_0^T{\mathcal{O_{\lambda, \mathfrak{K}}}(sh)h(\partial_t v(a))^2}\nonumber\\
&&+\int_0^T{s\mathcal{O_{\lambda, \mathfrak{K}}}(sh)^2(c_dDv)^2_{n+\frac{1}{2}}}+\int_0^T{\mathcal{O_{\lambda, \mathfrak{K}}}(sh)^2h(rf_1)^2_{n+1}}.\nonumber
\end{eqnarray}

\par Applying Lemma~\ref{lem:transmission condition} we have

\begin{eqnarray}Y_{31}^{(1)}&=&\int_0^T{T\theta s^2\mathcal{O_{\lambda, \mathfrak{K}}}(1)v(a)\frac{h}{2}(Dv)_{n+\frac{1}{2}}}+\int_0^T{T\theta s^2\mathcal{O_{\lambda, \mathfrak{K}}}(1)v(a)\frac{h}{2}(Dv)_{n+\frac{3}{2}}}\nonumber\\
%&=&\int_0^T{T\theta s^2\mathcal{O_{\lambda, \mathfrak{K}}}(1)v(a)\frac{h}{2(c_d)_{n+\frac{1}{2}}}(c_dDv)_{n+\frac{1}{2}}}\nonumber\\
%&+&\int_0^T{T\theta s^2\mathcal{O_{\lambda, \mathfrak{K}}}(1)v(a)\frac{h}{2(c_d)_{n+\frac{1}{2}}}(c_dDv)_{n+\frac{3}{2}}}\nonumber\\
&=&\int_0^T{sT\theta\mathcal{O_{\lambda, \mathfrak{K}}}(sh)v(a)(c_dDv)_{n+\frac{1}{2}}}\nonumber\\
&+&\int_0^T{sT\theta\mathcal{O_{\lambda, \mathfrak{K}}}(sh)v(a)\Big( s\mathcal{O_{\lambda, \mathfrak{K}}}(1)v(a)+r_0\Big)}\nonumber
%&=&\int_0^T{sT\theta\mathcal{O_{\lambda, \mathfrak{K}}}(sh)(c_dDv)_{n+\frac{1}{2}}}v_{n+1}+\int_0^T{ s^2T\theta\mathcal{O_{\lambda, \mathfrak{K}}}(sh)v^2_{n+1}}\nonumber\\
%&+&\int_0^T{sT\theta\mathcal{O_{\lambda, \mathfrak{K}}}(sh)v_{n+1}r_0}.\nonumber
\end{eqnarray}

\par By using ~\eqref{eq:A15} we obtain
\small{
\begin{eqnarray}\int_0^T{sT\theta\mathcal{O_{\lambda, \mathfrak{K}}}(sh)v_{n+1}r_0}&\le&\int_0^T{\Big( s^2T\theta\mathcal{O_{\lambda, \mathfrak{K}}}(sh)^2+sT^2\theta^2\mathcal{O_{\lambda, \mathfrak{K}}}(sh)^2\Big)v^2_{n+1}}\nonumber\\
&+&\int_0^T{s\mathcal{O_{\lambda, \mathfrak{K}}}(sh)^2(c_dDv)^2_{n+\frac{1}{2}}}+\int_0^T{\mathcal{O_{\lambda, \mathfrak{K}}}(sh)^2h(rf)^2_{n+1}}.\nonumber
\end{eqnarray}
}
\par We have
\begin{equation}\int_0^T{sT\theta\mathcal{O_{\lambda, \mathfrak{K}}}(sh)(c_dDv)_{n+\frac{1}{2}}}v_{n+1}\le\int_0^T{sT^2\theta^2\mathcal{O_{\lambda, \mathfrak{K}}}(sh)v^2_{n+1}}+\int_0^T{s\mathcal{O_{\lambda, \mathfrak{K}}}(sh)(c_dDv)^2_{n+\frac{1}{2}}}.\nonumber
\end{equation}

\par With ~\eqref{eq:A.9} we thus estimate $Y_{31}^{(1)}$ as

\begin{eqnarray}\left|{Y_{31}^{(1)}}\right|&\le& \int_0^T{\Big( s^2T\theta\mathcal{O_{\lambda, \mathfrak{K}}}(sh)+sT^2\theta^2\mathcal{O_{\lambda, \mathfrak{K}}}(sh)\Big)v^2_{n+1}}+\int_0^T{\mathcal{O_{\lambda, \mathfrak{K}}}(sh)h(\partial_t v)^2_{n+1}}\nonumber\\
&+&\int_0^T{s\mathcal{O_{\lambda, \mathfrak{K}}}(sh)(c_dDv)^2_{n+\frac{1}{2}}}+\int_0^T{\mathcal{O_{\lambda, \mathfrak{K}}}(sh)h(rf_1)^2_{n+1}}.\nonumber
\end{eqnarray}

\par Next, by using ~\eqref{eq:A18} we estimate $\mu_1^{(1)}$ as
\begin{eqnarray}
\mu^{(1)}&=&\int_0^T{s\mathcal{O_{\lambda, \mathfrak{K}}}(sh)(c_dDv)^2_{n+\frac{3}{2}}}+\int_0^T{s\mathcal{O_{\lambda, \mathfrak{K}}}(sh)(c_dDv)^2_{n+\frac{1}{2}}}\nonumber\\
&\le&\int_0^T{s\mathcal{O_{\lambda, \mathfrak{K}}}(sh)(c_dDv)^2_{n+\frac{1}{2}}}+\int_0^T{s^3\mathcal{O_{\lambda, \mathfrak{K}}}(sh)v^2_{n+1}}+\int_0^T{s\mathcal{O_{\lambda, \mathfrak{K}}}(sh)r^2_0}\nonumber
\end{eqnarray}

\par By making use of ~\eqref{eq:A12}  we have%, ~\eqref{eq:A14} and ~\eqref{eq:A16} we estimate some terms of $\mu_1$
\small{
\begin{eqnarray}&&\int_0^T{s\mathcal{O_{\lambda, \mathfrak{K}}}(sh)r_0^2}\nonumber\\
&\le&\int_0^T{s^3\mathcal{O_{\lambda, \mathfrak{K}}}(sh)^3v^2_{n+1}}+\int_0^T{s\mathcal{O_{\lambda, \mathfrak{K}}}(sh)^3(c_dDv)^2_{n+\frac{1}{2}}}+\int_0^T{\mathcal{O_{\lambda, \mathfrak{K}}}(sh)^2h(rf)^2_{n+1}}.\nonumber
\end{eqnarray}
}

%\begin{eqnarray}&&\int_0^T{s^2\mathcal{O_{\lambda, \mathfrak{K}}}(sh)r_0v_{n+1}}\nonumber\\
%&\le&\int_0^T{s^3\mathcal{O_{\lambda, \mathfrak{K}}}(sh)v^2_{n+1}}+\int_0^T{s\mathcal{O_{\lambda, \mathfrak{K}}}(sh)^2(c_dDv)^2_{n+\frac{1}{2}}}+\int_0^T{\mathcal{O_{\lambda, \mathfrak{K}}}(sh)h(rf)^2_{n+1}}.\nonumber
%\end{eqnarray}
%
%\small{
%\begin{eqnarray}&&\int_0^T{s\mathcal{O_{\lambda, \mathfrak{K}}}(sh)r_0(c_dDv)_{n+\frac{1}{2}}}\nonumber\\
%&\le&\int_0^T{s^3\mathcal{O_{\lambda, \mathfrak{K}}}(sh)^2v^2_{n+1}}+\int_0^T{s\mathcal{O_{\lambda, \mathfrak{K}}}(sh)^2(c_dDv)^2_{n+\frac{1}{2}}}+\int_0^T{\mathcal{O_{\lambda, \mathfrak{K}}}(sh)^2h(rf)^2_{n+1}}.\nonumber
%\end{eqnarray}
%}
%
%\begin{equation}\int_0^T{s^2\mathcal{O_{\lambda, \mathfrak{K}}}(sh)v_{n+1}(c_dDv)_{n+\frac{1}{2}}}\le\int_0^T{s^3\mathcal{O_{\lambda, \mathfrak{K}}}(sh)v^2_{n+1}}+\int_0^T{s\mathcal{O_{\lambda, \mathfrak{K}}}(sh)(c_dDv)^2_{n+\frac{1}{2}}}.\nonumber
%\end{equation}

\par Using ~\eqref{eq:A.9} we obtain
\begin{eqnarray}\mu_1^{(1)}&\le& \int_0^T{\Big(s^3\mathcal{O_{\lambda, \mathfrak{K}}}(sh)+sT^2\theta^2\mathcal{O_{\lambda, \mathfrak{K}}}(sh)^3\Big)v^2_{n+1}}+\int_0^T{\mathcal{O_{\lambda, \mathfrak{K}}}(sh)^2h(\partial_t v)^2_{n+1}}\nonumber\\
&+&\int_0^T{s\mathcal{O_{\lambda, \mathfrak{K}}}(sh)(c_dDv)^2_{n+\frac{1}{2}}}+\int_0^T{\mathcal{O_{\lambda, \mathfrak{K}}}(sh)^2h(rf_1)^2_{n+1}}.\nonumber
\end{eqnarray}

\par By making use Lemma~\ref{lem:transmission condition} we have
\small{
\begin{eqnarray}\mu_1^{(2)}&=&\int_0^T{s^2\mathcal{O_{\lambda, \mathfrak{K}}}(sh)v_{n+1}(c_dDv)_{n+\frac{3}{2}}}+\int_0^T{s^2\mathcal{O_{\lambda, \mathfrak{K}}}(sh)v_{n+1}(c_dDv)_{n+\frac{1}{2}}}\nonumber\\
%&=&\int_0^T{s^2\mathcal{O_{\lambda, \mathfrak{K}}}(sh)v_{n+1}\Big((c_dDv)_{n+\frac{1}{2}}+ s\mathcal{O_{\lambda, \mathfrak{K}}}(1)v_{n+1}+r_0\Big)}\nonumber\\
%&&  -\int_0^T{s^2\mathcal{O_{\lambda, \mathfrak{K}}}(sh)v_{n+1}(c_dDv)_{n+\frac{1}{2}}}\nonumber\\
&=&\int_0^T{s^2\mathcal{O_{\lambda, \mathfrak{K}}}(sh)v_{n+1}(c_dDv)_{n+\frac{1}{2}}}+\int_0^T{s^3\mathcal{O_{\lambda, \mathfrak{K}}}(sh)v^2_{n+1}}+\int_0^T{s^2\mathcal{O_{\lambda, \mathfrak{K}}}(sh)r_0v_{n+1}}.\nonumber
\end{eqnarray}
}
\par Applying Young's inequality and using ~\eqref{eq:A14} yield
\begin{equation}\int_0^T{s^2\mathcal{O_{\lambda, \mathfrak{K}}}(sh)v_{n+1}(c_dDv)_{n+\frac{1}{2}}}\le\int_0^T{s^3\mathcal{O_{\lambda, \mathfrak{K}}}(sh)v^2_{n+1}}+\int_0^T{s\mathcal{O_{\lambda, \mathfrak{K}}}(sh)(c_dDv)^2_{n+\frac{1}{2}}}.\nonumber
\end{equation}

\small{
\begin{eqnarray}&&\int_0^T{s^2\mathcal{O_{\lambda, \mathfrak{K}}}(sh)r_0v_{n+1}}\nonumber\\
&\le&\int_0^T{s^3\mathcal{O_{\lambda, \mathfrak{K}}}(sh)^2v^2_{n+1}}+\int_0^T{s\mathcal{O_{\lambda, \mathfrak{K}}}(sh)^2(c_dDv)^2_{n+\frac{1}{2}}}+\int_0^T{\mathcal{O_{\lambda, \mathfrak{K}}}(sh)^2h(rf)^2_{n+1}}.\nonumber
\end{eqnarray}
}

\par Using ~\eqref{eq:A.9} we have
\small{
\begin{eqnarray}\mu_1^{(2)}&\le& \int_0^T{\Big(s^3\mathcal{O_{\lambda, \mathfrak{K}}}(sh)+sT^2\theta^2\mathcal{O_{\lambda, \mathfrak{K}}}(sh)^3\Big)v^2_{n+1}}+\int_0^T{\mathcal{O_{\lambda, \mathfrak{K}}}(sh)^2h(\partial_t v)^2_{n+1}}\nonumber\\
&+&\int_0^T{s\mathcal{O_{\lambda, \mathfrak{K}}}(sh)(c_dDv)^2_{n+\frac{1}{2}}}+\int_0^T{\mathcal{O_{\lambda, \mathfrak{K}}}(sh)^2h(rf_1)^2_{n+1}}.\nonumber
\end{eqnarray}
}

\par We thus obtain
\begin{eqnarray}\mu_1&\le& \int_0^T{\Big(s^3\mathcal{O_{\lambda, \mathfrak{K}}}(sh)+sT^2\theta^2\mathcal{O_{\lambda, \mathfrak{K}}}(sh)^3\Big)v^2_{n+1}}+\int_0^T{\mathcal{O_{\lambda, \mathfrak{K}}}(sh)^2h(\partial_t v)^2_{n+1}}\nonumber\\
&+&\int_0^T{s\mathcal{O_{\lambda, \mathfrak{K}}}(sh)(c_dDv)^2_{n+\frac{1}{2}}}+\int_0^T{\mathcal{O_{\lambda, \mathfrak{K}}}(sh)^2h(rf_1)^2_{n+1}}.\nonumber
\end{eqnarray}

\par Now, we estimate some terms of $\mu_r$. By using ~\eqref{eq:A12}- ~\eqref{eq:A16} we have
\small{
\begin{eqnarray}&&\int_0^T{s\mathcal{O_{\lambda}}(1)r_0^2}\nonumber\\
&\le&\int_0^T{s^3\mathcal{O_{\lambda, \mathfrak{K}}}(sh)^2v^2_{n+1}}+\int_0^T{s\mathcal{O_{\lambda, \mathfrak{K}}}(sh)^2(c_dDv)^2_{n+\frac{1}{2}}}+\int_0^T{\mathcal{O_{\lambda, \mathfrak{K}}}(sh)h(rf)^2_{n+1}}.\nonumber
\end{eqnarray}
}

\small{
\begin{eqnarray}\int_0^T{s^2\mathcal{O_{\lambda}}(1)r_0v_{n+1}}&\le&\int_0^T{\Big(s^3\mathcal{O_{\lambda, \mathfrak{K}}}(sh)+\epsilon s^3\mathcal{O_{\lambda, \mathfrak{K}}}(1)\Big)v^2_{n+1}}\nonumber\\
&+&\int_0^T{s\mathcal{O_{\lambda, \mathfrak{K}}}(sh)(c_dDv)^2_{n+\frac{1}{2}}}+C_{\epsilon}\int_0^T{\mathcal{O_{\lambda, \mathfrak{K}}}(sh)h(rf)^2_{n+1}}.\nonumber\end{eqnarray}
}

\small{
\begin{eqnarray}\int_0^T{s\mathcal{O_{\lambda}}(1)r_0(c_dDv)_{n+\frac{1}{2}}}&\le&\int_0^T{s^3\mathcal{O_{\lambda, \mathfrak{K}}}(sh)v^2_{n+1}}+C_{\epsilon}\int_0^T{\mathcal{O_{\lambda, \mathfrak{K}}}(sh)h(rf)^2_{n+1}}\nonumber\\
&+&\int_0^T{\Big({s\mathcal{O_{\lambda, \mathfrak{K}}}(sh)+\epsilon s\mathcal{O_{\lambda, \mathfrak{K}}}(1)}\Big)(c_dDv)^2_{n+\frac{1}{2}}}.\nonumber
\end{eqnarray}
}
\begin{equation}\int_0^T{s\mathcal{O_{\lambda, \mathfrak{K}}}(sh)v_{n+1}(c_dDv)_{n+\frac{1}{2}}}\le\int_0^T{s\mathcal{O_{\lambda, \mathfrak{K}}}(sh)(c_dDv)^2_{n+\frac{1}{2}}}+\int_0^T{s\mathcal{O_{\lambda, \mathfrak{K}}}(sh)v^2_{n+1}}.\nonumber
\end{equation}

\par Using ~\eqref{eq:A.9} we have:
\begin{eqnarray}\mu_r&\le& \int_0^T{\Big(s^3\mathcal{O_{\lambda, \mathfrak{K}}}(sh)+sT^2\theta^2\mathcal{O_{\epsilon, \lambda, \mathfrak{K}}}(sh)^2+\epsilon s^3\mathcal{O_{\lambda, \mathfrak{K}}}(1)\Big)v^2_{n+1}}\nonumber\\
&+&\int_0^T{\mathcal{O_{\epsilon, \lambda, \mathfrak{K}}}(sh)h(\partial_t v)^2_{n+1}}+\int_0^T{\mathcal{O_{\epsilon, \lambda, \mathfrak{K}}}(sh)h(rf_1)^2_{n+1}}\nonumber\\
&+&\int_0^T{\Big(s\mathcal{O_{\lambda, \mathfrak{K}}}(sh)+\epsilon s\mathcal{O_{\lambda, \mathfrak{K}}}(1)\Big)}(c_dDv)^2_{n+\frac{1}{2}}.\nonumber
\end{eqnarray}

\section*{Acknowledgement}
The author warmly thanks J\'er\^{o}me Le Rousseau for his helps and remarks. The author also wishes to acknowledge Region Centre for its financial support.

\newpage

\renewcommand{\refname}{References}

% Ket thuc van ban
\end{document}